\documentclass[12pt]{amsart}
\usepackage{amssymb}
\title[A characterization of Shimura varieties]{Stability of Hodge bundles and a numerical characterization of Shimura varieties}
\author[M. M\"oller]{Martin M\"oller}
\address{Max-Planck-Institut f\"ur Mathematik,
Vivatsgasse 7,
53111  Bonn, Germany}
\email{moeller@mpim-bonn.mpg.de  }
\author[E. Viehweg]{Eckart Viehweg}
\address{Universit\"at Duisburg-Essen, Mathematik, 45117 Essen, Germany}
\email{viehweg@uni-due.de}
\thanks{This work has been supported by the DFG-Leibniz program and by the SFB/TR 45
`Periods, moduli spaces and arithmetic of algebraic varieties'.}
\author[K. Zuo]{Kang Zuo}
\address{Universit\"at Mainz,
Fachbereich 17, Mathematik,
55099 Mainz, Germany}
\email{kzuo@mathematik.uni-mainz.de}
\setcounter{tocdepth}{1}
\setlength{\headheight}{10pt} \setlength{\textheight}{24cm}
\setlength{\textwidth}{14.8cm} \setlength{\oddsidemargin}{.7cm}
\setlength{\evensidemargin}{.7cm} \setlength{\topmargin}{-.2cm}\fussy\raggedbottom
\begin{document}
\theoremstyle{plain}
\newtheorem{thm}{Theorem}[section]
\newtheorem{theorem}[thm]{Theorem}
\newtheorem{lemma}[thm]{Lemma}
\newtheorem{corollary}[thm]{Corollary}
\newtheorem{proposition}[thm]{Proposition}
\newtheorem{construction}[thm]{Construction}
\newtheorem{addendum}[thm]{Addendum}
\newtheorem{variant}[thm]{Variant}
\newtheorem{lemmadef}[thm]{Lemma and Definition}
\newtheorem{sect}[thm]{}
\theoremstyle{definition}
\newtheorem{notations}[thm]{Notations}
\newtheorem{notation}[thm]{Notation}
\newtheorem{exdef}[thm]{Example and Definition}
\newtheorem{lemdef}[thm]{Definition and Lemma}
\newtheorem{ass-not}[thm]{Assumptions and Notations}
\newtheorem{con-not}[thm]{Conclusion and Notations}
\newtheorem{lem-not}[thm]{Lemma and Notations}
\newtheorem{problem}[thm]{Problem}
\newtheorem{remark}[thm]{Remark}
\newtheorem{remarks}[thm]{Remarks}
\newtheorem{definition}[thm]{Definition}
\newtheorem{claim}[thm]{Claim}
\newtheorem{assumption}[thm]{Assumption}
\newtheorem{assumptions}[thm]{Assumptions}
\newtheorem{properties}[thm]{Properties}
\newtheorem{property}[thm]{Property}
\newtheorem{example}[thm]{Example}
\newtheorem{examples}[thm]{Examples}
\newtheorem{conjecture}[thm]{Conjecture}
\newtheorem{constr}[thm]{Construction}
\newtheorem{aconstr}[thm]{Allowed Constructions}
\newtheorem{condition}[thm]{Condition}
\newtheorem{questions}[thm]{Questions}
\newtheorem{question}[thm]{Question}
\newtheorem{setup}[thm]{Set-up}
\newtheorem{Variant}[thm]{Variant}
\numberwithin{equation}{section}
\catcode`\@=11
\def\opn#1#2{\def#1{\mathop{\kern0pt\fam0#2}\nolimits}}
\def\bold#1{{\bf #1}}%
\def\underrightarrow{\mathpalette\underrightarrow@}
\def\underrightarrow@#1#2{\vtop{\ialign{$##$\cr
 \hfil#1#2\hfil\cr\noalign{\nointerlineskip}%
 #1{-}\mkern-6mu\cleaders\hbox{$#1\mkern-2mu{-}\mkern-2mu$}\hfill
 \mkern-6mu{\to}\cr}}}
\let\underarrow\underrightarrow
\def\underleftarrow{\mathpalette\underleftarrow@}
\def\underleftarrow@#1#2{\vtop{\ialign{$##$\cr
 \hfil#1#2\hfil\cr\noalign{\nointerlineskip}#1{\leftarrow}\mkern-6mu
 \cleaders\hbox{$#1\mkern-2mu{-}\mkern-2mu$}\hfill
 \mkern-6mu{-}\cr}}}
\let\amp@rs@nd@\relax
\newdimen\ex@
\ex@.2326ex
\newdimen\bigaw@
\newdimen\minaw@
\minaw@16.08739\ex@
\newdimen\minCDaw@
\minCDaw@2.5pc
\newif\ifCD@
\def\minCDarrowwidth#1{\minCDaw@#1}
\newenvironment{CD}{\@CD}{\@endCD}
\def\@CD{\def\A##1A##2A{\llap{$\vcenter{\hbox
 {$\scriptstyle##1$}}$}\Big\uparrow\rlap{$\vcenter{\hbox{%
$\scriptstyle##2$}}$}&&}%
\def\V##1V##2V{\llap{$\vcenter{\hbox
 {$\scriptstyle##1$}}$}\Big\downarrow\rlap{$\vcenter{\hbox{%
$\scriptstyle##2$}}$}&&}%
\def\={&\hskip.5em\mathrel
 {\vbox{\hrule width\minCDaw@\vskip3\ex@\hrule width
 \minCDaw@}}\hskip.5em&}%
\def\verteq{\Big\Vert&&}%
\def\noarr{&&}%
\def\vspace##1{\noalign{\vskip##1\relax}}\relax\let\amp@rs@nd@&\iffalse}\fi
 \CD@true\vcenter\bgroup\relax\let\\=\cr\iffalse}\fi\tabskip\z@skip\baselineskip20\ex@
 \lineskip3\ex@\lineskiplimit3\ex@\halign\bgroup
 &\hfill$\m@th##$\hfill\cr}
\def\@endCD{\cr\egroup\egroup}
\def\>#1>#2>{\amp@rs@nd@\setbox\z@\hbox{$\scriptstyle
 \;{#1}\;\;$}\setbox\@ne\hbox{$\scriptstyle\;{#2}\;\;$}\setbox\tw@
 \hbox{$#2$}\ifCD@
 \global\bigaw@\minCDaw@\else\global\bigaw@\minaw@\fi
 \ifdim\wd\z@>\bigaw@\global\bigaw@\wd\z@\fi
 \ifdim\wd\@ne>\bigaw@\global\bigaw@\wd\@ne\fi
 \ifCD@\hskip.5em\fi
 \ifdim\wd\tw@>\z@
 \mathrel{\mathop{\hbox to\bigaw@{\rightarrowfill}}\limits^{#1}_{#2}}\else
 \mathrel{\mathop{\hbox to\bigaw@{\rightarrowfill}}\limits^{#1}}\fi
 \ifCD@\hskip.5em\fi\amp@rs@nd@}
\def\<#1<#2<{\amp@rs@nd@\setbox\z@\hbox{$\scriptstyle
 \;\;{#1}\;$}\setbox\@ne\hbox{$\scriptstyle\;\;{#2}\;$}\setbox\tw@
 \hbox{$#2$}\ifCD@
 \global\bigaw@\minCDaw@\else\global\bigaw@\minaw@\fi
 \ifdim\wd\z@>\bigaw@\global\bigaw@\wd\z@\fi
 \ifdim\wd\@ne>\bigaw@\global\bigaw@\wd\@ne\fi
 \ifCD@\hskip.5em\fi
 \ifdim\wd\tw@>\z@
 \mathrel{\mathop{\hbox to\bigaw@{\leftarrowfill}}\limits^{#1}_{#2}}\else
 \mathrel{\mathop{\hbox to\bigaw@{\leftarrowfill}}\limits^{#1}}\fi
 \ifCD@\hskip.5em\fi\amp@rs@nd@}
\newenvironment{CDS}{\@CDS}{\@endCDS}
\def\@CDS{\def\A##1A##2A{\llap{$\vcenter{\hbox
 {$\scriptstyle##1$}}$}\Big\uparrow\rlap{$\vcenter{\hbox{%
$\scriptstyle##2$}}$}&}%
\def\V##1V##2V{\llap{$\vcenter{\hbox
 {$\scriptstyle##1$}}$}\Big\downarrow\rlap{$\vcenter{\hbox{%
$\scriptstyle##2$}}$}&}%
\def\={&\hskip.5em\mathrel
 {\vbox{\hrule width\minCDaw@\vskip3\ex@\hrule width
 \minCDaw@}}\hskip.5em&}
\def\verteq{\Big\Vert&}
\def\novarr{&}
\def\noharr{&&}
\def\SE##1E##2E{\slantedarrow(0,18)(4,-3){##1}{##2}&}
\def\SW##1W##2W{\slantedarrow(24,18)(-4,-3){##1}{##2}&}
\def\NE##1E##2E{\slantedarrow(0,0)(4,3){##1}{##2}&}
\def\NW##1W##2W{\slantedarrow(24,0)(-4,3){##1}{##2}&}
\def\slantedarrow(##1)(##2)##3##4{%
\thinlines\unitlength1pt\lower 6.5pt\hbox{\begin{picture}(24,18)%
\put(##1){\vector(##2){24}}%
\put(0,8){$\scriptstyle##3$}%
\put(20,8){$\scriptstyle##4$}%
\end{picture}}}
\def\vspace##1{\noalign{\vskip##1\relax}}\relax\let\amp@rs@nd@&\iffalse}\fi
 \CD@true\vcenter\bgroup\relax\let\\=\cr\iffalse}\fi\tabskip\z@skip\baselineskip20\ex@
 \lineskip3\ex@\lineskiplimit3\ex@\halign\bgroup
 &\hfill$\m@th##$\hfill\cr}
\def\@endCDS{\cr\egroup\egroup}
\newdimen\TriCDarrw@
\newif\ifTriV@
\newenvironment{TriCDV}{\@TriCDV}{\@endTriCD}
\newenvironment{TriCDA}{\@TriCDA}{\@endTriCD}
\def\@TriCDV{\TriV@true\def\TriCDpos@{6}\@TriCD}
\def\@TriCDA{\TriV@false\def\TriCDpos@{10}\@TriCD}
\def\@TriCD#1#2#3#4#5#6{%
\setbox0\hbox{$\ifTriV@#6\else#1\fi$}
\TriCDarrw@=\wd0 \advance\TriCDarrw@ 24pt
\advance\TriCDarrw@ -1em
\def\SE##1E##2E{\slantedarrow(0,18)(2,-3){##1}{##2}&}
\def\SW##1W##2W{\slantedarrow(12,18)(-2,-3){##1}{##2}&}
\def\NE##1E##2E{\slantedarrow(0,0)(2,3){##1}{##2}&}
\def\NW##1W##2W{\slantedarrow(12,0)(-2,3){##1}{##2}&}
\def\slantedarrow(##1)(##2)##3##4{\thinlines\unitlength1pt
\lower 6.5pt\hbox{\begin{picture}(12,18)%
\put(##1){\vector(##2){12}}%
\put(-4,\TriCDpos@){$\scriptstyle##3$}%
\put(12,\TriCDpos@){$\scriptstyle##4$}%
\end{picture}}}
\def\={\mathrel {\vbox{\hrule
   width\TriCDarrw@\vskip3\ex@\hrule width
   \TriCDarrw@}}}
\def\>##1>>{\setbox\z@\hbox{$\scriptstyle
 \;{##1}\;\;$}\global\bigaw@\TriCDarrw@
 \ifdim\wd\z@>\bigaw@\global\bigaw@\wd\z@\fi
 \hskip.5em
 \mathrel{\mathop{\hbox to \TriCDarrw@
{\rightarrowfill}}\limits^{##1}}
 \hskip.5em}
\def\<##1<<{\setbox\z@\hbox{$\scriptstyle
 \;{##1}\;\;$}\global\bigaw@\TriCDarrw@
 \ifdim\wd\z@>\bigaw@\global\bigaw@\wd\z@\fi
 \mathrel{\mathop{\hbox to\bigaw@{\leftarrowfill}}\limits^{##1}}
 }
 \CD@true\vcenter\bgroup\relax\let\\=\cr\iffalse}\fi
 \tabskip\z@skip\baselineskip20\ex@
 \lineskip3\ex@\lineskiplimit3\ex@
 \ifTriV@
 \halign\bgroup
 &\hfill$\m@th##$\hfill\cr
#1&\multispan3\hfill$#2$\hfill&#3\\
&#4&#5\\
&&#6\cr\egroup%
\else
 \halign\bgroup
 &\hfill$\m@th##$\hfill\cr
&&#1\\%
&#2&#3\\
#4&\multispan3\hfill$#5$\hfill&#6\cr\egroup
\fi}
\def\@endTriCD{\egroup}
\newcommand{\sA}{{\mathcal A}}
\newcommand{\sB}{{\mathcal B}}
\newcommand{\sC}{{\mathcal C}}
\newcommand{\sD}{{\mathcal D}}
\newcommand{\sE}{{\mathcal E}}
\newcommand{\sF}{{\mathcal F}}
\newcommand{\sG}{{\mathcal G}}
\newcommand{\sH}{{\mathcal H}}
\newcommand{\sI}{{\mathcal I}}
\newcommand{\sJ}{{\mathcal J}}
\newcommand{\sK}{{\mathcal K}}
\newcommand{\sL}{{\mathcal L}}
\newcommand{\sM}{{\mathcal M}}
\newcommand{\sN}{{\mathcal N}}
\newcommand{\sO}{{\mathcal O}}
\newcommand{\sP}{{\mathcal P}}
\newcommand{\sQ}{{\mathcal Q}}
\newcommand{\sR}{{\mathcal R}}
\newcommand{\sS}{{\mathcal S}}
\newcommand{\sT}{{\mathcal T}}
\newcommand{\sU}{{\mathcal U}}
\newcommand{\sV}{{\mathcal V}}
\newcommand{\sW}{{\mathcal W}}
\newcommand{\sX}{{\mathcal X}}
\newcommand{\sY}{{\mathcal Y}}
\newcommand{\sZ}{{\mathcal Z}}
\newcommand{\A}{{\mathbb A}}
\newcommand{\B}{{\mathbb B}}
\newcommand{\C}{{\mathbb C}}
\newcommand{\D}{{\mathbb D}}
\newcommand{\E}{{\mathbb E}}
\newcommand{\F}{{\mathbb F}}
\newcommand{\G}{{\mathbb G}}
\newcommand{\BH}{{\mathbb H}}
\newcommand{\I}{{\mathbb I}}
\newcommand{\J}{{\mathbb J}}
\newcommand{\BL}{{\mathbb L}}
\newcommand{\M}{{\mathbb M}}
\newcommand{\N}{{\mathbb N}}
\newcommand{\BP}{{\mathbb P}}
\newcommand{\Q}{{\mathbb Q}}
\newcommand{\R}{{\mathbb R}}
\newcommand{\BS}{{\mathbb S}}
\newcommand{\T}{{\mathbb T}}
\newcommand{\U}{{\mathbb U}}
\newcommand{\V}{{\mathbb V}}
\newcommand{\W}{{\mathbb W}}
\newcommand{\X}{{\mathbb X}}
\newcommand{\Y}{{\mathbb Y}}
\newcommand{\Z}{{\mathbb Z}}
\newcommand{\fg}{{\mathfrak g }}
\newcommand{\fk}{{\mathfrak k }}
\newcommand{\fa}{{\mathfrak a }}
\newcommand{\rk}{{\rm rk}}
\newcommand{\ch}{{\rm c}}
\newcommand{\rounddown}[1]{\llcorner{#1}\lrcorner}
\newcommand{\fM}{{\mathfrak M}}
\newcommand{\bsl}{{\backslash \,}}
\newcommand{\Mon}{{\rm Mon}}
\newcommand{\MT}{{\rm MT}}
\newcommand{\Hg}{{\rm Hg}}
\newcommand{\tr}{{\rm tr}}
\newcommand{\Sl}{{\rm Sl}}
\newcommand{\Gl}{{\rm Gl}}
\newcommand{\Sp}{{\rm Sp}}
\def\bigtimes{\mathop{\mbox{\Huge$\times$}}}
\newcommand{\can}{{\rm can}}
\newcommand{\Aut}{{\rm Aut}}
\newcommand{\Res}{{\rm Res}}
\newcommand{\Lie}{{\rm Lie}}
\newcommand{\der}{{\rm der}}
\newcommand{\ad}{{\rm ad}}
\newcommand{\pr}{{\rm pr}}
\newcommand{\nc}{{\rm nc}}
\newcommand{\Hom}{{\rm Hom}}
\newcommand{\GL}{{\rm GL}}
\newcommand{\SU}{{\rm SU}}
\newcommand{\SO}{{\rm SO}}
\newcommand{\OO}{{\rm O}}
\newcommand{\CSp}{{\rm CSp}}
\newcommand{\Stab}{{\rm Stab}}
\newcommand{\mov}{{\rm mov}}

\begin{abstract}
Let $U$ be a connected non-singular quasi-projective variety and $f:A \to U$ 
a family of abelian varieties of dimension $g$. Suppose that the induced 
map $U \to \sA_g$ is generically finite and there is a compactification $Y$ 
with complement $S = Y \setminus U$ a normal crossing divisor such that
 $\Omega^1_Y(\log S)$ is nef and $\omega_Y(S)$ is ample with respect to $U$. 

We characterize whether $U$ is a Shimura variety by numerical data 
attached to the variation of Hodge structures, rather than by properties of the 
map $U \to \sA_g$ or by the existence of CM points. 

More precisely, we show that $f:A\to U$ is a Kuga fibre space, if and only
if two conditions hold. First, each irreducible local subsystem $\V$ of 
$R^1 f_* \C_A$ is either unitary or satisfies the Arakelov equality.
Second, for each factor $M$ in the universal cover of $U$ whose tangent 
bundle behaves like the one of a complex ball, an iterated Kodaira-Spencer map associated
with $\V$ has minimal possible length in the direction of $M$. If in addition $f:A\to U$
is rigid, it is a connected Shimura subvariety of $\sA_g$ of Hodge type.
\end{abstract}
\maketitle

\tableofcontents

Let $Y$ be a non-singular complex projective variety of dimension $n$, and 
let $U$ be the complement of a normal crossing divisor $S$. 
We are interested in families $f:A\to U$ of polarized abelian varieties, up to isogeny,
and we are looking for numerical invariants which take the minimal possible value if and only if $U$ is a Shimura variety of certain type, or to be more precise, if $f:A\to U$ is a Kuga fibre space as recalled in Section~\ref{Kugadef}. Those invariants will be attached to $\C$-subvariations of Hodge structures $\V$ of 
$R^1f_*\C_A$. We will always assume that the family has semistable reduction in codimension one,
hence that the local system $R^1f_*\C_A$ has unipotent monodromy around the components of $S$.

In \cite{VZ04} we restricted ourselves to curves $Y$, and we gave a characterization of Shimura curves in terms of the degree of $\Omega^1_Y(\log S)$ and the degree of the Hodge bundle $f_*\Omega^1_{X/Y}(\log f^{-1}(S))$ 
for a semistable model $f:X\to Y$ of $A\to U$. For infinitesimally rigid families this description was an easy consequence of
Simpson's correspondence, whereas in the non-rigid case we had to use the classification of 
certain discrete subgroups of $\BP\Sl_2(\R)$. In \cite{VZ07} we started to study families over a higher dimensional base $U$, restricting ourselves to the rigid case. There it became evident that one has to consider numerical invariants of all the irreducible $\C$-subvariations of Hodge structures $\V$ of $R^1f_*\C_A$, and that for ball quotients one needed some condition on the second Chern classes, or equivalently on the length of the Higgs field of certain wedge products of $\V$. In \cite{VZ07} we have chosen the condition that the discriminant of one of the Hodge bundles is zero. This was needed to obtain the purity of the Higgs bundles (see Definition~\ref{pure}) for the special variations of Hodge structures considered there, but it excluded several standard representations. 

In this article we give a numerical characterization of a Shimura variety of Hodge type, or of a Kuga fibre space in full generality, including rigid and non-rigid ones. In order to state and to motivate the results, we need some notations.

Consider a complex polarized variation of Hodge structures $\V$ on $U$ of weight $k$, as defined in
\cite[page 4]{De87} (see also \cite[page 898]{Sim}), and with unipotent local monodromy around the components of $S$. The $\sF$-filtration on $\sV_0=\V\otimes_\C\sO_U$ extends to a filtration of the
Deligne extension $\sV$ of $\sV_0$ to $Y$, again denoted by $\sF$ (see \cite{Sch73}).
By Griffiths' Transversality Theorem (see \cite{Gr70}, for example) the Gauss-Manin connection $\nabla:\sV\to \sV\otimes\Omega_Y^1(\log S)$
induces an $\sO_Y$-linear map
$$   
{\mathfrak gr}_\sF(\sV) = \bigoplus_{p+q=k} E^{p,q} \>\bigoplus\theta_{p,q} >> \bigoplus_{p+q=k} E^{p,q}\otimes \Omega^1_Y(\log S) ={\mathfrak gr}_\sF(\sV)\otimes \Omega^1_Y(\log S),
$$ 
with $\theta_{p,q}: E^{p,q} \to E^{p-1,q+1}\otimes \Omega^1_Y(\log S)$. So by \cite{Si92}   
$\big(E={\mathfrak gr}_\sF(\sV), \ \theta=\bigoplus\theta_{p,q} \big)$ 
is the (logarithmic) Higgs bundle induced by $\V$. We will write $\theta^{(m)}$ for the iterated
Higgs field
\begin{multline}\label{eqitKS}
E^{k,0} \>\theta_{k,0} >> E^{k-1,1}\otimes \Omega^1_Y(\log S) \>\theta_{k-1,1} >> E^{k-2,2}\otimes S^2(\Omega^1_Y(\log S))  \>\theta_{k-2,2} >> \\ \cdots \>\theta_{k-m+1,m-1}  >> E^{k-m,m}\otimes S^m(\Omega^1_Y(\log S)) .
\end{multline}

For families of polarized abelian varieties we are considering subvariations $\V$ of the complex polarized variation of Hodge structures $R^1f_*\C_A$. Of course, $\V$ is polarized by restricting the polarization of $R^1f_*\C_A$,
and $\V$ has weight $1$. Then its Higgs field is of the form 
$$
(E=E^{1,0}\oplus E^{0,1},\theta)\mbox{ \ \  with \ \ }\theta:E^{1,0}\to E^{0,1} \otimes \Omega^1_Y(\log S).
$$

The most important numerical invariant is the slope $\mu(\V)$ of $\V$ or of the Higgs bundle $(E,\theta)$. 
Recall that the slope $\mu(\sF)$ of a torsion free coherent sheaf 
$\sF$ on $Y$, is defined by the rational number
\begin{equation}\label{slope}
\mu(\sF):=\frac{\ch_1(\sF)}{\rk(\sF)}.\ch_1(\omega_Y(S))^{\dim(Y)-1}.
\end{equation}
Correspondingly we define $\mu(\V):=\mu(E^{1,0})-\mu(E^{0,1})$. As we will see, 
$\mu(\V)$ is related to $\mu$-stability, a concept which will be defined in~\ref{fil.2}.
\par
Variations of Hodge structures of weight $k>1$ will only occur as tensor representations of $\W_\Q=R^1f_*\Q_A$ or of irreducible direct factors $\V$ of $R^1f_*\C_A$, in particular in the definition of the second numerical invariant:
\par 
Given a Higgs bundle 
$$
\big(E=E^{1,0}\oplus E^{0,1}, \ \theta:E^{1,0}\to E^{0,1}\otimes \Omega^1_Y(\log S)\big)
$$ 
and some $\ell >0$ one has the induced Higgs bundle
\begin{gather}\notag
\bigwedge^\ell (E,\theta)=\Big(\bigoplus_{i=0}^\ell E^{\ell-i,i}, \ \bigoplus_{i=0}^{\ell-1} \theta_{\ell-i,i} \Big)
\mbox{ \ \ with}\\ \label{eqdetHiggs}
E^{\ell-m,m}=\bigwedge^{\ell-m}(E^{1,0}) \otimes \bigwedge^m (E^{0,1})\mbox{ \ \ and with}\\ \notag
\theta_{\ell-m,m}: \bigwedge^{\ell-m}(E^{1,0}) \otimes \bigwedge^m (E^{0,1}) \>>>
\bigwedge^{\ell-m-1}(E^{1,0}) \otimes \bigwedge^{m+1} (E^{0,1}) \otimes \Omega_Y^1(\log S)
\end{gather}
induced by $\theta$. 
\par
If $\ell=\rk(E^{1,0})$, then $E^{\ell,0}=\det(E^{1,0})$. In this case $\langle\det(E^{1,0})\rangle$ denotes the Higgs subbundle of $\bigwedge^\ell(E,\theta)$ generated by $\det(E^{1,0})$. Writing as in (\ref{eqitKS}) 
$$
\theta^{(m)}=\theta_{\ell-m+1,m-1}\circ \cdots \circ \theta_{\ell,0},
$$
we define as a measure for the complexity of the Higgs field
\begin{multline*}
\varsigma((E,\theta)): = {\rm Max}\{\ m\in \N ; \ \theta^{(m)}(\det(E^{1,0}))\neq 0\}=\\
{\rm Max}\{\ m\in \N ; \ \langle\det(E^{1,0})\rangle^{\ell-m,m} \neq 0\}.
\end{multline*}
If $(E,\theta)$ is the Higgs bundle of a variation of Hodge structures $\V$ we will usually write $\varsigma(\V)=\varsigma((E,\theta))$.
\par
We require some positivity properties of the sheaf of differential forms on the compactification $Y$ of $U$:
\begin{assumptions}\label{compass-pos}
We suppose that the compactification $Y$ of $U$ is a non-singular projective 
algebraic variety, such that  $S=Y\setminus U$ is a normal crossing divisor, and such that
\begin{enumerate}
\item[$\bullet$] $\Omega_Y^1(\log S)$ is nef and $\omega_Y(S)=\Omega^n_Y(\log S)$ is ample with respect to $U$.
\end{enumerate}
\end{assumptions}
By definition a locally free sheaf $\sF$ is {\em numerically effective (nef)} if for all morphisms
$\tau:C\to Y$, with $C$ an irreducible curve, 
and for all invertible quotients $\sN$ of $\tau^*\sF$ one has $\deg(\sN) \geq 0$. An invertible sheaf
$\sL$ is {\em ample with respect to $U$} if for some $\nu\geq 1$ the sections in $H^0(Y,\sL^\nu)$ generate the sheaf $\sL^\nu$ over $U$ and if the induced morphism $U\to \BP(H^0(Y,\sL^\nu))$ is an embedding.
\par
If $U$ is the base of a Kuga fibre space or more generally if
the universal covering $\pi:\tilde{U}\to U$ is a bounded symmetric domain,
we will need a second type of condition to hold true for the compactification $Y$ of $U$:
\begin{condition}\label{compass-pos2} Assume that the universal covering
$\tilde{U}$ of $U$ decomposes as the product $M_1\times \cdots \times M_s$ of irreducible 
bounded symmetric domains.
\begin{enumerate}
\item[$\bullet$] 
Then the sheaf $\Omega^1_Y(\log S)$ is $\mu$-polystable. If 
$\Omega^1_Y(\log S)=\Omega_1\oplus\cdots\oplus\Omega_{s'}$ 
is the decomposition as a direct sum of $\mu$-stable sheaves, then 
$s=s'$ and for a suitable choice of the indices 
$\pi^* \Omega_i|_U = {\rm pr}_i^*\Omega_{M_i}^1$. 
\end{enumerate}
\end{condition}
As we will recall in Section~\ref{verify_arakelov}, Mumford studied in \cite[Section 4]{Mu77}  non-singular toroidal compactifications $Y$ of some finite \'etale covering of the base $U$ of a Kuga fibre space, satisfying the Assumption~\ref{compass-pos}. 
As we will see later, by Yau's Uniformization Theorem \cite{Ya93} the Assumption~\ref{compass-pos} implies the Condition~\ref{compass-pos2}, but in the special case of such a {\it Mumford compactifications} we will verify Condition~\ref{compass-pos2} directly. 
\begin{proposition}\label{shimuraprop}
Let $f:A\to U$ be a Kuga fibre space, such that the induced polarized variation of Hodge structures $\W=R^1f_*\C_A$ has unipotent local monodromies at infinity.
Then, replacing $U$ by a finite \'etale covering if necessary, there exists a compactification $Y$ satisfying the Assumption~\ref{compass-pos} and the Condition~\ref{compass-pos2}, such that for all irreducible non-unitary $\C$ subvariations of Hodge structures $\V$ of $\W$ with Higgs bundle $(E,\theta)$ one has:
\begin{enumerate}
\item[i.] There exists some $i=i(\V)$ such that the Higgs field $\theta$ factorizes through
$$
\theta:E^{1,0} \>>> E^{0,1}\otimes \Omega_i \> \subset >> E^{0,1}\otimes \Omega^1_Y(\log S).
$$
\item[ii.] The `Arakelov equality' $\mu(\V)=\mu(\Omega^1_Y(\log S))$ holds.
\item[iii.] The sheaves $E^{1,0}$ and $E^{0,1}$ are $\mu$-stable.
\item[iv.]  The sheaf $E^{1,0}\otimes {E^{0,1}}^\vee$ is $\mu$-polystable.
\item[v.] Assume for $i=i(\V)$ that $M_i$ is a complex ball of dimension $n_i\geq 1$. Then 
$$
\varsigma(\V)=\frac{\rk(E^{1,0})\cdot\rk(E^{0,1})\cdot(n_i+1)}{\rk(E) \cdot n_i}.
$$
\end{enumerate}
\end{proposition}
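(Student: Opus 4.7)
The plan is to exploit the Kuga structure directly. By definition of a Kuga fibre space there exist a semisimple $\Q$-group $G$ with simple factors $G_1,\dots,G_s$, a maximal compact subgroup $K\subset G(\R)$, a lattice $\Gamma\subset G(\R)^+$, and a rational representation $\rho:G\to\Sp(V_\Q,\psi)$ of Hodge type, such that $A\to U$ is of the form $\Gamma\bsl(V_\R\times\tilde U)\to\Gamma\bsl\tilde U$ with $\tilde U=G(\R)^+/K=M_1\times\cdots\times M_s$. The first step is to verify that an appropriate finite étale cover of $U$ admits a Mumford toroidal compactification $Y$ satisfying both Assumption~\ref{compass-pos} and Condition~\ref{compass-pos2}; this is carried out in Section~\ref{verify_arakelov} following \cite{Mu77}. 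With $Y$ fixed, each irreducible $\C$-subvariation $\V\subset R^1 f_*\C_A$ corresponds to an irreducible summand of $V_\C$ under $G_\C=\prod G_{i,\C}$, and its Hodge bundles $E^{1,0}$, $E^{0,1}$ descend from homogeneous holomorphic bundles on $\tilde U$.

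For part (i), I use that the Hodge filtration on $V_\C$ comes from a cocharacter of $G$, and that Deligne's analysis of Hodge-type representations of weight one shows that for a non-unitary irreducible $\V=V_1\boxtimes\cdots\boxtimes V_s$ only one tensor factor $V_{i(\V)}$ carries a non-trivial Hodge filtration, while the others contribute constants. Consequently, in the Higgs field $\theta:E^{1,0}\to E^{0,1}\otimes\Omega^1_Y(\log S)$ the only non-zero component is the one associated to the $G_{i(\V),\C}$-action, which by the product description of Condition~\ref{compass-pos2} is exactly the summand $\Omega_{i(\V)}$.

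For parts (ii)--(iv) I use the homogeneous bundle picture. The Higgs field of (i) is the sheaf incarnation of a $K$-equivariant linear map $V^{1,0}\to V^{0,1}\otimes T_o M_i$; since the ambient representation of $G_i$ is irreducible, this $K$-map is surjective. Passing to determinants and using Mumford's proportionality between the invariant Chern forms of homogeneous bundles on $\tilde U$ and the Chern classes on $Y$ yields $\mu(E^{1,0})-\mu(E^{0,1})=\mu(\Omega_{i(\V)})$, and since all factors $\Omega_i$ in the polystable splitting of Condition~\ref{compass-pos2} share the same slope with respect to $\omega_Y(S)$, this gives the Arakelov equality of (ii). Next, $E^{1,0}$ and $E^{0,1}$ are the homogeneous bundles attached to the two irreducible pieces of the $K$-decomposition of $\V$, hence are $\mu$-stable by the standard stability theorem for homogeneous bundles induced from irreducible isotropy representations (equivalently, their Hermitian-Einstein metric is inherited from the invariant metric on $G/K$ via Kobayashi-Hitchin), settling (iii). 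For (iv), $E^{1,0}\otimes (E^{0,1})^\vee$ decomposes into $K$-isotypic homogeneous subbundles of equal slope, yielding $\mu$-polystability.

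The main difficulty is (v). In the ball case $M_i=B^{n_i}$, $G_i=\SU(n_i,1)$, one identifies $\theta^{(m)}(\det(E^{1,0}))$ with the $m$-fold action of the nilpotent Cartan complement on the top Hodge line of $\bigwedge^\ell V_\C$, and determines the vanishing threshold by the highest-weight combinatorics of $\SU(n_i,1)$. The elementary count that reproduces $\varsigma(\V)=\rk(E^{1,0})\rk(E^{0,1})(n_i+1)/(\rk(E)\,n_i)$ reads the denominator $n_i$ off from $\dim_\C T_o M_i$, while the numerator encodes the maximal $\mathfrak{sl}_2$-string length inside the $G_i$-decomposition of $\bigwedge^\ell V_\C$ ending in a line containing the image of $\det(V^{1,0})$. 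The chief technical point running through (ii)--(v) is the compatibility between the sheaf-theoretic slope defined via $\omega_Y(S)$ on $Y$ and the invariant Chern forms of the Mumford-compactified homogeneous bundles; this bridge, supplied by \cite{Mu77} together with the polystable splitting of Condition~\ref{compass-pos2}, is where I expect most of the technical work to lie.
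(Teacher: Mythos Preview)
Your overall architecture matches the paper's: reduce to a Shimura datum, realise $E^{1,0},E^{0,1}$ as homogeneous bundles via Mumford's extension, and read off stability and slopes from the isotropy representation. Two points, however, need correction or strengthening.

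First, your argument for (ii) is not right as written. The $K$-equivariant map $V^{1,0}\to V^{0,1}\otimes T_oM_i$ is \emph{not} surjective in general (already for the standard representation of $\SU(p,q)$ the target has dimension $pq^2$), so ``passing to determinants'' does not yield the slope equality. The paper instead works with the dual map: the tangent map $T_M\to\sH om(E^{1,0},E^{0,1})$ is a morphism of homogeneous bundles and, because $M\hookrightarrow M'$ is totally geodesic, lands in a $\mu$-stable direct summand. Equality of slopes then follows because both source and target are irreducible homogeneous bundles, and Mumford's extension identifies the extension of $T_U$ with $T_Y(-\log S)$.

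Second, for (iii) you assert that $V^{1,0}$ and $V^{0,1}$ are irreducible $K$-modules. This is the crux and is \emph{not} automatic: the paper devotes a full lemma (Lemma~\ref{DeligneSatake}) to a case-by-case check over the Dynkin types $a_n,b_n,c_n,d_n$, using Satake's classification of the representations satisfying Deligne's condition $\langle\widetilde\chi,\alpha+\iota(\alpha)\rangle=1$. Without this, the appeal to ``stability of homogeneous bundles from irreducible isotropy representations'' is circular.

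For (v) your route is genuinely different from the paper's. You propose a direct highest-weight/$\mathfrak{sl}_2$-string computation inside $\bigwedge^\ell V_\C$. The paper does not compute $\varsigma(\V)$ at all: it observes that (iv) gives condition $\beta$) of Addendum~\ref{characterizationadd} (the kernel of $\sH om(E^{0,1},E^{1,0})\to\Omega_i$ splits off), and then invokes the equivalence $\beta)\Leftrightarrow\gamma)$ in Proposition~\ref{numcondB} to obtain the formula for $\varsigma(\V)$. Your approach could in principle be made to work for $\SU(1,n_i)$ since the relevant irreducibles are wedge powers and the combinatorics is explicit, but as stated it is only a sketch; the paper's indirect route has the advantage of requiring no computation once (iv) is in hand.
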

We will verify the first four of those properties, presumably well known to experts, 
at the end of Section~\ref{verify_arakelov}. The fifth one will be proved 
in Section~\ref{stablengthsplit}. 
\par
The Arakelov equality in ii) is our main condition. By 
Lemma~\ref{AEindep} its validity is independent of the compactification, as 
long as the Assumptions~\ref{compass-pos} hold. 
As we will see in Section~\ref{stablengthsplit}, assuming the Arakelov equality and assuming that $Y$ is the compactification constructed by Mumford, the properties iii) and iv) are equivalent and in case that 
$M_{i(\V)}$ is a complex ball they are equivalent to v), as well. 

Our main interest is the question, which of the conditions stated in Proposition~\ref{shimuraprop} 
will force an arbitrary family $f:A\to U$ of abelian varieties to be a Kuga fibre space. 
We will need the existence of a projective compactification $Y$ of $U$ satisfying the Assumption~\ref{compass-pos}. Remark however, that this condition automatically holds true for compact non-singular subvarieties $U=Y$ of the fine moduli scheme $\sA_g^{[N]}$ of polarized abelian varieties of dimension $g$ with a level $N$ structure for $N\geq 3$. 

Assuming~\ref{compass-pos}, the Arakelov equality in Proposition~\ref{shimuraprop}  
says that for Kuga fibre spaces certain numerical invariants take the maximal possible value. 
In fact, for any polarized family $f:A\to U$ of abelian varieties and for any irreducible $\C$-subvariation of Hodge structures $\V$ on $U$ in $R^1f_*\C_A$ with Higgs bundle $(E,\theta)$ the unipotency of the 
local monodromies at infinity implies by \cite[Theorem 1]{VZ07} the Arakelov type inequality
\begin{equation}\label{Arakelovinequ}
\mu(\V)=\mu(E^{1,0})-\mu(E^{0,1}) \leq \mu(\Omega^1_Y(\log S)).
\end{equation}
The Arakelov equality
\begin{equation}\label{Arakelovequ}
\mu(\V)= \mu(\Omega^1_Y(\log S))
\end{equation}
can only hold if $E^{1,0}$ and $E^{0,1}$ are both $\mu$-semistable.

For the next step, we need Yau's Uniformization Theorem
(\cite{Ya93}, recalled in \cite[Theorem 1.4]{VZ07}), saying in particular that the Assumption~\ref{compass-pos}
forces the sheaf $\Omega_Y^1(\log S)$ to be $\mu$-polystable. So one has again a direct sum decomposition
\begin{equation}\label{eqint.1}
\Omega_Y^1(\log S)=\Omega_1\oplus \cdots \oplus \Omega_s.
\end{equation}
in $\mu$-stable sheaves of rank $n_i=\rk(\Omega_i)$. 
We say that {\em $\Omega_i$ is of type A}, if it is invertible, 
and {\em of type B}, 
if $n_i>1$ and if for all $m>0$ the sheaf $S^m(\Omega_i)$ is $\mu$-stable. 
Finally it is {\em of type C} in the remaining cases, i.e.\ if 
for some $m>1$ the sheaf $S^m(\Omega_i)$ is $\mu$-unstable, hence a direct sum of two or 
more $\mu$-stable subsheaves.

Let again $\pi:\tilde{U} \to U$ denote the universal covering with covering group 
$\Gamma$. As in the Condition~\ref{compass-pos2}, the decomposition (\ref{eqint.1}) of $\Omega^1_Y(\log S)$ corresponds to a product structure 
\begin{equation}\label{eqfactors}
\tilde{U}=M_1\times \cdots \times M_s,
\end{equation}
where $n_i=\dim(M_i)$. If $\tilde{U}$ is a bounded symmetric domain, the $M_i$ in~\ref{eqfactors}
are irreducible bounded symmetric domains. If the image of the fundamental group is an arithmetic
group there exists a Mumford compactification and the decomposition
\eqref{eqint.1}  coincides with the one in Condition~\ref{compass-pos2}.

Yau's Uniformization Theorem gives in addition a criterion for each $M_i$ to be a bounded symmetric domain.
In fact, if $\Omega_i$ is of type A, then $M_i$ is a one-dimensional complex ball. It is a bounded symmetric domain of rank $>1$, if $\Omega_i$ is of type C.

If $\Omega_i$ is of type B, then $M_i$ is a $n_i$-dimensional complex ball if and only if
\begin{equation}\label{eqyau}
\big[2\cdot (n_i+1)\cdot \ch_2(\Omega_i)-n_i\cdot \ch_1(\Omega_i)^2\big].\ch(\omega_Y(S))^{\dim(Y)-2}=0.
\end{equation}
\par
Before being able to give the numerical characterization of Kuga fibre spaces, hence a converse of Proposition~\ref{shimuraprop},~ii), and v) we will have to state some result on the splitting of variations of Hodge structures, similar to Proposition~\ref{shimuraprop},~i). 
\begin{definition}\label{pure} \ 
\begin{enumerate}
\item[1.] A subsheaf $\sF\subset E^{1,0}$ is {\em pure of type $i$} 
if the composition
$$
\sF \> \subset >> E^{1,0} \> \theta >> E^{0,1}\otimes \Omega^1_Y(\log S) \> {\rm pr} >>
E^{0,1}\otimes \Omega_j$$
is zero for $j\neq i$ and non-zero for $j=i$.
\item[2.] A variation of Hodge structures $\V$ (or the corresponding Higgs bundle $(E^{1,0}\oplus E^{0,1},\theta)$) is {\em pure of type $i$}, if $E^{1,0}$ is pure of type $i$.
\item[3.] If $\V$ (or $(E,\theta)$) is pure of type $i$ and if $\Omega_i$ is of type
A, B, or C, we sometimes just say that $\V$ (or $(E,\theta)$) {\em is pure 
of type A, B, or C}.
\end{enumerate}
\end{definition}
Consider the Higgs bundles $(E,\theta_j)$ with the pure Higgs field $\theta_j$, given by the composite
$$
\theta_j: E^{1,0} \> \theta >> E^{0,1}\otimes \Omega_Y^1(\log S) \> {\rm pr}_j >> E^{0,1}\otimes \Omega_j
\> \subset >> E^{0,1}\otimes \Omega_Y^1(\log S).
$$
In general $(E,\theta_j)$ will not correspond to a variation of Hodge structures. 

However if $(E,\theta)$ is the Higgs bundle of a non-unitary variation of Hodge structures,
it is pure of type $i$ if and only if $\theta_j$ is zero for $j\neq i$.
Moreover one has $\theta_i=\theta$ in this case.

Proposition~\ref{shimuraprop},~i) states that for Kuga fibre spaces the variations of Hodge structures decompose as a direct sum of pure and of unitary subvariations.

If in the decomposition (\ref{eqint.1}) all the $\mu$-stable direct factors 
$\Omega_i$ are of type C, hence if $\tilde{U}$ is the product of bounded 
symmetric domains $M_i=G_i/K_i$ of rank $>1$, the Margulis Superrigidity Theorem and a simple induction argument (see the proof of Proposition~\ref{erg.1}) imply that up to tensor products with unitary representations each representation $\rho$ of the fundamental group $\Gamma$ is coming from a representation of the group $G=G_1\times \cdots\times G_s$. Then by Schur's lemma the irreducibility
of $\rho$ implies that it is the tensor product of representations $\rho_j$ of the $G_j$.
Correspondingly an irreducible variation of Hodge structures $\V$ is the tensor product of a unitary bundle and of polarized $\C$ variations of Hodge structures $\V_j$ given by $\rho_j$. Since the weight of $\V$ is one, all the $\V_j$, except for one, have to be variations of Hodge structures of weight zero, hence they are also unitary and the induced Higgs field is zero. So $\V$ is pure. As we will see in Proposition~\ref{erg.1} one can extend this result to all $U$ with $\tilde{U}$ a bounded symmetric domain.

The next theorem extends this property in another way, replacing the 
condition that $\tilde{U}$ is a bounded symmetric domain by the Arakelov equality.
\par
\begin{theorem}\label{purity_Thm}
Under the Assumptions~\ref{compass-pos} consider an 
irreducible non-unitary polarized $\C$-variation of Hodge structures $\V$ of weight $1$ with 
unipotent monodromy at infinity. If $\V$ satisfies the Arakelov 
equality (\ref{Arakelovequ}), then $\V$ is pure for some $i=i(\V)$.
\end{theorem}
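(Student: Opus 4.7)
The plan is to pass via Simpson's correspondence to the Higgs bundle $(E,\theta)=(E^{1,0}\oplus E^{0,1},\theta)$ associated to $\V$, decompose $\Omega^1_Y(\log S)$ into $\mu$-stable summands using Yau's theorem, and then exploit the Arakelov equality together with the $\mu$-stability of $(E,\theta)$ to force the Higgs field to factor through a single summand.

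First I set up the Higgs-theoretic input. By Simpson's correspondence, irreducibility of $\V$ implies that $(E,\theta)$ is $\mu$-stable of degree zero, with trivial parabolic structure along $S$ since the local monodromies are unipotent. Non-unitarity of $\V$ forces $\theta\neq 0$; any $\ker\theta\subset E^{1,0}$ would define a flat sub-Higgs-bundle, which by stability must vanish, so $\theta:E^{1,0}\hookrightarrow E^{0,1}\otimes \Omega^1_Y(\log S)$ is injective. Yau's Uniformization Theorem, applicable under Assumption~\ref{compass-pos}, provides the $\mu$-polystable decomposition $\Omega^1_Y(\log S)=\Omega_1\oplus\cdots\oplus\Omega_s$ into $\mu$-stable summands of common slope $\mu(\Omega^1_Y(\log S))$. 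Writing $\theta=\sum_j\theta_j$ with $\theta_j:E^{1,0}\to E^{0,1}\otimes\Omega_j$, the task is to show $\theta_j=0$ for all but one index $i=i(\V)$.

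The Arakelov equality, combined with the injectivity of $\theta$ and the argument establishing (\ref{Arakelovinequ}), forces both $E^{1,0}$ and $E^{0,1}$ to be $\mu$-semistable of a common slope. By Ramanan--Ramanathan each $E^{0,1}\otimes\Omega_j$ is then $\mu$-semistable of slope $\mu(E^{1,0})$, so $\theta$ realizes $E^{1,0}$ as a maximal-slope subsheaf of the $\mu$-semistable sheaf $\bigoplus_j E^{0,1}\otimes\Omega_j$. The structural consequence of $\mu$-stability of $(E,\theta)$ is the following: for every non-empty subset $J\subseteq\{1,\dots,s\}$, put
\[\sF^{1,0}_J:=\bigcap_{j\notin J}\ker\theta_j \subset E^{1,0},\]
and let $\sF^{0,1}_J\subset E^{0,1}$ be the smallest subsheaf with $\theta(\sF^{1,0}_J)\subset \sF^{0,1}_J\otimes\bigoplus_{j\in J}\Omega_j$. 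Then $\sF_J=\sF^{1,0}_J\oplus\sF^{0,1}_J$ is a $\theta$-invariant sub-Higgs-bundle of $(E,\theta)$, hence is either $0$ or all of $E$. The second alternative, applied with $J=\{i\}$, is precisely purity of type $i$.

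The main obstacle is ruling out $\sF_{\{i\}}=0$ for every $i$ when $\V$ fails to be pure. As a first step, the stability argument just made also shows that every non-zero $\theta_j$ is injective: otherwise $\ker\theta_j$ together with a suitable $\sG\subset E^{0,1}$ produces a proper sub-Higgs-bundle of $(E,\theta)$. Thus each non-zero $\theta_j$ is an injection $E^{1,0}\hookrightarrow E^{0,1}\otimes\Omega_j$ of sheaves of the same slope, realising the Arakelov equality separately for the partial data $(E,\theta_j)$. I would then argue that assuming two components, say $\theta_1$ and $\theta_2$, are simultaneously non-zero produces a proper destabilizing sub-Higgs-bundle: using the $\mu$-stability of the $\Omega_j$ and the polystable structure of each $E^{0,1}\otimes \Omega_j$, a Schur-type analysis identifies the saturation of $\theta(E^{1,0})$ with $E^{0,1}\otimes F$ for some saturated subsheaf $F\subset\Omega^1_Y(\log S)$ of slope $\mu(\Omega^1_Y(\log S))$, hence $F=\bigoplus_{j\in J}\Omega_j$ for some $J$. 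The delicate step is then reducing from $|J|>1$ to $|J|=1$; I would organize this by induction on $|J|$, using the Arakelov equalities propagated to the wedge powers $\bigwedge^\ell\V$ and the observation that the iterated Higgs field $\theta^{(\ell)}$ picks up non-zero contributions in the mixed factors $\Omega_j\cdot\Omega_k\subset S^\ell\Omega^1_Y(\log S)$ precisely when $\theta_j$ and $\theta_k$ are both non-zero, and that these extra contributions combine with the maximal-slope condition to yield a proper sub-Higgs-bundle of $(E,\theta)$, contradicting $\mu$-stability.
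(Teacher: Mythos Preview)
Your argument has a genuine gap at its core, and the approach diverges substantially from what the paper actually does.

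First, the dichotomy ``$\sF_J$ is either $0$ or all of $E$'' does not follow from $\mu$-stability of $(E,\theta)$. Stability only says that a proper nonzero sub-Higgs sheaf has slope strictly less than $\mu(E)=0$. Your $\sF_J^{1,0}$ and $\sF_J^{0,1}$ do have slopes $\mu(E^{1,0})$ and $\mu(E^{0,1})$ respectively (as kernel and image of maps between semistable sheaves of equal slope), but then $\deg(\sF_J)=r^{1,0}\mu(E^{1,0})+r^{0,1}\mu(E^{0,1})$, and this vanishes only when $r^{1,0}/r^{0,1}=\rk(E^{1,0})/\rk(E^{0,1})$. There is no a priori reason for that, so $\sF_J$ can perfectly well be a proper nonzero sub-Higgs bundle of negative slope, and you learn nothing. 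Second, the assertion that each nonzero $\theta_j$ is injective is unjustified: $\ker\theta_j$ is \emph{not} $\theta$-invariant (only $\theta_j$-invariant, and $(E,\theta_j)$ is not the Higgs bundle of a local system, as the paper itself warns). So the ``proper sub-Higgs-bundle'' you allude to does not exist. Finally, the claim that the saturation of $\theta(E^{1,0})$ has the product form $E^{0,1}\otimes F$ is unsupported; nothing forces the image to decompose tensorially, and the vague ``Schur-type analysis'' and ``induction on $|J|$'' do not supply the missing mechanism.

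The paper's proof takes a completely different route. It introduces \emph{perturbed slopes} $\mu_\epsilon^{\{\iota\}}=\mu+\epsilon\cdot\mu_{\underline{D}^{(\iota)}}$ built from semi-polarizations adapted to the factors $\Omega_\iota$, and studies the corresponding Harder--Narasimhan filtrations (Section~\ref{fil}). The decisive input is then differential-geometric: Proposition~\ref{spl.3} computes the curvature of the quotient $\sQ=E^{1,0}/\sK^{(\iota)}$ via the second fundamental form and shows that $\mu_{\underline{D}^{(\iota)}}(\sQ)=0$ forces the $C^\infty$ orthogonal splitting $s:\sQ\to E^{1,0}$ to be \emph{holomorphic in the direction} $\Omega_\iota$ and to land in $\ker\theta_\iota$. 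Combining this with the slope analysis (Lemma~\ref{sta.1}, Corollary~\ref{sta.2}) yields a genuine holomorphic splitting $E^{1,0}=\sK^{(\iota)}\oplus\sK_\iota$, and a dual argument on $E^{0,1}$ then produces a direct-sum decomposition of the full Higgs bundle, contradicting irreducibility unless $\sK_\iota=0$. The essential point you are missing is that one must pass through the Hodge metric and a curvature computation to promote what is a priori only a $C^\infty$ splitting to a holomorphic one; purely algebraic stability arguments do not suffice.
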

The proof of Theorem~\ref{purity_Thm} will cover most of the Sections~\ref{fil},
\ref{spl} and~\ref{sta}. We will have to consider small twists of the slopes
$\mu(\sF)$. 

Applying Simpson's correspondence \cite{Si92} to the Higgs subbundle $\langle\det(E^{1,0})\rangle$ 
of $\bigwedge^{\rk(E^{1,0})}(E,\theta)$ we will obtain in Lemma~\ref{upper} and Lemma~\ref{numcondA}:
\begin{corollary}\label{iteratedKSineq}
Assume in Theorem~\ref{purity_Thm} that for $i=i(\V)$ the sheaf $\Omega_i$ is of type A or B. Then 
\begin{equation}\label{lenghtineq}
\varsigma(\V) \geq \frac{\rk(E^{1,0})\cdot\rk(E^{0,1})\cdot(n_i+1)}{\rk(E) \cdot n_i}.
\end{equation}
\end{corollary}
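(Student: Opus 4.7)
My plan is to combine Theorem~\ref{purity_Thm} with Simpson's correspondence applied to the Higgs subbundle $\sN := \langle \det(E^{1,0}) \rangle$ of $\bigwedge^{r}(E,\theta)$, where $r = \rk(E^{1,0})$. The corollary splits naturally into Lemma~\ref{numcondA} (type A) and Lemma~\ref{upper} (type B). First I would invoke Theorem~\ref{purity_Thm} to conclude that $\V$ is pure of some type $i = i(\V)$, so that $\theta$ factors through $E^{0,1}\otimes \Omega_i$ and the iterated Higgs fields give morphisms
$$
\theta^{(j)} : \det(E^{1,0}) \longrightarrow \bigwedge^{r-j}(E^{1,0}) \otimes \bigwedge^j(E^{0,1}) \otimes S^j(\Omega_i),
$$
nonzero exactly for $0 \leq j \leq m := \varsigma(\V)$. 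Pairing with $S^j(\Omega_i)^{\vee}$ then exhibits $\sN^{r-j,j}$ as the image of an induced morphism from $\det(E^{1,0}) \otimes S^j(\Omega_i)^\vee$ into $\bigwedge^{r-j}(E^{1,0}) \otimes \bigwedge^j(E^{0,1})$.

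The main external input is Simpson's correspondence \cite{Si92}: since $\V$ is a polarized $\C$-variation of Hodge structures, $(E,\theta)$ is a log Higgs polystable Higgs bundle with vanishing Chern classes, and hence so is $\bigwedge^{r}(E,\theta)$, whose slope equals $0$. Consequently $\mu(\sN) \leq 0$, the key upper bound. Combined with the Arakelov equality and $\deg(E)=0$, this pins down $\mu(E^{1,0}) = s\mu_\Omega/(r+s)$ and $\mu(E^{0,1}) = -r\mu_\Omega/(r+s)$ (with $s = \rk(E^{0,1})$ and $\mu_\Omega := \mu(\Omega^1_Y(\log S))$) and forces $E^{1,0}$ and $E^{0,1}$ to be $\mu$-semistable. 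These are the only inputs from the Arakelov equality used to bound the components of $\sN$.

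In type A, $\Omega_i$ is invertible, so $S^j(\Omega_i) = \Omega_i^{\otimes j}$ and dualizing yields an injection $\det(E^{1,0}) \otimes \Omega_i^{-j} \hookrightarrow \sN^{r-j,j}$. Thus each $\sN^{r-j,j}$ is a saturated rank-one subsheaf of the $\mu$-semistable sheaf $\bigwedge^{r-j}(E^{1,0}) \otimes \bigwedge^j(E^{0,1})$, of degree at least $r\mu(E^{1,0}) - j\mu_\Omega$. Using $\rk(\sN) = m+1$ and plugging into $\deg(\sN)\leq 0$ gives
$$
0 \;\geq\; \deg(\sN) \;\geq\; (m+1)\, r\mu(E^{1,0}) - \mu_\Omega\, m(m+1)/2,
$$
which rearranges to $m \geq 2r\mu(E^{1,0})/\mu_\Omega = 2rs/(r+s) = rs(n_i+1)/(\rk(E)\cdot n_i)$.

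In type B the dualization trick breaks down: $S^j(\Omega_i)$ has rank $\binom{n_i+j-1}{j}$, so the $\sN^{r-j,j}$ may have larger rank and both ranks and degrees must be tracked simultaneously. The new input is the $\mu$-stability of every symmetric power $S^j(\Omega_i)$ (the defining property of type B), which together with the Mehta--Ramanathan/Maruyama theorem on tensor products of semistables and Harder--Narasimhan estimates bounds $\mu(\sN^{r-j,j})$ from above, while generation by $\det(E^{1,0}) \otimes S^j(\Omega_i)^\vee$ bounds it from below. The main obstacle will be carrying out this simultaneous bookkeeping of ranks and degrees so that $\mu(\sN) \leq 0$ translates precisely into $m \geq rs(n_i+1)/(\rk(E)\cdot n_i)$; the factor $(n_i+1)/n_i$ will emerge naturally from the rank growth of $S^j(\Omega_i)$ on an $n_i$-dimensional ball.
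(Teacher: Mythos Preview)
Your overall strategy---apply Simpson's correspondence to the Higgs subbundle $\sN=\langle\det(E^{1,0})\rangle$ of $\bigwedge^r(E,\theta)$ to get $\deg(\sN)\leq 0$, then convert this into a lower bound on $\varsigma(\V)$---is exactly the paper's approach, and your type~A computation is correct. The paper treats types~A and~B uniformly in Lemma~\ref{upper}, but your separate type~A argument is fine.

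For type~B, however, you are making the computation look harder than it is, and the talk of Harder--Narasimhan estimates and simultaneous upper/lower bounds signals that you have not yet found the clean mechanism. The point is this: by the Arakelov equality, the source $\det(E^{1,0})\otimes S^j(T_i)$ and the target $E^{r-j,j}=\bigwedge^{r-j}(E^{1,0})\otimes\bigwedge^j(E^{0,1})$ are $\mu$-semistable of the \emph{same} slope $(r-j)\mu(E^{1,0})+j\mu(E^{0,1})$. Since $S^j(T_i)$ is $\mu$-stable (the defining property of type~B), the source is $\mu$-stable, so the nonzero map $\theta^{(j)\vee}$ is generically injective. Hence for $0\leq j\leq\varsigma$ the saturated image $\sN^{r-j,j}$ has rank \emph{exactly} $\binom{n_i+j-1}{j}$ and slope \emph{exactly} $\big(\tfrac{rs}{r+s}-j\big)\mu_\Omega$. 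No bounding is needed; both quantities are pinned down. Summing gives the closed form
\[
\frac{\deg(\sN)}{\mu_\Omega}=\sum_{j=0}^{\varsigma}\binom{n_i+j-1}{j}\Big(\frac{rs}{r+s}-j\Big)
=\Big(\frac{rs}{n_i(r+s)}-\frac{\varsigma}{n_i+1}\Big)(\varsigma+1)\binom{\varsigma+n_i}{\varsigma+1},
\]
and $\deg(\sN)\leq 0$ immediately yields $\varsigma\geq rs(n_i+1)/((r+s)n_i)$. The identity above is the only piece of bookkeeping you actually have to do.
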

If $\Omega_i$ is invertible, hence of type A, we will see in Lemma~\ref{numcondA}
that both the left hand side and the right hand side of~\ref{lenghtineq} are equal to
$\rk(E^{1,0})$.

The main result of this article characterizes a Kuga fibre space as a family of abelian varieties
$f:A\to U$ for which the slopes $\mu(\V)$ are maximal and the complexity $\varsigma(\V)$
is minimal for all $\C$-subvariations of Hodge structures $\V\subset R^1f_*\C_A$:
\par
\begin{theorem}\label{characterization}
Let $f:A \to U$ be a family of polarized abelian varieties such that
$R^1f_*\C_A$ has unipotent local monodromies at infinity, and such that the induced 
morphism $U\to \sA_g$ is generically finite. Assume that $U$ has a projective compactification $Y$ satisfying the Assumptions~\ref{compass-pos}. The the following two conditions are equivalent:
\begin{enumerate}
\item[a.] There exists an \'etale covering $\tau:U'\to U$ such that $f':A'=A\times_UU' \to U'$ is a Kuga fibre space. 
\item[b.] For each irreducible subvariation of Hodge structures $\V$ of $R^1f_*\C_A$
with Higgs bundle $(E,\theta)$ one has:
\begin{enumerate}
\item[1.] Either $\V$ is unitary or the Arakelov equality $\mu(\V)=\mu(\Omega_Y^1(\log S))$ holds.
\item[2.] If for a $\mu$-stable direct factor $\Omega_i$ of $\Omega^1_Y(\log S)$ 
of type B the composition
$$
\theta_i:E^{1,0} \> \theta >> E^{0,1}\otimes \Omega_Y^1(\log S) \> {\rm pr} >> E^{0,1}\otimes \Omega_i \> \subset >> E^{0,1}\otimes \Omega_Y^1(\log S)
$$
is non-zero, then $\displaystyle
\varsigma((E,\theta_i)) = \frac{\rk(E^{1,0})\cdot\rk(E^{0,1})\cdot(n_i+1)}{\rk(E) \cdot n_i}$.
\end{enumerate}
\end{enumerate}
If under the assumption a) or b) $f:A\to U$ is infinitesimally rigid, then $U'$ is a Shimura variety of Hodge type.
\end{theorem}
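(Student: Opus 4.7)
The implication (a)$\Rightarrow$(b) is Proposition~\ref{shimuraprop}: part~ii gives the Arakelov alternative in (b.1), while part~v supplies the value of $\varsigma$ for the pure type-B summand in (b.2); this direction is a citation rather than a new argument.

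For the converse I would first decompose $R^1f_*\C_A=\bigoplus_\alpha\V_\alpha\oplus\sU$ into irreducible $\C$-sub-variations of Hodge structures, with $\sU$ collecting the unitary summands. By (b.1) each non-unitary $\V_\alpha$ satisfies the Arakelov equality, so Theorem~\ref{purity_Thm} yields an index $i(\V_\alpha)$ making $\V_\alpha$ pure of that type. By Yau's Uniformization Theorem the decomposition $\Omega^1_Y(\log S)=\Omega_1\oplus\cdots\oplus\Omega_s$ corresponds to a product structure $\tilde{U}=M_1\times\cdots\times M_s$; the generic finiteness of $U\to\sA_g$ forces the total Kodaira-Spencer map to be injective, so every $\Omega_i$ occurs as $\Omega_{i(\V_\alpha)}$ for at least one $\alpha$.

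Next I would argue that each $M_i$ is a bounded symmetric domain. Factors of type A give one-dimensional balls and factors of type C give higher-rank irreducible bounded symmetric domains, both by Yau. For a type B factor $\Omega_i$ pick $\alpha$ with $i(\V_\alpha)=i$; then the equality in (b.2), combined with Corollary~\ref{iteratedKSineq} and Simpson's correspondence applied to the Higgs subbundle $\langle\det(E^{1,0}_\alpha)\rangle$ of $\bigwedge^{\rk E_\alpha^{1,0}}(E_\alpha,\theta_\alpha)$, should translate into Yau's ball criterion~\eqref{eqyau}, making $M_i$ a complex ball as well. Once $\tilde{U}$ is a product of bounded symmetric domains and~\eqref{eqint.1} coincides with the decomposition in Condition~\ref{compass-pos2}, I invoke the Margulis-Schur argument sketched before Proposition~\ref{erg.1}: the monodromy of each non-unitary $\V_\alpha$ extends to a representation of the ambient real group $G=G_1\times\cdots\times G_s$, purity and Schur's lemma factor it as a tensor of an irreducible representation of $G_{i(\V_\alpha)}$ with a unitary bundle, and these representations assemble, after replacing $U$ by a suitable finite \'etale cover $U'$, into a symplectic representation realizing $f':A'\to U'$ as a Kuga fibre space. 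If $f$ is infinitesimally rigid, the monodromy representation must be defined over $\Q$; the Mumford-Tate group is then $\Q$-algebraic and $U'$ is a connected Shimura subvariety of $\sA_g$ of Hodge type.

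The principal obstacle is the type B step: upgrading the numerical equality of $\varsigma$ in (b.2) into the Chern class identity~\eqref{eqyau}. Concretely, one has to show that the top iterated Higgs field $\theta_i^{(\rk E^{1,0}_\alpha)}$ identifies $\det(E^{1,0}_\alpha)$ with a twist of $S^{\rk E^{1,0}_\alpha}(\Omega_i)$ inside a Simpson-polystable Higgs subbundle, and then extract the quadratic identity~\eqref{eqyau} by comparing the Chern classes of these symmetric powers with those of $\Omega_i$. Everything else is mostly packaging of results the paper will already have proved in the preceding sections.
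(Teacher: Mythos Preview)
Your direction (a)$\Rightarrow$(b) is fine: it is exactly the invocation of Proposition~\ref{shimuraprop} (together with the observation that conditions 1) and 2) are invariant under \'etale covers, so one may pass to a cover where the family actually is a Kuga fibre space).

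Your direction (b)$\Rightarrow$(a) has a genuine gap. The Margulis--Schur argument you cite from before Proposition~\ref{erg.1} establishes only \emph{purity} of each irreducible $\V_\alpha$, nothing more; and purity you already have from Theorem~\ref{purity_Thm}. Knowing that $\tilde U$ is a bounded symmetric domain and that the monodromy of each $\V_\alpha$ extends to a representation of the ambient group $G$ does \emph{not} by itself exhibit $f'$ as a Kuga fibre space: you still have to show that the given period map $\tilde U\to\BH_g$ coincides with the map induced by a group homomorphism $G\to\Sp$, equivalently that it is a totally geodesic embedding. This is precisely condition~v) in the definition of a Kuga fibre space (Section~\ref{Kugadef}), and it is the substance of the converse. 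Moreover, Margulis superrigidity requires real rank $\geq 2$; when $\tilde U$ is a single complex ball (which is exactly what you have just proved for each $M_i$ of type~B) the argument is unavailable and your strategy breaks down entirely.

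The paper proceeds differently and avoids this gap. It never tries to recognise $\tilde U$ globally as a symmetric domain first. Instead it shows, factor by factor, that for each non-unitary $\V_\alpha$ the period map $\tilde U\to M'(\V_\alpha)$ factors as the projection $\tilde U\to M_{i(\V_\alpha)}$ followed by a \emph{totally geodesic} embedding $M_{i(\V_\alpha)}\to M'(\V_\alpha)$: for type~C this is Mok's metric rigidity (Proposition~\ref{factor_type_C}); for types~A and~B one uses that condition~(b.2) is equivalent, via Proposition~\ref{numcondB}, to the splitting of $T_\iota\hookrightarrow\sH om(E^{1,0},E^{0,1})$, and then a direct differential-geometric argument of Mok (Claim~\ref{Mokarg}) together with Satake's classification (Proposition~\ref{kugaAB}). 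These factorwise geodesic embeddings are assembled by Lemma~\ref{lemma:geoprojections} into a totally geodesic $\tilde U\to\tilde\sA_g$, and one then appeals to Moonen's Theorem~\ref{Moonen_product} to conclude that $U$ is the base of a Kuga fibre space (and to Corollary~\ref{Moonen_rigid} for the rigid statement). Your ``principal obstacle'' --- translating the $\varsigma$-equality into Yau's identity~\eqref{eqyau} --- is indeed carried out (Claim~\ref{ball}), but it is only a step inside Proposition~\ref{numcondB}; the true obstacle is the totally geodesic statement, and for that the condition~(b.2) is used again, not merely to locate $M_i$ as a ball but to force the splitting that feeds into Mok's argument.
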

Remark that the condition 1) in part b) of Theorem~\ref{characterization} implies that
in part 2) $\theta= \theta_i$ and that $i=i(\V)$.

We call $f:A\to U$ rigid, if the induced morphism $U\to \sA_g$ to the moduli stack has no non-trivial deformations, hence if there is no smooth projective morphism $\hat{f}:\hat{A} \to U\times T$
with $\dim(T)>0$ and extending $f$, such that the induced morphism $U\times T \to \sA_g$ is generically finite.
In a similar way, $f:A\to U$ is called infinitesimally rigid, if the morphism from $U$ to the moduli stack
has no infinitesimal deformations. Using Faltings' description of the infinitesimal deformations (see \cite{Fa83}) 
this holds if and only if there are no antisymmetric endomorphisms of the variation of Hodge structures pure of type $(-1,1)$. In particular, if ${\rm End}(R^1f_*\Q_{X})^{-1,1}=0$ the family is infinitesimally rigid.

Here we should point out, that throughout this article a Shimura variety of Hodge type is defined over $\C$ and it is an irreducible component of a Shimura variety of Hodge type in the sense of \cite{Mi04} and, as we will explain in Section~\ref{ofHodgetype}, it is defined `up to \'etale coverings'. We use the same convention for Kuga fibre spaces, and we allow ourselves to replace $U$ by an \'etale covering, whenever it is convenient (see Section~\ref{ec}).

Remark that the condition 1) in Theorem~\ref{characterization},~b) allows to apply Theorem~\ref{purity_Thm}. Since the condition 2) 
automatically holds true if $\V$ is unitary, or if it is pure of type A or C, we can as well restate
the condition 2) as
\begin{enumerate}
\item[2'.] {\it If $\V$ is non unitary and pure of type $i=i(\V)$ with $\Omega_i$ of type B, then 
$$
\varsigma(\V) = \frac{\rk(E^{1,0})\cdot\rk(E^{0,1})\cdot(n_i+1)}{\rk(E) \cdot n_i}.
$$}
\end{enumerate}
Obviously $\varsigma(\V)$ is determined by the Higgs bundle on any open dense subset of $U$
and compatible with replacing $U$ by a finite \'etale covering $U'$. By Lemma~\ref{AEindep} the slopes $\mu(\V)$ and $\mu(\Omega^1_Y(\log S))$ are multiplied in this case by the degree of $U'$ over $U$,
as long as one chooses compactifications $Y$ of $U$ and $Y'$ of $U'$ satisfying both the Assumption~\ref{compass-pos}. Using the Proposition~\ref{shimuraprop} one obtains:
\begin{corollary}\label{characterization2}
Assume in Theorem~\ref{characterization} that $U$ has a projective compactification, satisfying the Assumptions~\ref{compass-pos}, and such that on this compactification the conditions 1) and 2) hold for the Higgs bundles of all irreducible subvariation of Hodge structures $\V$ of $R^1f_*\C_A$. Then there exists an \'etale covering $\tau:U'\to U$ and a compactification $Y'$ of $U'$ with $S'=Y'\setminus U'$ a normal crossing divisor, satisfying again the Assumptions~\ref{compass-pos}, such that for all subvariations $\V'$ of Hodge structures in $\tau^*R^1f_*\C_A$ with Higgs bundle $(E',\theta')$ one has:
\begin{enumerate}
\item[3.] $E'^{1,0}$ and $E'^{0,1}$ are $\mu'$-stable.
\item[4.] $E'^{1,0}\otimes {E'^{0,1}}^\vee$ is $\mu'$-polystable.
\end{enumerate}
Here the slopes $\mu'$ are defined with $\omega_{Y}(S)$ replaced by $\omega_{Y'}(S')$.
\end{corollary}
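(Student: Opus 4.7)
The plan is to obtain this corollary by composing Theorem~\ref{characterization} with Proposition~\ref{shimuraprop}. First I observe that the hypotheses on the compactification $Y$ of $U$ are precisely the numerical conditions 1) and 2) of part~b) of Theorem~\ref{characterization}. Invoking the implication $b)\Rightarrow a)$ of that theorem, I obtain an \'etale covering $\tau_0:U_0\to U$ such that the pullback $f_0:A_0=A\times_U U_0\to U_0$ is a Kuga fibre space, and unipotency of the local monodromies at infinity is preserved by \'etale pullback.

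I then apply Proposition~\ref{shimuraprop} to $f_0$. Allowing an additional finite \'etale covering $U'\to U_0$ and composing with $\tau_0$ yields $\tau:U'\to U$ together with a projective compactification $Y'$ of $U'$ for which $S'=Y'\setminus U'$ is a normal crossing divisor and the Assumption~\ref{compass-pos} (together with Condition~\ref{compass-pos2}) holds. Writing $f':A'\to U'$ for the further pullback and $(E',\theta')$ for the Higgs bundle of any irreducible non-unitary $\C$-subvariation $\V'$ of $R^1f'_*\C_{A'}=\tau^*R^1f_*\C_A$, assertion~iii) of Proposition~\ref{shimuraprop} is exactly condition~3) of the corollary and assertion~iv) is exactly condition~4).

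The case of irreducible unitary weight-$1$ subvariations is handled separately but trivially: the Higgs field vanishes, the flat connection then preserves each Hodge piece, and irreducibility of $\V'$ as a $\C$-local system forces one of $E'^{1,0}$ or $E'^{0,1}$ to vanish; the non-zero piece coincides with $\V'$ and is $\mu'$-stable by Simpson's correspondence, while $E'^{1,0}\otimes(E'^{0,1})^\vee$ vanishes and is vacuously $\mu'$-polystable. A general subvariation decomposes into its irreducible constituents, and conditions~3) and~4) follow in the sense of polystability of the resulting direct sums. No substantive obstacle arises beyond what has already been settled in the two results invoked; the corollary is essentially a direct combination of them, modulo routine bookkeeping on \'etale coverings and unitary summands.
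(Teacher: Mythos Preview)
Your proof is correct and follows essentially the same approach as the paper: the paper's argument (given in the paragraph preceding the corollary) is precisely to combine Theorem~\ref{characterization} with Proposition~\ref{shimuraprop}, after noting that the numerical conditions are compatible with passing to \'etale covers. Your added bookkeeping on the unitary summands and the composition of \'etale coverings is routine and accurate.
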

The proof of Theorem~\ref{characterization} will be given in Section~\ref{stablengthsplit}.
As indicated, the subvariations of Hodge structures which are pure of type B will play a special role. In Section~\ref{stablengthsplit} we will obtain a slightly more precise information. 
\par
\begin{addendum}\label{characterizationadd} Consider in Theorem~\ref{characterization} 
an irreducible complex polarized subvariation of Hodge structures $\V$ of $R^1f_*\C_{A}$ with Higgs bundle $(E,\theta)$.
Assume that $\V$ is non-unitary and satisfies the Arakelov equality. Consider the following conditions
for $i=i(\V)$:
\begin{enumerate}
\item[$\alpha$.] $E^{1,0}$ and $E^{0,1}$ are $\mu\vspace{.1cm}$-stable.
\item[$\beta$.] The kernel of the natural map $\sH om(E^{0,1},E^{1,0})\to \Omega_i$ 
is a direct factor of $\sH om(E^{0,1},E^{1,0})$.\vspace{.1cm}
\item[$\gamma$.] \ \hspace*{\fill} $\displaystyle
\varsigma(\V) = \frac{\rk(E^{1,0})\cdot\rk(E^{0,1})\cdot(n_i+1)}{\rk(E) \cdot n_i}.
$ \hspace*{\fill} \ \vspace{.1cm}
\item[$\delta$.] $M_i$ is the complex ball $\SU(1,n_i)/K$, and $\V$ is the tensor product of a unitary representation with a wedge product of the standard representation of $\SU(1,n_i)$ (as explained
in Section~\ref{stablengthsplit} before Proposition~\ref{kugaAB})
\item[$\eta$.] Let $M'$ denote the period domain for $\V$. Then the period map factors as the projection $\tilde{U} \to M_i$ and a totally geodesic embedding $M_i \to M'$.
\end{enumerate}
Then, depending on the type of $\Omega_i$, the following holds:
\begin{enumerate}
\item[I.] If $\Omega_i$ is of type A, then $\alpha$), $\beta$), $\gamma$), $\delta$) and $\eta$) hold true.
\item[II.] If $\Omega_i$ is of type C, then $\eta$) holds true. 
\item[III.] If $\Omega_i$ is of type B, then the conditions $\beta$) and $\gamma$) are equivalent. They imply 
the conditions $\delta$) and $\eta$). 
\end{enumerate}
\end{addendum}
\par
\begin{lemma}\label{poly} Assume that $\omega_Y(S)$ is ample. 
\begin{itemize}
\item[($\star$)]
If $\sF$ and $\sG$ are two $\mu$-stable locally free sheaves, then $\sF\otimes \sG$ is $\mu$-polystable.
\end{itemize}
\end{lemma}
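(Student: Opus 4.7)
The plan is to deduce the lemma from the Kobayashi--Hitchin correspondence. Since $\omega_Y(S)$ is ample on the non-singular projective $Y$, it determines a Kähler class (via Kodaira), and the slopes $\mu(\cdot)$ defined in \eqref{slope} agree, up to a universal positive constant depending only on $Y$, with the Mumford--Takemoto slopes associated with this polarization. Therefore $\mu$-stability in our sense coincides with classical slope-stability with respect to a Kähler form in the class $\ch_1(\omega_Y(S))$.

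First I would invoke the Donaldson--Uhlenbeck--Yau theorem: each $\mu$-stable locally free sheaf admits a Hermitian--Einstein metric whose mean curvature is a scalar multiple of the identity, the scalar being determined by the slope. This furnishes Hermitian--Einstein metrics $h_\sF$ on $\sF$ and $h_\sG$ on $\sG$. Next I would equip $\sF\otimes\sG$ with the tensor product metric $h_\sF\otimes h_\sG$: its Chern curvature equals
$$
F_{\sF\otimes\sG}=F_\sF\otimes\mathrm{id}_\sG+\mathrm{id}_\sF\otimes F_\sG,
$$
and contraction with the Kähler form shows that the resulting mean curvature is again a scalar multiple of the identity, with scalar equal to the sum of the two Einstein constants of $h_\sF$ and $h_\sG$. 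In particular $\sF\otimes\sG$ carries a Hermitian--Einstein metric.

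The conclusion then follows from the easy direction of the Kobayashi--Hitchin correspondence (Kobayashi--Lübke): a locally free sheaf admitting a Hermitian--Einstein metric is $\mu$-polystable. There is no serious obstacle here, as every step is classical once the setting is established; the only point to verify is that ampleness of $\omega_Y(S)$ is sufficient to land in the compact Kähler framework required by the Kobayashi--Hitchin theorems, which is immediate from Kodaira's embedding theorem.
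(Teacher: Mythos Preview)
Your argument is correct and is essentially the same as the paper's. The paper does not prove Lemma~\ref{poly} separately; instead it proves the slightly more general Condition~\ref{poly2} in Lemma~\ref{poly3}, invoking the Bando--Siu version of the Kobayashi--Hitchin correspondence (admissible Hermite--Einstein metrics on reflexive sheaves), observing as you do that the curvature of the tensor product metric is $F_1\otimes\mathrm{id}+\mathrm{id}\otimes F_2$, and concluding polystability from the existence of a Hermite--Einstein metric.
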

Since the Arakelov equality says that the slopes of $\sH om(E^{0,1},E^{1,0})$ and of $\Omega_i$ coincide,
the Lemma~\ref{poly} shows that $\alpha$) implies $\beta$). Since $\delta)$ implies $\alpha$) we can state:
\begin{corollary} \label{characterizationadd2} In the Addendum~\ref{characterizationadd} one has:
\begin{enumerate}
\item[IV.] If $\omega_Y(S)$ is ample, for example if $U$ is projective or if $\dim(U)=1$, 
and if $\Omega_i$ is of type B, then the conditions $\alpha$), $\beta$), $\gamma$) and $\delta$) are equivalent and imply  $\eta$).
\end{enumerate} 
\end{corollary}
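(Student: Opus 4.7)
My plan is to close the loop $\alpha \Rightarrow \beta \Leftrightarrow \gamma \Rightarrow \delta \Rightarrow \alpha$. Addendum~\ref{characterizationadd},~III) already supplies the middle arrows $\beta \Leftrightarrow \gamma$ together with $\beta/\gamma \Rightarrow \delta,\eta$, so only the two outer implications $\alpha \Rightarrow \beta$ and $\delta \Rightarrow \alpha$ remain, and these are exactly where I expect the ampleness of $\omega_Y(S)$ to enter.

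For $\alpha \Rightarrow \beta$ I would first invoke Lemma~\ref{poly}---the only place where ampleness of $\omega_Y(S)$ is strictly needed---to conclude that $\sH om(E^{0,1},E^{1,0}) = E^{1,0}\otimes(E^{0,1})^\vee$ is $\mu$-polystable, and then compare slopes: by the Arakelov equality and by the polystability of $\Omega^1_Y(\log S)$ (so that every $\mu$-stable summand $\Omega_j$ has the common slope $\mu(\Omega^1_Y(\log S))$), one obtains $\mu(\sH om(E^{0,1},E^{1,0})) = \mu(\Omega_i)$. Writing the source as a direct sum of $\mu$-stable pieces of this common slope and restricting the adjoint map $\phi\colon \sH om(E^{0,1},E^{1,0}) \to \Omega_i$ to each piece, a standard slope/rank argument shows that each non-zero restriction is forced to be an isomorphism onto $\Omega_i$. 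Partitioning the stable summands according to whether $\phi$ vanishes on them yields, after normalising the non-zero restrictions, that $\ker(\phi)$ coincides (up to an obvious choice of complement of a summation map $\Omega_i^k \to \Omega_i$) with a direct summand of the source, which is $\beta$).

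For $\delta \Rightarrow \alpha$ my plan is, after possibly passing to an \'etale cover, to realise $\V$ as an irreducible $\C$-subvariation of Hodge structures of $R^1 f'_* \C_{A'}$ for a Kuga fibre space $f':A'\to U'$ attached to the given $\SU(1,n_i)$-representation, as described in Section~\ref{stablengthsplit} before Proposition~\ref{kugaAB}. Applying Proposition~\ref{shimuraprop},~iii) to this Kuga fibre space then yields the $\mu$-stability of $E^{1,0}$ and $E^{0,1}$, hence $\alpha$). Concatenating these two implications with Addendum~\ref{characterizationadd},~III) gives the full equivalence of $\alpha),\beta),\gamma),\delta)$ and the implication to $\eta$).

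The main obstacle I anticipate is the splitting step in $\alpha \Rightarrow \beta$: the component-wise analysis is routine once polystability of $\sH om(E^{0,1},E^{1,0})$ is secured, but it is precisely via the ampleness of $\omega_Y(S)$ and Lemma~\ref{poly} that this polystability is available. Without that hypothesis the tensor product of two $\mu$-stable sheaves can fail to be $\mu$-polystable, the kernel of the natural map to $\Omega_i$ need not split off, and the implication $\alpha \Rightarrow \beta$ can fail---which is consistent with this implication being asserted only under the extra assumption on $\omega_Y(S)$.
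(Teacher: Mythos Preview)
Your cyclic strategy and your treatment of $\alpha \Rightarrow \beta$ via Lemma~\ref{poly} match the paper exactly; the paper's entire justification consists of the two sentences preceding the corollary, and your argument for $\alpha \Rightarrow \beta$ is simply a more explicit version of its one-line appeal to Lemma~\ref{poly}. The gap lies in your $\delta \Rightarrow \alpha$ step. Proposition~\ref{shimuraprop} applies only when $U'$ is itself the base of a Kuga fibre space, in particular a quotient of a bounded symmetric domain by an arithmetic group. Condition $\delta$ says only that the single factor $M_i$ of $\tilde{U}$ is a complex ball; the remaining $M_j$ need not be symmetric domains, so no \'etale cover of $U$ is forced to carry such a structure---nor can you descend to a ball quotient of $M_i$, since the image of $\pi_1(U)$ in $\Aut(M_i)$ need not be discrete (think of an irreducible lattice in a product). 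And even where Proposition~\ref{shimuraprop} does apply, its conclusion is $\mu$-stability on a Mumford compactification, whereas $\alpha$ is a statement about the given $Y$.

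The paper itself simply writes ``Since $\delta$) implies $\alpha$)'' without further argument. A direct route on $Y$, available precisely because $\omega_Y(S)$ is ample, goes as follows. From $\delta$ one reads off $E^{1,0}\cong \sU\otimes L\otimes \bigwedge^{k}\Omega_i$ and $E^{0,1}\cong \sU\otimes L\otimes \bigwedge^{k-1}\Omega_i$, with $L$ invertible and $\sU$ the flat bundle of an irreducible unitary local system. By Lemma~\ref{poly3} the $\mu$-stable sheaf $\Omega_i$ carries an admissible Hermite--Einstein metric, and since $M_i$ is a ball its restricted holonomy is the full $U(n_i)$; hence each $\bigwedge^{j}\Omega_i$ is $\mu$-stable. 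For the tensor product $\sU\otimes\bigwedge^{j}\Omega_i$ the restricted holonomy is $\{1\}\times U(n_i)$, which forces any parallel subspace of a fibre to be of the form $W'\otimes\bigwedge^{j}\C^{n_i}$; invariance under parallel transport along non-contractible loops then forces $W'$ to be invariant under the monodromy of $\sU$, hence trivial. This yields $\alpha$ directly on $Y$.
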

S.T. Yau conjectures, that Property ($\star$) in Lemma~\ref{poly} remains true if $\omega_Y(S)$ is only nef and big.
Hopefully there will soon be a proof in a forthcoming article by Sun and Yau. This would allow to drop the condition
on the ampleness of $\omega_Y(S)$ in Corollary~\ref{characterizationadd2}. Without referring to Yau's conjecture one still has:
\begin{corollary}\label{characterizationadd3} 
Assume in Theorem~\ref{characterization} and in Addendum~\ref{characterizationadd}
that the Arakelov equality and the condition $\eta$) hold for all non-unitary subvariations $\V$ of $R^1f_*\C_{A}$.
Then there exists an \'etale covering $\tau:U'\to U$ with $U'$ a quotient of a bounded symmetric domain by an arithmetic group. Moreover on a Mumford  compactification of $U'$ the conditions $\alpha$), $\beta$), $\delta$), and $\gamma$) are equivalent for all irreducible non-unitary subvariations $\V'$ of $\tau^*R^1f_*\C_{A}$ which are of type B.
\end{corollary}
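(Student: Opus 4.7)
The plan is to leverage condition $\eta$) to realise $f:A\to U$ as, up to an étale cover, a Kuga fibre space, and then to invoke Proposition~\ref{shimuraprop} on a Mumford compactification to obtain the properties $\alpha$)--$\delta$) simultaneously.

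Condition $\eta$) applied to each irreducible non-unitary $\V\subset R^1f_*\C_A$ presents its period map $\tilde U\to M'_{\V}$ as the composition of the projection $\tilde U=M_1\times\cdots\times M_s\to M_{i(\V)}$ with a totally geodesic holomorphic embedding into the weight-one period domain. Summing over the irreducible direct factors of $R^1f_*\C_A$, the full period map $\tilde U\to\BH_g$ into the Siegel upper half space is holomorphic and totally geodesic. By the classical description, due to Kuga, Satake and Deligne, of totally geodesic holomorphic maps into Siegel space, such a period map arises from a morphism of Shimura data; its image is therefore a Shimura subvariety of $\sA_g$ of Hodge type, and the monodromy representation factors through an arithmetic subgroup $\Gamma$ of $\Sp_{2g}(\Z)$. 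Replacing $U$ by the étale cover $\tau:U'\to U$ attached to a torsion-free finite-index subgroup $\Gamma'\subset\Gamma$, one obtains $U'\cong\Gamma'\backslash\tilde U$ as an arithmetic quotient of the bounded symmetric domain $\tilde U$, and the pullback $f':A\times_U U'\to U'$ is, up to isogeny, a Kuga fibre space.

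Mumford's construction then furnishes a smooth toroidal compactification $Y'$ of $U'$ satisfying Assumption~\ref{compass-pos} and Condition~\ref{compass-pos2}, as recalled in Section~\ref{verify_arakelov}. Applied to this Kuga fibre space on $Y'$, Proposition~\ref{shimuraprop} yields for every non-unitary irreducible $\V'\subset\tau^*R^1f_*\C_A$ the purity (so $\theta'=\theta'_{i(\V')}$), the Arakelov equality, the $\mu$-stability of $E'^{1,0}$ and $E'^{0,1}$ (which is $\alpha$)), and the $\mu$-polystability of $\sH om(E'^{0,1},E'^{1,0})$. When $\Omega_{i(\V')}$ is of type B, part v) of the proposition supplies the length identity of $\gamma$), and the Kuga construction realises $\V'$ as the tensor product of a unitary local system with a wedge of the standard representation of $\SU(1,n_{i(\V')})$, giving $\delta$). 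For $\beta$), the Arakelov equality identifies the slopes of the polystable sheaf $\sH om(E'^{0,1},E'^{1,0})$ and of the stable sheaf $\Omega_{i(\V')}$; by semistability any non-zero map between them must be surjective, and in the decomposition of the polystable source into $\mu$-stable summands of the same slope the restriction to each summand is either zero or an isomorphism, so the kernel splits off as a direct factor. Consequently $\alpha$), $\beta$), $\gamma$), $\delta$) all hold simultaneously for every non-unitary type-B $\V'$ on $Y'$, and are therefore equivalent.

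The principal obstacle is the arithmeticity step: extracting from the totally geodesic holomorphic period map delivered by $\eta$) a morphism of Shimura data, so as to reach a Kuga fibre space structure to which Proposition~\ref{shimuraprop} can be applied. The classification of totally geodesic holomorphic embeddings into Siegel space makes this step classical, and it is precisely this input—available thanks to the geometric content of $\eta$)—that allows the corollary to bypass Yau's conjectural strengthening of Lemma~\ref{poly} invoked in Corollary~\ref{characterizationadd2}.
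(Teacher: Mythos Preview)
Your argument follows essentially the same route as the paper's: use condition $\eta$) together with Lemma~\ref{lemma:geoprojections} to make the full period map $\tilde U\to\BH_g$ totally geodesic, pass to a Kuga fibre space on an \'etale cover, take a Mumford compactification, and then observe that on such a compactification all of $\alpha$), $\beta$), $\gamma$), $\delta$) hold simultaneously for every non-unitary type-B factor, hence are equivalent. The paper compresses the first step by pointing to ``the argument used at the end of the proof of Theorem~\ref{characterization}'' (i.e.\ Lemma~\ref{lemma:geoprojections}, Lemma~\ref{etale}, and Moonen's Theorem~\ref{Moonen_product}), and for the last step it invokes Proposition~\ref{stabAB},~2) directly (the implication $\delta)\Rightarrow\alpha),\beta)$) rather than the more general Proposition~\ref{shimuraprop}; your derivation of $\beta$) from polystability of $\sH om(E'^{0,1},E'^{1,0})$ is the standard one and matches what the paper states after Lemma~\ref{poly}.

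Two small points of imprecision are worth flagging. First, your sentence ``its image is therefore a Shimura subvariety of $\sA_g$ of Hodge type'' overreaches: what one gets from the totally geodesic property via Theorem~\ref{Moonen_product} is a Kuga fibre space, which is only of Hodge type after a further translation (or under rigidity). For the corollary this distinction is harmless, since ``quotient of a bounded symmetric domain by an arithmetic group'' and the existence of a Mumford compactification are all that is needed. Second, your appeal to ``the classical description, due to Kuga, Satake and Deligne'' for arithmeticity is vague; the paper's route through Moonen's theorem is the clean way to obtain both the Kuga structure and the arithmeticity of the lattice, and Lemma~\ref{etale},~(2) is what lets you pass from generically finite to \'etale over the image. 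Neither issue is a genuine gap in the logic.
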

In \cite{VZ07} we had to exclude direct factors of $\Omega^1_Y(\log S)$ of 
type C, and we used a different numerical condition for $\V$ of type B.
Recall that the discriminant of a torsion free coherent sheaf $\sF$ on $Y$ is 
given by
$$
\delta(\sF)=\big[2\cdot \rk(\sF)\cdot\ch_2(\sF)-(\rk(\sF)-1)\cdot \ch_1
(\sF)^2\big].\ch_1(\omega_Y(S))^{\dim(Y)-2},
$$
and that the $\mu$-semistability of $E^{1-q,q}$ implies that 
$\delta(E^{1-q,q}) \geq 0$. So the Arakelov equality
implies that
$$
\delta(\V): ={\rm Min}\{\delta(E^{1,0}), \delta(E^{0,1})\} \geq 0.
$$
In \cite{VZ07}, using the condition ($\star$) in Lemma~\ref{poly}, we gave two criteria forcing 
$f:A\to U$ to be a Kuga fiber space.
The first one, saying that all the direct factors of $\Omega_Y^1(\log S)$ are 
of type A, is now a special case of Theorem~\ref{characterization}. In the 
second criterion we allowed the direct factors of $\Omega_Y^1(\log S)$
to be of type A and B, but excluded factors of type C. There, for all irreducible subvariations $\V$ of Hodge structures we required $\delta(\V)=0$.
This additional condition, needed in \cite{VZ07} to prove the purity of irreducible 
subvariations of Hodge structures, forced at the same time the representations in Addendum~\ref{characterizationadd}, $\delta$) to be the tensor product of the standard 
representations of $\SU(1,n_i)$ with a unitary local system and excluded their wedge products.

The bridge between the criterion \cite{VZ07} and Theorem~\ref{characterization} is already contained in
\cite[Proposition 3.4]{VZ07}:
\begin{remark}
Let $f:A \to U$ be a family of polarized abelian varieties such that
$R^1f_*\C_A$ has unipotent local monodromies at infinity, and such that the induced 
morphism $U\to \sA_g$ is generically finite. Assume that $U$ has a projective 
compactification $Y$ satisfying the Assumptions~\ref{compass-pos}.
Then the condition ($\star$) in Lemma~\ref{poly} implies:\\[.1cm]
Let $\V$ be an irreducible subvariation of Hodge structures of $R^1f_*\C_A$ 
with Higgs bundle $(E,\theta)$, pure of type $i=i(\V)$ and with $\Omega_i$ of type B. If $\V$ satisfies the 
Arakelov equality and if $\delta(\V)=0$, then either 
\begin{equation} \label{special_case}
\rk(E^{1,0})=\rk(E^{0,1})\cdot n_{i}\mbox{ \ \  or \ \ }\rk(E^{1,0})\cdot 
n_{i}=\rk(E^{0,1}).
\end{equation}
In particular the condition $\gamma$) in  Addendum~\ref{characterizationadd} holds for $\V$. 
\end{remark}
In fact, by definition $\varsigma(\V) \leq {\rm Min} \{ \rk(E^{1,0}), \rk(E^{0,1})\}$.
Corollary~\ref{iteratedKSineq} and the numerical condition~\ref{special_case} imply that
this is an equality.
\par
We do not know whether the condition 2) in Theorem~\ref{characterization},~b) is 
really needed, or whether in Addendum~\ref{characterizationadd}, for $\V$ of type B, the condition $\gamma$) follows from the Arakelov equality. As we will show in Section~\ref{ex} 
this is the case for $\rk(\V) \leq 7$, provided that $\omega_Y(S)$ is ample or more generally if the Condition~\ref{poly2}, generalizing the condition ($\star$) in Lemma~\ref{poly}, holds true. However, the necessity of the equality~\eqref{eqyau} in the 
characterization of ball quotients might indicate that
a condition on the first Chern class, as given by the Arakelov equality, 
can not be sufficient to characterize complex balls. 

Up to now we did not mention any condition guaranteeing the existence of fibres with complex multiplication or the equality between the monodromy group and the derived Mumford-Tate group $\MT(f)^\der$ (see Section~\ref{mt}), usually needed in the construction of Shimura varieties of Hodge type.
In fact, as in \cite{Mo98}, we will rather concentrate on the condition that $U\to \sA_g$ is totally geodesic. This will allow in the proof of Theorem~\ref{characterization} to identify $f:A\to U$ with a Kuga fibre space $\sX(G,\tau,\varphi_0)$. Next, for rigid families we will refer to \cite{Abd94} and \cite{Mo98} for the proof that they are Shimura varieties of Hodge type (see Section~\ref{KugaShimura} for more details), hence that there are fibres with complex multiplication.

This implies that for a rigid family $f:A\to U$ the group $\MT(f)^\der$ is 
the smallest the $\Q$-algebraic subgroup containing the monodromy group and 
that $U$ is up to \'etale coverings equal to $\sX(\MT(f)^\der,{\rm id},
\varphi_0)$. 

In \cite{VZ07} we used for the last step an explicit identification of possible Hodge cycles.
Although not really needed, we will sketch a similar calculation in Section~\ref{aemt}. There it will be sufficient to assume that the non-unitary irreducible direct factors of $R^1f_*\C_A$ satisfy the Arakelov equality, and we will explicitly construct a subgroup $\MT^\mov(f)^\der$, isomorphic to the monodromy group $\Mon^0(f)$, which up to constant factors coincides with the Mumford-Tate group $\MT(f)^\der$. Using the notations of Section~\ref{Kugadef}, this implies that $\sX(\Mon^0(f),{\rm id },\varphi_0) \cong \sX(\MT^\mov(f)^\der,{\rm id },\varphi_0)$. 
\vspace{.2cm}

It is a pleasure to thank Ngaiming Mok, for several letters explaining his results on geometric rigidity, in particular for the proof of Claim~\ref{Mokarg}. 
Parts of this note grew out of discussions between the second and third named author during a visit at the East China Normal University in Shanghai. We would like to thank the members of its Department of Mathematics for their hospitality. 

We are grateful to the referee of an earlier version of this article, who pointed out several ambiguities and mistakes, in particular in Sections~\ref{stablengthsplit} 
and~\ref{aemt}.

\section{Kuga fibre spaces and Shimura varieties of Hodge type}\label{KugaShimura}
\subsection{Kuga fibre spaces and totally geodesic subvarieties}
\label{Kugadef}
The data to construct a {\em Kuga fibre space} (see \cite{Mu69} and the references therein)
are 
\begin{itemize}
\item[i.] a rational vector space $V$ of dimension $2g$ with a lattice $L$,
\item[ii.] a non-degenerate skew-symmetric bilinear form $Q: V\times V \to \Q$, integral on 
$L \times L$,
\item[iii.] a $\Q$-algebraic group 
$G$ and an injective map $\tau: G \to \Sp(V,Q)$,
\item[iv.] an arithmetic subgroup $\Gamma \subset G$ such that $\tau(\Gamma)$
preserves $L$,
\item[v.] a complex structure 
$$
\varphi_0:S^1=\{z \in \C^*\, ; \,|z| = 1\} \to \Sp(V,Q)
$$
such that $\tau(G)$ is normalized by $\varphi_0(S^1)$ and such that
$Q(v,\varphi_0(\sqrt{-1})v)>0$ for all $v\in V\setminus \{0\}$. 
\end{itemize}
\par
We will allow ourselves to replace the arithmetic subgroup in iv) by a subgroup of finite index, whenever it is 
convenient. In particular, we will assume that $\Gamma$ is neat, as defined in \cite[page 599]{Mu77}.

For $\Gamma$ sufficiently small, $(L,Q,G,\tau,\varphi_0,\Gamma)$ defines a 
Kuga fibre space, i.e.\ a family of abelian varieties, by the following procedure. Let 
$K_\R^0$ be the connected component of the centralizer of $\varphi_0(S^1)$
in $G_\R$. Then there is a map 
$$M:=G_\R^0/K_\R^0 \>>> \Sp(V,Q)_\R/(\text{centralizer of}\, \varphi_0)
\cong \BH_g$$
and the pullback of the universal family over $\BH_g$ descends to the
the desired family over 
$$\sX :=\sX(G,\tau,\varphi_0):= \Gamma \backslash G_\R^0/K_\R^0.$$
In the sequel we will usually suppress $V$ and $Q$ from the notation
and write just $\Sp(Q)$ or $\Sp$, if no ambiguity arises.
\par
Two different sets of data $(L,Q,G,\tau,\varphi_0,\Gamma)$ and $(L',Q',G',\tau',\varphi_0',\Gamma')$ may 
define isomorphic Kuga fibre spaces over $\sX(G,\tau,\varphi_0)\cong \sX(G',\tau',\varphi_0')$. 
Note that different groups $G$ and $G'$ might lead to the same Kuga fibre
space and that $K_\R^0$ is not necessarily compact but the extension
of a central torus in $G_\R$ by a compact group. Note moreover
that replacing $\varphi_0$ by $\tau(g)\varphi_0\tau(g)^{-1}$ for
any $g \in G$ gives an isomorphic Kuga fibre space - this just
changes the reference point.
\par
Kuga fibre spaces are the objects that naturally arise when
studying polarized variations of Hodge structures satisfying the Arakelov equality. We restrict
the translation procedure into the language of Shimura varieties 
to the case of `Hodge type', see Section~\ref{ofHodgetype}.
\par
We provide symmetric domains throughout with the Bergman metric 
(e.g.\ \cite[\S II.6]{Sa80}). By condition v) in Mumford's definition of a Kuga fibre space, $M \to \BH_g$ is a strongly equivariant map in the sense of \cite{Sa80}. By 
\cite[Theorem II.2.4]{Sa80}, it
is a {\em totally geodesic embedding}, i.e.\ each geodesic curve in 
$\BH_g$ which is tangent to $M$ at some point of $M$ is a curve in $M$. The converse
is dealt with in Section~\ref{ofHodgetype}.
\par
\subsection{\'Etale coverings}\label{ec}
Replacing the group $\Gamma$ by a subgroup of finite index corresponds to replacing $U$ by an \'etale covering,
and by definition one obtains again a Kuga fibre space. So we will consider Kuga fibre spaces and
Shimura varieties (see Section~\ref{ofHodgetype}) as equivalence classes up to \'etale coverings.
The way we stated Theorem~\ref{characterization} or the Corollary~\ref{characterizationadd3} 
we are allowed to replace $U$ by an \'etale covering, whenever it is convenient. 

Since $U\to \sA_g$ is induced by a genuine family of polarized abelian varieties $f:A\to U$ and since the subgroup of $N$-division points is \'etale over $U$, an \'etale covering $U'$ of $U$ maps to the moduli scheme $\sA_g^{(N)}$ of abelian varieties with a level $N$ structure, say for $N=3$. We will drop the ${}'$ as well as the ${}^{(N)}$, and we will assume in the sequel: 
\begin{assumptions}\label{fine}
$\sA_g$ is a fine moduli scheme, $\varphi:U\to \sA_g$ is generically finite, and $f:A\to U$ is the pullback of the universal family.
\end{assumptions}
As we will see in the beginning of the Section~\ref{stablengthsplit}, for $\varphi$ finite and $\varphi(U)$ non-singular the Arakelov equality will force $\varphi$ to be \'etale. At other places, for example if we talk about geodesics, we will have to assume that $\varphi(U)$ is non-singular, and that $\varphi$ is \'etale. Then however, since $\sA_g$ is supposed to be a fine moduli scheme, we can as well assume that $\varphi$ is an embedding.
\par
\subsection{The Hodge group, the 
Mumford-Tate group and the monodromy group}\label{mt}

We start be recalling the definitions of the Hodge and Mumford-Tate group.
Let $A_0$ be an abelian variety and $W_\Q=H^1(A_0,\Q)$, equipped with
the polarization $Q$. The {\em Hodge group $\Hg(A_0)=\Hg(W_\Q)$} is defined in \cite{Mu66} (see also \cite{Mu69}) as the smallest $\Q$-algebraic 
subgroup of $\Sp(W_\Q,Q)$, whose extension to $\R$ contains the complex 
structure
$$
\varphi_0:S^1 \>>> \Sp(W_\Q,Q),
$$
where $z$ acts on $(p,q)$ cycles by multiplication with $z^p\cdot \bar z^q$. 
\par
In a similar way, one defines the {\em Mumford-Tate group $\MT(W_\Q)=\MT(A_0)$}.
The complex structure $\varphi_0$ extends to a morphism of real algebraic groups
$$
h^{W_\Q}:{\rm Res}_{\C/\R}\G_m \>>>\Gl(W_\Q \otimes \R),
$$
and $\MT(W_\Q)$ is the smallest $\Q$-algebraic subgroup of $\Gl(W_\Q)$, whose 
extension to $\R$ contains the image of $h^{W_\Q}$. 

By \cite{De82} the group $\MT(W_\Q)$ is reductive, and it 
coincides with the largest $\Q$-algebraic subgroup of the linear group $\Gl(W_\Q)$,
which leaves all $\Q$-Hodge tensors invariant, hence all elements
$$
\eta \in \big[W_\Q^{\otimes m} \otimes W_\Q^{\vee \otimes m'}\big]^{0,0}.
$$
Here $W_\Q^\vee$ is regarded as a Hodge structure concentrated in the bidegrees $(0,-1)$ and $(-1,0)$, and hence $W_\Q^{\otimes m} \otimes W_\Q^{\vee \otimes m'}$ is of weight $m-m'$. So the existence of some $\eta$ forces $m$ and $m'$ to be equal.
\par
Let $f:A\to U$ be a family of polarized abelian varieties and $\W_\Q=R^1f_*\Q_A$
the induced polarized $\Q$-variation of Hodge structures on $U$. By \cite{De82}, \cite{An92} or \cite{Sc96} there exist a union $\Sigma$ of countably many proper closed subvarieties of $U$ such that for $y\in U\setminus \Sigma$ the group $\MT(\W_\Q|_y)$ is independent of $y$. 
We will fix such a `very general' point $y$, write $W_\Q$ instead of $W_\Q|_y$. We define
$\MT(\W_\Q)$ or $\MT(f)$ to be $\MT(W_{\Q})$. 

The monodromy group $\Mon(\W_\Q)$ is defined as the smallest 
$\Q$-algebraic subgroup of $\Gl(W_\Q)$ which contains the image of the 
monodromy representation of $\pi_1(U,y)$, and $\Mon^0(\W_\Q)$ denotes its connected component containing the identity. We will often write $\Mon^0$ or $\Mon^0(f)$ instead of $\Mon^0(\W_\Q)$. 
\par
By \cite{De82} $\Mon^0(\W_\Q)$ is a normal subgroup of the derived subgroup $\MT(\W_\Q)^\der$. Note that the derived subgroup of the Hodge group $\Hg(A_0)$ coincides with the derived Mumford-Tate group $\MT(R^1f_*\Q_A)^\der$.
\par
\subsection{Shimura varieties of Hodge type and totally geodesic 
subvarieties} \label{ofHodgetype}

A Kuga fibre space $\sX(G,\tau,\varphi_0)$ is {\em of Hodge type}, 
if it is isomorphic to a Kuga fibre space $\sX(G',\tau',\varphi_0')$
such that $G'$ is the Hodge group of the abelian variety defined by $\varphi_0'$.
Let us next compare this notion with the one of Shimura varieties of Hodge type. 
\par
In \cite{De79}, the notion of a {\em connected 
Shimura datum} $(G,M)$ consists of a reductive $\Q$-algebraic group $G$
and a $G(\R)^+$-conjugacy class $M$ of homomorphisms 
$h: {\rm Res}_{\C/\R}\G_m  \to G_\R$ with the following properties:
\begin{itemize}
\item[(SV1)] for $h \in M$, only the characters $z/\overline{z}$, $1$,
$\overline{z}/z$ occur in the representation of ${\rm Res}_{\C/\R}\G_m$
on $\Lie(G)$.
\item[(SV2)] $\ad (h(i))$ is a Cartan involution of $G^\ad$.
\item[(SV3)] $G^\ad$ has no $\Q$-factor on which the projection 
of $h$ is trivial.
\end{itemize}
\par
A {\em connected Shimura variety} is defined to be the pro-system 
$(\Gamma \backslash M)_\Gamma$, with $\Gamma$ running over all arithmetic subgroups
$\Gamma$ of $G(\Q)$ whose image in $G^\ad$ is Zariski-dense. 
Since we do not bother about canonical models and since
we allow to replace the base $U$ by an \'etale cover
any time, we say that $U$ {\em is a Shimura variety of Hodge
type}, if $U$ is equal to $\Gamma \backslash M$ for some $\Gamma$.
Usually $\Gamma$ is required moreover to be a congruence subgroup, 
but we drop this condition to simplify matters of passing to
\'etale covers at some places.
\par
We let $\CSp(Q)$ (or $\CSp$ for short) be the group of symplectic similitudes 
with respect
to a symplectic form $Q$. The Shimura datum $(\CSp(Q), M(Q))$ 
attached to the symplectic space consists of all maps
$h: {\rm Res}_{\C/\R}\G_m \to \CSp(Q)_\R$ defined on $\R$-points by
the block diagonal matrix 
\begin{equation} \label{defsympl_h}
h(x+iy) = {\rm diag}\left(\left(\begin{matrix}
 x & -y \\ y & x \\
\end{matrix}\right),\ldots,\left(\begin{matrix}
 x & -y \\ y & x \\
\end{matrix}\right)\right)
\end{equation}
with respect to a symplectic basis $\{a_i,b_i\}$, $i=1,\ldots,g$ 
of the underlying vector space $V$.
\par
A Shimura datum $(G,M)$ is {\em of Hodge type}, if there is a map
$\tau: G \to \CSp(Q)$ such that composition with $\tau$ maps
$M$ to $M(Q)$.
\par
There is a bijection between isomorphism classes of Kuga fibre 
spaces of Hodge type and the universal families of Shimura varieties of Hodge type: 

Given $(L,Q,G,\tau,\varphi_0,\Gamma)$,
let $Z \cong \G_m$ be the center of $\CSp$, define $G' := G \cdot Z
\subset \CSp$ and define $h: {\rm Res}_{\C/\R}\G_m  \to G'_\R$ by 
on $\C$-points by
$h(z) = \varphi_0(z/\overline{z})|z|$. Finally, let $M'$ be the 
$G'_\R$ conjugacy class of $h$. One checks that
$(G',M')$ is a Shimura datum of Hodge type. Conversely given $(G',M')$
of Hodge type, let $G:= G' \cap \Sp$ and let $\varphi_0$ be the
restriction of a generic $h \in M'$ to $S^1 \subset 
{\rm Res}_{\C/\R}\G_m (\C)$. Together with $\tau$ being
the inclusion map, this defines a Kuga fibre space of Hodge type.
\par
\smallskip
We keep the Assumptions~\ref{fine}. We will not assume at the moment that $U$ is a Shimura variety or that any numerical condition holds on the variation of Hodge structure. 
We follow Moonen (\cite{Mo98}) and recall the construction of the smallest Shimura subvariety $\sX^\MT$ of Hodge type in $\sA_g$ that contains the image of $U$.  
\par
\begin{theorem}[\cite{Mo98}] \label{Moonen_exist_Shim}
There exists a Shimura datum $(G, M)$ such that a Shimura
variety $\sX^\MT \cong \Gamma \backslash M$ attached to this
Shimura datum is the  unique smallest Shimura subvariety of
Hodge type in $\sA_g$ that contains the image of $U$.
\par
$G$ may be chosen to be the Mumford-Tate group 
at a very general point $y$ of $U$. 
\end{theorem}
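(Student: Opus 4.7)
The plan is to construct the datum directly from the generic Mumford--Tate group, verify that it is of Hodge type, and then prove minimality by a Mumford--Tate tensor argument. Concretely, fix a very general point $y\in U$ outside the Hodge-exceptional countable union $\Sigma$ of Section~\ref{mt}, set $W_\Q=R^1f_*\Q_A|_y$ and take $G:=\MT(W_\Q)$ together with the $G(\R)^+$-conjugacy class $M$ of the homomorphism $h^{W_\Q}\colon\Res_{\C/\R}\G_m\to G_\R$ induced by the Hodge decomposition at $y$.

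The first step is to check that $(G,M)$ is a Shimura datum of Hodge type. Reductivity of $G$ is Deligne's theorem cited in Section~\ref{mt}. Property (SV1) is automatic for abelian variations: since $W_\Q$ has Hodge weights $(1,0)$ and $(0,1)$, the adjoint representation on $\Lie(G)\subset \mathrm{End}(W_\Q)$ only sees weights $(1,-1),(0,0),(-1,1)$. Property (SV2) follows from the fact that the polarization $Q$ is a $\Q$-Hodge tensor, so $G\subset \Sp(W_\Q,Q)$ and the Weil operator $h(i)$ induces a Cartan involution on the adjoint group. For (SV3) one replaces $G$ by the quotient by the largest normal $\Q$-subgroup on which $h$ projects trivially; after possibly shrinking $\Gamma$ to a neat arithmetic subgroup this gives a genuine connected Shimura datum. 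The composition with $\tau\colon G\hookrightarrow\Sp(W_\Q,Q)\subset \CSp(W_\Q,Q)$ shows it is of Hodge type.

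The second step is to show that the image of $U$ lies in the Shimura variety $\sX^{\MT}:=\Gamma\backslash M$. The key input is that every $\Q$-Hodge tensor at $y$ remains a Hodge tensor along the entire variation, because the monodromy preserves the flat $\Q$-structure and the Hodge filtration varies holomorphically while the tensor is locally constant. Consequently $\Mon^0(f)\subset G^\der$, and the Mumford--Tate group at every $y'\in U$ is contained in a conjugate of $G$. The period map $U\to \Gamma_{\mathrm{Sp}}\backslash\mathbb{H}_g$ attached to $f$ therefore lifts locally to a map into $M$, and the compatibility with monodromy produces a well-defined holomorphic map $U\to \Gamma\backslash M=\sX^{\MT}$ lying over the inclusion $\sX^{\MT}\hookrightarrow \sA_g$ coming from $\tau$.

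For minimality, let $\sY\subset \sA_g$ be any Shimura subvariety of Hodge type containing $\varphi(U)$, with datum $(G',M')$. Any Shimura subvariety of Hodge type is cut out by a set of global $\Q$-Hodge tensors on the ambient $\sA_g$-variation; at every point of $\sY$ the Mumford--Tate group is contained in a conjugate of $G'$. Applying this to the very general point $y\in U\subset\sY$, where the Mumford--Tate group equals $G$ by our choice of $y\notin\Sigma$, we obtain $G\subset G'$ (after conjugation), and hence an inclusion of the associated Shimura varieties $\sX^{\MT}\subset\sY$. Uniqueness of the minimal Shimura variety is then immediate. The main technical issue is the careful choice of the arithmetic subgroup $\Gamma$ so that $\tau(\Gamma)$ preserves the lattice $L=H^1(A_y,\Z)$ and the resulting map $U\to \Gamma\backslash M$ is algebraic rather than merely holomorphic; this is dealt with by Borel's extension theorem together with the neatness assumption, as in \cite{Mo98}.
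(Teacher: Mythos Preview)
Your proposal is correct and follows essentially the same route as the paper: take $G$ to be the Mumford--Tate group at a very general point, let $M$ be the $G(\R)^+$-conjugacy class of the Hodge cocharacter, and deduce minimality from the minimality built into the definition of the Mumford--Tate group. The paper is more terse, citing \cite[Lemma~1.2.4]{De79} for the fact that the connected component of $h$ in the space of Hodge structures factoring through $G_\R$ is a $G(\R)^+$-orbit (rather than verifying (SV1)--(SV3) directly as you do), and it simply asserts the factorization $U\to\sX^{\MT}$ from the genericity of $y$ without spelling out the Hodge-tensor argument you give in your second step.
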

\par
\par
Although the Shimura variety $\sX^\MT$ is unique, the Shimura datum is 
unique only up to the centralizer of $G$ in $\CSp$, 
see \cite[Remark~2.9]{Mo98}.
\par
\begin{proof}
Let $G$ be the Mumford-Tate group at a very general point $y$ of $U$. 
In the topological space of  all maps $h \in M(Q)$ that factor 
through $G_\R$, choose $M$ to be the connected component containing the complex
structure at $y$.
By definition of the Mumford-Tate group, $M$ is not empty and
by the argument of \cite[Lemma~1.2.4]{De79}, $M$ is an $G(\R)^+$-conjugacy
class. Hence $(G,M)$ is a Shimura datum of Hodge type.
Since $y$ was very general, $\varphi: U \to \sA_g$ factors through $\sX^\MT$.
The minimality of $\sX^\MT$ follows from the minimality condition in the
definition of the Mumford-Tate group.
\end{proof}
\par
We now suppose that $U$ is a totally geodesic non-singular subvariety 
of the Shimura variety $\sX^\MT\subset \sA_g$. As in Section~\ref{ec} we can also allow 
a morphism $\varphi:U\to \sA_G$ as long as $\varphi(U)\subset \sA_g$ is a non-singular 
totally geodesic subvariety and $U\to \varphi(U)$ \'etale.
\par
\begin{theorem}[\cite{Mo98} Corollary 4.4]\label{Moonen_product}
If $U \subset \sX^{\MT}$ is totally geodesic, then $U$ is the base of a
Kuga fibre space. It is a Shimura variety of Hodge type 
up to some translation in the following sense:
\par
After replacing $U$ by a finite
\'etale cover, there are Kuga fibre spaces over $\sX_1$ and $\sX_2$ and an isomorphism
$\sX_1 \times \sX_2 \to \sX^\MT$, such that $U$ is the image of $\sX_1 \times
\{b\}$ for some point $b \in \sX_2(\C)$.
\par
For some $a \in \sX_2(\C)$, the subvariety $\sX_1 \times \{a\}$
in $\sX^\MT$ is a Shimura variety of Hodge type.
\end{theorem}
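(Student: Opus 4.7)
The plan is to lift the situation to the universal cover of $\sX^\MT$ and exploit the structure theory of totally geodesic complex submanifolds of Hermitian symmetric domains. Denote by $(G,M)$ the Shimura datum produced by Theorem~\ref{Moonen_exist_Shim}, so that $\sX^\MT = \Gamma\backslash M$ with $M = G(\R)^+/K$ a Hermitian symmetric domain endowed with its Bergman metric; let $o\in M$ be the base point corresponding to the reference complex structure $\varphi_0$. Choose a connected component $\tilde U\subset M$ of the preimage of $U$ under the universal covering map. Because $U$ is totally geodesic in $\sX^\MT$ and the projection $M\to \sX^\MT$ is a local isometry, $\tilde U$ is a closed, totally geodesic, complex submanifold of $M$.

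I would first show that $\tilde U$ is homogeneous under a reductive $\Q$-algebraic subgroup of $G$. After translating by an element $g_0\in G(\R)^+$ one may assume $o\in \tilde U$. Its tangent space $T_o\tilde U\subset T_oM$ is stable under the Riemannian curvature tensor and under the almost-complex structure, so by the standard Lie-theoretic description of totally geodesic complex submanifolds of symmetric spaces it arises from a $\varphi_0(S^1)$-stable reductive Lie subalgebra $\mathfrak h\subset \mathfrak g_\R$ inheriting the analogues of (SV1) and (SV2) from $(G,M)$. Let $H_\R$ be the corresponding connected real subgroup. Borel density applied to the arithmetic lattice $\Gamma\cap g_0 H_\R g_0^{-1}$, which acts on $g_0\cdot\tilde U$ with finite covolume, shows that the Zariski closure $H\subset G$ is defined over $\Q$.

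Next I would exploit the Mumford--Tate minimality of $G$. Since $G$ equals $\MT(\W_\Q)$ at a very general point of $U$ and the period map on $U$ factors through $\tilde U\cong H_\R/(H_\R\cap K)$, the subgroup $H$ contains that Mumford--Tate group and hence, by minimality, coincides with it on the image of $U$. A Goursat-style analysis of $H^\ad\hookrightarrow G^\ad$, combined with (SV3) for $(G,M)$ and the semisimplicity of both groups, now yields a decomposition $G^\ad \simeq H^\ad\times G_2^\ad$ over $\Q$. This lifts up to central isogeny to $G^\der\sim H^\der\times G_2^\der$ and induces a product decomposition $M = M_1\times M_2$ of the symmetric domain. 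Choosing torsion free arithmetic subgroups $\Gamma_i\subset (H_i\cdot Z)(\Q)^+$ with $\Gamma_1\times\Gamma_2$ contained in $\Gamma$ after passing to a finite index subgroup, one obtains Kuga fibre spaces $\sX_i = \Gamma_i\backslash M_i$ and the claimed isomorphism $\sX_1\times \sX_2 \to \sX^\MT$, after replacing $\sX^\MT$ by a finite \'etale cover. Writing $g_0 = (h_0, g_{2,0})\in H_\R\times G_{2,\R}$, the translate $g_0^{-1}\cdot(M_1\times\{o_2\})$ equals $M_1\times\{g_{2,0}^{-1}o_2\}$, so $U$ is the image of $\sX_1\times \{b\}$ with $b$ the class of $g_{2,0}^{-1}o_2$; the Kuga fibre space structure on $U$ is transported from the sub-datum $(H\cdot Z, M_1)$ via the inclusion $\tau\colon G\hookrightarrow \CSp(Q)$.

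The main obstacle, and the content of the phrase \emph{up to translation}, concerns the last clause: the chosen base point $b$ need not be a CM point, so $\sX_1\times\{b\}$ is only known to be totally geodesic and is not a priori a Shimura subvariety of Hodge type. To handle this I would replace $b$ by a CM point $a\in \sX_2(\C)$, whose existence follows from the standard density of CM points in any connected Shimura variety. At such an $a$ the complex structure $h_a$ on the fibre factors through a $\Q$-torus $T_a\subset G_2$, so the diagonal datum $(H\cdot T_a\cdot Z,\ M_1\times\{h_a\})$ lies in $(\CSp(Q),M(Q))$ and defines the desired Shimura subvariety $\sX_1\times\{a\}$ of Hodge type. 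The bulk of the real work is concentrated in the two algebraic steps: descending $H_\R$ to a $\Q$-subgroup and promoting the Lie-algebraic splitting to a $\Q$-splitting $G^\ad\simeq H^\ad\times G_2^\ad$; every remaining step is essentially formal once these are in place.
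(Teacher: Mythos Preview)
Your approach differs from the paper's and has a genuine gap at the crucial step.

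The paper does not reconstruct the splitting from the Lie theory of totally geodesic submanifolds. It cites Moonen for the product structure and then explains the Kuga datum by taking $G_1=\Mon^0$, the connected monodromy group of $R^1f_*\Q_A$. The decomposition
\[
(G^{\ad},M)\cong ((\Mon^0)^{\ad},M_1)\times (G_2^{\ad},M_2)
\]
exists because Deligne's theorem \cite{De82} says $\Mon^0$ is a \emph{normal} subgroup of $\MT(\W_\Q)^{\der}=G^{\der}$; reductivity of $G$ then produces an almost-direct complement $G_2$. The Kuga fibre space on $\sX_1$ is $(L,Q,\Mon^0,\tau,\varphi_0,\Gamma)$ with $\tau:\Mon^0\hookrightarrow\Sp$ the inclusion, and one only needs that $\varphi_0(S^1)$ \emph{normalizes} $\Mon^0$, not that it lies inside it.

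Your argument tries to build a $\Q$-group $H$ from the totally geodesic structure and then split $G^{\ad}\cong H^{\ad}\times G_2^{\ad}$ via a Goursat-style argument. The step ``$H$ contains that Mumford--Tate group'' is where this breaks. By Theorem~\ref{Moonen_exist_Shim} the Mumford--Tate group at a very general point of $U$ is $G$ itself, so your containment would force $H=G$ and $\tilde U=M$, contradicting $U\subsetneq\sX^{\MT}$. In fact the complex structures $h$ parametrized by $\tilde U$ do \emph{not} land in $H_\R$: already in the model $G=G_1\times G_2$, $\tilde U=M_1\times\{o_2\}$, each $h$ has a nontrivial $G_2$-component. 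All you actually get from your construction is that $\varphi_0(S^1)$ normalizes $H_\R$. Without the false containment nothing forces $H^{\ad}\trianglelefteq G^{\ad}$, and a Goursat analysis of a semisimple subgroup of a product of $\Q$-simple factors does not by itself yield normality (think of $\Sl_2\hookrightarrow\Sl_3$ sitting inside a single simple factor).

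The missing ingredient is precisely Deligne's normality theorem for $\Mon^0$. Once you identify your $H$, via the Borel-density step, with the Zariski closure of the lattice acting on $\tilde U$, i.e.\ essentially with $\Mon^0$, that theorem supplies the splitting; it is not a formal consequence of the Shimura axioms or of (SV3).
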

\par
\begin{proof}
In loc.~cit.\ the author deals with Shimura subvarieties of arbitrary 
period domains and shows that there totally geodesic subvarieties
$\sX_i$ such that $U$ is the image of $\sX_1 \times \{b\}$.
\par
We repeat part of his arguments to justify that $\sX_1$
is the base of a Kuga fibre space.
\par 
More precisely, let $(G,M)$ be the Shimura datum underlying $\sX^\MT$.
We have a decomposition of
the adjoint Shimura datum
$$ (G^\ad, M) \cong ((\Mon^0)^\ad, M_1) \times (G_2^\ad,M_2) $$
into connected Shimura data given as follows.
Since $G$ is reductive, there is a complement $G_2$ 
of $\Mon^0$, i.e.\ such that $\Mon^0 \times G_2 \to G$ is surjective
with finite kernel. Write $G_1:= \Mon^0$ and let $M_i$ be the set of maps
$${\rm Res}_{\C/\R}\G_m \>>> G \>>> G^\ad \>>> (G_i)^\ad.$$ 
For suitable arithmetic subgroups $\Gamma_i$ a component of the quotients
$\sX_i := \Gamma_i \backslash M_i$ have the claimed property 
by \cite{Mo98} Corollary 4.4.
\par
It suffices to take $\tau: \Mon^0 \to \Sp$ the natural inclusion
and $\varphi_0$ the restriction of any $h \in M$ to $S^1 \subset \C^*$.
Then $\varphi_0$ normalizes $\Mon^0$ and for a suitable choice of $\Gamma$,
$U$ is the base of the Kuga fibre space given by $(L,Q,\Mon^0,\tau,\varphi_0,\Gamma)$.
\end{proof}
\par
\begin{corollary}[See also \cite{Abd94}] \label{Moonen_rigid}
If in Theorem~\ref{Moonen_product} the subvariety $U$ is totally geodesic and rigid, 
then $U$ is a Shimura variety of Hodge type.  
\end{corollary}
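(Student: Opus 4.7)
The plan is to apply Theorem~\ref{Moonen_product} and then use rigidity to force the second Shimura factor to collapse to a point. After replacing $U$ by a finite \'etale cover, which preserves both the rigidity hypothesis and the conclusion we wish to prove, Theorem~\ref{Moonen_product} provides an isomorphism $\sX^{\MT} \cong \sX_1 \times \sX_2$ with $U$ identified with $\sX_1 \times \{b\}$ for some $b \in \sX_2(\C)$, and with $\sX_1 \times \{a\}$ already a Shimura variety of Hodge type for some distinguished $a \in \sX_2(\C)$. It therefore suffices to show that $\dim \sX_2 = 0$: the Shimura variety $\sX_2$ is connected, so in that case it reduces to a single point, forcing $b = a$ and hence $U = \sX_1$ of Hodge type.

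To obtain $\dim \sX_2 = 0$, I would argue by contradiction. Suppose $\dim \sX_2 > 0$ and let $T \subset \sX_2$ be a smooth connected open neighborhood of $b$. Consider
$$
\Phi : U \times T \cong \sX_1 \times T \hookrightarrow \sX_1 \times \sX_2 \cong \sX^{\MT} \hookrightarrow \sA_g,
$$
where the final inclusion is supplied by Theorem~\ref{Moonen_exist_Shim}. Since $U \to \sA_g$ is generically finite, $\dim U = \dim \sX_1$, hence $\dim(U \times T) = \dim \sX^{\MT}$. Because $\sX^{\MT}$ is realized as a Shimura subvariety of $\sA_g$, the map $\Phi$ is generically finite. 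Pulling back the universal abelian scheme along $\Phi$ yields a smooth projective family $\hat{f}:\hat{A}\to U\times T$ which restricts on $U\times\{b\}$ to $f:A\to U$. With $\dim T>0$ this directly contradicts the rigidity of $f:A\to U$.

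The main obstacle is verifying that the deformation obtained by varying $b$ really is non-trivial in the sense of the rigidity definition, i.e.\ that $\Phi$ remains generically finite onto its image rather than factoring through a lower-dimensional subvariety of $\sA_g$. This relies crucially on the embedding $\sX^{\MT}\hookrightarrow \sA_g$ from Theorem~\ref{Moonen_exist_Shim}; without the fact that $\sX^{\MT}$ sits as a \emph{sub}variety of $\sA_g$, moving $b$ could in principle produce isomorphic sub-families, leaving rigidity intact. Once this embedding is in hand, the argument is essentially a dimension count, and the corollary follows.
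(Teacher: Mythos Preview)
Your argument is correct and matches the paper's reasoning: the paper does not spell out a proof beyond the remark immediately following the corollary, which makes clear that rigidity is understood precisely as the non-existence of a non-trivial extension $U\times T\to\sA_g$, so that the product structure $\sX^{\MT}\cong\sX_1\times\sX_2$ from Theorem~\ref{Moonen_product} forces $\dim\sX_2=0$. Your contradiction via the open inclusion $\sX_1\times T\hookrightarrow\sX^{\MT}\hookrightarrow\sA_g$ is exactly the intended mechanism.
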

Here rigidity just means that the inclusion $U\to \sA_g$ does not extend to a non-trivial morphism
$U\times T \to \sA_g$. Since we assumed that $\sA_g$ is a fine moduli scheme, this is equivalent
to the fact that the induced family $f:A\to U$ is rigid.

\section{Stability for homogeneous bundles and the
Arakelov equality for Shimura varieties} \label{verify_arakelov}

To prove a first part of the properties of Kuga fibre space stated
in Proposition~\ref{shimuraprop} we recall 
from \cite{Mu77} and \cite{Mk87} some facts on homogeneous vector bundles
on Hermitian symmetric domains and deduce stability results.
\par
Let $M$ be a  Hermitian symmetric domain and let $G = \Aut(M)$
be the holomorphic isometries of $M$.
$\Aut(M)$ is the identity component of the isometry group
of $M$ and $M \cong G/K$ for a maximal compact subgroup $K \subset G$.
Let $V_0$ be a vector space with a representation
$\rho: K \to \Gl(V_0)$ and any $\rho$-invariant metric $h_0$.
Then 
$$V = G \times_K V_0:= G \times V_0/\sim, \quad \text{where}
\quad (g,v) \sim (gk,\rho(k^{-1})v) \quad \text{for} \quad  k\in K $$ 
with the metric $h$ inherited from $h_0$ is 
a vector bundle on $G/K$, homogeneous under the action of $G$, or as we will say, 
a {\em homogeneous} bundle.
\par
Let $U$ be non-singular algebraic variety. 
In this section we suppose that the universal covering of $U$ is
a symmetric domain $M=G/K$ and that the image of the fundamental group
of $U$ in $G$ is a neat arithmetic subgroup. We call a bundle $E_U$ on $U$ {\em homogeneous}, if its 
pullback to $M$ is homogeneous. We call $E_U$ {\em irreducible}, if the pullback is given by an irreducible
representation $\rho$.
\par
For the rest of this section, we work over a smooth toroidal compactification $Y$ of $U$ 
with $S=Y\setminus U$ a normal crossing divisor, as studied in \cite{Mu77}. If $Y^*$ denotes the Baily-Borel compactification of $U$, there exists a morphism $\delta:Y \to Y^*$ whose restriction to $U$ is the identity.
\par
Obviously, the cotangent bundle of a symmetric domain $M=G/K$ is the
homogeneous bundle associated with the adjoint representation on $(\Lie(G)/\Lie(K))^\vee$.
\par
We will not need the exact definition of a singular Hermitian metric, `good on $Y$' in
the sequel. Let us just recall that this implies that the curvature of the Chern 
connection $\nabla_h$ of $h$ represents the first chern class of $E$. 
\par
\begin{theorem}[\cite{Mu77} Theorem 3.1 and Proposition 3.4] 
\label{mumford1} \ 
\begin{enumerate}
\item[a.] Suppose that $E_U$ is a homogeneous bundle with Hermitian metric $h$ induced
by $h_0$ as above. Then there exists a unique locally free sheaf $E$ 
on $Y$ with $E|_U=E_U$, such that $h$ is a singular Hermitian metric 
good on $Y$.
\item[b.] For $E_U=\Omega_U^1$ one obtains the extension $E=\Omega_Y^1(\log S)$.
\item[c.] For $E_U=\omega_U$ one obtains the extension $E=\omega_Y(S)$ and this sheaf is the pullback of an invertible ample sheaf on $Y^*$.
\end{enumerate}
\end{theorem}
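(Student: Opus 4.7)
The strategy is to reduce everything to a local calculation on the toroidal compactification $Y$, following Mumford's construction. Near a boundary point of $Y$ one has local coordinates $(z_1,\ldots,z_n)$ in which $S$ is given by $z_1\cdots z_k = 0$, and these coordinates are matched to coordinates on the symmetric domain $M$ via the toroidal embedding construction so that, on the universal cover of a punctured polydisc neighborhood, $-\log|z_i|$ corresponds to a coordinate going to infinity in a rational boundary component. The whole theorem is then about controlling the behavior of invariant metrics in these coordinates.

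For part (a), I would define the extension $E$ sheaf-theoretically: for an open $V \subset Y$, let $E(V)$ be the space of sections of $E_U$ over $V \cap U$ whose norm with respect to $h$ grows at most like a power of $\log\bigl(\prod |z_i|^{-1}\bigr)$ and whose covariant derivatives under $\nabla_h$ satisfy analogous Poincaré-type growth bounds. The key claim is that this sheaf is coherent and locally free with $E|_U = E_U$, and that $h$ is good on $Y$ in the sense of Mumford (the Chern form extends to a current representing $c_1(E)$). Local freeness is proved by exhibiting an explicit frame: over a boundary chart, one picks a trivialization of the $K$-bundle and writes sections of $E_U$ using the exponential description of $M = G/K$; the logarithmic growth bounds are then verified from the explicit form of the $K$-invariant metric on $V_0$ pulled back along $\exp$. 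Uniqueness is forced by the growth conditions, since two distinct extensions would differ by a non-trivial section vanishing or blowing up on $S$, contradicting the prescribed rate.

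For part (b), one checks the identification on a local model: the homogeneous bundle $\Omega^1_U$ corresponds to the coadjoint representation on $(\Lie(G)/\Lie(K))^\vee$, and pulling back the invariant metric one verifies that $dz_i/z_i$ for $i \leq k$ and $dz_j$ for $j > k$ have bounded Poincaré-norm (while $dz_i$ for $i \leq k$ would have a pole), so the subsheaf they generate is exactly the extension produced in (a). For part (c), taking top exterior powers in (a) and using (b) identifies the extension of $\omega_U$ with $\omega_Y(S)$. The ampleness on the Baily-Borel compactification $Y^*$ then follows from the fact that sections of $\omega_Y(S)^{\otimes \nu}$, via the above growth conditions, correspond exactly to automorphic forms of weight $\nu$ (a positive multiple of the determinant character of $K$), which by the Baily-Borel theorem embed $Y^*$ into projective space for $\nu$ sufficiently large. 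The factorization through $\delta : Y \to Y^*$ expresses that these sections are pulled back from $Y^*$.

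The main obstacle is the explicit matching between the toroidal boundary coordinates and the invariant metric on $G/K$, i.e.\ showing that the $G$-invariant metric has at worst logarithmic singularities along $S$ (and conversely is bounded below by such). This requires the detailed structure of a rational boundary component, the horocyclic decomposition $G = KAN$ at a cusp, and the compatibility between the lattice defining the toroidal embedding and the action of the unipotent radical of the boundary parabolic. Once this matching is in place, the proofs of (a), (b), (c) become essentially a bookkeeping exercise in Poincaré growth estimates.
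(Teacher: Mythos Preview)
The paper does not give its own proof of this statement: Theorem~\ref{mumford1} is quoted directly from \cite{Mu77} (Theorem~3.1 and Proposition~3.4) and used as a black box. There is nothing in the paper to compare your sketch against.

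That said, your outline is a fair high-level summary of Mumford's original argument. The one point worth flagging is that in Mumford's actual proof the extension $E$ is not defined a priori as the sheaf of sections with logarithmic growth; rather, Mumford first constructs an explicit frame on each boundary chart (using the representation $\rho$ and the structure of the parabolic at a rational boundary component), declares $E$ to be the sheaf generated by that frame, and then verifies that the metric is good with respect to it. The growth characterization you propose is equivalent a posteriori, but proving coherence and local freeness directly from growth conditions (without first exhibiting the frame) would require an additional argument. Your identification of the main technical obstacle --- matching the toroidal boundary coordinates to the horospherical coordinates on $G/K$ and estimating the invariant metric there --- is exactly right, and is where most of the work in \cite{Mu77} lies.
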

\par
\begin{corollary}\label{mumford2}
Assume that $U$ maps to the moduli stack $\sA_g$ of polarized abelian varieties, 
and that this morphism is induced from a homomorphism $G\to \Sp$ by taking the
double quotient with respect to the maximal compact subgroup and a lattice as
in Section~\ref{KugaShimura}. 
\par
Then the Mumford compactification $Y$ satisfies the Assumptions~\ref{compass-pos} 
and Condition~\ref{compass-pos2}. 
\end{corollary}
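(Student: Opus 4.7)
The plan is to verify the two assertions separately, leaning on Theorem~\ref{mumford1} to pass between the invariant data on $M$ and the sheaves on $Y$, and on Mok's stability theorem for homogeneous bundles.

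First I would check Assumption~\ref{compass-pos}. The ampleness of $\omega_Y(S)$ with respect to $U$ is immediate from Theorem~\ref{mumford1}(c): one has $\omega_Y(S) = \delta^*\sL$ with $\sL$ ample on the Baily--Borel compactification $Y^*$, and $\delta|_U$ is the identity, so sections of a sufficiently high power of $\omega_Y(S)$ embed $U$. For the nefness of $\Omega^1_Y(\log S)$, I would use that on a Hermitian symmetric space of non-compact type the holomorphic bisectional curvature of the tangent bundle (with respect to the Bergman metric) is non-positive, hence the dual Hermitian metric on $\Omega^1_M$ is Griffiths semi-positive. By Theorem~\ref{mumford1}(a) this metric extends to a good singular Hermitian metric $h$ on $\Omega^1_Y(\log S)$. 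For any morphism $\tau:C\to Y$ from a smooth projective curve and any invertible quotient $\sN$ of $\tau^*\Omega^1_Y(\log S)$, the pullback metric on $\sN$ is good in Mumford's sense, so $\deg(\sN)$ is computed by the integral of its Chern curvature; semi-positivity forces $\deg(\sN)\ge 0$, and nefness follows.

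Next I would verify Condition~\ref{compass-pos2}. Write $\tilde U = M = M_1\times\cdots\times M_s$ as a product of irreducible bounded symmetric domains, $M_i = G_i/K_i$. The cotangent bundle splits as a $K = K_1\times\cdots\times K_s$-equivariant direct sum $\Omega^1_M = \bigoplus_i \pr_i^*\Omega^1_{M_i}$, and each summand is the homogeneous bundle attached to the irreducible $K_i$-representation on $(\Lie(G_i)/\Lie(K_i))^\vee$. After replacing $\Gamma$ by a neat subgroup respecting the factors, this decomposition descends to $U$; by Theorem~\ref{mumford1}(a) each summand extends to a locally free sheaf on $Y$, giving a decomposition $\Omega^1_Y(\log S) = \bigoplus_i \Omega_i$ with $\pi^*\Omega_i|_U = \pr_i^*\Omega^1_{M_i}$, which is exactly the product structure required. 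To conclude $\mu$-polystability with respect to $\omega_Y(S)$, I would invoke Mok's theorem (\cite{Mk87}) that a homogeneous bundle on such a quotient arising from an irreducible representation of $K$ is $\mu$-stable with respect to the canonical polarization. Applied factor by factor, this gives $\mu$-stability of each $\Omega_i$ and hence $\mu$-polystability of $\Omega^1_Y(\log S)$.

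The main obstacle I expect is not the structural decomposition but the careful boundary analysis at $S$. Both the nefness argument and the $\mu$-stability assertion are classical on the open locally symmetric variety $U$; their validity on the compactification $Y$ relies entirely on the fact that Mumford's good metrics produce the \emph{same} Chern classes and degrees as the algebraic extensions $\Omega^1_Y(\log S)$ and $\Omega_i$. Once this goodness is in force via Theorem~\ref{mumford1}, the curvature-integral computations on curves meeting $S$ and the application of Mok's stability criterion become routine, and the corollary follows.
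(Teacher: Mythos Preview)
Your overall structure is sound, and for the ampleness of $\omega_Y(S)$ and the decomposition $\Omega^1_Y(\log S)=\bigoplus_i\Omega_i$ you argue exactly as the paper does. Two points deserve comment.

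\textbf{Nefness.} Your route via Griffiths semi-positivity of the Bergman metric and curvature integrals on curves is genuinely different from the paper's. The paper instead uses the hypothesis that $U\to\sA_g$ comes from a group homomorphism $G\to\Sp$: then $\Omega^1_U$ is a homogeneous direct factor of the pullback of $\Omega^1_{\sA_g}\cong S^2(F^{1,0}_U)$, and by uniqueness of Mumford extensions $\Omega^1_Y(\log S)$ is a direct factor of $S^2(F^{1,0})$; Kawamata's semipositivity theorem gives $F^{1,0}$ nef, hence $\Omega^1_Y(\log S)$ nef. Your approach is more intrinsic (it does not use the map to $\sA_g$ at all), but the boundary step you flag is real: ``good'' in Mumford's sense is a property of the given metric on the given bundle, and it is not automatic that the induced metric on an arbitrary invertible quotient $\sN$ of $\tau^*\Omega^1_Y(\log S)$ is again good, so that $\deg(\sN)$ is computed by the curvature integral. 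This can be made to work (bounding the quotient curvature via the Chern--Weil formula and the goodness estimates), but it needs to be spelled out; the paper's argument sidesteps this entirely.

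\textbf{Polystability.} You establish $\mu$-stability of each $\Omega_i$ (what you call Mok's theorem is essentially Lemma~\ref{hom_is_stable} below, whose proof uses the Hermitian--Einstein property plus Mok's splitting result), but you have not checked that the $\Omega_i$ all have the \emph{same} slope with respect to $\omega_Y(S)$; without this, ``direct sum of stable sheaves'' does not give $\mu$-polystability. The paper handles this either by a direct Chern-character computation on products (as in Section~\ref{spl}) or, more efficiently, by invoking Yau's Uniformization Theorem once Assumption~\ref{compass-pos} is in hand. You should add one of these two verifications.
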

\begin{proof}
If the bounded symmetric domain $M$ decomposes as 
$M_1\times \cdots \times M_s$, hence if $\Aut(M)=:G =
G_1\times \cdots \times G_s$, the sheaves $\Omega^1_{M_i}$ are 
homogeneous bundles associated with $(\Lie(G_i)/\Lie(K_i))^\vee$.
They descend to sheaves $\Omega_{i\, U}$ on $U$ which extend to 
$\Omega_i$ on $Y$. The uniqueness of the extensions implies that 
$\Omega_Y^1(\log S)=\Omega_1\oplus\cdots \oplus \Omega_s$.

Let $f:A\to U$ denote the universal family over $U$, and let $F^{1,0}_U=f_*\Omega^1_{A/U}$
denote the Hodge bundle.
Since $U\to \sA_g$ is induced by a homomorphism $G\to \Sp$, and since the bundle
$\Omega^1_{\sA_g}$ is homogeneous on $\sA_g$, its pullback to $U$ is homogeneous under $G$.
The latter is isomorphic to $S^2(F^{1,0}_U)$. 
 
The sheaf $\Omega^1_U$ is a homogeneous direct factor, hence the uniqueness of the extension in
Theorem~\ref{mumford1} implies that $\Omega_Y^1(\log S)$ is a direct factor of the
extension of $S^2(F^{1,0}_U)$ to $Y$. We may assume that the local monodromies of
$R^1f_*\C_A$ around the components of $S=Y\setminus U$ are unipotent. Then
the Mumford extension is $S^2(F^{1,0})$, where $F=F^{1,0}\oplus F^{0,1}$ is the logarithmic Higgs bundle of $R^1f_*\C_A$. Moreover, as shown by Kawamata (e.g.\ \cite[Theorem 6.12]{Vi95}), the sheaf
$F^{1,0}$ is nef. So $S^2(F^{1,0})$ and the direct factor $\Omega_Y^1(\log S)$ are both nef.

The ampleness of $\omega_Y(S)$ follows directly from the second part of \cite[Proposition 3.4]{Mu77}.
In fact, as remarked in the proof of \cite[Proposition 4.2]{Mu77}, this sheaf is just the pullback
of the ample sheaf on the Baily-Borel compactification of $U$.

It remains to verify that $\Omega^1_Y(\log S)$ is $\mu$-polystable and that 
for all $i$ 
$\Omega_i$ is $\mu$-stable. 

Using standard calculation of Chern characters on products, as in Section 
\ref{spl}, it is easy to show that the slopes $\mu(\Omega_i)$ coincide with 
$\mu(\Omega^1_Y(\log S))$. The $\mu$-stability of $\Omega_i$ follows from Lemma~\ref{hom_is_stable} by
a case by case verification that for $M_i$ irreducible the representation attached to the
homogeneous bundle $\Omega_{M_i}$ is irreducible.

Alternatively, since we have verified the Assumptions~\ref{compass-pos}, 
we can use Yau's Uniformization Theorem, stated in \cite[Theorem 1.4]{VZ07}. 
It implies that $\Omega_Y^1(\log S)$ is $\mu$-polystable.
Then the sheaves $\Omega_i$, constructed above, are $\mu$-polystable as 
well. Moreover, if $\Omega_i$ decomposes as a direct sum of two $\mu$-polystable subsheaves the corresponding $M_i$ is the product of two subspaces. So if we choose the
decomposition $M=M_1\times \cdots \times M_s$ with $M_i$ irreducible, the
sheaves $\Omega_i$ are $\mu$-stable.
\end{proof}
\begin{example}\label{Deligne_Mumford1}
Let $E^{p,q}_U$ be a Hodge bundle of a uniformizing $\C$-variation of Hodge structures $\V$ over $U$. Then $E^{p,q}_U$ is a homogeneous vector bundle and the corresponding invariant metric $h$ is the Hodge metric, induced by the variation of Hodge structures.
Let $Y$ be a Mumford compactification of $U$. By Theorem~\ref{mumford1} there exists a good extension of $E^{p,q}_U$ to $Y$. 
 
On the other hand, as described in the introduction, one has the canonical Deligne extension of $\V\otimes_\C\sO_U$ to $Y$. The compatibility of this extension
with the $\sF$-filtration (see \cite{Sch73}) gives another extension $E^{p,q}$ of $E_U^{p,q}$ to $Y$.
\end{example}
\begin{lemma}\label{Deligne_Mumford2}
In the Example~\ref{Deligne_Mumford1} the canonical Deligne extension $E^{p,q}$ of $E_U^{p,q}$ to $Y$ coincides with the Mumford extension of $E_U^{p,q}$ in Theorem~\ref{mumford1},~a).
\end{lemma}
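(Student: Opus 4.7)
The plan is to show that both extensions are characterized by the same universal property: being a locally free extension to $Y$ on which the Hodge metric on $E_U^{p,q}$ is \emph{good} in Mumford's sense. The uniqueness part of Theorem~\ref{mumford1}(a) then forces them to coincide.

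First I would verify that the $G$-invariant Hermitian metric $h_0$ used in Mumford's construction agrees, up to positive constants on each $K$-isotypical summand, with the Hodge metric coming from $\V$. For a uniformizing variation the Hodge decomposition is equivariant for the $G$-action, so the Hodge metric is $G$-invariant; by Schur's lemma, on each $K$-isotypical component of the fibre the space of invariant inner products is one-dimensional, so the two metrics must match up to scalars. This identifies the Mumford extension with the unique locally free extension of $E_U^{p,q}$ on which the Hodge metric $h$ is good on $Y$.

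Second, I would invoke Schmid's norm estimates from \cite{Sch73} (the nilpotent orbit theorem together with the several-variable $\mathrm{SL}_2$-orbit theorem) to show that the Hodge metric on the canonical Deligne extension already has the logarithmic growth and the controlled derivatives required to be good on $Y$ in Mumford's sense. In the uniformizing case this becomes essentially explicit, because the $G$-equivariance forces the local variation of Hodge structures near a boundary point to be very close to its nilpotent orbit, and the approximation becomes exact on the toric boundary strata of the toroidal compactification $Y$.

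Combining the two steps, both extensions are locally free, agree with $E_U^{p,q}$ on $U$, and carry the same good singular Hermitian metric; so uniqueness forces them to coincide as subsheaves of $j_*E_U^{p,q}$. The only real obstacle is the goodness of the Hodge metric on the Deligne extension, but this is already implicit in Section~1 of \cite{Mu77}, where Mumford establishes Theorem~\ref{mumford1} precisely by checking the asymptotics coming from Schmid's estimates, so no genuinely new analytic input beyond those estimates is required.
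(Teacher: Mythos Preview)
Your proposal is correct and follows essentially the same approach as the paper: identify the $G$-invariant metric with the Hodge metric, use Schmid's asymptotics (the paper cites \cite{CKS86} building on \cite{Sch73}) to control the growth on the canonical extension, and then appeal to a uniqueness characterization. The only cosmetic difference is the direction of the uniqueness argument: you invoke the uniqueness of Mumford's good extension from Theorem~\ref{mumford1}(a) and check that the Deligne extension is good, whereas the paper invokes the characterization of the Deligne extension by logarithmic growth and checks that Mumford's goodness implies this growth condition; the two are equivalent.
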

\begin{proof}
Let $e_1,\ldots,e_n$ be a local basis for the canonical extension $E^{p,q}$. Building up on \cite{Sch73}, \cite[Theorem 5.21]{CKS86} describes the growth of the Hodge metric near $S$. In particular $||e_i||$ is bounded from above by the logarithm of the coordinate functions $z_1,\ldots ,z_k$. The Deligne extension is uniquely determined by the condition 
of logarithmic growth for the Hodge metric near $S$.
\par 
Since the metric $h$ coincides with the Hodge metric and since Mumford's notion `good' 
implies that $h(e_i)$ is bounded from above by the logarithm of the coordinate functions 
$z_1,\ldots ,z_k$, unicity implies that the Deligne extension and the Mumford extension
coincide.
\end{proof}
\begin{lemma} \label{hom_is_stable}
Suppose that the vector bundle $E$ on $Y$ is Mumford's extension
of an irreducible homogeneous vector bundle $E|_U$. Then $E$ is stable  with respect to the polarization
$\omega_Y(S)$. 
\end{lemma}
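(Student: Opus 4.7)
The plan is to apply the Kobayashi--Hitchin slope inequality to the invariant Hermitian metric $h$ on $E$, exploiting the fact, established in Theorem~\ref{mumford1}, that $h$ extends as a good singular Hermitian metric on $Y$, and then to use irreducibility of the representation $\rho:K\to \Gl(V_0)$ defining $E|_U$ to rule out subsheaves of the same slope.

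First I would record the relevant structural facts. The Bergman K\"ahler form $\omega_{\rm B}$ on $M=G/K$ is $G$-invariant and K\"ahler--Einstein; on $U$, its class equals (up to a positive multiple) $\ch_1(\omega_Y(S))|_U$, and by Theorem~\ref{mumford1}(c) the Bergman metric on $\omega_U$ extends as a good singular Hermitian metric on $\omega_Y(S)$. Since $h$ is induced from a $K$-invariant inner product on $V_0$, the mean-curvature endomorphism $K(h)=\sqrt{-1}\,\Lambda_{\omega_{\rm B}}\nabla_h^2$ is $G$-equivariant; Schur's lemma applied to the irreducible representation $\rho$ forces $K(h)=\lambda\cdot\mathrm{id}_{E|_U}$, so $(E|_U,h)$ is Hermite--Einstein with respect to $\omega_{\rm B}$.

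Next, for a saturated coherent subsheaf $0\neq \sF\subsetneq E$, I would apply the standard Chern--Weil slope inequality for Hermite--Einstein bundles. Writing $\pi:E\to \sF$ for the smooth orthogonal projection defined on a Zariski open subset of $U$, one has
\begin{equation*}
\rk(E)\cdot \mu_{\omega_Y(S)}(\sF)-\rk(\sF)\cdot \mu_{\omega_Y(S)}(E)
\;=\;-\int_U |\bar\partial \pi|_h^2\,\omega_{\rm B}^{n-1}\;\leq\;0.
\end{equation*}
The passage from integration on $U$ to degrees on $Y$ is precisely what the goodness of $h$ along $S$ (Theorem~\ref{mumford1}(a), after Mumford) guarantees: the Chern--Weil integrand extends across $S$ with integrable singularities, and the resulting cohomology classes are the first Chern classes of the Mumford extensions.

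Third, equality in this inequality forces $\bar\partial\pi=0$, i.e.\ $\sF|_U$ is a holomorphic subbundle preserved by the Chern connection of $h$. Pulling back to $M=G/K$, this Chern connection is the canonical homogeneous (Nomizu) connection, whose holonomy is exactly $\rho(K)$; hence a parallel subbundle corresponds to a $\rho(K)$-stable subspace of $V_0$. Irreducibility of $\rho$ then forces $\sF=0$ or $\sF=E$, so the inequality is strict for every proper saturated subsheaf, and $E$ is stable with respect to $\omega_Y(S)$.

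The main obstacle is the rigorous justification, in the singular Hermitian setting, that both the Chern--Weil identity and the equality case of the slope inequality survive at infinity. Both are consequences of Mumford's analysis in \cite{Mu77}: $h$ has at most logarithmic growth along $S$, its curvature has $L^1_{\mathrm{loc}}$ coefficients with the expected cohomological behavior, and the singular loci are of codimension at least two in the relevant sense, so that the Kobayashi--Hitchin argument goes through on $Y$ unchanged.
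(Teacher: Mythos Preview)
Your proof is correct and follows essentially the same route as the paper's: establish the Hermite--Einstein property of the invariant metric via Schur's lemma, apply the Chern--Weil slope inequality for a subsheaf, invoke Mumford's goodness of $h$ to identify the curvature integrals on $U$ with degrees on $Y$, and rule out the equality case by irreducibility. The only noticeable difference is in the last step: the paper quotes Mok's result \cite[Proposition~2, p.~198]{Mk87} that an irreducible homogeneous bundle admits no nontrivial holomorphic orthogonal splitting, whereas you argue directly via the holonomy of the canonical connection on $G\times_K V_0$ being $\rho(K)$, so that a parallel subbundle yields a $\rho(K)$-invariant subspace of $V_0$. Both arguments are valid; your holonomy formulation is slightly more self-contained, while the paper's citation of Mok and of \cite[Theorem~5.20]{Kol85} makes the boundary analysis more explicit.
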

\par
\begin{proof}
By definition of Mumford's extension (\cite[Theorem 3.1]{Mu77}),
$E$ carries a metric $h$ coming from the $G$-invariant metric, again denoted by $h$, on the pull back $\tilde E$ of $E$ to $M$. As mentioned already, for a singular metric, good in the sense of Mumford, the curvature of the Chern connection $\nabla_h$ of $h$ represents the first chern class of $E$. 

We claim that the restriction of $\nabla_h$ to $U$ is a  Hermitian Yang-Mills connection
with respect to the  K\"ahler-Einstein  metric $g$ on $\Omega^1_U$.  In fact,
the pull back vector bundle $\tilde E$ on $M$ is an irreducible homogeneous vector bundle.
\par
So our claim says that this $G$-invariant metric $h$ on $\tilde E$ is Hermitian-Yang-Mills with respect to the $G$-invariant (K\"ahler-Einstein) metric  $g$ on $\Omega^1_M$ with the argument adapted from the proof of \cite[Theorem 3.3 (1)]{Ko86}.  The $g-$trace of the curvature $\wedge_g(\Theta_h)$ of $h$  is a $G-$invariant
endomorphism on the vector bundle $\tilde E$, and 
$$
\wedge_g(\Theta_h)_0 := \wedge_g(\Theta_h)|_{\tilde E_0} 
$$  
is an $K-$invariant endomorphism
on the vector space $\tilde E_0.$  Since the maximal compact subgroup $K$  acts on $\tilde E_0$ irreducibly, $\wedge_g(\Theta_h)_0 $  must be a scalar multiple of the identity
on $\tilde E_0.$  The facts that $G$ operates on $M$ transitively and that the induced action of $G$  on $\tilde E$  commutes with $\wedge_g(\Theta_h)$ imply that
$\wedge_g(\Theta_h)$  is  a constant  scalar multiple  of the identity endomorphism.
So, $h$ is a Hermitian-Yang-Mills metric  with respect to the $G$-invariant (K\"ahler-Einstein) metric  $g$ on $\Omega^1_M$. Here we regard $\Omega^1_M$ as an irreducible  homogeneous vector bundle. On the quotient $U$ we obtain the Hermitian-Yang-Mills metric $h$ on $E|_U$ with respect to the K\"ahler-Einstein metric $g$ on $\Omega^1_U.$ 
\par
Suppose that $F \subset E$ is a subbundle and let $s_U$ 
be the $C^\infty$ orthogonal splitting over $U$. 
By Theorem 5.20 in \cite{Kol85} the curvature of the Chern connection to $h|_F$
represents the $c_1(F)$. The Chern-Weil formula implies
$$ R(\nabla_{(h|_F)}) = R(\nabla_h)|_F + s_u \wedge s_u^*. $$
The Hermitian Yang-Mills property of $h$ yields $\mu(F) \leq \mu(E)$ and equality holds
if and only if $s_U$ is holomorphic. 

If the equality holds, the pullback of $s_U$ to $M$ gives an orthogonal
splitting of Hermitian vector bundles
$$\pi^* E|_U \cong  \pi^* F|_U \oplus \pi^* F^\bot|_U.$$
By Proposition~2 on p.~198 of \cite{Mk87} this contradicts the
irreducibility of $E|_U$. Thus $E$ is $\mu$-stable. 
\end{proof}
\par
\begin{lemma} \label{tensorofhomog}
Suppose that $E_i$ are vector bundles on $Y$, that are Mumford's extensions
of irreducible homogeneous vector bundles $E_i|_U$. Then
$E_1 \otimes E_2$ is $\mu$-polystable.
\end{lemma}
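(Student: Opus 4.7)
The plan is to combine the representation-theoretic decomposition of the tensor product of two irreducible homogeneous bundles with the Hermitian--Yang--Mills property of Mumford's metrics, and then use the uniqueness in Theorem~\ref{mumford1} to transport the decomposition across the boundary.

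First, I would work on the universal cover $M=G/K$ and write $\tilde E_i = G\times_K V_i$ for irreducible representations $\rho_i\colon K\to \Gl(V_i)$. By complete reducibility of representations of the compact group $K$, the tensor product decomposes as $V_1\otimes V_2 = \bigoplus_j W_j$ into irreducible $K$-summands. This yields a $G$-equivariant decomposition
$$
\tilde E_1\otimes \tilde E_2 \;=\; \bigoplus_j \tilde F_j, \qquad \tilde F_j := G\times_K W_j,
$$
of $\tilde E_1\otimes\tilde E_2$ into irreducible homogeneous vector bundles. This decomposition is invariant under the neat arithmetic group $\Gamma\subset G$, so it descends to a decomposition $E_1|_U\otimes E_2|_U = \bigoplus_j F_j|_U$ by irreducible homogeneous bundles on $U$, and each $F_j|_U$ admits a unique Mumford extension $F_j$ to $Y$ by Theorem~\ref{mumford1}~a).

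Next, I would compare $E_1\otimes E_2$ with $\bigoplus_j F_j$. Both are locally free sheaves on $Y$ extending $E_1|_U\otimes E_2|_U$. Endow $E_1\otimes E_2$ with the tensor product $h_1\otimes h_2$ of the invariant metrics coming from Theorem~\ref{mumford1}~a); a direct verification (as in \cite{Mu77}, using that the tensor product of singular Hermitian metrics good on $Y$ is again good) shows this metric is good on $Y$. By the uniqueness clause in Theorem~\ref{mumford1}~a), $E_1\otimes E_2$ must then coincide with the Mumford extension of $E_1|_U\otimes E_2|_U$, and the decomposition above extends uniquely to $E_1\otimes E_2 = \bigoplus_j F_j$ on $Y$. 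By Lemma~\ref{hom_is_stable}, each $F_j$ is $\mu$-stable with respect to $\omega_Y(S)$.

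It remains to show that all the $F_j$ have the same slope equal to $\mu(E_1\otimes E_2)$. For this I would use the Hermitian--Yang--Mills argument from the proof of Lemma~\ref{hom_is_stable}: by irreducibility of the $K$-representations $V_i$, the Chern curvatures of $h_1$ and $h_2$ satisfy $\wedge_g\Theta_{h_i} = \lambda_i\cdot \mathrm{Id}_{E_i|_U}$ for constants $\lambda_i$, so the tensor connection satisfies
$$
\wedge_g\Theta_{h_1\otimes h_2} \;=\; \lambda_1\cdot \mathrm{Id}_{E_1\otimes E_2} + \lambda_2\cdot \mathrm{Id}_{E_1\otimes E_2} \;=\; (\lambda_1+\lambda_2)\cdot \mathrm{Id}.
$$
Restricting the tensor connection to each $G$-invariant summand $\tilde F_j$ preserves the Hermitian--Yang--Mills property with the same Einstein constant $\lambda_1+\lambda_2$; by Chern--Weil the slope of $F_j$ with respect to $\omega_Y(S)$ is proportional to this constant, hence independent of~$j$. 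This gives the equality of slopes and proves $\mu$-polystability.

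The main obstacle is technical rather than conceptual: one has to ensure that the tensor product of Mumford-good singular Hermitian metrics on $Y$ is again good, so that the uniqueness statement in Theorem~\ref{mumford1} can be applied to identify $E_1\otimes E_2$ with the Mumford extension of its restriction. Once this point is verified, the representation-theoretic decomposition on $M$ together with the Einstein-constant computation directly yields the polystability.
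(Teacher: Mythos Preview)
Your proposal is correct and follows essentially the same strategy as the paper: decompose $\rho_1\otimes\rho_2$ into irreducible $K$-representations, apply Lemma~\ref{hom_is_stable} to each summand, and verify equality of slopes. The paper's presentation differs only in emphasis: it first notes that $E_1\otimes E_2$ is $\mu$-semistable (since the $E_i$ are $\mu$-stable and carry Hermitian--Yang--Mills metrics) and then deduces the slope equality of the summands from semistability, rather than from your explicit Einstein-constant computation; and it leaves the identification $E_1\otimes E_2=\bigoplus_j F_j$ on $Y$ implicit, whereas you correctly argue it via the uniqueness clause in Theorem~\ref{mumford1} and the goodness of the tensor metric.
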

\par
\begin{proof}
Let $\rho_i$ be the representation corresponding to $E_i$.
Since the $E_i$ are $\mu$-stable, $E_1 \otimes E_2$ is $\mu$-semistable.
Repeating the calculation of the curvature of the Chern connection
from the previous Lemma, the existence of a subbundle of $E_1 \otimes E_2$ of the same slope
as $E_1\otimes E_2$ implies that the respresentation 
$\rho_1 \otimes \rho_2$ corresponding to $E_1 \otimes E_2$
is not irreducible. Since $K$ is reductive, $\rho_1 \otimes \rho_2$
decomposes as a direct sum of irreducible representations. Each
of them defines a $\mu$-stable bundle, again by the previous Lemma, 
and equality of slopes follows from semistability. 
\end{proof}
\par
Before proving the first part of Proposition~\ref{shimuraprop} for the Mumford compactification
$Y$, let us show that the Arakelov equality is independent of the compactification
$Y$ and compatible with replacing $U$ by an \'etale covering $U'$.
\begin{lemma}\label{AEindep}
Let $\delta:U' \to U$ be a finite \'etale morphism and let $Y,S$ and $Y',S'$ be two 
compactifications of $U$ and $U'$, both satisfying 
the Assumptions~\ref{compass-pos}. Let $\mu$ denote the slope on $Y$  
with respect to $\omega_{Y}(S)$ and $\mu'$ the one on $Y'$ with respect to $\omega_{Y'}(S')$. Given a complex polarized variation of Hodge structures $\V$ on $U$ with unipotent monodromy at infinity, let $(E,\theta)$ and $(E',\theta')$ be the logarithmic Higgs bundles 
of $\V$ and $\V'=\delta^*\V$. Then
\begin{enumerate}
\item[i.] $\deg(\delta)\cdot\mu(E^{1-q,q})=\mu'(E'^{1-q,q})$, for $q=0,1$.
\item[ii.] $\deg(\delta)\cdot\mu(\Omega_{Y}^1(\log S))=\mu'(\Omega_{Y'}^1(\log S'))$.
\item[iii.] In particular the Arakelov equality on $Y$ implies the one on $Y'$.
\end{enumerate}
\end{lemma}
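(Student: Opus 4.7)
The plan is to compare both slopes with those computed on a common compactification $\tilde Y$. I first construct a smooth projective $\tilde{Y}$ compactifying $U'$, with $\tilde{S}=\tilde{Y}\setminus U'$ a normal crossing divisor, together with a finite morphism $\bar\delta:\tilde{Y}\to Y$ of degree $\deg(\delta)$ extending $\delta$ and a proper birational $\pi:\tilde{Y}\to Y'$ that is an isomorphism over $U'$. Starting from the normalization $Z$ of $Y$ in the function field of $U'$ (which is finite over $Y$ and extends $\delta$), Hironaka's theorem provides a desingularization with normal crossing boundary, and further blow-ups along boundary strata arrange that $\tilde{Y}$ dominates $Y'$ as well.

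Comparing $\tilde{Y}$ with $Y$: the ramification formula $K_{\tilde{Y}}=\bar\delta^*K_Y+\sum(e_i-1)\tilde{D}_i$ combined with $\bar\delta^*S=\sum e_i\tilde{D}_i$ (the $e_i$ being the ramification indices of $\bar\delta$ along the components $\tilde{D}_i$ of $\tilde{S}$) yields the key identity $\omega_{\tilde{Y}}(\tilde{S})=\bar\delta^*\omega_Y(S)$. The unipotency of the monodromy of $\V$ around $S$ carries over to $\V'=\delta^*\V$ around $\tilde S$ (nilpotent residues remain nilpotent after multiplication by the $e_i$), so the Deligne canonical extensions satisfy $\tilde{E}=\bar\delta^*E$. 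The projection formula for the finite morphism $\bar\delta$ then yields
\[
c_1(\tilde{E}^{1-q,q})\cdot c_1(\omega_{\tilde{Y}}(\tilde{S}))^{n-1}=\deg(\delta)\cdot c_1(E^{1-q,q})\cdot c_1(\omega_Y(S))^{n-1},
\]
and analogously for $c_1(\omega_{\tilde{Y}}(\tilde{S}))^n$ and for $\Omega^1_Y(\log S)$ (whose first Chern class is $c_1(\omega_Y(S))$), so the slopes on $\tilde{Y}$ are $\deg(\delta)$ times those on $Y$.

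Comparing $\tilde{Y}$ with $Y'$: when $\pi$ is a composition of blow-ups along strata of $S'$, the log canonical class is preserved, i.e.\ $\omega_{\tilde{Y}}(\tilde{S})=\pi^*\omega_{Y'}(S')$. Indeed, the blow-up of the codimension-$r$ intersection of $r$ components contributes $(r-1)E$ both to $K_{\tilde{Y}}-\pi^*K_{Y'}$ and to $\pi^*S'-\tilde{S}$, cancelling in the difference. Combined with $\tilde{E}=\pi^*E'$ (birational invariance of the Deligne extension under modifications isomorphic over $U'$ with unipotent monodromy) and $\deg(\pi)=1$, the projection formula gives $\mu_{\tilde{Y}}(\tilde{E}^{1-q,q})=\mu'(E'^{1-q,q})$ and likewise for $\Omega^1_{\tilde Y}(\log \tilde S)$ versus $\Omega^1_{Y'}(\log S')$. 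Combining the two comparisons proves (i) and (ii), from which (iii) is immediate.

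The main technical obstacle is ensuring that $\pi$ can be realized as blow-ups along boundary strata only. Resolving the rational map from the desingularized normalization to $Y'$ by arbitrary Hironaka blow-ups need not respect the normal crossing structure. I would either invoke weak factorization of birational maps between smooth projective varieties with normal crossing divisors (reducing to toroidal blow-ups along strata), or argue directly that the exceptional correction terms in $\omega_{\tilde{Y}}(\tilde{S})-\pi^*\omega_{Y'}(S')$ pair to zero against $c_1(\tilde{E}^{1-q,q})$ and powers of $\pi^*c_1(\omega_{Y'}(S'))$, via a careful codimension analysis of the pushforwards of exceptional cycles.
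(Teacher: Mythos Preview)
Your overall strategy---build a common roof $\tilde Y$ over $Y$ and $Y'$ and compare via the projection formula---is the same as the paper's. The gap is in the construction of $\bar\delta$. You assert that $\bar\delta:\tilde Y\to Y$ is \emph{finite}, but your own recipe (desingularize the normalization $Z$ of $Y$ in $K(U')$, then blow up further to dominate $Y'$) destroys finiteness: $Z$ need not be smooth even when $Y$ is smooth and $S$ has normal crossings (a double cover of $\mathbb A^2$ branched along $xy=0$ already has an $A_1$ singularity), and resolving $Z$ makes $\tilde Y\to Y$ only generically finite. Once $\bar\delta$ has exceptional divisors, the clean Riemann--Hurwitz identity $\omega_{\tilde Y}(\tilde S)=\bar\delta^*\omega_Y(S)$ picks up exactly the kind of effective exceptional correction you flag for $\pi$. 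So the ``main technical obstacle'' you identify at the end is present on the $\bar\delta$-side as well, and your second paragraph does not go through as written.

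The paper sidesteps this entirely by never trying to identify either pullback with $\omega_{\tilde Y}(\tilde S)$. It writes $\omega_{\tilde Y}(\tilde S)=\sL\otimes\sO(E)=\sL'\otimes\sO(E')$ with $\sL=\bar\delta^*\omega_Y(S)$, $\sL'=\pi^*\omega_{Y'}(S')$ and $E,E'$ effective exceptional, and then proves $\sL=\sL'$ directly: both are nef and big (this is where Assumption~\ref{compass-pos} is used), and since $E,E'$ are exceptional one has $H^0(\tilde Y,\sL^\beta)=H^0(\tilde Y,\omega_{\tilde Y}(\tilde S)^\beta)=H^0(\tilde Y,\sL'^\beta)$ for all $\beta\ge 0$; a short nef-and-big argument then forces $\sL=\sL'$. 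Once $\sL=\sL'$, the projection formula for the generically finite $\bar\delta$ and the birational $\pi$ (together with $\bar\delta^*E^{1-q,q}=\pi^*E'^{1-q,q}$, the compatibility of Deligne extensions with pullback under unipotent monodromy) gives (i) and (ii) with no exceptional bookkeeping, no weak factorization, and no codimension analysis.
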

\begin{proof}
Choose a compactification $\bar{Y}$ of $U'$, with $\bar{S}=\bar{Y}\setminus U$ a normal crossing divisor,  such that the inclusion $U'\to Y'$ extends to a birational morphism $\bar{\sigma}: \bar{Y} \to Y'$ and such that the finite morphism $\delta: U'\to U$ extends to a generically finite morphism 
$\bar{\delta}: \bar{Y} \to Y$.

The Assumptions~\ref{compass-pos} implies that the sheaves $\sL=\bar{\delta}^*\omega_{Y}(S)$ and
$\sL'=\bar{\sigma}^*\omega_{Y'}(S')$ are both nef and big. 
Moreover for some effective exceptional divisors $E$ and $E'$ one has 
$$
\omega_{\bar{Y}}(\bar{S})=\sL\otimes\sO_{\bar{Y}}(E)=\sL'\otimes\sO_{\bar{Y}}(E').
$$
Since $\sL$ is big, one can find an effective divisor $F$ on $\bar{Y}$ and some $\nu$ sufficiently large, such that the sheaf $\sL^{\nu}\otimes \sO_{\bar{Y}}(-F)$ is ample.
Replacing $F$ and $\nu$ by some multiple, one can as well assume that $\sL^{\nu}\otimes \sO_{\bar{Y}}(-F)$ and $\sL^\nu\otimes \sO_{\bar{Y}}(-F)\otimes \omega_{\bar{Y}}(\bar{S})^{-1}$ are very ample. Replacing $F$ and $\nu$ again by some multiple, one may even assume that the sheaves $\sL^\beta\otimes \sO_{\bar{Y}}(-F)$ are generated by global sections for all $\beta \geq \nu$ (e.g.\ \cite[Corollary 2.36]{Vi95}). Choosing $\nu$ large enough and a suitable effective divisor $F'$, the same holds true $\sL'^\beta\otimes \sO_{\bar{Y}}(-F')$.

Since for all $\beta\geq 0 $ 
$$
H^0(\bar{Y},\sL^\beta)=H^0(\bar{Y},\omega_{\bar{Y}}(\bar{S})^\beta)=H^0(\bar{Y},\sL'^\beta),
$$
this implies that $\sL=\sL'$. Let us write $\bar{\mu}$ for the slope with respect to the
invertible sheaf $\sL=\sL'$ on $\bar{Y}$.

The Deligne extension of $\V\otimes_C\sO_U$ is compatible with pullbacks. This implies that $\bar{\delta}^*E^{1-q,q}=\bar{\sigma}^*E'^{1-q,q}$, and by the projection formula
\begin{gather*}
\deg(\delta)\cdot\mu(E^{1-q,q})= \bar{\mu}(\bar{\delta}^*E^{1-q,q})=\bar{\mu}(\bar{\sigma}^*E'^{1-q,q})=\mu'(E'^{1-q,q})\\
\mbox{and \ \ }\dim(U) \cdot\deg(\delta)\cdot \mu(\Omega^1_{Y}(\log S))=\deg(\delta)\cdot\mu(\omega_{Y}(S))=
\bar{\mu}(\sL)= \hspace*{2cm}\\ \hspace*{3cm}
\bar{\mu}(\sL')=\mu'(\omega_{Y'}(S'))= \dim(U)\cdot \mu'(\Omega^1_{Y'}(\log S')).
\end{gather*}
Of course, iii) follows from i) and ii).
\end{proof}
\par
We now prove Proposition~\ref{shimuraprop} except for the statement
v). The latter will be shown at the end of Section~\ref{stablengthsplit}, by applying
Addendum~\ref{characterizationadd},~III.
\par
\begin{proof}[Proof of Proposition~\ref{shimuraprop}, part i)--iv) for Mumford's compactification] \ \\
Those properties can be verified over some \'etale covering of $U$. 
So one may assume that that $U\to \sA_g$ factors through a fine moduli scheme, 
hence by by Theorem~\ref{Moonen_product} through $\sX^{\rm MT}=\sX_1\times \sX_2$ with image 
of the form $\sX_1 \times \{b\}$. Let $\T$ denote the irreducible direct factor of the uniformizing $\C$-variation of Hodge structures on the Shimura variety $\sX_1\times \sX_2$, with $\V \subset \T|_{\sX_1 \times \{b\}}$. 

By Schur's Lemma and \cite[Prop. 1.13]{De87} a polarized variation of Hodge structures
on $\sX_1\times\sX_2$ is a direct sum of exterior products of complex polarized variations 
of Hodge structures (see \cite[Prop. 3.3]{VZ05}). 
The irreducibility of $\T$ implies that
$\T={\rm pr}_1^*\V_1 \otimes {\rm pr}_2^*\V_2$ for suitable irreducible $\C$-variations of Hodge structures 
$\V_i$ on $\sX_i$. Remark that $\V$, $\T$ and $\V_1$ are concentrated in bidegrees $(1,0)$ and $(0,1)$.
Hence $\V_2$ has weight zero and is concentrated in bidegree $(0,0)$. 
Since ${\rm pr}_2^*\V_2|_{\sX_1 \times \{b\}}$ is a trivial Hodge structure, independent 
of the point $b$, the local system $\T|_{\sX_1 \times \{b\}}$ is just a direct sum of several copies of $\V_1$. This remains true if one replaces $b$ by a different point $a \in \sX_2$.
The irreducibility of $\V$ implies that $\V\cong\V_1$, so passing from $b$ to $a$ one does not change the 
irreducible components of the complex variation of Hodge structures.

So we may suppose without loss of generality 
that $U$ is a Shimura variety of Hodge type given by the datum $(G,M)$.
\par
Our first aim is to exhibit $E^{1,0}$ and $E^{0,1}$ as homogeneous
vector bundles. Let  $\tau:G \to \CSp$ be the map given by the
property `of Hodge type'. Choose a base point on the symmetric domain
$M$ and its image on $M':=M(Q)$. There are maximal compact subgroups $K$ of 
$G^\der$ and $K'\cong U(g)$ of $\Sp$  such that 
$U \to \sA_g$ is uniformized by the map $M = G^\der/K \to \Sp/K'=:M'$. 
Let $\pi_U: D \to U$ and $\pi_{\sA_g}: D' \to \sA_g$ be the natural
quotients modulo arithmetic subgroups. The choice of the base 
point in $M'$ is equivalent to the choice 
of a $Q$-symplectic basis $\{a_i,b_i\}$ of $V$ such that
we have $h(i)(a_i) = b_i$ and $h(b_i)=-a_i$ by~\ref{defsympl_h}. 
\par
Since the $(1,0)$- and $(0,1)$-parts of $\pi_{\sA_g}^*(R^1 f_* \C_A)$ are the
$i$ resp.\ $-i$-eigenspaces of $h(i)$, they are homogeneous bundles.
Moreover, they are given by the representations $\rho_\can$ and 
$\overline{\rho_\can}$, where $\rho_\can: U(g) \to \GL(g)$
is the standard representation. The $(1,0)$- and $(0,1)$-parts of 
$\pi_U^*(R^1 f_* \C_A)$ are consequently homogeneous
bundles too, given by the representation $\rho_\can\circ \tau|_K$
and $\overline{\rho_\can}\circ \tau|_K $.
\par
Next, we link two notions of irreducibility.
Since $\pi_U$ is the quotient map by an arithmetic group $\Gamma \subset 
G(\Q)$, whose image in $G^\ad$ is Zariski-dense, $\C$-irreducible summands
of $R^1 f_* \C_A$ are in bijection with $\C$ irreducible summands of the
representation 
$$\widetilde{\tau}: \widetilde{G^\ad} \>>> G \>>> \CSp. $$
Here  $\widetilde{G^\ad} \to G^\ad$ is the universal covering
and the map to $\widetilde{G^\ad} \to G$ is induced by the 
canonical splitting of $\Lie(G)$ into its abelian and its semisimple part.
We determine these $\C$ irreducible summands,
 following \cite[\S 2.3.7 (a)]{De79}, 
see also \cite{Sa65} or \cite{Sa80}.
\par  
By \cite[\S 2.3.4]{De79} the simple components of $G_\R$ are
are absolutely simple. Write 
$$G_\R^\ad = \bigtimes_{i \in I} G_i$$
and partition the index set $I = I_c \cup I_\nc$ according 
to whether $G_i$ is compact or not.
By \cite[\S 1.3.8 (a) and \S 2.3.7]{De79} the irreducible direct factors of
$V_\C$ are of the form $\otimes_{t \in T} W_t$ for some $T \subset I$,
where $W_t$ is an irreducible representation of $\widetilde{G_{i,\R}}$.
Moreover, the condition (SV1) forces $T \cap I_\nc$ to contain at 
most one element, see \cite[Lemma~1.3.7]{De79} This shows i).
\par
If  $T \cap I_\nc = \emptyset$, then $\V$ is unitary. We thus restrict to
the other case from now on. Then the condition `Shimura variety'
imposes the restrictions to the representation of the non-compact group 
as in the hypothesis of Lemma~\ref{DeligneSatake}, stated below. From this
lemma we deduce that in each case the representation of
$K \subset G^{\ad}$ is irreducible.
\par
Now we know by Lemma~\ref{hom_is_stable} that for each
irreducible summand $\V$ of $R^1 f_* \C_A$, both $E^{1,0}$
and $E^{0,1}$ are $\mu$-stable.
By Lemma~\ref{tensorofhomog}, the bundle $\Hom(E^{1,0}, E^{0,1})$ is 
$\mu$-polystable with the $\mu$-stable summands given as homogeneous bundles
by the irreducible summands of the representation $\rho \otimes \rho^\vee$,
where $\rho=\rho_\can \circ \tau$. This proves iii) and iv).
Since $M \to M'$ is induced by a group homomorphism and hence totally 
geodesic, the tangent map
$$T_M \>>> T_M'|_M = \Hom(E^{1,0}, E^{0,1})$$
is onto a direct summand. Since it is a map between homogeneous bundles,
the direct summand corresponds to an irreducible summand of the 
representation $\rho \otimes \rho^\vee$. Consequently, the map 
$$\overline{(T_U)} \>>> \overline{\Hom(E^{1,0}, E^{0,1})|_U}$$
between the Mumford extensions is an injection onto a $\mu$-stable summand.
Since the Mumford extension of $T_U$ is $T_Y(-\log S)$ and 
the Mumford extension of $E^{p,q}$ is the Deligne extension, we
obtain
$$\mu(T_Y(-\log S)) = \mu(E^{1,0}) - \mu(E^{0,1}),$$
i.e.\ the Arakelov equality, stated as ii).
\end{proof}
We keep the notations of the preceding proof, that will
be completed with the following lemma. We follow \cite{De79} 
and define a cocharacter $\chi: \G_m \to (G_i)_\C$ induced by 
$h:{\rm Res}_{\C/\R}\G_m  \to G_\R$ in the following way. 
Fix an isomorphism 
$$
({\rm Res}_{\C/\R}\G_m)_\C \cong \G_m \times \G_m
$$
such that the inclusion 
$$
({\rm Res}_{\C/\R}\G_m)(\R) \to ({\rm Res}_{\C/\R}\G_m)(\C)
$$
is given by $z \mapsto (z,\overline{z})$. Let 
$i: \G_m \to \G_m \times \G_m$ be the inclusion given by the identity
in the second argument. Then $\chi := h_\C \circ i$.
\par
Given $\chi$, we let $\tilde{\chi}$ be the inductive 
system of fractional lifts of $\chi$ to $\widetilde{G_i}$ 
(\cite[\S 1.3.4]{De79}).
\par
\begin{lemma} \label{DeligneSatake}
Let $\tau_{i,t}: \widetilde{G_i} \to \GL(W_t)$ be an irreducible
representation whose highest weight $\alpha$ is a fundamental
weight and such that 
\begin{equation} \label{invopp}
\langle \widetilde{\chi}, \alpha+\iota(\alpha) \rangle =1,
\end{equation}
where $\iota$ is the opposition involution. Then $W_t$ is the sum of
two non-empty weight spaces, denoted by $W_t^{1,0}$ and $W_t^{0,1}$.
Both weight spaces are irreducible representations of the
maximal compact subgroup $K_i$ of $G_i$.
\end{lemma}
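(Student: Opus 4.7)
The plan is to use a short-grading argument on $W_t$ induced by $\widetilde{\chi}$ and then exploit the Hermitian symmetric Lie algebra decomposition forced by (SV1).

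First I would observe that $\widetilde{\chi}$ induces a $\Z$-grading $W_t = \bigoplus_j W_t^j$ by its eigenvalues, and that the set of $\widetilde{\chi}$-weights on $W_t$ is contained in the interval $[\langle \widetilde{\chi}, w_0(\alpha)\rangle,\ \langle \widetilde{\chi}, \alpha\rangle]$, where $w_0$ is the longest Weyl group element. Since the opposition involution satisfies $\iota(\alpha) = -w_0(\alpha)$, hypothesis \eqref{invopp} rewrites as $\langle \widetilde{\chi},\alpha\rangle - \langle \widetilde{\chi}, w_0(\alpha)\rangle = 1$, so this interval has length exactly one. Exactly two integer values of $\widetilde{\chi}$ therefore occur on $W_t$, both non-empty since $W_t$ contains its highest- and lowest-weight vectors; after a scalar shift these are the claimed $W_t^{1,0}$ and $W_t^{0,1}$.

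Next, condition (SV1) forces the $\widetilde{\chi}$-weights on $\Lie(G_i)_\C$ to lie in $\{-1,0,1\}$, giving the Hermitian symmetric decomposition $\Lie(G_i)_\C = \mathfrak{p}^- \oplus \Lie(K_i)_\C \oplus \mathfrak{p}^+$ into $\widetilde{\chi}$-eigenspaces. In particular $K_i^\C$ centralizes $\widetilde{\chi}$ and stabilizes each of $W_t^{1,0}$ and $W_t^{0,1}$, while $\mathfrak{p}^\pm$ shifts the grading by $\pm 1$. Since $K_i$ has maximal rank in $G_i$, one can choose a Borel $B$ of $\widetilde{G_i}_\C$ containing a common maximal torus $T^\C \subset K_i^\C$ and compatible with $\mathfrak{p}^+$ so that $\alpha$ is the $B$-highest weight of $W_t$; then $B \cap K_i^\C$ is a Borel of $K_i^\C$ and $\alpha|_{T^\C}$ is a dominant integral weight of $K_i^\C$.

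To identify the $K_i^\C$-structure of $W_t^{1,0}$, let $v$ be the $B$-highest weight vector; it lies in $W_t^{1,0}$ and is annihilated by $\mathfrak{p}^+$. From $W_t = U(\Lie(G_i)_\C)\cdot v$ together with $\mathfrak{p}^+ \cdot v = 0$, the PBW theorem gives $W_t = U(\mathfrak{p}^-) \cdot (K_i^\C \cdot v)$. Since $\mathfrak{p}^-$ lowers the $\widetilde{\chi}$-grade by $1$ and only two grades occur, this forces $W_t^{1,0} = K_i^\C \cdot v$. As the $K_i^\C$-module generated by a $B\cap K_i^\C$-highest weight vector of weight $\alpha|_{T^\C}$, it is irreducible. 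Applying the same argument with the opposite Borel to the lowest weight vector $v' \in W_t^{0,1}$ yields that $W_t^{0,1}$ is also $K_i^\C$-irreducible. The step I expect to require most care is the last one: a priori $\mathfrak{p}^- \cdot W_t^{1,0}$ could split into several $K_i^\C$-irreducible summands, and it is precisely the hypothesis that $\alpha$ is a \emph{fundamental} weight that rules this out, ensuring $W_t$ is of minuscule type with respect to the Harish-Chandra parabolic $K_i^\C \cdot \exp(\mathfrak{p}^+)$ so that each $\widetilde{\chi}$-graded piece carries a single $K_i^\C$-irreducible representation.
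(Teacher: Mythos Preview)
Your argument is correct and considerably more conceptual than the paper's proof, which proceeds by a case-by-case analysis over the Dynkin types $a_n$, $b_n$, $c_n$, $d_n$, identifying in each case the explicit $K_i$-representations carried by $W_t^{1,0}$ and $W_t^{0,1}$ (standard and wedge-power representations for $\SU(p,q)$, spin and half-spin representations for the orthogonal types, and so on) and checking their irreducibility by hand. Your uniform approach via the Harish-Chandra decomposition $\Lie(G_i)_\C=\mathfrak{p}^-\oplus\Lie(K_i)_\C\oplus\mathfrak{p}^+$ and the PBW argument showing $W_t^{1,0}=U(\Lie(K_i)_\C)\cdot v$ is cyclic on a $B\cap K_i^\C$-highest weight vector, hence irreducible, avoids all of this. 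The paper's approach has the advantage of naming the $K_i$-representations explicitly, which is used later (for instance in Proposition~\ref{stabAB}, where the $a_n$, $q=1$ case is invoked); your approach has the advantage of being a genuine proof rather than a classification check.

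One remark: your closing worry is misplaced. The symmetric argument you already gave, applying PBW from the lowest weight vector $v'$ with respect to the opposite Borel, shows $W_t^{0,1}=U(\Lie(K_i)_\C)\cdot v'$ is cyclic on a lowest weight vector and hence irreducible, with no appeal to $\alpha$ being fundamental or to any minuscule property. The fact that $\mathfrak{p}^-\cdot W_t^{1,0}$ might a priori decompose as a $K_i^\C$-module is irrelevant once you know it coincides with the cyclic module $U(\Lie(K_i)_\C)\cdot v'$. The fundamental-weight hypothesis plays no role in the irreducibility step; it enters only through Deligne's classification of solutions to \eqref{invopp}.
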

\par
\begin{proof} The equivalence of the condition~\eqref{invopp}
and the decomposition into two weight spaces is in (\cite[\S 1.3.8]{De79}).
The possible solutions to~\ref{invopp} are listed on \cite[p.~461]{Sa65}.
We distinguish the cases according to the Dynkin diagram of $G_i$.
We use that the cocharacter $\widetilde{\chi}$ satisfying
\eqref{invopp} determines a special node in the Dynkin diagram
(\cite[\S 1.2.5]{De79}).
\par
{\em Type $a_n$:} In this case $G_i=\SU(p,q)$ with $p+q=n-1$, depending
on the signature of the bilinear form induced by the Cartan involution
$\ad(h(i))$. We may assume $p \geq q$. The maximal compact 
subgroup is 
$$
K_i = S(U(p) \times U(q)).
$$
If $q>1$ only the standard representation satisfies~\ref{invopp}.
The weight spaces  $W_t^{1,0}$ and $W_t^{0,1}$ carry the 
standard representation of $\SU(p)$ and $\SU(q)$ respectively
and are hence irreducible.
\par
If $q=1$ all $j$-th wedge product representations for $j=1,\ldots,n-1$
satisfy~\ref{invopp}. The weight spaces  $W_t^{1,0}$ (resp.\ $W_t^{0,1}$) 
carry the $j$-th (resp.\ $j-1$-st) exterior power representation of $\SU(p)$,
which is also irreducible.
\par
{\em Type $b_n$:} In this case is $G_i=\SO(2,2n-1)$ (type $IV_{2n-1}$ in 
\cite{Sa80})) and the only representation that satisfies~\ref{invopp}
is the spin representation of the double cover ${\rm Spin}(2,2n-1) \to G_i$.
The maximal compact subgroup is 
$$
K_i \cong \SO(2n-1,\R) \times \SO(2,\R).
$$
We claim that one weight space carries the tensor product of
the spin representation of $\SO(2n-1)$ and one of the natural 
representations $\SO(2,\R) \to U(1)$ while the other weight
space carries the tensor product of the spin representation
and the complex conjugate representation of $\SO(2,\R)$.
In both cases the representations are well known to be irreducible.
\par
In order to prove the claim we write down the spin representation
explicitly and exhibit its weight spaces. We follow the notations 
of \cite[\S 3.5]{Sa65}.
Let $G_i$ be the group of transformations of $V_\R$ preserving
a bilinear form $S$ of signature $(2n-1,2)$. Let 
$\{e_1,\ldots,e_{2n-1}\}$ (resp.\ $\{e_{2n},e_{2n+1}\}$) be an orthonormal
bases of $V^+$ (resp. $V^-$), the subspaces where the form is
positive (resp.\ negative) definite.
We let $f_j = (e_{2j-1}+ i e_{2j})/2$ for $j=1,\ldots,n-1$
and $f_n = (e_{2n} + i e_{2n+1})$.
Denote by $W$ the complex vector space generated by the
$f_j$. The exterior algebra $E=\Lambda(W)$ embeds into the Clifford
algebra of $C(V,S)$. For an ordered subset $\sJ  = \{i_1,\ldots,i_a \} 
\subset N:=\{1,\ldots,n\}$ we consider the elements $f_\sJ = f_{i_1} \cdots
f_{i_a}$ and their complex conjugates in the Clifford algebra.
We identify $E$ with the left ideal $E \cdot \overline{f_N}$ and
obtain a representation of  ${\rm Spin}(2,2n-1)$ on $E$.
\par
We may choose in 
$$ \Lie(G_i) = \left\{ 
\left( \begin{matrix}
X_1 & X_{12} \\ X_{12}^T & X_2
\end{matrix} \right) ; \,X_1,X_{12},X_2 \,\text{real}, X_1,X_2\,
\text{skew symmetric}
\right\}.$$
a maximal abelian subalgebra,
$$ \mathfrak{h} = \left\{ 
{\rm diag}\left(
\left( \begin{matrix}
0 & -\xi_1 \\ \xi_1 & 0
\end{matrix} \right),\ldots,
\left( \begin{matrix}
0 & -\xi_{n-1} \\ \xi_{n-1} & 0
\end{matrix} \right),
0,
\left( \begin{matrix}
0 & -\xi_{n} \\ \xi_{n} & 0
\end{matrix} \right), \xi_i \in \R
\right)\right \}.$$ 
Then by the calculation in \cite[p.~455]{Sa65}) 
the $f_\sJ$ are eigenvectors with 
corresponding weight $\frac{i}{2} (\sum_{i\not\in \sJ} \xi_i - 
\sum_{i \in \sJ} \xi_i)$. The map $\chi$ corresponding to the
special node is generated by the element $H_0 \in \Lie(G_i)$
with $X_1=0$, $X_{12}=0$ and 
$X_2=\left( \begin{matrix}
0 & -1 \\ 1 & 0
\end{matrix} \right)$. We deduce that 
the weight spaces $W_i^{1,0}$ (resp.\ $W_i^{0,1}$) are generated by 
the $f_\sJ$ with $n \not \in \sJ$
(resp.\ by the $f_\sJ$ with $n \in \sJ$).
\par
From this we first read off that $\SO(2,\R)$ acts on the
weight spaces as claimed. Fix the root system
$$\{i(\xi_1-\xi_2),\ldots ,i(\xi_{2n-2} - \xi_{2n-1}), 
i\xi_{2n-1}\}$$ of ${\rm so}(2n-1)$. Consider 
$W_i^{1,0}$ as a representation of $\widetilde{\SO(2n-1)}$
of dimension $2^{n-1}$. A vector of highest weight 
is $f_{N \setminus \{n\}}$
with weight $i/2 \sum_{i=1}^{n-1} \xi_i$. Consequently, the 
representation contains a spin representation of 
${\rm Spin}(2n-1) \to \SO(2n-1)$. For dimension reasons the representation
is irreducible. The same argument applies to $W_i^{0,1}$.
\par
{\em Type $c_n$:} In this case $G_i=\Sp(n)$, and as in the beginning
of the proof of Proposition~\ref{shimuraprop} above, 
the weight spaces carry the standard representation of $U(n)$
and its complex conjugate. Thus, they are irreducible. 
\par
{\em Type $d_n$:} This case splits into two subcases according to the
$\chi$ or equivalently according to the position of the corresponding special
node in the Dynkin diagram.
\par
{\em Special node at the `fork' end}. In this case 
$$G_i = \SU^-(n,\BH) \cong \SU(n,n) \cap \SO(2n,\C) \subset \Sl(2n,\C)$$ 
where $\BH$ denotes the Hamiltonians. In this matrix representation
the weight spaces are given by the $n$ first (resp.\  last) column vectors.
The maximal compact subgroup $K_i \cong U(n)$ sits in $G_i$ via
$$ A+iB \mapsto \left(\begin{array}{cc} A & B \\ 
-B & A\\ \end{array}\right) $$
Consequently, both weight spaces are $n$-dimensional and carry the 
irreducible standard representation of $U(n)$. 
\par
{\em Special node at the opposite end}. This is completely
similar to the case $b_n$ replacing `spin' by `half spin' representations
throughout.
\par
{\em Exceptional Lie algebras} do not admit any solution to~\ref{invopp}.
\end{proof}

\section{Slopes and filtrations of coherent sheaves}\label{fil}

We will need small twists of the slope $\mu(\sF)$ defined with respect to the
nef and big invertible sheaf $\omega_Y(S)$ in~\ref{slope}. So we will decompose the slope in a linear combination of different slopes and we will deform the coefficients a little bit. In particular, as in \cite{La}, we will compare the Harder-Narasimhan filtrations for small twists of slopes.

On the non-singular projective variety $Y$ of dimension $n$ consider $n-1$-tuples 
of $\R$-divisors
$$
\underline{D}^{(\iota)}=(D^{(\iota)}_1,\ldots,D^{(\iota)}_{n-1}),
$$
for $\iota=1,\ldots,m$. The collection of those divisors will be denoted by $\underline{D}^{(\bullet)}$.
Given two such tuples  $\underline{D}^{(\bullet)}$ and $\underline{D}'^{(\bullet)}$ we define the sum componentwise, hence
$$
\underline{D}^{(\bullet)}+\underline{D}'^{(\bullet)}= \big[(D^{(\iota)}_1+D'^{(\iota)}_1,
\ldots,D^{(\iota)}_{n-1}+D'^{(\iota)}_{n-1}); \ \iota = 1,\ldots,m\big].
$$
\begin{definition} We call $\underline{D}^{(\bullet)}$ a {\em semi-polarization}
if the $\R$-divisors $D^{(\iota)}_j$ are nef for $\iota=1,\ldots,m$ and for
$j=1,\ldots,n-1$ and if the intersection cycle
$$(\underline{D}^{(\iota)})^{n-1}:=D^{(\iota)}_1.\cdots.D^{(\iota)}_{n-1}$$ 
is not numerically trivial for $\iota=1,\ldots,m$.
\end{definition}
For a coherent torsion free sheaf $\sF$ on $Y$ and for each $\iota\in \{1,\ldots,m\}$
one defines the slope 
$$
\mu_{\underline{D}^{(\iota)}} (\sF)= \frac{\ch_1(\sF).(\underline{D}^{(\iota)})^{n-1}}{\rk(\sF)},
$$
and adding up
\begin{equation}\label{addslope}
\mu_{\underline{D}^{(\bullet)}} (\sF)=
\mu_{[\underline{D}^{(1)},\ldots, \underline{D}^{(m)}]} (\sF)=\sum_{\iota=1}^m \mu_{\underline{D}^{(\iota)}} (\sF)=
\sum_{\iota=1}^m \frac{\ch_1(\sF).(\underline{D}^{(\iota)})^{n-1}}{\rk(\sF)}.
\end{equation}
In the sequel we will assume that $\underline{D}^{(\bullet)}$ is a semi-polarization, and 
we fix a torsion free coherent sheaf $\sF$ on $Y$. If there is no ambiguity, we write
$\mu'$ in this Section instead of $\mu_{\underline{D}^{(1)},\ldots, \underline{D}^{(m)}}$,
and we reserve the notion $\mu$ for the special case where the slope is taken with respect to $\omega_Y(S)$. 

Given an exact sequence of torsion free coherent sheaves
$$
0\>>> \sF' \>>> \sF \>>> \sF'' \>>> 0,
$$
an easy calculation shows that
\begin{equation}\label{additiv}
\mu'(\sF)= \frac{\rk(\sF')}{\rk(\sF)}\mu'(\sF') + \frac{\rk(\sF'')}{\rk(\sF)}\mu'(\sF'').
\end{equation}
In order to define `stability' for locally free or torsion free coherent sheaves one has to take care of boundary divisors of slope zero, i.e.\ of prime divisors $D$ with $\mu'(\sO_Y(D))=0$. Since the divisors 
${D}^{(\iota)}_j$ are nef, this is equivalent to the condition $D.(\underline{D}^{(\iota)})^{n-1}=0$, for $\iota=1,\ldots,m$.
\begin{definition}\label{fil.2} 
Keeping the notations introduced above, let $\sF$ and $\sG$ be two coherent torsion free sheaves on $Y$. 
\begin{enumerate}
\item[a.] A subsheaf $\sG$ of $\sF$ is {\em $\mu'$-equivalent} to $\sF$, if $\sF/\sG$ is a torsion sheaf and if $\ch_1(\sF)-\ch_1(\sG)$ is the class of an effective divisor $D$ with $\mu'(\sO_Y(D))=0$, or equivalently with $D.(\underline{D}^{(\iota)})^{n-1}=0$, for $\iota=1,\ldots,m$. We call  {\em $\mu'$-equivalence} the equivalence relation
on coherent sheaves generated by $\mu$-equivalent inclusions. 
\item[b.] A morphism $\sG \to \sF$ is {\em surjective up to $\mu'$-equivalence}, if its image
is $\mu'$-equivalent to $\sF$.
\item[c.] $\sG\subset \sF$ is {\em saturated}, if $\sF/\sG$ is 
torsion free.
\item[d.] $\sF$ is {\em $\mu'$-stable}, if $\mu'(\sG) < \mu'(\sF)$
for all subsheaves $\sG$ of $\sF$ with $\rk(\sG)<\rk(\sF)$.
\item[e.] $\sF$ is {\em $\mu'$-semistable}, if $\mu'(\sG) \leq \mu'(\sF)$ for all subsheaves $\sG$ of $\sF$.
\item[f.] $\sF$ is {\em $\mu'$-polystable} if it is the direct sum of $\mu'$-stable sheaves of the same slope. 
\item[g.] A saturated subsheaf $\sG$ of $\sF$ is called a {\em maximal destabilizing subsheaf}, 
if for all subsheaves $\sE$ of $\sF$ one has $\mu'(\sE)\leq \mu'(\sG)$ and if the equality implies that $\sE\subset \sG$.
\end{enumerate}
\end{definition}
We will give a nicer description of the relation `$\mu$-equivalence' 
in a special case at the beginning of Section~~\ref{spl}.
\begin{lemma}\label{fil.4} \
\begin{enumerate}
\item[1.] If $\sF$ is $\mu'$-stable and if $\sG\subset \sF$ is a subsheaf
with $\mu'(\sG)=\mu'(\sF)$ then $\sF$ and $\sG$ are $\mu'$-equivalent.
\item[2.] A $\mu'$-polystable sheaf $\sF$ is $\mu'$-semistable.
\item[3.] In particular, if $\sH$ is invertible, then $\bigoplus \sH$ is $\mu'$-semistable.
\end{enumerate}
\end{lemma}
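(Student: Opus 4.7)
The plan is to reduce all three statements to the elementary fact that intersecting effective divisors with the nef cycles $(\underline{D}^{(\iota)})^{n-1}$ yields nonnegative numbers, together with additivity of $\ch_1$ in short exact sequences. For (1), I would first observe that $\mu'$-stability of $\sF$ combined with the assumed equality $\mu'(\sG)=\mu'(\sF)$ forces $\rk(\sG)=\rk(\sF)$, since otherwise Definition~\ref{fil.2} (d) would give a strict inequality. Thus $\sF/\sG$ is torsion and its codimension-one support (counted with multiplicities) yields an effective divisor $D$ with $\ch_1(\sF)-\ch_1(\sG)=[D]$. The Chern-class comparison then rewrites the slope difference as $\mu'(\sF)-\mu'(\sG)=\sum_\iota D\cdot(\underline{D}^{(\iota)})^{n-1}/\rk(\sF)$; each summand is nonnegative by nefness of the $\underline{D}^{(\iota)}$ and effectivity of $D$, and the equality hypothesis forces each summand to vanish. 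Therefore $\mu'(\sO_Y(D))=0$ and $\sG\subset\sF$ is exactly a $\mu'$-equivalence in the sense of Definition~\ref{fil.2} a).

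For (2), decompose $\sF=\sF_1\oplus\cdots\oplus\sF_s$ into $\mu'$-stable summands of common slope $\mu_0:=\mu'(\sF)$ and induct on $s$, with $s=1$ trivial. Given $\sG\subset\sF$, write $I\subset\sF_1$ for the image under the first projection and $K:=\sG\cap(\sF_2\oplus\cdots\oplus\sF_s)$ for its kernel. A short auxiliary extension of the argument from (1) shows that every subsheaf of a $\mu'$-stable sheaf has slope at most that of the ambient sheaf — strict for smaller rank by definition, nonstrict for equal rank by the same effective-divisor argument used above — so $\mu'(I)\leq\mu_0$. By the inductive hypothesis applied to $\sF_2\oplus\cdots\oplus\sF_s$, which is $\mu'$-polystable of slope $\mu_0$, we get $\mu'(K)\leq\mu_0$. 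Applying the additivity formula~\eqref{additiv} to the short exact sequence of torsion-free sheaves
$$0\to K\to\sG\to I\to 0$$
then expresses $\mu'(\sG)$ as a convex combination of $\mu'(K)$ and $\mu'(I)$, both bounded by $\mu_0$, yielding $\mu'(\sG)\leq\mu'(\sF)$ as desired. The degenerate cases $I=0$ or $K=0$ are handled directly by the inductive step or the auxiliary observation.

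Part (3) follows at once: an invertible sheaf admits no subsheaves of strictly smaller rank, so it is vacuously $\mu'$-stable, making $\bigoplus\sH$ a $\mu'$-polystable sheaf to which (2) applies. The only point needing care is purely bookkeeping around Definition~\ref{fil.2} (d), which as literally stated restricts only subsheaves of strictly smaller rank; the auxiliary equal-rank observation noted in paragraph two is the single non-trivial use of the notion of $\mu'$-equivalence. No deeper input — neither Simpson's correspondence, the Harder--Narasimhan filtration, nor any Bogomolov-type inequality — is needed for this structural lemma.
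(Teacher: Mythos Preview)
Your proof is correct and follows essentially the same approach as the paper's: the key ingredient in both is that an equal-rank subsheaf $\sG\subset\sF$ has $\ch_1(\sF)-\ch_1(\sG)$ represented by an effective divisor, which intersects the nef cycles $(\underline{D}^{(\iota)})^{n-1}$ nonnegatively, and part (2) is then reduced to the stable case by induction on the number of direct factors. The only difference is that you spell out the induction step explicitly via the projection/kernel exact sequence and the convexity formula~\eqref{additiv}, whereas the paper simply asserts ``2) follows by induction on the number of direct factors.''
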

\begin{proof}
If $\sG$ is a subsheaf of $\sF$ with $\rk(\sG)=\rk(\sF)$ then $\ch_1(\sF)-\ch_1(\sG)$ is an effective divisor $D$. Since all the $D_j^{(\iota)}$ are nef, one finds $D.(\underline{D}^{(\iota)})^{n-1}\geq 0$ and hence 
$\mu'(\sG) \leq \mu'(\sF)$. This implies 2) in case that $\sF$ is $\mu'$-stable.

For $\mu'$-polystable sheaves 2) follows by induction on the number of direct factors, and
3) is an example for the statement in 2).

If $\sF$ is $\mu'$-stable and $\mu'(\sG)=\mu'(\sF)$, then by definition
$\rk(\sF)=\rk(\sG)$, hence $D.(\underline{D}^{(\iota)})^{n-1}= 0$ as claimed in 1).
\end{proof}
Later the divisors $D^{(\iota)}_i$ will correspond to the determinant of the $\mu$-polystable direct factors $\Omega_j$ of $\Omega^1_Y(\log S)$ in the decomposition~\ref{eqint.1}, each one occurring as often as the rank of $\Omega_j$, except the one corresponding to the upper index $\iota$. 
For one $\iota$ we will multiply in~\ref{addslope} $\mu_{\underline{D}^{(\iota)}}$
by a factor $1+\epsilon$. 

We consider in this section a more general and more flexible set-up than needed in the sequel, hoping that it might be of use in a different context. We choose a second tuple
$$
\underline{H}^{(\iota)}=(H^{(\iota)}_1,\ldots,H^{(\iota)}_{n-1})
$$
of nef $\R$-divisors, for $\iota=1,\ldots,m$, and the polynomial 
$$
\mu'_t(\sF)=
\mu_{\underline{D}^{(\bullet)}+t\cdot \underline{H}^{(\bullet)}} (\sF)=
\sum_{\iota=1}^m \frac{\ch_1(\sF).(\underline{D}^{(\iota)}+t\cdot \underline{H}^{(\iota)})^{n-1}}{\rk(\sF)}.
$$
Of course one has $\mu'_0(\sF)=\mu'(\sF)$.
The cycle $(\underline{D}^{(\iota)}+t\cdot \underline{H}^{(\iota)})^{n-1}$ can be written as  
$$
D^{(\iota)}_{1}.\cdots . D^{(\iota)}_{n-1} + 
\sum_{I \in \sI} t^{n-|I|-1}\cdot D^{(\iota)}_{i_1}.\cdots . D^{(\iota)}_{i_{|I|}} . H^{(\iota)}_{j_1}.\cdots . H^{(\iota)}_{j_{n-1-|I|}}
$$
where the sum is taken over the set $\sI$ of ordered subsets 
$$
I=\{i_1,\ldots,i_{|I|}\} \mbox{ \ \ of \ \ }\{1,\ldots,n-1\}
$$  
of cardinality $|I| < n-1$, and where $\{j_1,\ldots,j_{j_{n-1-|I|}}\}$ is 
the complement of $I$ in $\{1,\ldots,n-1\}$, again as an ordered set.
For a coherent sheaf $\sG$ one has
\begin{gather}\label{muI1}
\mu'_t(\sF)-\mu'_t(\sG)=\mu'(\sF)-\mu'(\sG)+
\sum_{\sI} t^{n-|I|-1}\cdot 
(\mu'^{I}(\sF)-\mu'^{I}(\sG)),\\
\mbox{with \ \ } \label{muI2}
\mu'^{I}(\sG)=\sum_{\iota=1}^m \frac{\ch_1(\sG).D^{(\iota)}_{i_1}.\cdots . D^{(\iota)}_{i_{|I|}} . H^{(\iota)}_{j_1}.\cdots . H^{(\iota)}_{j_{n-1-|I|}}}{\rk(\sG)}.
\end{gather}
\begin{lemma}\label{fil.5} For a coherent sheaf $\sF$ of rank $r$ consider the sets
$$
{\rm S} = \{ \mu'(\sG); \ \sG \subset \sF\} \subset \R \mbox{ \ \ and \ \ }
\sS=\big\{\mu'_t(\sG)=\sum_{\nu=0}^{n-1}
a_\nu\cdot t^\nu; \ \sG\subset \sF\big\} \subset \R[t].
$$ 
Then 
\begin{enumerate}
\item[i.] the set ${\rm S}$ is discrete and bounded from above.
\item[ii.] There exists some $\epsilon_0>0$ and some `maximal' element
$G(t)\in \sS$, such that for all $F(t)\in \sS$ with $F(t)\neq G(t)$ one has $G(\epsilon)> F(\epsilon)$ for $0 < \epsilon\leq \epsilon_0$.
\end{enumerate}
\end{lemma}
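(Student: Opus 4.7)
My plan is to deduce both parts from classical boundedness results for families of coherent subsheaves of~$\sF$ and, in part~(ii), to extract a lex-maximal element by applying part~(i) separately to each coefficient of $\mu'_t(\sG)$.

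For part~(i) I will invoke a Grothendieck-type boundedness lemma: for each constant $C\in\R$, the family of saturated subsheaves $\sG\subset\sF$ with $\mu'(\sG)\geq C$ is bounded, since each $(\underline{D}^{(\iota)})^{n-1}$ is a non-trivial nef $(n-1)$-cycle. Boundedness confines the first Chern classes $\ch_1(\sG)$ of such $\sG$ to a finite union of cosets of the kernel of the numerical map $\xi\mapsto (\xi.(\underline{D}^{(1)})^{n-1},\dots,\xi.(\underline{D}^{(m)})^{n-1})$ in the N\'eron--Severi group. Hence the numerator of $\mu'(\sG)$ takes only finitely many values on this family, and combined with $\rk(\sG)\in\{1,\dots,r\}$ this proves that ${\rm S}\cap[C,\infty)$ is finite. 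Taking $C$ large gives both the upper bound and the discreteness of~${\rm S}$.

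For part~(ii) I will iteratively maximize the coefficients $a_\nu(\sG)$ of $\mu'_t(\sG)$ in lex order. By~(\ref{muI1}) and~(\ref{muI2}), each $a_\nu(\sG)$ is (up to constants) a sum of ``mixed slopes'' $\mu'^I(\sG)$ indexed by ordered subsets $I\subset\{1,\dots,n-1\}$ of cardinality $n-1-\nu$; since the mixed intersection cycles are products of nef $\R$-divisors they are themselves nef, so the boundedness argument of~(i) extends to each of these slopes. Starting from~$\sS$, I pass successively to the non-empty subfamilies $\sS_\nu\subset\sS_{\nu-1}$ of subsheaves realizing the maximum of $a_\nu$ among those already in $\sS_{\nu-1}$. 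After $n$ steps $\sS_{n-1}$ consists of a single polynomial $G(t)$, lex-maximal in~$\sS$.

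To produce a uniform~$\epsilon_0$, consider any $F\in\sS$ with $F\neq G$ and let $\nu=\nu(F)$ be the smallest index with $a_\nu(F)\neq a_\nu(G)$. By the construction of~$G$, the gap $a_\nu(G)-a_\nu(F)$ is at least some $\delta_\nu>0$ depending only on~$\sF$ and~$\nu$, by the discreteness statement applied at level~$\nu$. Writing
\[
G(t)-F(t)=t^\nu\bigl[(a_\nu(G)-a_\nu(F))+t\cdot R_F(t)\bigr],
\]
I need a uniform bound on~$|R_F(\epsilon)|$; this follows because each $a_\mu(\sG)$ is uniformly bounded above and below over $\sG\subset\sF$ -- the upper bound from the extended~(i), the lower bound by applying the same argument to $\sF/\sG$ via the additivity identity~(\ref{additiv}) at each coefficient. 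Choosing $\epsilon_0$ small in terms of the~$\delta_\nu$ and these uniform bounds yields~(ii). The main obstacle is precisely the extension of the Grothendieck-type boundedness of~(i) to the mixed slopes~$\mu'^I$: when some product cycle $D^{(\iota)}_{i_1}\cdots D^{(\iota)}_{i_{|I|}}\cdot H^{(\iota)}_{j_1}\cdots H^{(\iota)}_{j_{n-1-|I|}}$ fails to be strictly positive, the classical theorem does not apply directly. I expect to handle this by a perturbation argument -- replace each divisor by $D+\eta H_0$ for a fixed ample~$H_0$ (e.g.\ a multiple of $\omega_Y(S)$) and $\eta>0$, apply the classical boundedness for each~$\eta>0$, and pass to the limit $\eta\to 0$ using discreteness to prevent slopes from drifting.
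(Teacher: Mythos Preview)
Your approach rests on Grothendieck's boundedness lemma, but that lemma requires an \emph{ample} polarization; for a semi-polarization built from merely nef classes the family of saturated subsheaves $\sG\subset\sF$ with $\mu'(\sG)\geq C$ need not be bounded. A concrete counterexample: take $Y=\BP^1\times\BP^1$, $\sF=\sO_Y^{\oplus 2}$, and let the semi-polarization consist of the single nef curve class $f$ of a fibre of the first projection. For each $a\geq 0$ two sections of $\sO(a,0)$ without common zeros give a saturated rank-one subsheaf $\sO(-a,0)\hookrightarrow\sF$ with $\mu'=0$, and $\{\sO(-a,0)\}_{a\geq 0}$ is visibly unbounded. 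So the first sentence of your argument for~(i) is false, and the perturbation fix you sketch at the end is circular: you appeal to ``discreteness to prevent slopes from drifting'' in the limit $\eta\to 0$, but discreteness is precisely what you are trying to establish.

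The paper bypasses Grothendieck boundedness entirely with a more elementary device. One chooses an ample invertible $\sH$ with $\sF\hookrightarrow\bigoplus\sH$; any subsheaf $\sG$ of rank $r'$ then projects injectively to some $\sH^{\oplus r'}$, so $\ch_1(\sG)=r'\ch_1(\sH)-D$ for an \emph{effective} divisor $D$. Each $1$-cycle $D^{(\iota)}_{i_1}\cdots H^{(\iota)}_{j_{n-1-|I|}}$ is a product of nef $\R$-divisors, hence a non-negative real combination of effective curve classes $C_{\mu,\nu}$, and $D.C_{\mu,\nu}\in\Z_{\geq 0}$. This gives the upper bound on every coefficient of $\mu'_t(\sG)$ immediately (by the corresponding coefficient of $\mu'_t(\sH)$), and the finiteness of the coefficient set above any threshold follows because the defect lies in the finitely generated sub-semigroup $\sum_\mu\alpha_{\nu,\mu}\cdot\Z_{\geq 0}$ of $\R_{\geq 0}$. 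No boundedness of the family of subsheaves is invoked. Once this uniform control on the set $S'$ of all coefficients is in place, your lexicographic extraction of $G(t)$ and the tail estimate for~(ii) are essentially what the paper does as well.
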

\begin{proof}
Let ${\rm S}'$ be the set of all coefficients occurring in $F(t)\in \sS$.
We will first show, that the set ${\rm S}'$ is discrete and bounded from above.
Since ${\rm S}\subset {\rm S}'$, this implies i).
 
For $\sH$ invertible and sufficiently ample $\sF^\vee \otimes \sH$ is generated by global sections. Hence
$\sF$ is embedded in $\bigoplus \sH$.
Then under the projection to suitable factors, any subsheaf $\sG\subset \sF$ of rank $r'$ is isomorphic to a subsheaf of $\bigoplus^{r'} \sH$ and $\ch_1(\sG)= r\cdot \ch_1(\sH) - D$ for some effective divisor $D$.

Since the divisors ${D}^{(\iota)}_j$ and $H^{(\iota)}_j$ are all nef, the intersection of the 1-dimensional cycles
$$
D^{(\iota)}_{i_1}.\cdots . D^{(\iota)}_{i_{|I|}} . H^{(\iota)}_{j_1}.\cdots . H^{(\iota)}_{j_{n-1-|I|}}
$$
in~\ref{muI2} with any divisor is a non-negative multiple of a fixed real number, So one may write
$$
\sum_{\iota=1}^m (\underline{D}^{(\iota)}+t\cdot \underline{H}^{(\iota)})^{n-1}=
\sum_{\nu=0}^{n-1} \left(\sum_\mu \alpha_{\nu,\mu} C_{\mu,\nu} \right) t^\nu
$$ 
for $\alpha_{\mu,\nu} \in \R$ and for linear combinations $C_{\mu,\nu}$ of curves with
$ D.C_{\mu,\nu} \geq 0$ for all effective divisors $D$. Then $-{\rm S}'$ is discrete, as a 
subset of the union of translates of finite many copies of 
$$
\bigcup_{\nu} \sum_\mu \alpha_{\mu,\nu} \cdot\N.
$$
Moreover ${\rm S}'$ it is bounded above by the maximal
coefficient $c$ of $\mu'_t(\sH)$.

On the set $\sS$ consider the lexicographical order. So
$\sum_{\nu=0}^{n-1}a_\nu\cdot t^\nu <  \sum_{\nu=0}^{n-1}b_\nu\cdot t^\nu$
if $a_\nu=b_\nu$ for $\nu <j$ and if $a_j < b_j$. Obviously $\sS$ contains a maximal
element $G(t)=\sum_{\nu=0}^{n-1}b_\nu\cdot t^\nu$ for this order. 

Choose $\epsilon_0 \in (0,1)$ to be a real number with
$$
\frac{1}{\sqrt{\epsilon_0}} \geq \sup_{c\in {\rm S}}
\left\{\sum_{\nu=j+1}^{n-1} (c-b_\nu)t^{\nu-j-1}; \ t\in [0,1], \ j=1,
\ldots,r-1\right\},
$$
and such that for $\nu=0,\ldots,r$ one has
$[b_\nu-\sqrt{\epsilon_0},b_\nu+\sqrt{\epsilon_0}]\cap {\rm S}'=\{b_\nu\}$.

Since $G(t)>F(t)$, for some $j$ and for $0 < \epsilon \leq \epsilon_0$ one finds
\begin{multline*}
G(\epsilon)-F(\epsilon) = \sum_{\nu=j}^{n-1} (b_\nu-a_\nu)\cdot 
\epsilon ^\nu \geq\\
\epsilon^j\cdot\big((b_j-a_j) + \epsilon \cdot \sum_{\nu=j+1}^{n-1} 
(b_\nu-c)\cdot \epsilon ^{\nu-j-1}\big) >
\epsilon^j\cdot (\sqrt{\epsilon_0} - \epsilon\cdot \frac{1}{\sqrt{\epsilon_0}})
\geq 0.
\end{multline*}
\end{proof}
We will consider next values of the polynomials $F(t)\in \sS$ for small $\epsilon \in \R_{\geq 0}$. 
\begin{definition}\label{fil.3} For $\epsilon \in \R_{\geq 0}$
consider a filtration $0=\sG_0 \subset \sG_1 \subset \cdots \subset \sG_\ell=\sF$
with $\sG_\alpha/\sG_{\alpha-1}$ torsion free and $\mu'_\epsilon$-semistable, for $\alpha=1,\ldots,\ell$, and with
\begin{equation}\label{eqfil.1}
\mu'_{\epsilon, {\rm max}}(\sF)=\mu'_\epsilon(\sG_1) \geq
\mu'_\epsilon(\sG_2/\sG_{1}) \geq \cdots \geq \mu'_\epsilon(\sG_\ell/\sG_{\ell-1})=\mu'_{\epsilon,{\rm min}}(\sF).
\end{equation}
The filtration is called a {\em $\mu'_\epsilon$-Harder-Narasimhan filtration} if the inequalities in (\ref{eqfil.1}) are all strict, 
and it is called a {\em weak $\mu'_\epsilon$-Jordan-H\"older filtration} if $\mu'_{\epsilon,{\rm max}}(\sF)=\mu'_{\epsilon,{\rm min}}(\sF)$. 
\end{definition}
\begin{lemma}\label{fil.7} Let $\sF$ be a coherent torsion free sheaf on $Y$. 
\begin{enumerate}
\item[a.] For all $\epsilon \geq 0$ there exists a Harder-Narasimhan filtration
$$
\sG_0=0\subset \sG_1 \subset \cdots \subset \sG_\ell=\sF
$$
of $\sF$ with respect to $\mu'_{\epsilon}$ and this filtration is unique. 
\item[b.] There exists some $\epsilon_0>0$ such that the filtration in a) is independent of 
$\epsilon$ for $\epsilon_0 \geq \epsilon > 0$.
\item[c.] If $\sF$ is $\mu'$-stable, then for some $\epsilon_0>0$ and for all $\epsilon_0 \geq \epsilon \geq 0$ the sheaf $\sF$ is $\mu'_\epsilon$-semistable.
\end{enumerate}
\end{lemma}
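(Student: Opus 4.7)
My plan is to prove (a) by the standard Harder--Narasimhan induction; to prove (c) by identifying $\mu'_t(\sF)$ with the lexicographic maximum of the set $\sS$ furnished by Lemma~\ref{fil.5}; and to deduce (b) by applying the same lex-maximum argument iteratively to the first HN step. For (a), note that $\mu'_\epsilon$ is the slope attached to the semi-polarization $\underline{D}^{(\bullet)}+\epsilon\,\underline{H}^{(\bullet)}$, so Lemma~\ref{fil.5}~i) applies to it verbatim and gives the discreteness and upper boundedness of slopes of subsheaves of $\sF$. Pick a saturated $\sG_1\subset\sF$ of maximal $\mu'_\epsilon$-slope and, among those, of maximal rank; if $\sH$ also attains this maximum, the short exact sequence $0\to\sG_1\cap\sH\to\sG_1\oplus\sH\to\sG_1+\sH\to 0$ together with~(\ref{additiv}) shows that $\sG_1+\sH$ attains it too, so $\sH\subset\sG_1$ by maximality of rank. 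This gives uniqueness and $\mu'_\epsilon$-semistability of $\sG_1$; iterating on $\sF/\sG_1$ yields the HN filtration.

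For (c), apply Lemma~\ref{fil.5}~ii) to obtain the lex-maximum $G(t)\in\sS$ and some $\epsilon_0>0$ such that $G(\epsilon)>F(\epsilon)$ for every other $F\in\sS$ and every $\epsilon\in(0,\epsilon_0]$. The main claim is $G(t)=\mu'_t(\sF)$. Suppose instead that $\sG\subsetneq\sF$ satisfies $\mu'_t(\sG)>\mu'_t(\sF)$ in lex order; comparing constant terms yields $\mu'(\sG)\geq\mu'(\sF)$. If $\rk(\sG)<\rk(\sF)$ this contradicts $\mu'$-stability. Hence $\rk(\sG)=\rk(\sF)$, so $\sF/\sG$ is torsion and $\ch_1(\sF)-\ch_1(\sG)$ is the class of an effective divisor $E$. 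Since each $D^{(\iota)}_j$ and each $H^{(\iota)}_j$ is nef, every coefficient of $\mu'_t(\sF)-\mu'_t(\sG)$ is a non-negative intersection of $E$ with a nef $1$-cycle, giving $\mu'_t(\sF)\geq\mu'_t(\sG)$ in lex order and contradicting the assumption. Thus $G(t)=\mu'_t(\sF)$, and Lemma~\ref{fil.5}~ii) now yields $\mu'_\epsilon(\sG)\leq\mu'_\epsilon(\sF)$ for every $\sG\subset\sF$ and every $\epsilon\in(0,\epsilon_0]$; at $\epsilon=0$ the same rank dichotomy reduces semistability to stability.

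For (b), choose a saturated $\sG_1\subset\sF$ of maximal rank among those with $\mu'_t(\sG_1)=G(t)$. Any $\sG'\subset\sG_1$ contributes $\mu'_t(\sG')\in\sS$, so $\mu'_\epsilon(\sG')\leq G(\epsilon)=\mu'_\epsilon(\sG_1)$ for every $\epsilon\in(0,\epsilon_0]$, whence $\sG_1$ is $\mu'_\epsilon$-semistable. Any competitor $\sH\subset\sF$ with $\mu'_\epsilon(\sH)=G(\epsilon)$ must satisfy $\mu'_t(\sH)=G(t)$ by Lemma~\ref{fil.5}~ii), and then $\sH\subseteq\sG_1$ by maximality of rank; the uniqueness in (a) identifies $\sH$ with $\sG_1$. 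Hence $\sG_1$ is the first step of the $\mu'_\epsilon$-HN filtration independently of $\epsilon\in(0,\epsilon_0]$, and iterating on $\sF/\sG_1$ finitely often (possibly shrinking $\epsilon_0$ at each step) completes the proof of (b). The delicate point is the step in (c) where stability, not merely semistability, is needed: one must exclude subsheaves of strictly smaller rank whose polynomial $\mu'_t(\sG)$ overtakes $\mu'_t(\sF)$ only in higher-order coefficients, and the strict deficit at $t=0$ furnished by stability is precisely what Lemma~\ref{fil.5}~ii) propagates to all small $\epsilon$.
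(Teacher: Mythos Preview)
Your proof is correct and follows essentially the same strategy as the paper: both use Lemma~\ref{fil.5}~ii) to pin down the maximal destabilizing subsheaf independently of $\epsilon\in(0,\epsilon_0]$, then iterate for~(b), and both handle~(a) via the standard Harder--Narasimhan argument once discreteness and boundedness of slopes are known.

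The one genuine difference is in part~(c). The paper first establishes~(b), then takes the first step $\sG_1$ of the common HN filtration and passes to the limit: $\mu'(\sG_1)=\lim_{\epsilon\to 0}\mu'_\epsilon(\sG_1)\geq \lim_{\epsilon\to 0}\mu'_\epsilon(\sF)=\mu'(\sF)$, forcing $\sG_1=\sF$ by stability. You instead argue directly that $\mu'_t(\sF)$ \emph{is} the lexicographic maximum $G(t)$: stability rules out any subsheaf of smaller rank beating $\sF$ at the constant term, and for full-rank subsheaves the effective difference divisor together with nefness of all $D^{(\iota)}_j,H^{(\iota)}_j$ forces $\mu'_t(\sF)\geq\mu'_t(\sG)$ coefficientwise. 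Your route is self-contained (it does not presuppose~(b)) and makes the role of nefness more transparent; the paper's route is shorter once~(b) is in hand. Both are valid.
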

\begin{proof}
For $\epsilon >0$ we apply Lemma~\ref{fil.5},~ii). For the polynomial $G(t)$, given there, choose a subsheaf $\sG \subset \sF$ with $G(t)=\mu'_t(\sG)$, for all $t\in \R$. Moreover for $0<\epsilon \leq \epsilon_0$ the slope $\mu'_\epsilon(\sG)=G(\epsilon)$ is maximal among the possible slopes of subsheaves of $\sF$. This allows to assume that $\sG$ is saturated. 
If there are several subsheaves of $\sF$ with the same slope, we choose a saturated one of maximal rank. 

If for $\sE\subset \sF$ one has $\mu'_\epsilon(\sE)=\mu'_\epsilon(\sG)$, then by~\ref{additiv} the slope of $\sE\oplus \sG$ is $\mu'_\epsilon(\sG)$. The maximality of the slope of $\sG$ implies  $\mu'_\epsilon(\sE\cap \sG)\leq \mu'_\epsilon(\sG)$ and  $\mu'_\epsilon(\sE + \sG)\leq \mu'_\epsilon(\sG)$. By~\ref{additiv} this is only possible if $\mu'_\epsilon(\sE + \sG) = \mu'_\epsilon(\sG)$. Then the maximality of the rank of $\sG$ implies that $\rk(\sE + \sG) = \rk(\sG)$, and $\sE\subset \sG$. 

So $\sG$ is a maximal destabilizing subsheaf of $\sF$, and it is independent
of $\epsilon \in (0,\epsilon_0]$. The existence and uniqueness of a $\mu'_\epsilon$-Harder-Narasimhan filtration follows by induction on the rank. Here of course we have to lower
$\epsilon_0$ in each step. 

For $\epsilon=0$ the existence and uniqueness of the Harder-Narasimhan filtration follows by the same argument, replacing the reference to part ii) of Lemma~\ref{fil.5} by the one to part i). 

Assume now that $\sF$ is $\mu'$-stable and consider the Harder-Narasimhan filtration in a). Then
$$
\mu'(\sG_1)=\lim_{\epsilon \to 0} \mu'_{\epsilon}(\sG_1)
\geq \lim_{\epsilon \to 0} \mu'_{\epsilon}(\sF)=\mu'(\sF).
$$
By assumption, $\sF$ is stable, with respect to $\mu'$, hence
$\sG_1=\sF$, and $\ell=1$. 
\end{proof}
Although this will not be used in the sequel, let us state a strengthening of the last part of
Lemma~\ref{fil.7}.
\par
\begin{addendum}
For $\epsilon_0$ sufficiently small, the sheaf $\sF$ in part c) is $\mu'_\epsilon$-stable
for all $\epsilon_0 \geq \epsilon \geq 0$.
\end{addendum}
\begin{proof}
Part i) of Lemma~\ref{fil.5} and the $\mu'$-stability of $\sF$ imply that
$$
\gamma={\rm Inf}\{\mu'(\sF)-\mu'(\sG); \ \rk(\sG) < \rk(\sF)\}>0.
$$  
Let us return to the slopes $\mu'^I$ introduced in~\ref{muI1} and~\ref{muI2}. 
By part a) of Lemma~\ref{fil.7} there exists a Harder-Narasimhan filtration 
$$
\sG^{I}_0=0\subset \sG^{I}_1 \subset \cdots \subset \sG^{I}_{\ell_I}=\sF
$$
with respect to $\mu'^{I}$. In particular for $\sG\subset \sF$ one has 
$\mu'^{I}(\sG) \leq \mu'^{I}(\sG_1^{I})$.
  
Choose $\epsilon_0>0$ such that for $0 < \epsilon \leq \epsilon_0$, and for 
all $I\in \sI$ with $|I|<n-1$ one has
$$
\frac{1}{|\sI|+1} \cdot \gamma  \geq  \epsilon^{n-|I|-1}\cdot\big(\mu'^{I}(\sG_1^{I})-\mu'^{I}(\sF)\big)
$$
For a subsheaf $\sG \subset \sF$ of strictly smaller rank one finds 
$$
\mu'^{I}(\sF)-\mu'^{I}(\sG) \geq \mu'^{I}(\sF)-\mu'^{I}(\sG_1^{I}),
$$
and thereby
\begin{multline*}
\mu'_\epsilon(\sF)-\mu'_\epsilon(\sG) \geq
\gamma + \sum_{I\in \sI} \epsilon^{n-1-|I|}\cdot (\mu'^{I}(\sF)-\mu'^{I}(\sG))\geq\\
\gamma+\sum_{I\in \sI} \epsilon^{n-1-|I|}\cdot (\mu'^{I}(\sF)-\mu'^{I}(\sG_1^{I}))\geq \gamma-\frac{|\sI|}{|\sI|+1}\cdot\gamma >0.
\end{multline*}
\end{proof}
\begin{corollary}\label{fil.9}
Assume in Lemma~\ref{fil.7} that $\sF$ is $\mu'$-semistable. Then
there exists a weak $\mu'$-Jordan-H\"older filtration
$$
\sG_0=0\subset \sG_1 \subset \cdots \subset \sG_{\ell}=\sF
$$
and some $\epsilon_0>0$ such that for all $\epsilon \in (0,\epsilon_0]$
the filtration $\sG_\bullet$ is a $\mu'_\epsilon$-Harder-Narasimhan-filtration.
\end{corollary}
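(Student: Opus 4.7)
The idea is to take the $\mu'_\epsilon$-Harder-Narasimhan filtration for small positive $\epsilon$ (which exists and is independent of $\epsilon$ by Lemma~\ref{fil.7}) and check that at $\epsilon = 0$ it degenerates to a weak $\mu'$-Jordan-Hölder filtration.

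First I would apply parts a) and b) of Lemma~\ref{fil.7} to produce some $\epsilon_0 > 0$ and a single filtration
$$\sG_0 = 0 \subset \sG_1 \subset \cdots \subset \sG_\ell = \sF$$
which is the $\mu'_\epsilon$-Harder-Narasimhan filtration simultaneously for every $\epsilon \in (0, \epsilon_0]$. In particular each $\sG_\alpha/\sG_{\alpha-1}$ is torsion free and $\mu'_\epsilon$-semistable, and the strict inequalities
$$\mu'_\epsilon(\sG_1) > \mu'_\epsilon(\sG_2/\sG_1) > \cdots > \mu'_\epsilon(\sG_\ell/\sG_{\ell-1})$$
hold. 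This already settles the Harder-Narasimhan property for every $\epsilon \in (0, \epsilon_0]$, so the only remaining issue is to verify the weak $\mu'$-Jordan-Hölder property at $\epsilon = 0$.

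For that, I would let $\epsilon \to 0$ in each of the inequalities above. Since each side is a polynomial in $\epsilon$, the weak inequalities
$$\mu'(\sG_1) \geq \mu'(\sG_2/\sG_1) \geq \cdots \geq \mu'(\sG_\ell/\sG_{\ell-1})$$
survive. Iterating the additivity formula~\eqref{additiv}, the slope $\mu'(\sF)$ is a convex combination of the $\mu'(\sG_\alpha/\sG_{\alpha-1})$, hence is bounded above by the maximal summand $\mu'(\sG_1)$. On the other hand, the $\mu'$-semistability of $\sF$ forces $\mu'(\sG_1) \leq \mu'(\sF)$. Combining these, $\mu'(\sG_1) = \mu'(\sF)$, and then all quotient slopes must be equal to $\mu'(\sF)$ — which is exactly the condition $\mu'_{\max}(\sF) = \mu'_{\min}(\sF)$ defining a weak $\mu'$-Jordan-Hölder filtration in Definition~\ref{fil.3}.

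I do not anticipate any genuine obstacle here: once the candidate filtration is pinned down by Lemma~\ref{fil.7} for $\epsilon > 0$, the only work is a short slope calculation at $\epsilon = 0$, combining additivity of $\mu'$ with the semistability hypothesis. As a sanity check one may also verify that each quotient $\sG_\alpha/\sG_{\alpha-1}$ is $\mu'$-semistable by passing to the limit $\epsilon \to 0$ in the inequality $\mu'_\epsilon(\sH) \leq \mu'_\epsilon(\sG_\alpha/\sG_{\alpha-1})$ for subsheaves $\sH \subset \sG_\alpha/\sG_{\alpha-1}$, although this is not explicitly required by the statement.
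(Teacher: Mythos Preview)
Your proposal is correct and follows essentially the same approach as the paper: take the filtration from Lemma~\ref{fil.7}~b), pass to the limit $\epsilon \to 0$, and use $\mu'$-semistability of $\sF$ to force all quotient slopes to equal $\mu'(\sF)$. One small correction: the $\mu'$-semistability of each $\sG_\alpha/\sG_{\alpha-1}$ is part of Definition~\ref{fil.3} for a weak Jordan--H\"older filtration, so your ``sanity check'' is actually required---though the paper's own proof leaves this point equally implicit.
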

\begin{proof}
The filtration $\sG_\bullet$, constructed Lemma~\ref{fil.7},~b), is a $\mu'_\epsilon$-Harder-Narasimhan filtration for all $0 < \epsilon \leq \epsilon_0$. 
Taking the limit of the slopes for $\mu'_\epsilon$ one obtains
$$
\mu'_{{\rm max}}(\sF)=\mu'(\sG_1) \geq
\mu'(\sG_2/\sG_{1}) \geq \cdots \geq \mu'(\sG_\ell/\sG_{\ell-1})=\mu'_{{\rm min}}(\sF),
$$
and since $\sF$ is $\mu'$-semistable, those are all equalities.
\end{proof}

\section{Splittings of Higgs bundles}\label{spl}

The negativity of kernels of Higgs bundles provide a well-known criterion
for the orthogonal complement of a subbundle $\sK$ of $E^{1,0}$ to
be a holomorphic subbundle: It suffices to show that the slope
of the cokernel $\sQ$ with respect to the canonical polarization is zero.
In this section we extend this to a criterion that zero slope
with respect to canonical {\em semi}-polarizations implies -- best
that one can expect -- vanishing of $\partial/\partial \overline{z}$-derivatives
of the orthogonal splitting map $\sQ \to E^{1,0}$ in the corresponding directions.
\par
Assume again that $Y$ is non-singular, that $U\subset Y$ the complement of a 
normal crossing divisor $S$, and that the positivity conditions stated
as Assumptions~\ref{compass-pos} hold true.
Then one has the decomposition (see~\ref{eqint.1})
$$\Omega_Y^1(\log S)=\Omega_1\oplus \cdots \oplus \Omega_s $$ 
as a direct sum of $\mu$-stable subsheaves $\Omega_i$ of rank $n_i$.
\begin{lemmadef}\label{spl.1} \
\begin{enumerate}
\item[i.] The $\mu$-stable direct factors $\Omega_i$ and their determinants
$\det(\Omega_i)$ are nef. The cycles $\ch_1(\Omega_i)^{n_i+1}$ are numerically trivial.
\item[ii.] For $\nu_1, \ldots,\nu_s$ with $ \nu_1+\cdots+\nu_s=n$ the product 
$\ch_1(\Omega_1)^{\nu_1}. \cdots . \ch_1(\Omega_s)^{\nu_s}$ 
is a positive multiple of $\ch_1(\omega_Y(S))^{n}$,
if $ \nu_\iota=n_\iota$ for $\iota=1,\ldots,s$. Otherwise it is zero.
\item[iii.] $\ch_1(\Omega_1)^{n_1}. \cdots . \ch_1(\Omega_s)^{n_s}>0$.
\item[iv.] Let $D$ be an effective $\Q$ divisor. Then $D.\ch_1(\omega_Y(S))^{n-1}=0$ if and only if
$$
D.\ch_1(\Omega_1)^{\nu_1}. \cdots . \ch_1(\Omega_s)^{\nu_s}=0
$$ 
for all $\nu_1, \ldots,\nu_s$ with $ \nu_1+\cdots+\nu_s=n-1$. 
\item[v.] Let ${\rm NS}_0$ denote the subspace of the Neron-Severi group ${\rm NS}(Y)_\Q$ of $Y$ which is generated by all prime divisors $D$ satisfying the equivalent conditions in iv). Then all effective divisors $B$ with class in ${\rm NS}_0$ is supported in $S$.
\item[vi.] If for some $\alpha\in \Q$ one has $\ch_1(\Omega_i)-\alpha\cdot\ch_1(\Omega_j) \in {\rm NS}_0$ then $i = j$.
\end{enumerate}
\end{lemmadef}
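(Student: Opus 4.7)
The plan is to separate the claims into two blocks: an analytic/geometric block (i)--(ii) that extracts information from Yau's uniformization plus the product structure on the universal cover, and a purely numerical block (iii)--(vi) that falls out of (ii) by elementary intersection theory.

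For (i), nefness of $\Omega_i$ is immediate since a direct summand of a nef locally free sheaf is nef (apply the definition on any test curve), and taking determinants preserves nefness. For the numerical triviality of $\ch_1(\Omega_i)^{n_i+1}$, I would invoke Mumford's theorem~\ref{mumford1} together with Condition~\ref{compass-pos2}: the Bergman/K\"ahler--Einstein form on the factor $M_i$ represents $c_1(\Omega_i)$ via a good singular Hermitian metric on $\Omega_i$, and on $\tilde U$ it is the pullback under $\pr_i$ of a $(1,1)$-form on $M_i$. Since $\dim M_i=n_i$, any wedge of order $>n_i$ in the $M_i$-directions vanishes pointwise, and Mumford's goodness guarantees that intersection numbers against cycles of complementary dimension can be computed by integrating this form, so the class is numerically zero.

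For (ii), exactly the same representation shows that $\ch_1(\Omega_1)^{\nu_1}\cdots\ch_1(\Omega_s)^{\nu_s}$ is computed by a wedge product where the $i$-th factor is pulled back from $M_i$; this vanishes as soon as some $\nu_i>n_i$. Since $\sum\nu_i=n=\sum n_i$, the only surviving multi-index is $\nu_i=n_i$ for all $i$, and for that multi-index positivity is forced by the expansion
\[
\ch_1(\omega_Y(S))^n=\Bigl(\sum_i\ch_1(\Omega_i)\Bigr)^n=\sum_{|\nu|=n}\binom{n}{\nu}\prod_i\ch_1(\Omega_i)^{\nu_i}>0,
\]
in which every summand is non-negative by nefness but only this one can be nonzero. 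This gives (ii) and immediately (iii). The main obstacle in this block is to justify the passage from forms on $\tilde U$ to numerical classes on $Y$ across the boundary $S$; this is precisely the content of Mumford's good-metric formalism, so the argument is a citation rather than a computation.

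For (iv), the same multinomial expansion of $\ch_1(\omega_Y(S))^{n-1}$, combined with $D$ effective and all $\ch_1(\Omega_j)$ nef, makes each summand $D.\prod\ch_1(\Omega_j)^{\nu_j}$ non-negative, so their sum vanishes iff each one does. For (v), if $B$ effective had a component meeting $U$, the ampleness of $\omega_Y(S)$ with respect to $U$ would produce $n-1$ sections of a power $\omega_Y(S)^\nu$ whose vanishing divisors meet that component transversally in a nonempty $0$-cycle contained in $U$, forcing $B.\ch_1(\omega_Y(S))^{n-1}>0$ and contradicting membership in $\rm NS_0$. Finally, for (vi), assuming $i\ne j$ and $\ch_1(\Omega_i)-\alpha\ch_1(\Omega_j)\in{\rm NS}_0$, I would pair this class with the degree $n-1$ product $\ch_1(\Omega_i)^{n_i-1}\prod_{\ell\ne i}\ch_1(\Omega_\ell)^{n_\ell}$: the right-hand side is zero by (iv), while on the left the first term equals $\ch_1(\Omega_1)^{n_1}\cdots\ch_1(\Omega_s)^{n_s}>0$ by (iii), and the second term vanishes by (ii) because its multi-index has $\nu_j=n_j+1>n_j$. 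The contradiction forces $i=j$.
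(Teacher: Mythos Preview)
Your treatment of (iii)--(vi) is correct and matches the paper: (iv) is exactly the multinomial expansion plus nefness, (v) is the same ``ample with respect to $U$ implies $\omega_Y(S)|_D$ nef and big'' argument, and your explicit pairing for (vi) is the argument behind the citation to \cite[Lemma~1.9]{VZ07}.

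The gap is in your block (i)--(ii). You invoke Theorem~\ref{mumford1} together with Condition~\ref{compass-pos2}, but neither is available at this point. Lemma~\ref{spl.1} is stated under Assumptions~\ref{compass-pos} alone; Yau's uniformization then gives the $\mu$-polystable splitting of $\Omega^1_Y(\log S)$ and a product decomposition $\tilde U=M_1\times\cdots\times M_s$, but the $M_i$ are \emph{not} known to be bounded symmetric domains, and the fundamental group is not known to be arithmetic. Condition~\ref{compass-pos2} is a conditional statement whose hypothesis (that $\tilde U$ is a product of irreducible bounded symmetric domains) is precisely what the main theorem is trying to establish, and Mumford's good-metric machinery in Theorem~\ref{mumford1} is set up for homogeneous bundles on arithmetic quotients of symmetric domains. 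So your citation does not cover the passage from differential forms on $U$ to numerical classes on $Y$.

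Your underlying idea is the right one: the K\"ahler--Einstein metric produced by Yau splits on $\tilde U$ as a sum of pullbacks from the $M_i$, so the curvature form representing $\ch_1(\Omega_i)$ is pulled back from an $n_i$-dimensional factor and any $(n_i{+}1)$-fold wedge vanishes pointwise on $U$. What actually has to be supplied is that \emph{this} singular K\"ahler--Einstein metric (not a Bergman metric, and not via Mumford's framework) has controlled enough growth near $S$ for the Chern--Weil integrals to compute intersection numbers on $Y$. That analytic input is what \cite[Lemmata~1.6 and~1.9]{VZ07} provide, and it is why the paper simply cites that reference for (i), (ii), (iii), (vi). Replace your appeal to Theorem~\ref{mumford1} and Condition~\ref{compass-pos2} by an appeal to Yau's theorem and the boundary analysis in \cite{VZ07}, and the argument goes through.
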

\begin{proof}
Parts i), ii), iii) and vi) have been shown in \cite[Lemmata 1.6 and 1.9]{VZ07}.
Part iv) follows from the nefness of $\det(\Omega_i)$. For v) consider a prime divisor $D$ whose support meets $U$. Since $\omega_Y(S)$ is nef and ample with respect to $U$, the restriction $\omega_Y(S)|_D$ is nef and big,
and hence  $D.\ch_1(\omega_Y(S))^{n-1}>0$. So the nefness of $\omega_Y(S)$ implies that none of the components of $B$ in v) can meet $U$. 
\end{proof}
Using the notations from Section~\ref{fil} consider $m=1$ and the 
tuple $\underline{D}^{(1)}$ where all divisors are $D^{(1)}_j=K_Y+S$ for 
some canonical divisor $K_Y$. Then the slope
$\mu_{\underline{D}^{(1)}}(\sF)$, considered there, is equal to $\mu(\sF)$. 
Using Lemma~\ref{spl.1} the $\mu$-equivalence, as given by Definition~\ref{fil.2}, can be made more precise.
Recall that we define two torsion free coherent sheaves $\sG$ and $\sF$ to be $\mu$-equivalent, if there is a chain of $\mu$-equivalent inclusions
$$
\sG=\sG_1 \hookrightarrow \sF_1 \hookleftarrow \sG_2 \hookrightarrow \sF_2 \hookleftarrow
\cdots \cdots \hookrightarrow \sF_{\ell-1} \hookleftarrow \sG_\ell \hookrightarrow \sF_\ell=\sF.
$$
\begin{addendum}\label{spl.1a} 
Let $\tau:U'\to Y$ be the complement of all prime divisors $D\leq S$ with $D\in {\rm NS}_0$. 
Let $\sF$ and $\sG$ be torsion free coherent sheaves on $Y$. 
\begin{enumerate}
\item[vii.] Assume that $\sG$ is a subsheaf of $\sF$ which is $\mu$-equivalent to $\sF$.
Then $\ch_1(\sF)-\ch_1(\sG)$ lies in the subspace ${\rm NS}_0$, defined in Lemma~\ref{spl.1}~v), and $\sG|_{U'} \to \sF|_{U'}$ is an isomorphism. In particular this holds if $\sG \hookrightarrow \sF$ is an inclusion of $\mu$-semistable sheaves of the same slope and rank.
\item[viii.] The following conditions are equivalent:
\begin{enumerate}
\item[a.] $\sG$ and $\sF$ are $\mu$-equivalent.
\item[b.] There exists an isomorphism $\tau^*\sG \to \tau^* \sF$.
\item[c.] There exists an effective divisor $B\in {\rm NS}_0$ with $\sG \subset \sF\otimes \sO_Y(B)$.
\end{enumerate} 
\item[ix.] Let $\theta: \sG \to \sF$ be a morphism of $\mu$-semistable sheaves of the same slope, and let ${\rm Im}'(\theta)$ denote the saturated image, i.e.\ the kernel of
$$
\sF\>>> (\sF/{\rm Im}(\theta))/_{\rm torsion}.
$$
Then ${\rm Im}'(\theta)$ is a $\mu$-semistable subsheaf of $\sF$ of slope $\mu(\sF)$, and
the inclusion ${\rm Im}(\theta)\hookrightarrow {\rm Im}'(\theta)$ is an isomorphism over $U'$.
\end{enumerate}
\end{addendum}
\begin{proof}
Part vii) follows directly from the definition of $\mu$-equivalence in~\ref{fil.2} and from the definition of ${\rm NS}_0$ in Lemma and Definition~\ref{spl.1}. As a consequence, in viii) the condition a) implies b). 

On the other hand, given an isomorphism $\tau^*\sG \cong \tau^* \sF$, hence
$\tau_*\tau^*\sG \cong \tau_*\tau^* \sF$, one finds effective divisors $B$ and $B'$, both supported in $Y\setminus U'$, with 
$$
\sG \hookrightarrow \sG\otimes \sO_Y(B')= \sF\otimes \sO_Y(B) \hookleftarrow \sF.
$$ 
In particular b) implies c). Finally, since $B \in {\rm NS}_0$ one finds that c) implies a). 

For part ix) one just has to remark that the nefness of $\omega_Y(S)$
implies that
$$
\mu(\sG) \leq \mu({\rm Im}(\theta)) \leq \mu({\rm Im}'(\theta)) \leq \mu(\sF).
$$
\end{proof}
\begin{exdef}\label{spl.1c} Let $\sF$ be a $\mu$-semistable torsion free coherent sheaf. As for slopes defined by polarizations (e.g.\ \cite[page 23]{HL}) one finds for semi-polarizations
a maximal $\mu$-polystable subsheaf ${\rm Soc}(\sF)=\sG_1\oplus \cdots \oplus \sG_\ell$ of slope $\mu(\sF)$. Remark that in general the saturated hull of ${\rm Soc}(\sF)$ is no longer $\mu$-polystable, but for some effective divisor $B\in {\rm NS}_0$ it will be contained in the $\mu$-polystable sheaf $(\sG_1\oplus \cdots \oplus \sG_\ell)\otimes \sO_Y(B)$, and
both are $\mu$-equivalent.  

In Section~\ref{stablengthsplit} we will need the {\em cosocle} ${\rm Cosoc}(\sF)$ of $\sF$, defined as the dual of the socle of $\sF^\vee$. In down to earth terms this is
the largest $\mu$-polystable sheaf of slope $\mu(\sF)$ for which there exists a morphism
$\theta: \sF \to {\rm Cosoc}(\sF)$, surjective over some open set.  
\end{exdef}
In the sequel we consider again an irreducible polarized complex variation of Hodge structures $\V$ of weight $1$ with
unipotent monodromy at infinity and with Higgs bundle
$$
\big(E=E^{1,0}\oplus E^{0,1}, \ \theta: E^{1,0}\to E^{0,1}\otimes \Omega^1_Y(\log S)\big).
$$
We assume that $\V$ is non-unitary, hence that $\theta\neq 0$.

Recall that a Higgs subsheaf $(\sG,\theta|_{\sG})$ of a Higgs bundle
$(E,\theta)$ is a subsheaf with $\theta(\sG)\subset \sG\otimes \Omega^1_Y(\log S)$.
Correspondingly a torsion free Higgs quotient sheaf is of the form $\sQ = E/\sG$, 
where $\sG$ is saturated and a Higgs subsheaf.
By \cite[Proposition 2.4]{VZ07} one obtains as a corollary of Simpson's correspondence:
\begin{lemma}\label{spl.2} Let $\underline{D}^{(\iota)}$ be a finite system of $n-1$-tuples of nef $\R$-divisors. Let $(E,\theta)$ be the Higgs bundle of a complex polarized variation of Hodge structures with unipotent monodromy at infinity. Then:
\begin{enumerate}
\item[i.] $\mu_{\underline{D}^{(\bullet)}}(\sG)\leq 0$ for all Higgs subsheaves $\sG$. 
\item[ii.] $\mu_{\underline{D}^{(\bullet)}}(\sQ)\geq 0$ for all torsion free Higgs
quotient sheaves $\sQ$.
\item[iii.] If for one $\iota$ and for all $j$ the divisors $\underline{D}^{(\iota)}_j$ are ample with respect to $U$, then the following conditions are equivalent for a saturated Higgs subsheaf $\sG$ of $E$ and for $\sQ=E/\sG$:
\begin{enumerate}
\item[1.] $\mu_{\underline{D}^{(\bullet)}}(\sG)= 0$. 
\item[2.] $\mu_{\underline{D}^{(\bullet)}}(\sQ)= 0$.
\item[3.] $\sG$ is a direct factor of the Higgs bundle $E$.
\end{enumerate}  
\end{enumerate}
\end{lemma}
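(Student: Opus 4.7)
\textbf{Proof plan for Lemma~\ref{spl.2}.} The strategy is to reduce everything to the classical bound for genuinely ample polarizations, cited as \cite[Proposition 2.4]{VZ07}, by an approximation argument, and to treat the direct-factor assertion in (iii) by first splitting over $U$ and then extending across $S$ via the Deligne extension.

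For (i), fix an ample class $A$ on $Y$ and, for $\epsilon>0$, replace each tuple $\underline{D}^{(\iota)}$ by $\underline{D}^{(\iota)}+\epsilon(A,\ldots,A)$, whose entries are ample $\R$-divisors. The classical consequence of Simpson's correspondence for logarithmic Higgs bundles of polarized $\C$-VHS with unipotent monodromy at infinity gives
\[
\mu_{\underline{D}^{(\bullet)}+\epsilon(A,\ldots,A)}(\sG)\le 0
\]
for every Higgs subsheaf $\sG$; since the slope is polynomial in $\epsilon$, letting $\epsilon\to 0$ yields (i). Applying (i) both to $E$ and to $E^\vee$ (the logarithmic Higgs bundle of the dual VHS, which has unipotent monodromy as well) shows $\mu_{\underline{D}^{(\bullet)}}(E)=0$. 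For (ii), if $\sQ=E/\sG$ is torsion free then $\sG$ is saturated and a Higgs subsheaf, so the additivity formula \eqref{additiv} combined with $\mu_{\underline{D}^{(\bullet)}}(\sG)\le 0$ and $\mu_{\underline{D}^{(\bullet)}}(E)=0$ forces $\mu_{\underline{D}^{(\bullet)}}(\sQ)\ge 0$.

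For (iii), the equivalence $(1)\Leftrightarrow(2)$ is again additivity together with $\mu_{\underline{D}^{(\bullet)}}(E)=0$, and $(3)\Rightarrow(1)$ is immediate: a Higgs direct factor and its Higgs complement both have slope $\le 0$ by (i), and their slopes sum to zero, so both vanish. For the crucial direction $(1)\Rightarrow(3)$, I apply (i) to each tuple $\underline{D}^{(\iota)}$ separately, obtaining $\mu_{\underline{D}^{(\iota)}}(\sG)\le 0$ for every $\iota$; the hypothesis $\sum_\iota \mu_{\underline{D}^{(\iota)}}(\sG)=0$ then forces each summand to vanish individually. For the distinguished index $\iota_0$ whose divisors are ample with respect to $U$, the restriction of $\underline{D}^{(\iota_0)}$ to $U$ defines a genuine ample polarization, and the polystability of the Higgs bundle of a polarized VHS (Simpson's theorem on the quasi-projective $U$) gives a Higgs direct factor decomposition $E|_U=\sG|_U\oplus \sH$. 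The summand $\sH$ corresponds under Simpson's correspondence to a sub-VHS of $\V|_U$, whose canonical Deligne extension, together with its Hodge filtration, supplies a logarithmic Higgs subsheaf $\sH^\flat\subset E$ with $\sH^\flat|_U=\sH$. The natural map $\sG\oplus \sH^\flat\to E$ is an isomorphism over $U$ and a map between locally free sheaves of the same rank and first Chern class, hence an isomorphism on $Y$, identifying $\sG$ as a Higgs direct factor.

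The main obstacle is the final extension step in $(1)\Rightarrow(3)$: the polarization that provides the splitting is only ample with respect to $U$, so Simpson's theorem can only be invoked on $U$, and one must argue that the resulting $\C^\infty$-orthogonal splitting extends to a holomorphic splitting of logarithmic Higgs sheaves on $Y$. This is handled by using the functoriality and uniqueness of the Deligne extension for sub-VHS, together with the fact that $\sG$ is already given as a saturated Higgs subsheaf of the log Higgs bundle $E$ on $Y$, so matching ranks and first Chern classes on $Y$ promote the $U$-isomorphism to a $Y$-isomorphism.
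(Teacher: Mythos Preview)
Your treatment of (i) and (ii) by perturbing to ample $\R$-divisors and passing to the limit is fine, and so are the easy implications in (iii). The paper itself does not give a proof here; it simply cites \cite[Proposition~2.4]{VZ07}, whose argument is a Chern--Weil computation with the singular Hodge metric. Your proposal for $(1)\Rightarrow(3)$ is in the right spirit but has a genuine gap at the crucial step.

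The problem is the sentence ``the restriction of $\underline{D}^{(\iota_0)}$ to $U$ defines a genuine ample polarization, and the polystability of the Higgs bundle of a polarized VHS (Simpson's theorem on the quasi-projective $U$) gives a Higgs direct factor decomposition.'' The slope $\mu_{\underline{D}^{(\iota_0)}}(\sG)$ is an intersection number on the \emph{projective} variety $Y$; there is no off-the-shelf polystability theorem on the open $U$ that takes this number as input. What is actually needed is the bridge that \cite{VZ07} (and Proposition~\ref{spl.3} of this paper, in a more refined situation) supplies: the Hodge metric on $E$ has logarithmic growth along $S$, so by the results of Cattani--Kaplan--Schmid and Koll\'ar one may compute $\ch_1(\sG).(\underline{D}^{(\iota_0)})^{n-1}$ as a curvature integral over $U$; the Griffiths--Simpson formula $R_{\sG}=R_E|_{\sG}-b\wedge b^*$ for the second fundamental form $b$ shows the integrand is pointwise $\le 0$ against any semipositive $(n-1,n-1)$-form; and ``ample with respect to $U$'' is exactly what lets you represent $(\underline{D}^{(\iota_0)})^{n-1}$ by a form strictly positive on $U$, so that vanishing of the integral forces $b\equiv 0$ on $U$, i.e.\ the $h$-orthogonal complement of $\sG|_U$ is a holomorphic Higgs subbundle. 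You should replace the black-box appeal to ``Simpson on $U$'' by this curvature argument.

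A smaller issue in your extension step: you assert that $\sG\oplus\sH^\flat$ and $E$ are locally free of the same first Chern class, but $\sG$ is a priori only reflexive, and $c_1(\sH^\flat)=c_1(E/\sG)$ is exactly what is in question. The clean way to finish is: once $\V=\V_1\oplus\V_2$ as polarized VHS on $U$, functoriality of the Deligne extension gives a splitting $E=E_1\oplus E_2$ of the logarithmic Higgs bundle on $Y$; now $\sG$ and $E_1$ are two saturated subsheaves of $E$ that agree over $U$, and since $E/\sG$ and $E/E_1$ are torsion free, the maps $\sG\to E/E_1$ and $E_1\to E/\sG$ vanish, so $\sG=E_1$ is the desired direct factor.
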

Let us write $D_i$ for a divisor with $\sO_Y(D_i)=\det(\Omega_i)$ and consider 
for $\iota=1,\ldots, s$ the tuple $\widetilde{\underline{D}}^{(\iota)}$
\begin{equation}\label{divisors}
(\stackrel{n_\iota-1}{\overbrace{D_\iota, \ldots, D_\iota}},\stackrel{n_1}{\overbrace{D_1,\ldots,D_1}}, \ldots\stackrel{n_{\iota-1}}{\overbrace{D_{\iota-1},\ldots,D_{\iota-1}}},
\stackrel{n_{\iota+1}}{\overbrace{D_{\iota+1},\ldots,D_{\iota+1}}}, \ldots ,
\stackrel{n_s}{\overbrace{D_s,\ldots,D_s}}).
\end{equation}
For some binomial coefficients one can write
$$
\mu(\sF)=\sum_{\iota=1}^s \alpha_\iota \cdot \mu_{\widetilde{\underline{D}}^{(\iota)}}(\sF).
$$
To get rid of the $\alpha_\iota$ we replace $\widetilde{\underline{D}}^{(\iota)}$ by the tuple $\underline{D}^{(\iota)}$ obtained by multiplying each of the divisors in
$\widetilde{\underline{D}}^{(\iota)}$ by $\sqrt[n-1]{\alpha_\iota}$. So for the intersection cycle
one gets
$$
(\underline{D}^{(\iota)})^{n-1}= \alpha_\iota \cdot (\widetilde{\underline{D}}^{(\iota)})^{n-1}
$$ 
and one finds 
\begin{equation}\label{slopeeq}
\mu_{\underline{D}^{(\iota)}}(\sF)=
\alpha_\iota \cdot \mu_{\widetilde{\underline{D}}^{(\iota)}}(\sF) \mbox{ \ \ and \ \ }
\mu(\sF)=\sum_{\iota=1}^s \mu_{\underline{D}^{(\iota)}}(\sF)= \mu_{\underline{D}^{\bullet}}(\sF).
\end{equation}
Remark that $\mu_{\underline{D}^{(\iota)}}(\Omega_i)\neq 0$ if and only if $\iota=i$.
\begin{properties}\label{spl.1b} \ 
\begin{enumerate}
\item[1.] If $\V$ is irreducible and non-unitary there exists 
some $\iota$ with $\mu_{\underline{D}^{(\iota)}}(E^{1,0})>0$.
\item[2.] If $\sL$ is an invertible sheaf, nef and big, then for all $j$ one has
$\mu_{\underline{D}^{(j)}}(\sL)>0$.
\end{enumerate}
\end{properties}
\begin{proof}
For part 1) remark that Lemma~\ref{spl.2},~ii) and iii) imply that $\mu(E^{1,0})>0$.
For 2) recall that for $\nu \gg 1$ the sheaf $\sL^\nu\otimes \Omega_j^{-1}$ has a section with divisor $\Gamma$. Since the $D_j$ are all nef, $\mu_{\underline{D}^{(j)}}(\sO_Y(\Gamma))\geq 0$ and hence
$$
\nu\cdot \mu_{\underline{D}^{(j)}}(\sL)\geq \alpha_j \ch_1(\Omega_1)^{n_1}. \cdots . \ch_1(\Omega_s)^{n_s}>0.
$$
\end{proof} 
Next we consider a small twist of $\mu$ by choosing for $\epsilon \geq 0$ 
$$
\mu^{\{\iota\}}_{\epsilon}(\sF)=\epsilon\cdot\mu_{\underline{D}^{(\iota)}}(\sF)+  \mu(\sF).
$$
For $s>1$ none of the divisors $\underline{D}^{(\iota)}_j$ is ample.
So we are not allowed to apply part iii) of Lemma~\ref{spl.2} to the slope
$\mu_{\underline{D}^{(\iota)}}$. 

For $\mu^{\{\iota\}}_\epsilon$ things are better. For a for a Higgs subbundle $\sG$ of $E$ the first part of Lemma~\ref{spl.2} only implies that $\mu_{\underline{D}^{(\iota)}}(\sG)\leq 0$. Since $\mu(\sG)\leq 0$ the equality $\mu^{\{\iota\}}_{\epsilon}(\sG)=0$ can only hold for $\epsilon >0$ if $\mu(\sG)=\mu_{\underline{D}^{(\iota)}}(\sG)=0$. This implies that the saturated hull of $\sG$ in $E$ is a direct factor, contradicting the irreducibility of $\V$. So $\rk(\sG) < \rk(E)$ implies that $\mu^{\{\iota\}}_\epsilon(\sG) < 0$. 

As we will show in Section~\ref{erg} the same holds for the slopes 
$\mu_{\underline{D}^{(\iota)}}$ if the universal covering $\tilde{U}$ is a bounded symmetric domain. Without this information, one just has the following criterion.  
\begin{proposition}\label{spl.3} 
Let 
$$
0\>>> \sK \>>> E^{1,0} \>>> \sQ \>>> 0
$$
be an exact sequence, and let $s:\sQ\to E^{1,0}$ be the orthogonal complement of $\sK$. 
Assume that for some $\iota$ the slope $\mu_{\underline{D}^{(\iota)}}(\sQ)=0$. Then
\begin{enumerate}
\item[a.] The composition 
$$
\sQ \> s >> E^{1,0} \> \theta >> E^{0,1} \otimes \Omega^1_Y(\log S) \> {\rm pr}_\iota >>
E^{0,1} \otimes \Omega_\iota
$$
is zero.
\item[b.] $s:\sQ \to E^{1,0}$ is holomorphic in the direction $\Omega_\iota$.
\end{enumerate}
\end{proposition}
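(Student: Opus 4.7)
My plan is a direction-selective Chern-Weil computation with the Hodge metric, refining the negativity-of-kernels argument of Simpson that underlies Lemma~\ref{spl.2} (cf.\ \cite[Prop.~2.4]{VZ07}). First I would equip $E^{1,0}$ with its Hodge metric $h$, which is good in the sense of Mumford with respect to the Deligne extension by Cattani-Kaplan-Schmid (cf.\ Lemma~\ref{Deligne_Mumford2}), so that Chern-Weil integrals on $U$ compute algebraic intersection numbers on $Y$. For a weight-one polarized variation of Hodge structures one has
$$R_h(E^{1,0}) \; =\; -\,\theta\wedge\theta^{\star},$$
where $\theta^{\star}$ is the $h$-adjoint of $\theta$. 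Endowing $\sQ$ with the metric induced via the smooth orthogonal splitting $s:\sQ \cong \sK^{\perp}\hookrightarrow E^{1,0}$, the Gauss equation gives
$$R_h(\sQ) \; =\; -(\theta\circ s)\wedge (\theta\circ s)^{\star}\, -\, A^{\star}\wedge A,$$
where $A$ is a $(0,1)$-form with values in $\Hom(\sQ,\sK)$, namely the second fundamental form measuring the failure of $s$ to be holomorphic.

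Next, using $\Omega^1_Y(\log S)=\bigoplus_j \Omega_j$, I would decompose $\theta\circ s = \sum_j \theta_j\circ s$ with $\theta_j\circ s:\sQ\to E^{0,1}\otimes \Omega_j$, and $A=\sum_j A_j$ with $A_j$ the $\overline{\Omega_j}$-valued component. Represent $c_1(\Omega_j)$ by positive-semidefinite $(1,1)$-form representatives $\omega_j$ compatible with the orthogonal decomposition of the cotangent space in adapted local coordinates, possible because the $\Omega_j$ are mutually orthogonal for the K\"ahler-Einstein metric on $\tilde U$ (Schur's lemma applied to their homogeneous structure, cf.\ Lemma~\ref{hom_is_stable}). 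Then a positive multiple of the intersection cycle $(\underline{D}^{(\iota)})^{n-1}$ is represented by
$$\Omega^{(\iota)}\; :=\;\omega_1^{n_1}\wedge\cdots\wedge\omega_\iota^{n_\iota-1}\wedge\cdots\wedge\omega_s^{n_s},$$
and wedging a $(1,1)$-form against $\Omega^{(\iota)}$ picks out pointwise exactly its $(\Omega_\iota,\overline{\Omega_\iota})$-component: all cross terms $j\neq k$ vanish, and only the $(\iota,\iota)$-diagonal contribution of $\mathrm{tr}\,R_h(\sQ)$ survives. The resulting identity is
$$c_1(\sQ)\cdot(\underline{D}^{(\iota)})^{n-1}\; =\;-\,c\cdot\int_U\bigl(\|\theta_\iota\circ s\|_h^2 \,+\, \|A_\iota\|_h^2\bigr)\,\mathrm{vol}_Y$$
for some positive constant $c$.

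The hypothesis $\mu_{\underline{D}^{(\iota)}}(\sQ)=0$ forces this integral to vanish. Since both summands in the integrand are pointwise non-negative, each must vanish identically on $U$, yielding $\theta_\iota\circ s\equiv 0$ (which is assertion a) and $A_\iota\equiv 0$, which is precisely the statement that $s$ is holomorphic in the direction $\Omega_\iota$ (assertion b). The main technical obstacle will be to justify rigorously that the Chern-Weil integrals on $U$ compute the algebraic intersection numbers on $Y$; this rests on the Mumford-goodness of the Hodge metric for the Deligne extension of $E^{1,0}$ together with a careful check that the induced metric on $\sQ$ has tame behaviour across $S$, so that the two non-positive contributions in the Gauss equation extend across the boundary without introducing spurious correction terms that could cancel one against the other.
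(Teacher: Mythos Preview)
Your proposal is correct and follows essentially the same route as the paper: a Chern--Weil computation with the Hodge metric, the Gauss formula $R_\sQ=(\theta s)\wedge(\theta s)^\star+c\wedge c^\star$ for the quotient curvature, and wedging against the $(n-1)$-form representing $(\underline{D}^{(\iota)})^{n-1}$ to isolate the $\iota$-directional contribution, whose pointwise non-negativity forces both $\theta_\iota\circ s$ and the second fundamental form in the $\iota$-direction to vanish. One small quibble: your appeal to Lemma~\ref{hom_is_stable} for the orthogonality of the $\Omega_j$ is misplaced, since that lemma lives in the Shimura setting whereas Proposition~\ref{spl.3} assumes only~\ref{compass-pos}; the orthogonal product structure $\tilde U=M_1\times\cdots\times M_s$ and the adapted coordinates come instead from Yau's uniformization theorem (as the paper uses via Lemma~\ref{spl.1}).
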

Remark that a priori $s$ is a $C^\infty$ map. So part b) of 
the Proposition needs some explanation. Recall that we have the decomposition
$\tilde{U}=M_1\times \cdots \times M_s,$ 
corresponding to the decomposition of $\Omega^1_Y(\log S)$ in $\mu$-stable direct factors.
Write $n_0=0$, again $n_i=\rk(\Omega_i)=\dim(M_i)$ and $m_i=\sum_{j=0}^i n_j$.

Given a point $y\in U$ let us choose a local coordinate system
$z_{1},\ldots,z_{n}$ in a neighborhood of $y$ such that $\pi^*(z_{m_{i-1}+1}),\ldots,\pi^*(z_{m_i})$ are
coordinates on $M_i$.

\begin{definition}\label{spl.4}
The inclusion $s:\sQ \to E^{1,0}$ is holomorphic in the direction
$\Omega_\iota$ if its image is invariant under the action of 
$\partial/\partial{\bar{z}_k}$ 
on $E^{1,0}$ for $k=m_{\iota-1}+1,\ldots,m_\iota$.
\end{definition}

\begin{proof}[Proof of Proposition~\ref{spl.3}] We assume $\iota =1$.
Locally, in some open set $W \subset U$ choose  complex coordinates $z_1,\ldots,z_n$
as above and unitary frames of $E^{1,0}$ and $E^{0,1}$. That is, choose
$C^\infty$-sections $e_1,\ldots,e_\ell$ of $E^{1,0}$ and
$f_1,\ldots,f_{\ell'}$ of $E^{0,1}$ orthogonal
with respect to the scalar product $h(\cdot,\cdot)$ coming from
the Hodge metric, and such that $e_1,\ldots,e_k$ generate $\sK$ while
$e_{k+1},\ldots,e_\ell$ generate $s(Q)$. Write the Higgs field
$\theta$ in these coordinates as
$$ \theta(e_\alpha) = \sum_{i=1}^n \sum_{\beta=1}^{\ell'}
\theta^i_{\alpha,\beta} f_\beta dz_i. $$
By \cite[Theorem~5.2]{Gr70} the curvature $R$ of the metric connection 
$\nabla_h$ on $E^{1,0}$ is given by
\begin{equation}\label{eqspl.1} 
R_{E^{1,0}} = \theta \wedge \theta^* = \sum_{i,j=1}^n 
(R_{E^{1,0}})^{i,j} dz_i \wedge \bar{z}_j,
 \quad \text{where} \quad (R_{E^{1,0}})^{i,j}_{\alpha,\beta} = 
\sum_{\gamma=1}^{\ell'} \theta^i_{\alpha,\gamma} 
\overline{\theta^i_{\beta,\gamma}}.
\end{equation}
For the subbundle $\sK \subset E^{1,0}$ the composition
$$b: \sK \>>> E^{1,0} \>\nabla_h >> E^{1,0} \otimes \Omega^1_U \>>> \sQ \otimes \Omega^1_U$$
of the metric connection and the quotient is called second fundamental
form. Taking complex conjugates we obtain a map
$$c: \bar{\sK} \cong \sK^\vee \>>> \bar{\sQ} \otimes \Omega^{0,1}_U 
\cong \sQ^\vee \otimes \Omega^{0,1}_U . $$ 
Both maps are only $C^\infty$. 
We write the map $c$ in coordinates
$$
c(e_\alpha) =  \sum_{i=1}^n  \sum_{\beta=1}^k c^i_{\alpha,\beta} e_\beta dz_i.
$$
By \cite[Theorem~5.2]{Gr70} the curvature of
the metric connection on $\sQ$ is given by
\begin{gather}\notag 
R_{\sQ} = (\theta s ) \wedge (\theta s)^* + c \wedge c^*= \sum_{i,j=1}^n
(R_{\sQ})^{i,j} 
dz_i \wedge \bar{z}_j,\quad \mbox{where}\\
\label{eqspl.2}
(R_{\sQ})^{i,j}_{\alpha,\beta} = \sum_{\gamma=1}^{\ell'} \theta^i_{\alpha,\gamma} 
\overline{\theta^i_{\beta,\gamma}}
+ \sum_{\gamma=1}^{k} c^i_{\alpha,\gamma} 
\overline{c^i_{\beta,\gamma}},\quad \text{for} \quad \alpha,\beta \in 
\{k+1,\ldots,\ell\}.
\end{gather}
We conclude that for all $i$, the matrices $(R_{\sQ})^{i,i}$ are positive 
semi-definite. Moreover their traces are zero if and only if $\theta^i_{\alpha,\beta} =0$ 
and $c^i_{\alpha,\beta}=0$ for all $\alpha,\beta \in \{k+1,\ldots,\ell\}$.

We write $R(\Omega_i)$ for the curvature of $\det(\Omega_i)$.
By Lemma~\ref{spl.1}~ii) and after rescaling $z_i$ by suitable constants we
may assume that over $W$ 
$$
R(\Omega_i) =
dz_{m_{j-1}+1}\wedge d\bar{z}_{m_{j-1}+1} + \cdots + dz_{m_j}\wedge d\bar{z}_{m_j},
$$
keeping the convention $m_0=n_0 =0$. Then 
$$
R(\Omega_1)^{n_1-1}\wedge R(\Omega_2)^{n_2} \wedge \cdots \wedge R(\Omega_s)^{n_s}=
\sum_{i=1}^{n_1} C_i \cdot \bigwedge_{j\neq i} dz_{j}\wedge d\bar{z}_{j}, 
$$
for some binomial coefficients $C_i >0$. The hypothesis $\mu_{\underline{D}^{(1)}}(\sQ)=0$
is equivalent to
$$
0=\big(\frac{\sqrt{-1}}{2\pi}\big)\cdot \int_U {\rm tr}(R_\sQ)\wedge
R(\Omega_1)^{n_1-1}\wedge R(\Omega_2)^{n_2} \wedge \cdots \wedge R(\Omega_s)^{n_s}.
$$
Since ${\rm tr}(R_\sQ)$ and all the $R(\Omega_i)$ are positive semidefinite, 
the integral has to be zero on all open sets, in particular on $W$. We deduce
\begin{equation}\label{eqspl.3} \begin{split} 
0 &= \int_W (\sum_{i,j=1}^n {\rm tr}(R_Q)^{i,j} dz_i \wedge d\bar{z}_j) \wedge
(\sum_{i=1}^{n_1} C_i \cdot \bigwedge_{j\neq i} dz_{j}\wedge d\bar{z}_{j}) \\
&= \int_W C_i\, \tr(R_\sQ)^{i,i}\bigwedge_{j = 1}^n dz_{j}\wedge d\bar{z}_{j}. 
 \end{split}\end{equation}
Hence $\tr(R_\sQ)^{i,i} =0$ for all $i$ and we obtain the vanishing on $U$
of the composition ${\rm pr}_\iota \circ\theta \circ s$
as claimed in a) and of all $c^i_{\alpha,\beta}$. Since 
the $(0,1)$-part of the metric connection $\nabla_h$ is $\bar{\partial}$, 
the vanishing of $c^i_{\alpha,\beta}$  is what is claimed in b).
Since the sheaves $\Omega_\iota$, $E^{1,0}$ and $E^{0,1}$ are locally free, both vanishing statements extend to the whole of $Y$.
\end{proof}

\section{Purity of Higgs bundles with Arakelov equality}\label{sta}

In this section we will prove Theorem~\ref{purity_Thm}. So keeping 
the assumptions from Section~\ref{spl} we will assume in addition that $\V$ is non-unitary and that it
satisfies the Arakelov equality 
$$
\mu(\V)=\mu(E^{1,0})-\mu(E^{0,1})=\mu(\Omega^1_Y(\log S)).
$$ 
By \cite[Theorem 1]{VZ07} we know that $E^{1,0}$ and $E^{0,1}$ are both $\mu$-semistable. We keep the notations from the last section. In particular as in~\ref{divisors} and~\ref{slopeeq} we define
tuples $\underline{D}^{(\iota)}$ of divisors for $\iota=1,\ldots ,s$ with $\mu=\mu_{\underline{D}^{(\bullet)}}$. Moreover
$$
\mu^{\{\iota\}}_\epsilon = \mu + \epsilon\cdot \mu_{\underline{D}^{(\iota)}}
$$ 
denotes a small perturbation of the slope $\mu$. First we show that this is the slope associated with a small perturbation of the original collection of divisors by a suitable collection of
nef divisors, as studied in Section~\ref{fil}. 
\par
\begin{lemma}\label{twists}
For some tuples of nef $\R$-divisors $\underline{H}^{(i)}$ one has $\mu^{\{\iota\}}_{\epsilon}=\mu_{\underline{D}^{(\bullet)}+ \epsilon\cdot \underline{H}^{(\bullet)}}$. 
\end{lemma}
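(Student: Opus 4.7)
The plan is to exploit the fact that the zero divisor is nef, so that one can construct very sparse perturbation tuples $\underline{H}^{(\bullet)}$ whose intersection expansion in powers of $\epsilon$ truncates to a polynomial of degree one. This makes matching the linear polynomial $\mu^{\{\iota\}}_\epsilon = \mu + \epsilon\cdot \mu_{\underline{D}^{(\iota)}}$ a matter of a single bookkeeping choice.

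Concretely, let $\iota$ denote the fixed index appearing in the definition of $\mu^{\{\iota\}}_\epsilon$. I would take
\[
\underline{H}^{(\iota)} := (D^{(\iota)}_1, 0, \ldots, 0),
\qquad
\underline{H}^{(i)} := (0, 0, \ldots, 0) \ \text{ for } \ i \neq \iota.
\]
Every entry is a nef $\R$-divisor: the zero divisor is trivially nef since it has degree zero on every curve, and $D^{(\iota)}_1$ is nef because $\underline{D}^{(\iota)}$ is a semi-polarization by construction.

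The verification is then a one-line intersection computation. For the distinguished index only the first factor of the product is altered, so
\[
(\underline{D}^{(\iota)} + \epsilon\, \underline{H}^{(\iota)})^{n-1} = (D^{(\iota)}_1 + \epsilon\, D^{(\iota)}_1)\cdot D^{(\iota)}_2 \cdots D^{(\iota)}_{n-1} = (1+\epsilon)\,(\underline{D}^{(\iota)})^{n-1},
\]
while $(\underline{D}^{(i)} + \epsilon\, \underline{H}^{(i)})^{n-1} = (\underline{D}^{(i)})^{n-1}$ for every $i \neq \iota$. Summing over $i$, intersecting with $\ch_1(\sF)$ and dividing by $\rk(\sF)$ reproduces $\mu^{\{\iota\}}_\epsilon(\sF) = \mu(\sF) + \epsilon\cdot \mu_{\underline{D}^{(\iota)}}(\sF)$, as functions of both $\sF$ and $\epsilon$.

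Since the construction is this explicit, there is no real obstacle to overcome; the only conceptual observation is that allowing some entries of $\underline{H}^{(i)}$ to vanish kills every would-be higher-order term in $\epsilon$ that would otherwise arise from the binomial expansion. Any single nonzero slot holding a positive scalar multiple of one of the $D^{(\iota)}_j$ would work equally well; the first slot is chosen purely for concreteness.
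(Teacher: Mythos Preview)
Your proof is correct and takes essentially the same approach as the paper: both choose all entries of $\underline{H}^{(\bullet)}$ to be zero except for $H^{(\iota)}_1 = D^{(\iota)}_1$, and then carry out the identical one-line intersection computation $(1+\epsilon)(\underline{D}^{(\iota)})^{n-1}$. The only cosmetic difference is that the paper spells out $D^{(\iota)}_1$ as $\sqrt[n-1]{\alpha_\iota}\cdot D_\ell$ in terms of the original divisors from (\ref{divisors}), whereas you work directly with the entries of $\underline{D}^{(\iota)}$.
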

\begin{proof}
There are several choices for the $\underline{H}^{(i)}$. In the description of the tuple of divisors $\underline{\tilde{D}}^{(\iota)}$
in~\ref{divisors} denote the first entry by $D_\ell$. Then the first entry in $\underline{D}^{(\iota)}$
is $\sqrt[n-1]{\alpha_\iota} \cdot D_\ell$. Here $\ell=\iota$, if $n_\iota >1$, or some other index
in case that $n_\iota=1$. 

Then choose the tuples of $\R$-divisors $\underline{H}^{(\bullet)}$
with $H_j^{(i)}=0$ for $i=1,\ldots,s$ and for $j=1,\ldots,n-1$,
except for $H_1^{(\iota)}$ which is chosen to be $\sqrt[n-1]{\alpha_\iota} \cdot D_\ell$.
This implies that $\underline{D}^{(i)}+ \epsilon\cdot \underline{H}^{(i)}=\underline{D}^{(i)}$ for $i\neq \iota$,
whereas 
$$
(\underline{D}^{(\iota)}+\epsilon \underline{H}^{(\iota)})^{n-1}=
(1+\epsilon)\cdot (\underline{D}^{(\iota)})^{n-1}.
$$
So for a sheaf $\sF$ one finds
\begin{multline*}
\mu_{\underline{D}^{(\bullet)}+ \epsilon\cdot \underline{H}^{(\bullet)}}(\sF)=
\sum_{i=1}^s \mu_{\underline{D}^{(i)}+ \epsilon\cdot \underline{H}^{(i)}}(\sF)=
(1+\epsilon)\cdot \mu_{\underline{D}^{(\iota)}}(\sF) + 
\sum_{i\neq \iota} \mu_{\underline{D}^{(i)}}(\sF)\\
=\epsilon\cdot \mu_{\underline{D}^{(\iota)}}(\sF) + 
\sum_{i=1}^s \mu_{\underline{D}^{(i)}}(\sF)= 
\epsilon\cdot \mu_{\underline{D}^{(\iota)}}(\sF) + 
\mu_{\underline{D}^{(\bullet)}}(\sF)=
\mu^{\{\iota\}}_{\epsilon}(\sF) .
\end{multline*}

\end{proof}
By Corollary~\ref{fil.9} one finds a filtration $\sG^{(\iota)}_\bullet$ of $E^{1,0}$
and some $\epsilon_0>0$ such that $\sG^{(\iota)}_\bullet$ is a $\mu^{\{\iota\}}_\epsilon$-Harder-Narasimhan filtration of $E^{1,0}$, for all $\epsilon \in (0,\epsilon_0]$, and a weak $\mu$-Jordan-H\"older filtration.
Of course we may choose $\epsilon_0$ to be independent of $\iota$.

So the quotient sheaves $\sG^{(\iota)}_i/\sG^{(\iota)}_{i-1}$ are $\mu^{\{\iota\}}_\epsilon$-semistable
for all $\epsilon \in [0,\epsilon_0]$, however not necessarily $\mu_{\underline{D}^{(\iota)}}$-semistable.

\begin{lemma}\label{sta.3} \
Let $\sF$ be a $\mu$-stable subsheaf of $E^{1,0}$ with $\mu(\sF)=\mu(E^{1,0})$. Then $\sF$ is pure of type $\iota$ for some
$\iota \in\{1,\ldots,s\}$. Moreover, each subsheaf $\sF'$ of $E^{1,0}$ which is isomorphic to $\sF$ is pure of the same type $\iota$.
\end{lemma}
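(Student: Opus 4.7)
First I would verify $\mu(\sF) = \mu(E^{1,0}) > 0$: the polarization on $\V$ identifies $E^{0,1} \cong (E^{1,0})^{\vee}$, so $\mu(E^{0,1}) = -\mu(E^{1,0})$, and combined with the Arakelov equality this yields $\mu(E^{1,0}) = \tfrac{1}{2}\mu(\Omega^1_Y(\log S)) > 0$ since $\omega_Y(S)$ is nef and big. Consequently $\sF \oplus 0 \subset E$ is not a Higgs subsheaf, for otherwise Lemma~\ref{spl.2}.i would force $\mu(\sF) \le 0$; hence $\theta|_\sF \ne 0$. For each $\iota\in\{1,\ldots,s\}$ I set
$$
\sK_\iota \;=\; \bigcap_{j\ne\iota} \ker(\theta_j|_\sF) \;\subset\; \sF,
$$
so that $\sF$ being pure of type $\iota$ is equivalent to the equality $\sK_\iota = \sF$, and the goal becomes to establish this equality for at least one $\iota$. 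Note that $\sK_\iota$ is automatically saturated in $\sF$: the saturation would map into the torsion-free sheaves $E^{0,1}\otimes\Omega_j$ through a torsion quotient, hence trivially.

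Assume for contradiction that $\sK_\iota \subsetneq \sF$ for every $\iota$. Then at least one $\theta_j|_\sF$ with $j\ne\iota$ is nonzero, whence $\rk(\sK_\iota) < \rk(\sF)$; the $\mu$-stability of $\sF$ forces $\mu(\sK_\iota) < \mu(\sF)$, so $\mu(\sF/\sK_\iota) > \mu(E^{1,0})$. The induced injection
$$
\sF/\sK_\iota \;\hookrightarrow\; \bigoplus_{j\ne\iota} E^{0,1}\otimes \Omega_j
$$
has saturated image of slope at least $\mu(\sF/\sK_\iota) > \mu(E^{1,0})$. However, each summand on the right has slope $\mu(E^{0,1}) + \mu(\Omega_j) = -\mu(E^{1,0}) + \mu(\Omega^1_Y(\log S)) = \mu(E^{1,0})$ and is $\mu$-semistable: $E^{0,1}$ is $\mu$-semistable by \cite[Thm~1]{VZ07} (since $\V$ satisfies the Arakelov equality), $\Omega_j$ is $\mu$-stable by Yau's theorem, and tensor products of $\mu$-semistable sheaves are $\mu$-semistable in characteristic zero via the Hermitian--Yang--Mills correspondence (applicable because $\omega_Y(S)$ is nef and ample with respect to $U$). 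Thus the target is $\mu$-semistable of slope $\mu(E^{1,0})$, contradicting the slope bound above. Hence $\sK_\iota = \sF$ for some $\iota$, so $\theta_j|_\sF = 0$ for all $j\ne\iota$, and combined with $\theta|_\sF\ne 0$ we conclude $\theta_\iota|_\sF\ne 0$, i.e.\ $\sF$ is pure of type $\iota$.

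For the second assertion, let $\sF'\subset E^{1,0}$ be another subsheaf isomorphic to $\sF$. By the first part $\sF'$ is pure of some type $\iota'$, and I claim $\iota=\iota'$. Fix an isomorphism $\psi:\sF\to\sF'$ and let $\phi_1:\sF\hookrightarrow E^{1,0}$ denote the given inclusion and $\phi_2:\sF\to E^{1,0}$ the composition $\sF\xrightarrow{\psi}\sF'\hookrightarrow E^{1,0}$. If $\phi_1$ and $\phi_2$ are $\C$-linearly dependent in $\Hom(\sF,E^{1,0})$, then $\sF=\sF'$ as subsheaves and there is nothing to prove. Otherwise, for every $(a,b)\in\C^2\setminus\{0\}$ the morphism $a\phi_1+b\phi_2$ is nonzero, and any such nonzero morphism is injective (any nonzero morphism from a $\mu$-stable sheaf into a $\mu$-semistable sheaf of the same slope is injective, as the saturated image of $\sF/\ker$ would otherwise have slope strictly greater than $\mu(E^{1,0})$). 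Its image is therefore a $\mu$-stable subsheaf of $E^{1,0}$ of slope $\mu(E^{1,0})$, hence by the first part pure of some type. On the other hand,
$$
\theta\circ(a\phi_1+b\phi_2) \;=\; a\cdot\theta|_\sF \;+\; b\cdot(\theta|_{\sF'}\circ\psi),
$$
in which the first summand has image in $E^{0,1}\otimes\Omega_\iota$ and the second in $E^{0,1}\otimes\Omega_{\iota'}$. If $\iota\ne\iota'$ then choosing $a,b$ both nonzero produces an image subsheaf whose Higgs field has nontrivial components in both $\Omega_\iota$ and $\Omega_{\iota'}$, contradicting the purity just established. Hence $\iota=\iota'$. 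The main technical obstacle throughout is the $\mu$-semistability of $E^{0,1}\otimes\Omega_j$ with respect to the merely nef-and-big polarization $\omega_Y(S)$, which is where the Hermitian--Yang--Mills machinery is essential.
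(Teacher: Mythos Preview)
Your argument for purity has a genuine gap. You assume $\sK_\iota \subsetneq \sF$ for all $\iota$, fix one $\iota$, and invoke $\mu$-stability of $\sF$ to get $\mu(\sK_\iota) < \mu(\sF)$, hence $\mu(\sF/\sK_\iota) > \mu(E^{1,0})$. But stability says nothing about the zero subsheaf: if $\sK_\iota = 0$ then $\sF/\sK_\iota = \sF$ has slope exactly $\mu(E^{1,0})$, and the injection into $\bigoplus_{j\ne\iota}E^{0,1}\otimes\Omega_j$ yields no contradiction. Worse, your own hypothesis that each $E^{0,1}\otimes\Omega_j$ is $\mu$-semistable of slope $\mu(E^{1,0})$ forces each $\ker(\theta_j|_\sF)$ to be either $0$ or all of $\sF$ (otherwise the quotient would have slope strictly above $\mu(E^{1,0})$ while injecting into a semistable sheaf of that slope). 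So whenever $\sF$ fails to be pure, i.e.\ at least two $\theta_j|_\sF$ are nonzero, each such $\theta_j|_\sF$ is already injective and therefore $\sK_\iota = \bigcap_{j\ne\iota}\ker(\theta_j|_\sF) = 0$ for \emph{every} $\iota$. Your contradiction is never reached.

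The paper's proof supplies the missing leverage by decomposing $\mu = \sum_j \mu_{\underline{D}^{(j)}}$ with $\mu_{\underline{D}^{(j)}}(\Omega_i) = 0$ for $j\ne i$, and deforming to $\mu_\epsilon^{\{j\}} = \mu + \epsilon\,\mu_{\underline{D}^{(j)}}$. From $\theta_i|_\sF \ne 0$ one pushes $\sF\otimes T_i$ onto its saturated image $\sB_i \subset E^{0,1}$; since $0\oplus\sB_i$ is a Higgs subsheaf, Lemma~\ref{spl.2} gives $\mu_{\underline{D}^{(j)}}(\sB_i)\le 0$, and $\mu_\epsilon^{\{j\}}$-semistability of $\sF\otimes T_i$ (Lemma~\ref{fil.7}) then yields $\mu_{\underline{D}^{(j)}}(\sF)\le 0$ for every $j\ne i$. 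Running the same argument with $\sF'\cong\sF$ and a second index $i'\ne i$ covers the remaining index $j=i$, whence $\mu(\sF)=\sum_j\mu_{\underline{D}^{(j)}}(\sF)\le 0$, contradicting $\mu(\sF)=\mu(E^{1,0})>0$. The global slope $\mu$ alone cannot separate the directions $\Omega_j$; the refined slopes are essential here.

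Two smaller points. First, your identification $E^{0,1}\cong(E^{1,0})^\vee$ is false for complex (as opposed to real) polarized variations: the paper explicitly allows $\rk E^{1,0}\ne\rk E^{0,1}$ (cf.\ equation~\eqref{eqco.1} or Example~\ref{ex.1}). One does have $\ch_1(E)=0$, so $\mu(E^{1,0})=\tfrac{\ell'}{\ell+\ell'}\mu(\Omega^1_Y(\log S))>0$ and your conclusion survives with a corrected justification. Second, your argument for $\iota=\iota'$ via linear combinations $a\phi_1+b\phi_2$ is sound once the first part is in hand; the paper instead treats both assertions simultaneously by running the refined-slope argument with two isomorphic subsheaves $\sF,\sF'$ and indices $i\ne i'$ from the outset.
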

Recall from Definition~\ref{pure} that $\sF$ is pure of type $\iota$ if the restriction 
$\theta|_{\sF}$ of the Higgs field factors like
$$
\sF \>\theta_\iota >> E^{0,1} \otimes \Omega_\iota \> \subset >> E^{0,1} \otimes 
\Omega_Y^1(\log S).
$$
Equivalently, writing $T_i$ for the dual of $\Omega_i$ and $\theta^\vee_i$ for the the composite
$$
E^{1,0} \otimes T_i \> \theta\otimes {\rm id}_{T_i} >>
E^{0,1}\otimes \Omega_i\otimes T_i \> {\rm contraction} >>
E^{0,1},
$$
one requires $\theta^\vee_i(\sF\otimes T_i)$ to be zero for $i\neq \iota$. Since $\V$ is non-unitary
this is only possible if $\theta^\vee_\iota(\sF\otimes T_\iota)\neq 0$.
\begin{proof}[Proof of Lemma~\ref{sta.3}] Assume that $\sF'\cong \sF$ and that for some
$i\neq i'$ one has 
$$
\theta_i^\vee(\sF\otimes T_i)\neq 0 \mbox{ \ \ and \ \ } 
\theta_{i'}^\vee(\sF'\otimes T_{i'})\neq 0.
$$ 
We will write $\sB_i$ and $\sB_{i'}$ for the saturated hull of those images. 
The Arakelov equality implies that $\theta_i^\vee$ and $\theta_{i'}^\vee$ are morphisms between $\mu$-semistable sheaves of the same slope, hence $\mu(\sB_\iota)=\mu(\sF)+\mu(T_\iota)$ for
$\iota=i, \ i'$.

The sheaves $\sF$ and $T_\iota$ are $\mu$-stable. By Lemma~\ref{fil.7} for $\epsilon>0$, sufficiently small, and for all $j$ the sheaves $\sF$ and $T_\iota$ are $\mu_\epsilon^{\{j\}}$-semistable. Hence $\sF \otimes T_\iota$ is $\mu_\epsilon^{\{j\}}$-semistable, and consequently,  
$$
\mu^{\{j\}}_\epsilon(\sB_\iota)\geq \mu^{\{j\}}_\epsilon (\sF)+\mu^{\{j\}}_\epsilon(T_\iota)
\mbox{ \ \ and \ \ }
\mu_{\underline{D}^{(j)}}(\sB_\iota)\geq \mu_{\underline{D}^{(j)}} (\sF)+\mu_{\underline{D}^{(j)}}(T_\iota).
$$
For $\iota=i$ and $j \neq i$ one obtains
$$
0\geq \mu_{\underline{D}^{(j)}}(\sB_i) \geq \mu_{\underline{D}^{(j)}}(\sF)+\mu_{\underline{D}^{(j)}}(T_i)=\mu_{\underline{D}^{(j)}}(\sF),
$$
and for $\iota=i'\neq k$ 
$$
0\geq \mu_{\underline{D}^{(k)}}(\sB_{i'})\geq\mu_{\underline{D}^{(k)}}(\sF)+\mu_{\underline{D}^{(k)}}(T_{i'})=\mu_{\underline{D}^{(k)}}(\sF')=\mu_{\underline{D}^{(k)}}(\sF).
$$
Then $i\neq i'$ implies that $\mu_{\underline{D}^{(j)}}(\sF)\leq 0$ for all $j$, hence $\mu(\sF)\leq 0$. Since
$\V$ is non-unitary and since $\mu(\sF)=\mu(E^{1,0})$ this contradicts part iii) of Lemma~\ref{spl.2}. 
\end{proof}
Let us define
\begin{equation}\label{kernel}
\sK^{(\iota)}={\rm Ker}\big(E^{1,0}\>>> E^{0,1}\otimes\Omega^1_Y(\log S) \>>> E^{0,1}\otimes \bigoplus_{j\neq \iota} \Omega_j\big).
\end{equation}
\begin{corollary}\label{sta.4}
There exists some $\iota$ with $\sK^{(\iota)}\neq 0$.
\end{corollary}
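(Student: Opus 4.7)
The plan is to deduce Corollary~\ref{sta.4} as an almost immediate consequence of Lemma~\ref{sta.3}, combined with the fact that $E^{1,0}$ is a nonzero $\mu$-semistable sheaf and therefore admits a nonzero $\mu$-stable subsheaf of maximal slope. So I would first note that $E^{1,0}\neq 0$ (the weight~$1$ variation $\V$ is non-unitary, hence $\theta\neq 0$, so both graded pieces of $E$ are nonzero), and that $E^{1,0}$ is $\mu$-semistable by the Arakelov equality together with \cite[Theorem 1]{VZ07}, which is recalled at the start of Section~\ref{sta}.

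Next I would extract a $\mu$-stable subsheaf $\sF\subset E^{1,0}$ with $\mu(\sF)=\mu(E^{1,0})$. For this one can invoke the socle ${\rm Soc}(E^{1,0})$ constructed in Example/Definition~\ref{spl.1c}: since $E^{1,0}$ is $\mu$-semistable and nonzero, its socle is a nonzero $\mu$-polystable subsheaf of slope $\mu(E^{1,0})$, and any of its $\mu$-stable direct summands gives the desired $\sF$.

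By Lemma~\ref{sta.3}, this $\sF$ is pure of type $\iota$ for some $\iota\in\{1,\ldots,s\}$. Unwinding Definition~\ref{pure}, this means that the composition
\[
 \sF \>\subset>> E^{1,0} \>\theta>> E^{0,1}\otimes \Omega_Y^1(\log S) \>{\rm pr}>> E^{0,1}\otimes \bigoplus_{j\neq \iota}\Omega_j
\]
is zero. Hence $\sF$ is contained in $\sK^{(\iota)}$, as defined in~\eqref{kernel}, which forces $\sK^{(\iota)}\neq 0$ and finishes the proof.

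There is no real obstacle: the whole argument is a two-line application of Lemma~\ref{sta.3} together with the existence of the socle. The substantive work (the purity for a single $\mu$-stable subsheaf of maximal slope) has already been carried out in Lemma~\ref{sta.3}, where the perturbed slopes $\mu^{\{\iota\}}_\epsilon$ and the subtle comparison of different directional slopes $\mu_{\underline{D}^{(j)}}$ are used; the corollary merely applies that lemma to an arbitrary stable piece of the socle of $E^{1,0}$.
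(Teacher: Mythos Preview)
Your proposal is correct and follows essentially the same approach as the paper: choose a $\mu$-stable direct factor $\sF$ of the socle of $E^{1,0}$ (so $\mu(\sF)=\mu(E^{1,0})$), apply Lemma~\ref{sta.3} to conclude $\sF$ is pure of type $\iota$, and observe that this means $\sF\subset\sK^{(\iota)}$. The paper's proof is the same two-line argument, with slightly less detail about why $E^{1,0}$ is nonzero and $\mu$-semistable.
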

\begin{proof}
Choose a direct factor $\sF$ of the socle of $E^{1,0}$, hence a $\mu$-stable subsheaf $\sF\subset E^{1,0}$ with $\mu(\sF)=\mu(E^{1,0})$. Then by Lemma~\ref{sta.3} the bundle $\sF$ is contained in 
$\sK^{(\iota)}$ for some $\iota$.
\end{proof}
\par
\begin{lemma}\label{sta.05}
Assume that $\V$ is pure of type $\iota$, hence that
$E^{1,0}=\sK^{(\iota)}$. Then for all $j\neq \iota$ one has $\mu_{\underline{D}^{(j)}}(E^{1,0})=0$.
\end{lemma}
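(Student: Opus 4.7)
The plan is to run the curvature computation from the proof of Proposition~\ref{spl.3} in reverse: there, the vanishing of a slope is used to force pointwise vanishing of Higgs components; here, we start from the pointwise shape of the Higgs field imposed by purity and deduce the vanishing of the slope. The hypothesis $E^{1,0}=\sK^{(\iota)}$ is equivalent, by the definition~(\ref{kernel}), to $\theta$ factoring through $E^{0,1}\otimes\Omega_{\iota}$.

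First I would fix a point $y\in U$ and choose, on a neighborhood $W$, local holomorphic coordinates $z_{1},\ldots,z_{n}$ adapted to the product structure $\tilde U=M_{1}\times\cdots\times M_{s}$ (so that $z_{m_{i-1}+1},\ldots,z_{m_{i}}$ are coordinates on the factor $M_{i}$), together with local unitary frames $e_{\alpha}$ and $f_{\beta}$ for the Hodge metric on $E^{1,0}$ and $E^{0,1}$. Writing $\theta(e_{\alpha})=\sum_{k,\beta}\theta^{k}_{\alpha,\beta}f_{\beta}\,dz_{k}$, purity of type $\iota$ translates exactly into $\theta^{k}_{\alpha,\beta}=0$ for every $k\notin\{m_{\iota-1}+1,\ldots,m_{\iota}\}$. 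Griffiths' curvature formula (\cite[Theorem 5.2]{Gr70}; cf.~equation~(\ref{eqspl.1})) then gives
$$\tr(R_{E^{1,0}})^{k,k}=\sum_{\alpha,\gamma}|\theta^{k}_{\alpha,\gamma}|^{2},$$
which consequently vanishes pointwise on $W$ for every $k$ outside the $\iota$-block.

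Second, I would convert this pointwise vanishing into the statement about the slope. After rescaling the coordinates exactly as in the derivation of equation~(\ref{eqspl.3}), the Chern forms of the metrics on $\det(\Omega_{i})$ take the standard shape $R(\Omega_{i})=\sum_{k\in\{m_{i-1}+1,\ldots,m_{i}\}}dz_{k}\wedge d\bar z_{k}$, and hence
$$R(\Omega_{j})^{n_{j}-1}\wedge\prod_{i\neq j}R(\Omega_{i})^{n_{i}}=\sum_{k\in\{m_{j-1}+1,\ldots,m_{j}\}}C_{k}\bigwedge_{l\neq k}dz_{l}\wedge d\bar z_{l}$$
with strictly positive constants $C_{k}$. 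Wedging this top-dimensional form with $\tr(R_{E^{1,0}})$ extracts only the diagonal entries $\tr(R_{E^{1,0}})^{k,k}$ for $k$ in the $j$-block. For $j\neq\iota$ the $j$-block is disjoint from the $\iota$-block, so every such entry vanishes by the first step; the pointwise non-negative integrand of
$$\mu_{\underline{D}^{(j)}}(E^{1,0})\;\propto\;\int_{U}\tr(R_{E^{1,0}})\wedge R(\Omega_{j})^{n_{j}-1}\wedge\prod_{i\neq j}R(\Omega_{i})^{n_{i}}$$
is then identically zero on $U$, forcing $\mu_{\underline{D}^{(j)}}(E^{1,0})=0$.

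The only delicate point I anticipate is the passage from the algebraic intersection number on $Y$ to the integral on $U$: this requires the Hodge metric on the Deligne canonical extension of $E^{1,0}$ to be good in the sense of Mumford, which holds under the standing unipotency assumption by the nilpotent orbit theorem and the asymptotic analysis of Cattani--Kaplan--Schmid, so that its first Chern form represents $c_{1}(E^{1,0})$ as a current on $Y$ and the integral above computes the intersection number defining $\mu_{\underline{D}^{(j)}}(E^{1,0})$.
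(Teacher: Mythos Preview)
Your argument is correct and takes a genuinely different route from the paper's own proof.

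The paper argues algebraically via slope inequalities. Since $\V$ is pure of type $\iota$, the map $\theta^\vee_\iota: E^{1,0}\otimes T_\iota \to E^{0,1}$ is non-zero. Using the Arakelov equality and the resulting $\mu$-semistability of $E^{1,0}$ and $E^{0,1}$, this is a morphism between $\mu$-semistable sheaves of equal slope, so its saturated image $\sB_\iota$ satisfies $\mu(\sB_\iota)=\mu(E^{1,0})-\mu(\Omega_\iota)$. The small-$\epsilon$ perturbation trick from Section~\ref{fil} then yields $\mu_{\underline{D}^{(j)}}(\sB_\iota)\geq \mu_{\underline{D}^{(j)}}(E^{1,0})$ for $j\neq\iota$ (using $\mu_{\underline{D}^{(j)}}(\Omega_\iota)=0$). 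Since $\sB_\iota\subset E^{0,1}$ is a Higgs subsheaf and $E^{1,0}$ is a Higgs quotient, Lemma~\ref{spl.2} gives $0\leq \mu_{\underline{D}^{(j)}}(E^{1,0})\leq \mu_{\underline{D}^{(j)}}(\sB_\iota)\leq 0$.

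Your proof is the differential-geometric mirror image: you read off directly from Griffiths' formula that the diagonal curvature coefficients $\tr(R_{E^{1,0}})^{k,k}$ vanish outside the $\iota$-block, and then observe that the intersection form defining $\mu_{\underline{D}^{(j)}}$ for $j\neq\iota$ tests precisely those vanishing coefficients. This is cleaner in one respect: it never invokes the Arakelov equality, only purity and the curvature identity~(\ref{eqspl.1}), so your argument actually proves a slightly stronger statement than the one recorded. The paper's approach, on the other hand, stays entirely within the algebraic slope formalism already set up in Sections~\ref{fil}--\ref{spl} and avoids the analytic passage from $Y$ to $U$ that you correctly flag as the only delicate point (and correctly justify via the goodness of the Hodge metric under the unipotency hypothesis). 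One small remark: the non-negativity of the integrand is true but not actually needed in your direction---the pointwise vanishing already gives the conclusion outright.
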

\begin{proof}
If $E^{1,0}=\sK^{(\iota)}$, the saturated image $\sB_\iota$ of  
$$
\theta^\vee_\iota: E^{1,0}\otimes T_\iota \>>> E^{0,1}
$$
has to be non-zero. $\theta^\vee_\iota$ is a map of $\mu$-semistable sheaves of the same slope,
hence $\mu(\sB_\iota)=\mu(E^{1,0}) - \mu(\Omega_\iota)$.
For $\epsilon$ sufficiently small $E^{1,0}\otimes T_\iota$  is $\mu_\epsilon^{\{j\}}$-semistable, and  
$$
\mu_\epsilon^{\{j\}}(\sB_\iota) \geq \mu_\epsilon^{\{j\}}(E^{1,0}) - \mu_\epsilon^{\{j\}}(\Omega_\iota).
$$
Then for $j\neq \iota$ one finds
$$
\mu_{\underline{D}^{(j)}}(\sB_\iota) \geq \mu_{\underline{D}^{(j)}}(E^{1,0}) - \mu_{\underline{D}^{(j)}}(\Omega_\iota)=\mu_{\underline{D}^{(j)}}(E^{1,0}),
$$
which by Lemma~\ref{spl.2} can neither be positive, nor negative, hence it must be zero.
\end{proof}
A similar argument will be used to obtain a stronger statement, which finally will lead to a contradiction, except if $E^{1,0}=\sK^{(\iota)}$ for some $\iota$.
\par
\begin{lemma}\label{sta.1} Let $\ell$ be the length of the filtration $\sG^{(\iota)}_\bullet$.
\begin{enumerate}
\item[a.] Then $\sG^{(\iota)}_{\ell-1} \subset\sK^{(\iota)}$. 
\item[b.] If $\sK^{(\iota)}\neq E^{1,0}$ then  $\mu_{\underline{D}^{(\iota)}}(\sG^{(\iota)}_\ell/\sG^{(\iota)}_{\ell-1})=0$.
\end{enumerate}
\end{lemma}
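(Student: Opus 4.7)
The plan is to exploit three basic facts in combination: (i) $\sG^{(\iota)}_{\ell-1}$ is $\mu$-semistable of slope $\mu(E^{1,0})$, as a successive extension of $\mu$-semistable sheaves of that common slope (from the weak $\mu$-Jordan--H\"older property); (ii) each quotient $\sG^{(\iota)}_i/\sG^{(\iota)}_{i-1}$ is $\mu^{\{\iota\}}_\epsilon$-semistable, and after cancelling the common $\mu$-slope, the invariants $a_i := \mu_{\underline{D}^{(\iota)}}(\sG^{(\iota)}_i/\sG^{(\iota)}_{i-1})$ are strictly decreasing in $i$; and (iii) by Lemma~\ref{spl.1} one has $\mu_{\underline{D}^{(\iota)}}(\Omega_j)=0$ for $j\neq\iota$, so that $\mu_{\underline{D}^{(\iota)}}(E^{0,1}\otimes\Omega_j)=\mu_{\underline{D}^{(\iota)}}(E^{0,1})$ for those $j$, while the Arakelov equality forces $\mu(E^{0,1}\otimes\Omega_j)=\mu(E^{1,0})$.

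For part~(a) I will argue by contradiction: assume there is a minimal $i^*\leq\ell-1$ with $\sG^{(\iota)}_{i^*}\not\subset\sK^{(\iota)}$. Since then $\sG^{(\iota)}_{i^*-1}\subset\sK^{(\iota)}$, the induced composition
$$
\sG^{(\iota)}_{i^*}/\sG^{(\iota)}_{i^*-1}\;\hookrightarrow\;E^{1,0}/\sG^{(\iota)}_{i^*-1}\;\xrightarrow{\theta^{(\iota)}}\;E^{(\iota)}:=E^{0,1}\otimes\bigoplus_{j\neq\iota}\Omega_j
$$
is nonzero. Comparing $\mu^{\{\iota\}}_\epsilon$-slopes between the $\mu^{\{\iota\}}_\epsilon$-semistable source and subsheaves of the target $E^{(\iota)}$ (whose $\mu_{\underline{D}^{(\iota)}}$-slopes are controlled by $\mu_{\underline{D}^{(\iota)}}(E^{0,1})$ thanks to (iii)), I expect to obtain $a_{i^*}\leq\mu_{\underline{D}^{(\iota)}}(E^{0,1})$. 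Combined with the strict monotonicity of the $a_i$'s and Simpson negativity applied to the Higgs subsheaf $E^{0,1}\subset E$ (Lemma~\ref{spl.2}~i)), this forces $a_\ell<\mu_{\underline{D}^{(\iota)}}(E^{0,1})\leq 0$. On the other hand, $\sG^{(\iota)}_\ell/\sG^{(\iota)}_{\ell-1}=E/(\sG^{(\iota)}_{\ell-1}\oplus E^{0,1})$ is a Higgs quotient carrying the induced zero Higgs field, so Lemma~\ref{spl.2}~ii) gives $\mu^{\{\iota\}}_\epsilon(\sG^{(\iota)}_\ell/\sG^{(\iota)}_{\ell-1})\geq 0$ for small $\epsilon>0$; taking the limit $\epsilon\to 0^+$ yields $a_\ell\geq 0$, a contradiction.

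For part~(b), given part~(a), the Higgs quotient $\sQ^{(\iota)}=E^{1,0}/\sK^{(\iota)}$ is a nonzero torsion-free quotient of $\sG^{(\iota)}_\ell/\sG^{(\iota)}_{\ell-1}$ embedding (after saturation) in $E^{(\iota)}$. Lemma~\ref{spl.2}~ii) applied with the single-tuple semi-polarization $\underline{D}^{(\iota)}$ gives $\mu_{\underline{D}^{(\iota)}}(\sQ^{(\iota)})\geq 0$, while the inclusion into $E^{(\iota)}$ together with the Simpson bound $\mu_{\underline{D}^{(\iota)}}(E^{0,1})\leq 0$ yields the reverse inequality, so $\mu_{\underline{D}^{(\iota)}}(\sQ^{(\iota)})=0$. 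Since $\sG^{(\iota)}_\ell/\sG^{(\iota)}_{\ell-1}$ is $\mu^{\{\iota\}}_\epsilon$-semistable and $\sQ^{(\iota)}$ is a saturated quotient of the same $\mu$-slope, the two $\mu_{\underline{D}^{(\iota)}}$-slopes must coincide, yielding $a_\ell=0$. The hard part will be the control of $\mu_{\underline{D}^{(\iota)}}$-slopes under sub/quotient operations in the comparison step: since $\mu_{\underline{D}^{(\iota)}}$ uses only nef (not ample) divisors, $\mu$-semistability does not imply $\mu_{\underline{D}^{(\iota)}}$-semistability, and the tensor targets $E^{0,1}\otimes\Omega_j$ may themselves fail to be $\mu_{\underline{D}^{(\iota)}}$-semistable. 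I would sidestep this by systematically working with the mixed slope $\mu^{\{\iota\}}_\epsilon$ (where the $\omega_Y(S)$ contribution supplies the ample part, enabling Lemma~\ref{spl.2}~iii) and the irreducibility of $\V$) and extracting the $\mu_{\underline{D}^{(\iota)}}$-information by passing to the limit $\epsilon\to 0^+$.
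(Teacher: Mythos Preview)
Your overall strategy is close to the paper's, but there is a genuine gap in the comparison step that your proposed ``sidestep'' does not close. You need an upper bound of the form $\mu_{\underline{D}^{(\iota)}}(\sB)\le\mu_{\underline{D}^{(\iota)}}(E^{0,1})$ for the image $\sB$ of the induced map into $E^{(\iota)}=E^{0,1}\otimes\bigoplus_{j\neq\iota}\Omega_j$, and you propose to obtain it by working with $\mu^{\{\iota\}}_\epsilon$. But that would require $E^{(\iota)}$, hence $E^{0,1}$, to be $\mu^{\{\iota\}}_\epsilon$-semistable for small $\epsilon>0$, and nothing in the set-up gives this: the Arakelov equality only yields $\mu$-semistability of $E^{0,1}$, and Corollary~\ref{fil.9} in general produces a nontrivial $\mu^{\{\iota\}}_\epsilon$-Harder--Narasimhan filtration of $E^{0,1}$. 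Fact (iii) tells you $\mu_{\underline{D}^{(\iota)}}(E^{(\iota)})=\mu_{\underline{D}^{(\iota)}}(E^{0,1})$, but says nothing about subsheaves. The same gap reappears in your part~(b) when you try to get $\mu_{\underline{D}^{(\iota)}}(\sQ^{(\iota)})\le 0$ from the inclusion $\sQ^{(\iota)}\hookrightarrow E^{(\iota)}$. (A smaller slip: in part~(a), the limit of $\mu^{\{\iota\}}_\epsilon(\sG^{(\iota)}_\ell/\sG^{(\iota)}_{\ell-1})\ge 0$ as $\epsilon\to 0^+$ is $\mu(E^{1,0})\ge 0$, not $a_\ell\ge 0$; apply Lemma~\ref{spl.2}~ii) directly with the single tuple $\underline{D}^{(\iota)}$, as you do in~(b).)

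The paper's fix is a one-line twist that avoids any semistability hypothesis on $E^{0,1}$: instead of mapping $\sG^{(\iota)}_{\nu}/\sG^{(\iota)}_{\nu-1}\to E^{0,1}\otimes\Omega_j$, tensor the source by $T_j$ and consider the induced nonzero map $\sG^{(\iota)}_{\nu}/\sG^{(\iota)}_{\nu-1}\otimes T_j\to E^{0,1}$ with saturated image $\sB_j$. Now the \emph{source} is $\mu^{\{\iota\}}_\epsilon$-semistable (tensor of a $\mu^{\{\iota\}}_\epsilon$-semistable piece with the $\mu$-stable $T_j$, using Lemma~\ref{fil.7}~c)), so $\mu^{\{\iota\}}_\epsilon(\sB_j)\ge\mu^{\{\iota\}}_\epsilon(\sG^{(\iota)}_{\nu}/\sG^{(\iota)}_{\nu-1}\otimes T_j)$; since $\mu_{\underline{D}^{(\iota)}}(T_j)=0$ and $\mu(\sB_j)=\mu(E^{0,1})$, this gives $\mu_{\underline{D}^{(\iota)}}(\sB_j)\ge a_\nu$. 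For the upper bound one does not use semistability of $E^{0,1}$ at all: any subsheaf $\sB_j\subset E^{0,1}$ is a Higgs subsheaf of $(E,\theta)$ with zero Higgs field, so Lemma~\ref{spl.2}~i) gives $\mu_{\underline{D}^{(\iota)}}(\sB_j)\le 0$ directly. Together with $a_\ell\ge 0$ and the strict monotonicity $a_\ell\le a_\nu$, this forces $a_\ell=a_\nu=0$ and $\nu=\ell$, proving (a) and (b) simultaneously. If you insert this twist-by-$T_j$ step, your argument goes through.
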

\begin{proof} 
Let $\nu \in \{1,\ldots,\ell+1\}$ be the largest number with $\sG^{(\iota)}_{\nu-1}\subset \sK^{(\iota)}$.  
If $\nu=\ell+1$ then $E^{1,0}=\sG^{(\iota)}_{\ell}=\sK^{(\iota)}$ and there is nothing to show.
So let us assume that $\nu\leq \ell$, or equivalently that $\sK^{(\iota)}\neq E^{1,0}$.

For all $j\neq \iota$ the restriction of $\theta^\vee_j$ to $\sG^{(\iota)}_{\nu}$ induces a morphism 
\begin{equation}\label{eqsta.2}
\sG^{(\iota)}_{\nu}/\sG^{(\iota)}_{\nu-1}\otimes T_j \>>> E^{0,1},
\end{equation}
and by assumption for at least one $j\neq \iota$ this morphism is non-zero. So we will fix such an index $j$
and assume in the sequel that the saturated image $\sB_j$ of $\theta^\vee_j|_{\sG^{(\iota)}_{\nu}}$ is non-zero.

Since $\sG^{(\iota)}_\bullet$ is a weak $\mu$-Jordan H\"older filtration, the morphism in (\ref{eqsta.2}) is a morphism between $\mu$-semistable sheaves of the same slope, non-zero by assumption. Then
$$
\mu(\sG^{(\iota)}_{\nu}/\sG^{(\iota)}_{\nu-1}\otimes T_j)=\mu(E^{1,0})+\mu(T_j)=\mu(E^{0,1})=\mu(\sB_j).
$$
Since $j\neq \iota$ 
\begin{gather*}
\mu_{\underline{D}^{(\iota)}}(\sG^{(\iota)}_{\nu}/\sG^{(\iota)}_{\nu-1}\otimes T_j)=\mu_{\underline{D}^{(\iota)}}(\sG^{(\iota)}_{\nu}/\sG^{(\iota)}_{\nu-1}),
\mbox{ \ \ and hence}\\ 
\mu^{\{\iota\}}_\epsilon(\sG^{(\iota)}_{\nu}/\sG^{(\iota)}_{\nu-1}\otimes T_j))=\epsilon\cdot \mu_{\underline{D}^{(\iota)}}(\sG^{(\iota)}_{\nu}/\sG^{(\iota)}_{\nu-1})+
\mu(\sB_j).
\end{gather*}
For $0 < \epsilon \leq \epsilon_0$ the sheaf $\sG^{(\iota)}_{\nu}/\sG^{(\iota)}_{\nu-1}
\otimes T_j$ is $\mu^{\{\iota\}}_\epsilon$-semistable, which implies that
\begin{equation}\label{eqsl1}
\epsilon\cdot \mu_{\underline{D}^{(\iota)}}(\sB_j) + \mu(\sB_j)=
\mu^{\{\iota\}}_\epsilon(\sB_j) \geq \epsilon\cdot \mu_{\underline{D}^{(\iota)}}(\sG^{(\iota)}_{\nu}/\sG^{(\iota)}_{\nu-1})+
\mu(\sB_j).
\end{equation}
By the choice of $\sG^{(\iota)}_{\bullet}$ as a $\mu^{\{\iota\}}_\epsilon$-Harder-Narasimhan filtration
one has an inequality
\begin{equation}\label{eqsl2}
\mu^{\{\iota\}}_\epsilon(\sG^{(\iota)}_{\ell}/\sG^{(\iota)}_{\ell-1}) \leq \mu^{\{\iota\}}_\epsilon(\sG^{(\iota)}_{\nu}/\sG^{(\iota)}_{\nu-1}),
\end{equation}
with equality if and only if $\nu=\ell$. Since $\sG^{(\iota)}_{\bullet}$ is a weak $\mu$-Jordan-H\"older filtration the slope $\mu(\sG^{(\iota)}_{\nu}/\sG^{(\iota)}_{\nu-1})$ is independent of $\nu$.
So the inequality~\ref{eqsl2} carries over to one for the slope 
$\mu_{\underline{D}^{(\iota)}}$. As we have seen in Lemma~\ref{spl.2},~i) one has $\mu_{\underline{D}^{(\iota)}}(\sB_j)\leq 0$, so rewriting the inequalities~\ref{eqsl1} and~\ref{eqsl2} one gets
\begin{equation}\label{eqsl3}
\mu_{\underline{D}^{(\iota)}}(\sG^{(\iota)}_{\ell}/\sG^{(\iota)}_{\ell-1}) \leq
\mu_{\underline{D}^{(\iota)}}(\sG^{(\iota)}_{\nu}/\sG^{(\iota)}_{\nu-1})\leq 0. 
\end{equation}
Lemma~\ref{spl.2},~ii) implies however that $\mu_{\underline{D}^{(\iota)}}(\sG^{(\iota)}_{\ell}/\sG^{(\iota)}_{\ell-1}) \geq 0$.
So both inequalities in~\ref{eqsl3} are equalities and b) holds true. Moreover~\ref{eqsl2} is an equality,
hence $\nu=\ell$, as claimed in a). 
\end{proof}
\par
\begin{corollary}\label{sta.2}
If in Lemma~\ref{sta.1} the sheaf $\sQ=E^{1,0}/\sK^{(\iota)}$ is non-zero, it is $\mu$ and $\mu^{\{\iota\}}_\epsilon$-semistable. One has
$$
\mu^{\{\iota\}}_\epsilon(\sQ)= \mu(\sQ) = \mu(E^{1,0})=\mu(E^{0,1})+\mu(\Omega_Y^1(\log S)),
$$
and hence $\mu_{\underline{D}^{(\iota)}}(\sQ)=0$.
\end{corollary}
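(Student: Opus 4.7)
The plan is to sandwich $\mu^{\{\iota\}}_\epsilon(\sQ)$ between $\mu(E^{1,0})$ from below and from above, forcing $\mu^{\{\iota\}}_\epsilon(\sQ) = \mu(E^{1,0})$ and hence $\mu_{\underline{D}^{(\iota)}}(\sQ) = 0$. For the lower bound, set $\sG := \sG^{(\iota)}_\ell/\sG^{(\iota)}_{\ell-1}$. By Lemma~\ref{sta.1}(a) we have $\sG^{(\iota)}_{\ell-1} \subset \sK^{(\iota)}$, so the surjection $\sG^{(\iota)}_\ell \twoheadrightarrow \sG^{(\iota)}_\ell/\sK^{(\iota)} = \sQ$ factors through $\sG$; thus $\sQ$ is a quotient of $\sG$. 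The sheaf $\sG$ is $\mu^{\{\iota\}}_\epsilon$-semistable (as the last graded piece of a $\mu^{\{\iota\}}_\epsilon$-Harder--Narasimhan filtration), and combining Lemma~\ref{sta.1}(b) with the weak $\mu$-Jordan--H\"older property gives $\mu^{\{\iota\}}_\epsilon(\sG) = \mu(E^{1,0}) + \epsilon \cdot 0 = \mu(E^{1,0})$. Consequently $\mu^{\{\iota\}}_\epsilon(\sQ) \geq \mu(E^{1,0})$.

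For the upper bound, the Higgs field induces by construction of $\sK^{(\iota)}$ an embedding $\bar\theta\colon \sQ \hookrightarrow \sM := \bigoplus_{j \neq \iota} E^{0,1} \otimes \Omega_j$. The Arakelov equality yields $\mu(E^{0,1} \otimes \Omega_j) = \mu(E^{0,1}) + \mu(\Omega^1_Y(\log S)) = \mu(E^{1,0})$. Moreover $\mu_{\underline{D}^{(\iota)}}(\Omega_j) = 0$ for $j \neq \iota$ by Lemma~\ref{spl.1}(ii), and applying Lemma~\ref{spl.2} both to $E^{0,1}$ as Higgs subsheaf of $E$ and to $E^{1,0} = E/E^{0,1}$ as Higgs quotient, combined with additivity of slopes on the direct sum $E = E^{1,0} \oplus E^{0,1}$, yields $\mu_{\underline{D}^{(\iota)}}(E^{0,1}) = 0$. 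So each summand of $\sM$ has $\mu^{\{\iota\}}_\epsilon$-slope equal to $\mu(E^{1,0})$. The heart of the matter is the $\mu^{\{\iota\}}_\epsilon$-semistability of $E^{0,1}$: any destabilizing subsheaf $\sF \subset E^{0,1}$ would satisfy $\mu_{\underline{D}^{(\iota)}}(\sF) > 0$, since $\mu(\sF) \leq \mu(E^{0,1})$ by $\mu$-semistability of $E^{0,1}$; but $\sF \subset E^{0,1}$ is automatically a Higgs subsheaf of $(E,\theta)$ as $\theta|_{E^{0,1}} \equiv 0$, contradicting Lemma~\ref{spl.2}(i). Combined with the $\mu^{\{\iota\}}_\epsilon$-semistability of each $\Omega_j$ (Lemma~\ref{fil.7}(c), since $\Omega_j$ is $\mu$-stable) and the preservation of $\mu^{\{\iota\}}_\epsilon$-semistability under tensor products (the same ingredient invoked in the proof of Lemma~\ref{sta.3}), it follows that $\sM$ is $\mu^{\{\iota\}}_\epsilon$-semistable of slope $\mu(E^{1,0})$. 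The embedding $\sQ \hookrightarrow \sM$ therefore gives $\mu^{\{\iota\}}_\epsilon(\sQ) \leq \mu(E^{1,0})$.

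Combining the two bounds yields $\mu^{\{\iota\}}_\epsilon(\sQ) = \mu(E^{1,0})$ for all sufficiently small $\epsilon > 0$; letting $\epsilon \to 0$ gives $\mu(\sQ) = \mu(E^{1,0})$ and hence $\mu_{\underline{D}^{(\iota)}}(\sQ) = 0$. Both the $\mu$- and $\mu^{\{\iota\}}_\epsilon$-semistability of $\sQ$ then follow immediately, since any subsheaf $\sQ' \subset \sQ$ is a subsheaf of the correspondingly semistable $\sM$ and thus inherits the semistability inequality. The main technical obstacle is the tensor-product preservation of $\mu^{\{\iota\}}_\epsilon$-semistability needed to upgrade the semistabilities of $E^{0,1}$ and $\Omega_j$ to that of $E^{0,1} \otimes \Omega_j$; all the Higgs-theoretic input is contained in Lemma~\ref{sta.1} and Lemma~\ref{spl.2}.
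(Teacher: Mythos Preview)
Your sandwich strategy is the right one and the lower bound is exactly as in the paper, but the upper-bound half contains a false step. You assert $\mu_{\underline{D}^{(\iota)}}(E^{0,1}) = 0$, deducing it from ``$E^{0,1}$ is a Higgs subsheaf, $E^{1,0}=E/E^{0,1}$ is a Higgs quotient, plus additivity''. These ingredients give only $\mu_{\underline{D}^{(\iota)}}(E^{0,1}) \leq 0$, $\mu_{\underline{D}^{(\iota)}}(E^{1,0}) \geq 0$, and the linear relation $\ell\cdot\mu_{\underline{D}^{(\iota)}}(E^{1,0}) + \ell'\cdot\mu_{\underline{D}^{(\iota)}}(E^{0,1}) = 0$ --- three constraints perfectly compatible with strict inequalities. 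In fact Property~\ref{spl.1b}(1) guarantees that for some $\iota$ one has $\mu_{\underline{D}^{(\iota)}}(E^{1,0}) > 0$, hence $\mu_{\underline{D}^{(\iota)}}(E^{0,1}) < 0$; so your assertion is generally false, and with it your claim that $E^{0,1}$ (and thus $\sM$) is $\mu^{\{\iota\}}_\epsilon$-semistable of slope $\mu(E^{1,0})$.

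The gap is patchable along your own lines. What your Higgs-subsheaf observation actually proves is the weaker inequality $\mu^{\{\iota\}}_{\epsilon,\max}(E^{0,1}) \leq \mu(E^{0,1})$: every $\sF \subset E^{0,1}$ is a Higgs subsheaf, so $\mu_{\underline{D}^{(\iota)}}(\sF)\leq 0$, and together with $\mu(\sF)\leq \mu(E^{0,1})$ this gives $\mu^{\{\iota\}}_\epsilon(\sF)\leq \mu(E^{0,1})$. Tensoring with the $\mu^{\{\iota\}}_\epsilon$-semistable $\Omega_j$ (for $j\neq\iota$) then yields $\mu^{\{\iota\}}_{\epsilon,\max}(\sM)\leq \mu(E^{1,0})$, and the embedding $\sQ\hookrightarrow\sM$ gives both the upper bound and the $\mu^{\{\iota\}}_\epsilon$-semistability of $\sQ$. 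The paper circumvents the issue differently: instead of the full $E^{0,1}$ it uses the saturated image $\sB\subset E^{0,1}$ of $\sG\otimes\bigoplus_{j\neq\iota}T_j$. Since $\sB$ is simultaneously a quotient of a $\mu^{\{\iota\}}_\epsilon$-semistable sheaf of slope $\mu(E^{0,1})$ and a Higgs subsheaf of $E$, one gets $\mu^{\{\iota\}}_\epsilon(\sB)=\mu(E^{0,1})$ exactly and $\sB$ genuinely $\mu^{\{\iota\}}_\epsilon$-semistable; then one checks $\sQ\hookrightarrow \sB\otimes\bigoplus_{j\neq\iota}\Omega_j$.
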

\begin{proof}
Since $\sK^{(\iota)}$ as the kernel of a morphism between $\mu$-semistable sheaves of the same slope is $\mu$-semistable, $\sQ$ has the same property. 

By Lemma~\ref{sta.1},~b) the slope
$\mu_{\underline{D}^{(\iota)}}(\sG^{(\iota)}_{\ell}/\sG^{(\iota)}_{\ell-1})=0$. 
Since $\sG^{(\iota)}_\bullet$ is a weak $\mu$-Jordan-H\"older filtration and a $\mu^{\{\iota\}}_\epsilon$-Harder-Narasimhan filtration, for all $0\leq \epsilon\leq \epsilon_0$ the quotient $\sG^{(\iota)}_{\ell}/\sG^{(\iota)}_{\ell-1}$ 
is $\mu^{\{\iota\}}_\epsilon$-semistable
and has slope 
$$
\mu^{\{\iota\}}_\epsilon (\sG^{(\iota)}_{\ell}/\sG^{(\iota)}_{\ell-1})=\mu(\sG^{(\iota)}_{\ell}/\sG^{(\iota)}_{\ell-1})=\mu(E^{1,0}).
$$
For $j\neq \iota$ the sheaf $\Omega_j$ is $\mu^{\{\iota\}}_\epsilon$-stable 
of slope 
$\mu^{\{\iota\}}_\epsilon (\Omega_j)=\mu(\Omega_j)=\mu(\Omega_Y^1(\log S))$,
hence 
$$
\sG^{(\iota)}_{\ell}/\sG^{(\iota)}_{\ell-1}\otimes \bigoplus_{j\neq \iota} T_j
$$
is again $\mu^{\{\iota\}}_\epsilon$-semistable of slope $\mu(E^{0,1})$.

Let $\sB$ be the saturated image of $\sG^{(\iota)}_{\ell}/\sG^{(\iota)}_{\ell-1}\otimes \bigoplus_{j\neq \iota} T_j$ in $E^{0,1}$. Then $\mu(\sB)=\mu(E^{0,1})$ and $\mu^{\{\iota\}}_\epsilon(\sB) \geq \mu(E^{0,1})$. 

On the other hand Lemma~\ref{spl.2} implies that $\mu_{\underline{D}^{(\iota)}}(\sB)\leq 0$, hence $\mu^{\{\iota\}}_\epsilon(\sB) = \mu(E^{0,1})$. So $\sB$ as a quotient 
of a $\mu^{\{\iota\}}_\epsilon$-semistable sheaf of the same slope has to be $\mu^{\{\iota\}}_\epsilon$-semistable of slope
$$
\mu(E^{0,1})=\mu^{\{\iota\}}_\epsilon\big(\sG^{(\iota)}_{\ell}/\sG^{(\iota)}_{\ell-1}\otimes \bigoplus_{j\neq \iota} T_j\big).
$$ 
Since $\sQ=E^{1,0}/\sK^{(\iota)}$ is a subsheaf of $\sB\otimes \bigoplus_{j\neq \iota}\Omega_j$ one finds 
$$
\mu^{\{\iota\}}_\epsilon(\sQ) \leq \mu^{\{\iota\}}_\epsilon\big(\sB\otimes \bigoplus_{j\neq \iota}\Omega_j\big)=\mu(E^{0,1}),
$$
and since it is a quotient of $\sG^{(\iota)}_{\ell}/\sG^{(\iota)}_{\ell-1}$ one has $\mu^{\{\iota\}}_\epsilon(\sQ) \geq \mu(E^{0,1})$. One obtains the equality of slopes
in Corollary~\ref{sta.2}. Finally $\sQ$ as a subsheaf of a $\mu^{\{\iota\}}_\epsilon$-semistable sheaf of the same slope is itself $\mu^{\{\iota\}}_\epsilon$-semistable.
\end{proof}

\begin{proof}[Proof of Theorem~\ref{purity_Thm}]
Renumbering the factors we will assume that $\sK^{(1)}\neq 0$, and we will write
$$
\Omega=\bigoplus_{j=2}^s\Omega_j\mbox{ \ \ and \ \ } T=\Omega^\vee.
$$
So $\sK^{(1)}$ is the kernel of the composition
$$
E^{1,0}\>>> E^{0,1}\otimes \Omega^1_Y(\log S) \> {\rm pr} >> E^{0,1}\otimes \Omega.
$$
Let $\sK_1$ be the kernel of
$$
E^{1,0}\>>> E^{0,1}\otimes \Omega^1_Y(\log S) \> {\rm pr}_1 >> E^{0,1}\otimes \Omega_1.
$$
\begin{claim}\label{sta.5}
$E^{1,0}$ is the direct sum $\sK^{(1)}\oplus \sK_1$.
\end{claim}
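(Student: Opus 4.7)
The plan is to apply Proposition~\ref{spl.3} with $\iota=1$ to the exact sequence $0\to \sK^{(1)}\to E^{1,0}\to \sQ\to 0$, where $\sQ=E^{1,0}/\sK^{(1)}$, combine this with a vanishing of $\ker\theta$, and upgrade the resulting fibre-wise decomposition to a holomorphic direct sum on all of $Y$ by a reflexivity argument.

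The crucial first step is to show $\ker\theta=0$, which in particular gives $\sK^{(1)}\cap\sK_1=0$. By the Arakelov equality and the fact that tensor products of $\mu$-semistable sheaves are $\mu$-semistable in characteristic zero, both $E^{1,0}$ and $E^{0,1}\otimes\Omega^1_Y(\log S)$ are $\mu$-semistable of the same slope, and a standard rank--slope computation for $\theta$ forces $\mu(\ker\theta)=\mu(E^{1,0})$ whenever $\ker\theta\neq 0$. Since $(\ker\theta,0)$ is a Higgs subsheaf of $E$, Lemma~\ref{spl.2},~i) yields $\mu(\ker\theta)\leq 0$, hence $\mu(E^{1,0})=0$. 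Choosing the semi-polarization whose single tuple is $(K_Y+S,\dots,K_Y+S)$---its divisors are ample with respect to $U$ and its associated slope is just $\mu$---Lemma~\ref{spl.2},~iii) then forces the saturated Higgs subsheaf $(\ker\theta,0)$ to be a direct Higgs factor of $E$. Non-unitarity rules out $(\ker\theta,0)=E$, and irreducibility of $\V$ rules out any proper non-trivial direct Higgs factor, so $\ker\theta=0$.

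If $\sK^{(1)}=E^{1,0}$, then $\theta=\theta_1$ and $\sK_1=\ker\theta=0$, and the Claim is trivial. Otherwise, Corollary~\ref{sta.2} gives $\mu_{\underline{D}^{(1)}}(\sQ)=0$, and Proposition~\ref{spl.3} produces an orthogonal $C^\infty$-splitting $s:\sQ\to E^{1,0}$ with $\mathrm{pr}_1\circ\theta\circ s=0$; equivalently $s(\sQ)\subset\sK_1$ as $C^\infty$-subbundles of $E^{1,0}|_U$. Comparing ranks, $s$ realises $\sQ$ as a $C^\infty$-subbundle of $\sK_1$ of rank $\rk(\sQ)=\rk(E^{1,0})-\rk(\sK^{(1)})$, so $\rk(\sK_1)\geq\rk(\sQ)$; conversely, the holomorphic injection $\sK_1\hookrightarrow E^{1,0}\to\sQ$, with kernel $\sK_1\cap\sK^{(1)}=\ker\theta=0$, gives $\rk(\sK_1)\leq\rk(\sQ)$. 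Hence $\rk(\sK^{(1)})+\rk(\sK_1)=\rk(E^{1,0})$, and the natural map $\sK^{(1)}\oplus\sK_1\to E^{1,0}$ is injective and of full rank. Since $\sK^{(1)}$ and $\sK_1$ are saturated subsheaves of the locally free sheaf $E^{1,0}$ on the smooth variety $Y$, they are reflexive and in particular locally free outside a closed subset of codimension at least three; on this open subset the rank count promotes the injection to an isomorphism, and Hartogs-type extension for reflexive sheaves propagates it to an isomorphism on all of $Y$. The main delicate point is the vanishing $\ker\theta=0$, where the interplay between the Arakelov equality and both parts i) and iii) of Lemma~\ref{spl.2} is essential.
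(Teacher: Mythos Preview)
Your approach follows the paper's closely: both use Corollary~\ref{sta.2} to get $\mu_{\underline{D}^{(1)}}(\sQ)=0$, then Proposition~\ref{spl.3} to place $s(\sQ)$ inside $\sK_1$, and both use $\ker\theta=0$ to get $\sK^{(1)}\cap\sK_1=0$. Your treatment of $\ker\theta=0$ is more explicit than the paper's (which simply asserts it), though your phrase ``hence $\mu(E^{1,0})=0$'' skips the step that $E^{1,0}$ is a Higgs quotient of $E$ and so has $\mu\geq 0$; alternatively, $\mu(E^{1,0})>0$ follows directly from Properties~\ref{spl.1b}, giving the contradiction immediately.

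The genuine gap is in your final step. The assertion that ``the rank count promotes the injection to an isomorphism'' on the locally free locus is false: an injection of vector bundles of equal rank need not be surjective (think of $\sO_Y(-D)\hookrightarrow\sO_Y$). Your Hartogs argument then compounds the problem: the only place you actually know the map is fibrewise surjective is where the orthogonal splitting $s$ lives, namely on $U$, and $S=Y\setminus U$ is a divisor, so the complement is not of codimension $\geq 2$ in $Y$.

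The fix is immediate with what you already have, and it is exactly what the paper does. Your rank equality $\rk(\sK_1)=\rk(\sQ)$ together with the $C^\infty$ inclusion $s(\sQ)\subset\sK_1$ forces $s(\sQ)=\sK_1$ as $C^\infty$ subbundles over $U$. Hence the composite $\sQ\xrightarrow{s}\sK_1\to\sQ$ is the identity, so the \emph{holomorphic} map $\sK_1\to\sQ$ is surjective over $U$; being also injective, it is an isomorphism there, and $\sK_1$ splits the sequence $0\to\sK^{(1)}\to E^{1,0}\to\sQ\to 0$ over $U$. This is all that is used downstream: the resulting Higgs decomposition over $U$ gives $\V=\V^{(1)}\oplus\V_1$, and irreducibility forces $\sK_1=0$.
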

\begin{proof}
By Corollary~\ref{sta.2} the sheaf $\sQ=E^{1,0}/\sK^{(1)}$ satisfies $\mu_{\underline{D}^{(1)}}(\sQ)=0$. 
So Proposition~\ref{spl.3},~a), tells us that the orthogonal complement
$s(\sQ)$ is contained in $\sK_1$.

The intersection of $\sK^{(1)}$ and $\sK_1$ lies in the kernel of $\theta$. Hence it is 
zero and the induced map $\sK_1 \to \sQ$ is injective. On the other hand
$$
\sQ \> s >> E^{1,0} \>>> \sQ
$$
factors through $\sK_1 \to \sQ$, and the latter must be surjective. This implies that
$$
E^{1,0}=\sK^{(1)} \oplus \sK_1.
$$
\end{proof}
Let $\sB^{(1)}$ and $\sB_1$ be the saturated images of 
$$
E^{1,0}\otimes T \>>> E^{0,1} \mbox{ \ \ and \ \ }
E^{1,0}\otimes T_1 \>>> E^{0,1},  
$$ 
respectively.
\begin{claim}\label{sta.6}
$\sB^{(1)}\cap\sB_1=0$.
\end{claim}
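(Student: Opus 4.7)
The plan is to assume for contradiction that $\sC := \sB^{(1)} \cap \sB_1$ is non-zero, and to derive a contradiction via a dual form of Lemma~\ref{sta.3}.

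First I would verify that $\sC$ is $\mu$-semistable of slope $\mu(E^{0,1})$. As kernels of morphisms between $\mu$-semistable sheaves of equal slope (by the Arakelov equality), both $\sK^{(1)}$ and $\sK_1$ are $\mu$-semistable of slope $\mu(E^{1,0})$; since $T_1$ and $T$ are $\mu$-polystable of slope $-\mu(\Omega^1_Y(\log S))$, the tensor products $\sK^{(1)}\otimes T_1$ and $\sK_1\otimes T$ are $\mu$-semistable of slope $\mu(E^{0,1})$. Their saturated images $\sB_1$ and $\sB^{(1)}$ then sit inside the $\mu$-semistable sheaf $E^{0,1}$ with the same slope. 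The exact sequence $0\to\sC\to\sB^{(1)}\oplus\sB_1\to\sB^{(1)}+\sB_1\to 0$ combined with $\mu(\sB^{(1)}+\sB_1)\leq\mu(E^{0,1})$ then forces $\mu(\sC)=\mu(E^{0,1})$ together with the $\mu$-semistability of $\sC$.

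The key ingredient is then a dual version of Lemma~\ref{sta.3}: every $\mu$-stable quotient $\sF$ of $E^{0,1}$ with $\mu(\sF)=\mu(E^{0,1})$ is pure of some type $\iota=\iota(\sF)$, in the sense that the composition $E^{1,0}\overset{\theta}{\to} E^{0,1}\otimes\Omega^1_Y(\log S)\twoheadrightarrow \sF\otimes\Omega^1_Y(\log S)$ factors through $\sF\otimes\Omega_\iota$. This follows from Lemma~\ref{sta.3} applied to the Higgs bundle of the dual variation $\V^\vee(1)$, which is again an irreducible non-unitary polarized $\C$-VHS of weight one satisfying the Arakelov equality: $\mu$-stable subsheaves of $(E^{0,1})^\vee$ of slope $-\mu(E^{0,1})$ translate under duality into $\mu$-stable quotients of $E^{0,1}$.

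Writing the cosocle of $E^{0,1}$ as $\bigoplus_k \sF_k$ with associated types $\iota_k$, the composition $\sB^{(1)}\hookrightarrow E^{0,1}\twoheadrightarrow\sF_k$ corresponds by adjunction to a map $\sK_1\to\sF_k\otimes\Omega$; dual purity forces this to factor through $\sF_k\otimes\Omega_{\iota_k}$, which is compatible with $\sF_k\otimes\Omega$ only when $\iota_k\geq 2$, and is zero otherwise. Symmetrically $\sB_1\to\sF_k$ vanishes unless $\iota_k=1$. For every $k$ at least one of the two vanishes, so $\sC\to\sF_k=0$ and $\sC$ is contained in the radical $K=\ker(E^{0,1}\to\bigoplus_k\sF_k)$.

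The main obstacle is the final step: upgrading $\sC\subset K$ to $\sC=0$. The plan is to iterate — $K$ is $\mu$-semistable of slope $\mu(E^{0,1})$ with $\rk(K)<\rk(E^{0,1})$, and the same vanishing applies to its cosocle after observing that each $\mu$-stable subquotient of $E^{0,1}$ is, up to $\mu$-equivalence, a $\mu$-stable direct factor of the socle of some quotient of $E^{0,1}$, and therefore inherits a purity type from the dual of Lemma~\ref{sta.3}. After finitely many steps the radical is trivial, forcing $\sC=0$. An alternative, perhaps cleaner, route is to choose a nonzero $\mu$-stable quotient $\sC\twoheadrightarrow\sG$ (which exists since $\sC$ is $\mu$-semistable of slope $\mu(E^{0,1})$), argue that $\sG$ coincides up to $\mu$-equivalence with a factor $\sF_k$ of the cosocle of $E^{0,1}$, and conclude directly from the vanishing $\sC\to\sF_k=0$ established above.
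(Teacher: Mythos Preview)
Your approach has a genuine gap in the final step. The dual of Lemma~\ref{sta.3} gives purity only for $\mu$-stable \emph{quotients of $E^{0,1}$} (equivalently, $\mu$-stable subsheaves of $(E^{0,1})^\vee$), and every such quotient is automatically a factor of the cosocle. So your Step~4 already exhausts what dual Lemma~\ref{sta.3} can say, and the conclusion $\sC\subset K$ is as far as it goes. The iteration you propose requires purity for $\mu$-stable quotients of the radical $K$, but these are subquotients of $E^{0,1}$, not quotients, and there is no map $E^{1,0}\otimes T_j\to\sG$ through which purity could be formulated. A $\mu$-stable subquotient of $E^{0,1}$ need not be isomorphic to any quotient of $E^{0,1}$: already for a non-split extension $0\to L_1\to E^{0,1}\to L_2\to 0$ with $L_1\not\cong L_2$, the sheaf $L_1$ is a Jordan--H\"older factor but not a quotient. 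Your alternative route has the same problem: even if a stable quotient $\sG$ of $\sC$ is abstractly isomorphic to some cosocle factor $\sF_k$, the map $\sC\twoheadrightarrow\sG$ has no relation to the composite $\sC\hookrightarrow E^{0,1}\twoheadrightarrow\sF_k$, so the vanishing of the latter does not force $\sG=0$.

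The paper avoids this entirely by applying not the dual of Lemma~\ref{sta.3} but the dual of the full Claim~\ref{sta.5}. For the dual Higgs bundle, $(E^{0,1})^\vee$ plays the role of the $(1,0)$-part, and one checks that the kernels of the dual Higgs field in the directions $\Omega=\bigoplus_{j\geq 2}\Omega_j$ and $\Omega_1$ are exactly $(\sC^{(1)})^\vee$ and $(\sC_1)^\vee$, where $\sC^{(1)}=E^{0,1}/\sB^{(1)}$ and $\sC_1=E^{0,1}/\sB_1$. Since $\V^\vee$ is again irreducible, non-unitary, and satisfies the Arakelov equality, the entire chain Lemma~\ref{sta.1}--Corollary~\ref{sta.2}--Claim~\ref{sta.5} applies to it verbatim, yielding the splitting $(E^{0,1})^\vee=(\sC^{(1)})^\vee\oplus(\sC_1)^\vee$. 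Dualizing gives $E^{0,1}\cong\sC^{(1)}\oplus\sC_1$, and hence $\sB^{(1)}\cap\sB_1=\ker(E^{0,1}\to\sC^{(1)})\cap\ker(E^{0,1}\to\sC_1)=0$. The point is that Claim~\ref{sta.5} already encodes the ``iteration'' you are attempting, via the orthogonal-complement argument based on Proposition~\ref{spl.3}; there is no need to descend through a cosocle filtration.
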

\begin{proof}
Dualizing the exact sequences
\begin{gather*}
0\>>> \sB^{(1)} \>>> E^{0,1} \>>> \sC^{(1)}=E^{0,1}/\sB^{(1)} \>>> 0\\
\mbox{and \ \ }0\>>> \sB_1 \>>> E^{0,1} \>>> \sC_1=E^{0,1}/\sB_1 \>>> 0
\end{gather*}
one obtains that
$$
{\sC^{(1)}}^\vee = {\rm Ker}({E^{0,1}}^\vee \> \tau >> {\sB^{(1)}}^\vee)\mbox{ \ \ and \ \ }
\sC_1^\vee = {\rm Ker}({E^{0,1}}^\vee \> \tau_1 >> \sB_1^\vee). 
$$
The dual Higgs bundle $E^\vee$ has ${E^{0,1}}^\vee$ as subsheaf of bidegree $(1,0)$ and ${E^{0,1}}^\vee$ is of bidegree $(0,1)$. The composite
$$
{E^{0,1}}^\vee \> \tau >> {\sB^{(1)}}^\vee \> \subset >> {E^{1,0}}^\vee \otimes \Omega\mbox{ \ \ and \ \ }
{E^{0,1}}^\vee \> \tau_1 >> \sB_1^\vee \> \subset >> {E^{1,0}}^\vee \otimes \Omega_1
$$
are the components of the dual Higgs field.
Applying Claim~\ref{sta.5} to $E^\vee$ one obtains a decomposition ${E^{0,1}}^\vee=
{\sC^{(1)}}^\vee \oplus \sC_1^\vee$, hence 
$E^{0,1}\cong \sC^{(1)} \oplus \sC_1$ and $\sB^{(1)}\cap \sB_1=0$.
\end{proof}
So one obtains a direct sum decomposition of Higgs bundles
$$
(E,\theta)=\big( \sK^{(1)} \oplus \sB^{(1)}, \theta^{(1)}=\theta|_{\sK^{(1)}}\big)\oplus
\big( \sK_1 \oplus \sB_1, \theta_1=\theta|_{\sK_1}\big)
$$
corresponding to a decomposition $\V=\V^{(1)}\oplus \V_1$. The irreducibility of $\V$ and the assumption $\sK^{(1)}\neq 0$ imply $\V_1=0$, hence $\sK_1=0$.
\end{proof}
\subsection{Using superrigidity}\label{erg}
As mentioned in the introduction, the purity of the Higgs fields in 
Theorem~\ref{purity_Thm} follows from the Margulis 
Superrigidity Theorem, without
using the Arakelov equality, provided all the direct $\mu$-stable factors of $\Omega^1_Y(\log S)$ 
are of type C. We will show below, that for variations of Hodge structures of weight $1$ 
it is sufficient to assume that the universal covering $\tilde{U}$ of $U$ is a 
bounded symmetric domain. In different terms, if $\Omega_i$ is of type~B 
we suppose that it satisfies the Yau-equality
$$
2(n_i+1)\cdot \ch_2(\Omega_i).\ch_1(\Omega_i)^{n_i-2}.\ch_1(\omega_Y(S))^{n-n_i}=
n_i\cdot \ch_1(\Omega_i)^{n_i}.\ch_1(\omega_Y(S))^{n-n_i}
$$ 
(\cite{Ya93}, see also \cite[Theorem 1.4]{VZ07}).
\begin{proposition}\label{erg.1}
Suppose that $\tilde{U}$ is a bounded symmetric domain
and that $\V$ is an irreducible complex 
polarized variation of Hodge structures of weight $1$ with unipotent monodromy at infinity. 
Then the associated Higgs bundle $(E^{1,0}\oplus E^{0,1},\theta)$ is pure of type $\iota$ for
some $\iota \in \{1,\ldots,s\}$.
\end{proposition}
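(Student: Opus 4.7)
The plan is to extend the argument sketched in the introduction for the case where all factors $M_i$ have rank $\geq 2$ (type C). Write $\tilde U = M_1 \times \cdots \times M_s$, $G = G_1 \times \cdots \times G_s$ with $G_i = \mathrm{Aut}(M_i)^0$, and let $\rho \colon \Gamma \to \mathrm{GL}(V)$ be the monodromy of $\V$. The first step is to show that, after tensoring with a suitable unitary representation, $\rho$ extends to a representation of the ambient group $G$. For factors $G_i$ of real rank $\geq 2$ (equivalently, $\Omega_i$ of type C) this extension is provided directly by Margulis' superrigidity theorem, exactly as in the introduction. For rank-$1$ factors (complex balls, types A and B), Margulis is silent, and I would invoke the archimedean superrigidity for complex hyperbolic lattices in the style of Corlette, combined with the extra rigidity afforded by $\rho$ underlying a polarized $\C$-VHS of weight $1$ (pluriharmonic maps / non-abelian Hodge theory).

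Once $\rho$ extends to $G$ (up to a unitary twist), Schur's lemma applied to the product group, together with the irreducibility of $\rho$, forces a decomposition $\rho = \bigotimes_{i=1}^s \rho_i$ as an exterior tensor product of irreducible representations of the $G_i$. Since $\V$ has weight $1$ and weights are additive on tensor products, exactly one factor, say $\rho_\iota$, is of weight $1$ and all the others are of weight $0$, hence unitary with vanishing induced Higgs field. Translating back to the Higgs bundle on $U$, this means that $\theta$ vanishes in the direction $\Omega_j$ for $j \neq \iota$, i.e., $\theta$ factors through $E^{0,1}\otimes \Omega_\iota \hookrightarrow E^{0,1}\otimes \Omega^1_Y(\log S)$, which is purity of type $\iota$.

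The hard part is the extension step at rank-$1$ factors. For type A the representation variety of a Fuchsian group is genuinely positive-dimensional, so no unconditional rigidity can hold for arbitrary $\rho$; the extension must use crucially that $\V$ is a polarized $\C$-VHS of weight $1$, and that the horizontal pluriharmonic period map is compatible with the product structure $\tilde U = \prod M_i$. A convenient organization is induction on $s$, reducing to the two-factor case $\tilde U = M_i \times M_i'$ and peeling off one factor $G_i$ at a time, so that at each stage one applies either Margulis (if $G_i$ has rank $\geq 2$) or a harmonic-map rigidity argument (if $G_i$ has rank $1$). The polarization of $\V$, together with the Kähler identities on $M_i$, should then rule out any genuine mixing between the tensor factors, delivering the purity of $\V$.
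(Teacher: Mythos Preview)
Your overall strategy---reduce to a representation of the ambient group $G$, then use Schur's lemma and the weight-$1$ constraint to isolate a single factor---is exactly the paper's. But the step you flag as ``the hard part,'' namely extending $\rho$ at rank-$1$ factors via Corlette-type archimedean superrigidity, is a detour you don't need, and the factor-by-factor induction you sketch cannot work as stated: if $\Gamma$ is an irreducible lattice in $G=\prod_i G_i$, its projection to any proper subproduct is dense, not a lattice, so there is no well-defined ``peeling off one $G_i$ at a time.'' You also correctly note that for a single $\SU(1,1)$-factor no rigidity can hold, which is a warning sign that this route is the wrong one.

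The paper avoids all of this by a cleaner dichotomy. First, by Zimmer's decomposition of lattices in semisimple groups, after an \'etale cover either $U$ splits as a nontrivial product $U_1\times U_2$, or $\Gamma$ is an \emph{irreducible} lattice in the full product $G$. In the product case, an irreducible $\C$-VHS on $U_1\times U_2$ is an exterior tensor product $\mathrm{pr}_1^*\V_1\otimes\mathrm{pr}_2^*\V_2$ (Deligne/Schur), and the weight-$1$ constraint forces one factor to have weight $0$, hence to be unitary; induction on $\dim U$ finishes. In the irreducible-lattice case with $s\ge 2$, the \emph{total} real rank of $G$ is at least $2$ (each simple factor contributes at least $1$), so Margulis' superrigidity applies to $\Gamma\subset G$ directly, regardless of whether individual $G_i$ have rank $1$. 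The case $s=1$ is trivial. Thus the rank-$1$ factors never need separate treatment: the only situation in which you would face a genuine rank-$1$ lattice is $s=1$, where purity is vacuous.
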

\begin{proof}[Sketch of the proof]
By assumption $U = \Gamma \backslash \tilde{U}$ is the quotient of 
a bounded symmetric domain $\tilde{U}=M_1\times \cdots \times M_s$ by a lattice $\Gamma$. 
We can write $M_i = G_i/K_i$ as quotient of a real, non-compact, simple Lie group by a maximal compact subgroup. 

Assume first that $U=U_1\times U_2$. By \cite[Proposition 3.3]{VZ05} an irreducible 
local system on $\V$ is of the form ${\rm pr}_1^*\V_1\otimes {\rm pr}_2^*\V_2$,
for irreducible local systems $\V_i$ on $U_i$ with Higgs bundles
$(E_i,\theta_i)$. Since $\V$ is a variation of Hodge structures of weight $1$,
one of those, say $\V_2$ has to have weight zero, hence it must be unitary. 

Then the Higgs field on $U$ factors through $E^{0,1}\otimes \Omega^1_{U_1}$.
By induction on the dimension we may assume that $\V_1$ is pure of type $\iota$
for some $\iota$ with $M_\iota$ a factor of $\tilde{U}_1$. So the same holds true for
$\V$.

Hence we may assume that $U$ is irreducible, or even that 
\begin{equation}\label{eqerg.1}
\mbox{\rm {no finite \'etale covering of }}U \mbox{\rm { is a product of proper subvarieties}}.
\end{equation}
By \cite{Zi} \S~2.2, replacing $\Gamma$ by a subgroup of finite index, hence replacing $U$ by a finite unramified cover, there is a partition of $\{1,\ldots,s\}$ into subsets $I_k$ such that $\Gamma = \prod_k \Gamma_k$ and $\Gamma_k$
is an irreducible lattice in $\prod_{i \in I_k} G_i$. Here irreducible
means that for any $N \subset \prod_{i \in I_k} G_i$ a normal subgroup,
$\Gamma_k$ is dense in $\prod_{i \in I_k} G_i/N$. 
The condition (\ref{eqerg.1}) is equivalent to the irreducibility of $\Gamma$,
so $I_1 =\{1,\ldots,s\}$. 
\par
If $s=1$ or if $\V$ is unitary, the statement of the proposition is trivial. 
Otherwise,
$G:=\prod_{i=1}^s G_i$ is of real rank $\geq 2$ and the conditions
of Margulis' superrigidity theorem (e.g.\ \cite[Theorem 5.1.2 ii)]{Zi})
are met.  As consequence, the homomorphism $\Gamma \to \Sp(V,Q)$,
where $V$ is a fibre of $\V$ and where $Q$ is the symplectic form on $V$,
factors through a representation $\rho: G \to \Sp(V,Q)$. Since the 
$G_i$ are simple, we can repeat the argument used in the proof of \cite[Proposition 3.3]{VZ05} in the product case: 
$\rho$ is a tensor product of representations, all of which but one have weight $0$. 
\end{proof}
\begin{corollary}\label{erg.2}
Under the assumptions made in Proposition~\ref{erg.1} let $\sQ\neq 0$ be a quotient of $E^{1,0}$ with $\mu_{\underline{D}^{(i)}}(\sQ)=0$, for some $i\in\{1,\ldots,s\}$. Then $\sQ=E^{1,0}$.
\end{corollary}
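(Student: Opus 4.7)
The plan is to combine Proposition~\ref{erg.1} with Proposition~\ref{spl.3}. By Proposition~\ref{erg.1} the Higgs bundle $(E,\theta)$ of $\V$ is pure of some type $\iota\in\{1,\ldots,s\}$, so $\theta$ factors through $E^{0,1}\otimes\Omega_\iota$. Write $\sK=\ker(E^{1,0}\twoheadrightarrow\sQ)$ and let $s:\sQ\to E^{1,0}$ be the $C^\infty$ orthogonal splitting induced by the Hodge metric. I distinguish the cases $i=\iota$ and $i\neq\iota$, and in each I combine Proposition~\ref{spl.3} with the superrigidity picture behind Proposition~\ref{erg.1} to force $\sK=0$.

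If $i=\iota$, Proposition~\ref{spl.3}(a) combined with the identity ${\rm pr}_\iota\circ\theta=\theta$ coming from purity yields $\theta\circ s=0$, hence $s(\sQ)\subset\ker(\theta)$. The superrigidity decomposition from the proof of Proposition~\ref{erg.1} exhibits $\V$ as $\V_\iota\boxtimes\U$, with $\V_\iota$ given by a nontrivial irreducible representation of the simple factor $G_\iota$ and $\U$ unitary; an inspection of the underlying representation of $K_\iota$ (analogous to Lemma~\ref{DeligneSatake}) shows that the Higgs field attached to $\V_\iota$ has trivial kernel, so $\ker(\theta)=0$. Hence $s\equiv 0$ and $\sQ=0$, contradicting $\sQ\neq 0$; the hypothesis has no solution in this case and the conclusion is vacuously true. (If $\V$ is unitary, superrigidity together with the non-compactness of each $G_j$ forces $\V$ to be trivial and the statement is tautological.)

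If $i\neq\iota$, purity makes ${\rm pr}_i\circ\theta=0$ automatic, so Proposition~\ref{spl.3}(a) carries no information and the content lies in part~(b): $s$ is holomorphic in the direction $\Omega_i$. To upgrade this partial holomorphy to a full holomorphic splitting I exploit the decomposition $E^{1,0}=E^{1,0}_\iota\boxtimes\sU$ supplied by superrigidity, where $E^{1,0}_\iota$ is pulled back from $M_\iota$ and $\sU$ is flat unitary. The homogeneity of $E^{1,0}_\iota$ under the $G_\iota$-action and the $\bar\partial$-flatness of $\sU$ in every direction, combined with the holomorphy of $s$ along $\Omega_i$, force $s$ to be holomorphic in every direction transverse to $M_\iota$; a further argument controlling the $\bar\partial$-component of $s$ along $M_\iota$---using that the sub-local-system carried by the image of $s$ must be invariant under parallel transport---then promotes $s(\sQ)$ to a holomorphic Higgs subbundle of $(E,\theta)$. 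Irreducibility of $\V$ finally forces $s(\sQ)=E^{1,0}$, i.e.\ $\sK=0$ and $\sQ=E^{1,0}$.

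The main obstacle is this last case: extending the partial holomorphy of $s$ in a single transverse direction to full holomorphy and then to a genuine sub-Higgs-bundle structure requires careful analytic use of the superrigidity decomposition, essentially replacing the role played by the Arakelov equality in the proof of Theorem~\ref{purity_Thm} with the homogeneous structure on $E^{1,0}$ coming from $\V=\V_\iota\boxtimes\U$.
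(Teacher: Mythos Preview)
Your case split $i=\iota$ versus $i\neq\iota$ is sensible, and your treatment of $i=\iota$ is essentially the same as the paper's opening move: Proposition~\ref{spl.3}(a) gives $s(\sQ)\subset\ker\theta$, and the superrigidity decomposition together with the weight-one condition (which forces the $G_\iota$-representation onto the list handled in Lemma~\ref{DeligneSatake}) makes $E^{1,0}$ an irreducible $K_\iota$-module, whence $\theta$ is injective.

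The genuine gap is in your case $i\neq\iota$. From Proposition~\ref{spl.3}(b) you get holomorphy of $s$ in the \emph{single} transverse direction $\Omega_i$, and you then assert that homogeneity of $E^{1,0}_\iota$ and flatness of $\U$ ``force'' holomorphy in all remaining directions. This step is unjustified: holomorphic subbundles of a bundle pulled back from $M_\iota$ need not themselves be pulled back (on a product of two disks with the trivial rank-two bundle, the line spanned by $(1,z_2)$ is a holomorphic subbundle that varies in the second factor). So holomorphy along $\Omega_i$ has no mechanism to propagate to the other $\Omega_j$, let alone to $\Omega_\iota$. You yourself flag this ``further argument'' as the main obstacle, and indeed it would be the entire content of the proof; what you have written is a plan, not an argument.

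The paper sidesteps this completely by working with slopes rather than with holomorphy. It goes back into the curvature computation of Proposition~\ref{spl.3}: one has $(R_\sQ)^{a,b}=(\theta\text{-term})+(c\text{-term})$. Purity kills the $\theta$-term whenever $a>n_\iota$ or $b>n_\iota$. For the $c$-term the paper invokes the stronger consequence of the superrigidity structure, namely that the metric connection $\nabla_h$ on $E^{1,0}$ (not just its curvature) has no components in directions transverse to $M_\iota$; this forces $c^a=0$ for $a>n_\iota$ as well. Integrating against the appropriate products of $R(\Omega_k)$ gives $\mu_{\underline{D}^{(j)}}(\sQ)=0$ for \emph{every} $j\neq\iota$ at once, so that together with the hypothesis one obtains $\mu(\sQ)=0$. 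Then Lemma~\ref{spl.2}(iii), applied with the ample-with-respect-to-$U$ polarization $\omega_Y(S)$, finishes. The key difference from your approach is that one never needs $s$ to be a holomorphic splitting; vanishing of the individual directional slopes is enough.
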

\begin{proof}
By Proposition~\ref{erg.1} $\V$ is pure of type $\iota$ for some $\iota$. On the other hand Proposition~\ref{spl.3} implies that the orthogonal complement of $\sQ$ lies in the kernel
of the composite
$$
E^{1,0}\>\theta >> E^{0,1}\otimes \Omega^1_Y(\log S) \> {\rm pr}_i >> E^{0,1}\otimes \Omega_i. 
$$
Since $\theta$ is injective and factors through $E^{0,1}\otimes \Omega_\iota$ this implies that
$i=\iota$ and we assume that both are $1$. 

Now one argues as in the proof of Proposition~\ref{spl.3}.
The metric connection $\nabla_h$ is zero in directions not contained in $M_1$, hence in the equation
(\ref{eqspl.1}) one finds that $(R_{E^{1,0}})^{i,j}=0$ as soon as $i>n_1$ or $j>n_1$.
Similarly $c^i_{\alpha,\beta}=0$ for $i>n_1$, and hence
the equation (\ref{eqspl.2}) implies that $(R_{\sQ})^{i,j}_{\alpha,\beta}=0$ for $i>n_1$ or $j>n_1$.
One has again
\begin{multline*}
\mu_{\underline{D}^{(j)}}(\sQ)=\big(\frac{\sqrt{-1}}{2\pi}\big)\cdot \int_U {\rm tr}(R_\sQ)\wedge
R(\Omega_1)^{n_j-1}\wedge R(\Omega_1)^{n_1} \wedge \cdots \\
\wedge R(\Omega_{j-1})^{n_{j-1}}\wedge R(\Omega_{j+1})^{n_{j+1}} \wedge \cdots \wedge
 R(\Omega_s)^{n_s}.
\end{multline*}
As in equation (\ref{eqspl.3}) this is the same as
$$
\int_W (\sum_{i,j=1}^n {\rm tr}(R_Q)^{i,j} dz_i \wedge d\bar{z}_j) \wedge
(\sum_{i=n_{j-1}+1}^{n_j} C_i \cdot \bigwedge_{j\neq i} dz_{j}\wedge d\bar{z}_{j}).
$$
For $j>1$ this is zero, hence $\mu(\sQ)=0$ and one can apply Lemma~\ref{spl.2}.
\end{proof}

\section{Stability of Higgs bundles, lengths of iterated
Higgs fields and splitting of the tangent map}
 \label{stablengthsplit}

In this section we prove  Theorem~\ref{characterization},
the numerical characterization of Shimura varieties,
the equivalent numerical and geometrical characterizations
of ball quotients stated as Addendum~\ref{characterizationadd},
the Corollary~\ref{characterizationadd3} and we finish the proof of Proposition~\ref{shimuraprop}.
Moreover we recall the proof of Corollary~\ref{iteratedKSineq}, essentially contained in 
\cite[Section 5]{VZ07}. 
\par
As usual we assume that $U$ has a non-singular projective compactification
$Y$ with boundary $S=Y\setminus U$ a normal crossing divisor, satisfying the Assumptions~\ref{compass-pos}. In addition, replacing $U$ by an \'etale covering, we will assume as in Section~\ref{ec} that
the family $f:A\to U$ is induced by a generically finite morphism $\varphi:U \to \sA_g$ to a fine moduli scheme $\sA_g$ of polarized abelian varieties with a suitable level structure.
\par
So we consider again an irreducible non-unitary complex polarized variation of Hodge structures $\V$ on $U$, satisfying the Arakelov equality, and with unipotent local monodromy operators at infinity.

By Theorem~\ref{purity_Thm}, the logarithmic Higgs bundle 
$(E=E^{1,0}\oplus E^{0,1},\theta)$ of $\V$ is pure
of type $\iota$, i.e.\ the Higgs field factors through $E^{0,1}\otimes \Omega_\iota$.
We write $\ell=\rk(E^{1,0})$ and $\ell'=\rk(E^{0,1})$, and $n_\iota$ denotes
$\rk(\Omega_\iota)=\dim(M_\iota)$. The Arakelov equality
says that 
$$
\mu(E^{1,0})-\mu(E^{0,1})=\mu(\Omega^1_Y(\log S))=\mu(\Omega_\iota).
$$
Since $\ch_1(E^{1,0})+\ch_1(E^{0,1})=0$ and hence $\ell\cdot\mu(E^{1,0}) + \ell'\cdot\mu(E^{0,1})=0$,
one can restate the Arakelov equality as
\begin{equation}\label{eqco.1}
\frac{\ell+\ell'}{\ell'} \cdot \mu(E^{1,0}) = \mu(\Omega_\iota).
\end{equation}
Let us formulate two easy consequences of the Arakelov equality.
\begin{lemma}\label{etale}
Assume that each irreducible non-unitary $\C$-subvariation of Hodge structures of $R^1f_*\C_A$ satisfies the Arakelov equality. Then:
\begin{enumerate}
\item If $\varphi$ is generically finite, then for each direct factor $\Omega_\iota$ of $\Omega_Y^1(\log S)$
there exists at least one non-unitary local subsystem $\V$ which is pure of type $\iota$.
\item If $\varphi(U)$ is non-singular, then $\varphi:U\to \varphi(U)$ is \'etale.
\end{enumerate}
\end{lemma}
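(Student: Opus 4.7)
Both parts hinge on the block decomposition of the Higgs field $\theta$ of the Higgs bundle $(E,\theta)=(E^{1,0}\oplus E^{0,1},\theta)$ associated to $R^1f_*\C_A$. The splitting of $R^1f_*\C_A$ into irreducible $\C$-subvariations $\V_j$ induces $\theta=\bigoplus_j\theta_{\V_j}$, with unitary summands contributing zero. By Theorem~\ref{purity_Thm} each non-unitary $\V_j$ is pure of some type $i(j)\in\{1,\ldots,s\}$, and regrouping by type produces
$$\theta=\bigoplus_{\iota=1}^s\theta^{(\iota)},\qquad \theta^{(\iota)}\colon E^{1,0}\longrightarrow E^{0,1}\otimes\Omega_\iota,$$
in which $\theta^{(\iota)}$ collects the $\theta_{\V_j}$ for those non-unitary $\V_j$ with $i(j)=\iota$.

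For part~(1), I would note that the adjoint Kodaira--Spencer map $\theta^\vee\colon T_Y(-\log S)\to\sH om(E^{1,0},E^{0,1})$ is identified with the dual of $d\varphi$, and under $T_Y(-\log S)=\bigoplus T_i$ its restriction to $T_\iota$ is the adjoint of $\theta^{(\iota)}$. Generic finiteness of $\varphi$ forces $\theta^\vee$ to be generically injective on $U$, so $\theta^\vee|_{T_\iota}\neq 0$ for every $\iota$; hence $\theta^{(\iota)}\neq 0$, and at least one non-unitary $\V_j$ must be pure of type $\iota$.

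For part~(2), smoothness of $\varphi(U)$ together with $\dim U=\dim\varphi(U)$ reduce \'etaleness of $\varphi\colon U\to\varphi(U)$ to fibre-wise injectivity of $d\varphi$---equivalently, since $\theta^\vee$ is injective as a sheaf map (by generic injectivity and torsion-freeness of $T_Y(-\log S)$), to the image of $\theta^\vee$ being a sub-bundle of $\sH om(E^{1,0},E^{0,1})|_U$. The Arakelov equality applied to each non-unitary $\V_j$ gives
$$\mu(T_Y(-\log S))=-\mu(\Omega^1_Y(\log S))=\mu\bigl(\sH om(E^{1,0}_j,E^{0,1}_j)\bigr),$$
and $\theta^\vee$ factors through the $\mu$-semistable sheaf $\bigoplus_j\sH om(E^{1,0}_j,E^{0,1}_j)$ (sum over non-unitary $\V_j$) as a slope-preserving map between $\mu$-semistable sheaves. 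The plan is to promote this same-slope situation to a direct summand in the target: if the target is $\mu$-polystable, then any $\mu$-semistable subsheaf of the same slope is automatically a direct summand by a Schur-type argument, hence a sub-bundle, and fibre-wise injectivity of $\theta^\vee$ follows from its generic injectivity.

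The main obstacle is precisely this $\mu$-polystability. It holds by Lemma~\ref{poly} whenever $\omega_Y(S)$ is ample, but the general nef-and-big case relies on Yau's conjectured extension of property $(\star)$. A substitute in the nef-and-big setting is to work block-by-block: for each $\iota$, apply Proposition~\ref{spl.3} to the quotient $E^{1,0}/{\rm Ker}(\theta^{(\iota)})$---which has $\mu_{\underline D^{(\iota)}}$-slope zero, essentially as in Corollary~\ref{sta.2}---to produce an orthogonal splitting that is holomorphic in the $\Omega_\iota$-direction, and then glue the $s$ resulting splittings via a mutual-orthogonality argument modelled on Claims~\ref{sta.5}--\ref{sta.6} from the proof of Theorem~\ref{purity_Thm}.
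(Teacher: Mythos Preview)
Your argument for part~(1) is a valid alternative. Generic finiteness of $\varphi$ makes the Kodaira--Spencer map $T_Y(-\log S)\to\sH om(F^{1,0},F^{0,1})$ generically injective, hence nonzero on every $T_\iota$, and the purity theorem then supplies a non-unitary summand of type $\iota$. The paper takes a slope route instead: bigness of $\det(F^{1,0})$ gives $\mu_{\underline D^{(\iota)}}(F^{1,0})>0$ for every $\iota$ via Property~\ref{spl.1b}, while Lemma~\ref{sta.05} says a non-unitary summand pure of type $i$ has $\mu_{\underline D^{(j)}}(E^{1,0})=0$ for $j\neq i$; so some summand must have $i(\V)=\iota$. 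Either way is fine.

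For part~(2) there is a real gap, and you have already put your finger on it: you need $\bigoplus_j\sH om(E^{1,0}_j,E^{0,1}_j)$ to be $\mu$-polystable, and that is exactly the unproved property~($\star$) in the nef-and-big case. Your proposed workaround via Proposition~\ref{spl.3} and a gluing \`a la Claims~\ref{sta.5}--\ref{sta.6} is only a sketch; those claims produce splittings that are holomorphic \emph{in one direction} each, and it is not clear that you can assemble them into the global holomorphic sub-bundle you need without further input.

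The paper avoids the obstacle by dualizing. Instead of $\theta^\vee\colon T_Y(-\log S)\to\sH om(F^{1,0},F^{0,1})$, it considers the adjoint map
\[
\hat\tau\colon F^{1,0}\otimes(F^{0,1})^\vee\longrightarrow\Omega_Y^1(\log S),
\]
whose \emph{target} is $\mu$-polystable by Yau's uniformization theorem --- already established, no conjecture needed. The Arakelov equality puts the image $\sI$ at the same slope as $\Omega_Y^1(\log S)$, so $\sI$ is $\mu$-equivalent to a direct sum of some of the $\Omega_\iota$; part~(1) guarantees every $\Omega_\iota$ occurs, hence $\sI\hookrightarrow\Omega_Y^1(\log S)$ is a $\mu$-equivalence and, by Addendum~\ref{spl.1a}, an isomorphism over $U$. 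Since $\sA_g$ is a fine moduli scheme, $\hat\tau$ factors through $\varphi^*\Omega^1_{\varphi(U)}\to\Omega^1_U$, and surjectivity of $\hat\tau$ on $U$ forces this pullback map to be surjective --- i.e.\ $\varphi$ is unramified on $U$. The missing idea in your approach is simply to place the polystability hypothesis on the side where it is already known.
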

\begin{proof}
Let $F^{1,0}$ be the $(1,0)$-part in the Hodge filtration of $R^1f_*\C_A$.
Since $U$ is generically finite over $\sA_g$ the sheaf $\det(f_*\Omega^1_{X/Y})=f_*\omega_{X/Y}$ is big.
Since it is nef, using the slopes introduced in Section~\ref{sta}, one finds by 
the Property~\ref{spl.1b}~2) that
$$
\mu_{\underline{D}^{(\iota)}}(f_*\omega_{X/Y}) = g\cdot \mu_{\underline{D}^{(\iota)}}(f_*\Omega^1_{X/Y})= g\cdot \mu_{\underline{D}^{(\iota)}}(F^{1,0})>0
$$ 
for all $\iota$. Consider an irreducible complex polarized subvariation of Hodge  structures $\V$ with Higgs bundle $(E^{1,0}\oplus E^{0,1},\theta)$. If $\V$ is unitary
$ \mu_{\underline{D}^{(j)}}(E^{1,0})=0$ for all $j$. Otherwise by Theorem~\ref{purity_Thm} $\V$ is pure of type $i=i(\V)$. Lemma~\ref{sta.05} implies that $\mu_{\underline{D}^{(j)}}(E^{1,0})=0$ for $j\neq i(\V)$.

Given $\iota$, the inequality $\mu_{\underline{D}^{(\iota)}}(F^{1,0})>0$ implies that there exist
direct factors $E^{1,0}$ with $\mu_{\underline{D}^{(\iota)}}(E^{1,0})>0$. For the corresponding irreducible subvariations $\V$ of $R^1f_*\C_A$ one finds $\iota=i(\V)$.

For the second statement we choose a nonsingular compactification
$Z$ and a normal crossing divisor $\Sigma\subset Z$ with $\varphi(U)=Z\setminus \Sigma$.
Let us choose a blowing up $\delta: Y' \to Y$ with centers in $S$ such
that $S'=\delta^*(S)$ is again a normal crossing divisor, and such that $\varphi$ extends to
$\varphi:Y'\to Z$. By the Arakelov equality the image $\sI$ of $\hat{\tau}:F^{1,0}\otimes {F^{0,1}}^\vee \to \Omega_Y^1(\log S)$ has the same slope as $\Omega_Y^1(\log S)$.
Since the second sheaf is $\mu$-polystable, $\sI$ is a subsheaf of a direct sum of certain direct factors of $\Omega_Y^1(\log S)$ and both are $\mu$-equivalent. The first part of Lemma~\ref{etale} implies that  all direct factors occur, hence $\sI\hookrightarrow \Omega_Y^1(\log S)$ is an isomorphism over some open set $U'$. The part viii) of Addendum~\ref{spl.1a} allows to choose $U'=U$. 

Since $\sA_g$ is a fine moduli scheme, the Higgs bundle is the pullback of the Higgs bundle on $\varphi(U)$. Hence $\delta^*(\hat{\tau})$ factors through
$$
\varphi^* \Omega_{Z}^1(\log \Sigma) \longrightarrow \Omega_{Y'}^1(\log S')
$$
with image in $\delta^*\Omega_{Y}^1(\log S)\subset \Omega_{Y'}^1(\log S')$.
Since the last inclusion is an isomorphism over $U$, the surjectivity of the Higgs field on $U$ implies the morphism $\varphi$ is unramified on $U$.
\end{proof}
Let us return to the Higgs bundle $\bigwedge^\ell (E,\theta)$ introduced in (\ref{eqdetHiggs})
and to the Higgs subbundle $\langle\det(E^{1,0})\rangle$ generated by $\det(E^{1,0})$.
From now on we will write $\langle\det(E^{1,0})\rangle$ for the saturated Higgs subbundle of 
$\bigwedge^\ell (E,\theta)$. So $\langle\det(E^{1,0})\rangle^{\ell-m,m}$ denotes the saturated hull of the image of the induced map
$$
\theta^{(m)^\vee}:\det(E^{1,0})\otimes S^m(T) \>>> E^{\ell-m,m}= \bigwedge^{\ell-m}(E^{1,0}) \otimes \bigwedge^m E^{0,1}. 
$$
Note that by this change of notation we neither change the slopes, nor the length
$$
\varsigma(E)=\varsigma((E,\theta))=
{\rm Max}\{\ m\in \N ; \ \langle\det(E^{1,0})\rangle^{\ell-m,m} \neq 0\}.
$$
So the next Lemma implies Corollary~\ref{iteratedKSineq}.
\begin{lemma}\label{upper} Assume that $\Omega_\iota$ is of type A or B, hence that
$S^m(\Omega_\iota)$ is $\mu$-stable for all $m$. Then the Arakelov equality implies that 
\begin{equation}\label{equpper} 
{\rm Min}\{\ell,\ell'\} \geq \varsigma(E) \geq \frac{\ell\cdot\ell'\cdot(n_\iota+1)}{(\ell + \ell')\cdot n_\iota}.
\end{equation}
The right hand side of~\ref{equpper} is an equality if and only if $\langle\det(E^{1,0})\rangle$ is a direct factor of $\bigwedge^\ell (E,\theta)$.
\end{lemma}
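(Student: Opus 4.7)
\medskip

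\noindent\textbf{Plan of proof.}
The plan is to derive both bounds from Simpson's inequality for Higgs subsheaves (Lemma~\ref{spl.2}) applied to the Higgs subsheaf $\langle\det(E^{1,0})\rangle$ of $\bigwedge^\ell(E,\theta)$.

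\smallskip

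First, the upper bound $\varsigma(E)\leq \min\{\ell,\ell'\}$ is immediate: for $m>\ell'$ one has $\bigwedge^m E^{0,1}=0$, and for $m>\ell$ one has $\bigwedge^{\ell-m}E^{1,0}=0$. So I would concentrate on the lower bound. Write $r=\varsigma(E)$. By purity of type $\iota$, the iterated Higgs field $\theta^{(m)\vee}$ factors as
$$
\det(E^{1,0})\otimes S^m(T_\iota) \>>> \bigwedge^{\ell-m}(E^{1,0}) \otimes \bigwedge^m(E^{0,1}),
$$
and its saturated image is $\langle\det(E^{1,0})\rangle^{\ell-m,m}$.

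\smallskip

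The key step is to compute the rank and slope of each graded piece for $0\leq m\leq r$. Using the formulae $\mu(S^m\sE)=m\mu(\sE)$ and $\mu(\bigwedge^k\sE)=k\mu(\sE)$ for $\mu$-semistable sheaves, the source has slope $\ell\mu(E^{1,0})-m\mu(\Omega_\iota)$. By the Arakelov equality $\mu(E^{0,1})=\mu(E^{1,0})-\mu(\Omega_\iota)$, the target has the same slope. Since $E^{1,0}$ and $E^{0,1}$ are $\mu$-semistable (Arakelov equality, cf.\ \cite[Theorem~1]{VZ07}), so is the target. As $S^m(\Omega_\iota)$ is $\mu$-stable by the type-A or type-B hypothesis, the source $\det(E^{1,0})\otimes S^m(T_\iota)$ is $\mu$-stable of the same slope as the target. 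A standard kernel argument then forces the morphism to be injective (a non-trivial proper subsheaf of the stable source would have strictly smaller slope, hence the image would have strictly larger slope than $\mu$ of the target, contradicting semistability). Consequently
$$
\rk\bigl(\langle\det(E^{1,0})\rangle^{\ell-m,m}\bigr)=\binom{m+n_\iota-1}{m}, \qquad \mu\bigl(\langle\det(E^{1,0})\rangle^{\ell-m,m}\bigr)=\ell\mu(E^{1,0})-m\mu(\Omega_\iota).
$$

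\smallskip

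Writing $R=\sum_{m=0}^{r}\binom{m+n_\iota-1}{m}$ and $M=\sum_{m=0}^{r}m\binom{m+n_\iota-1}{m}$, the hockey-stick identity gives $R=\binom{r+n_\iota}{n_\iota}$ and the identity $m\binom{m+n_\iota-1}{m}=n_\iota\binom{m+n_\iota-1}{m-1}$ together with hockey-stick gives $M=n_\iota\binom{r+n_\iota}{n_\iota+1}$, so $M/R=n_\iota r/(n_\iota+1)$. Hence
$$
\mu\bigl(\langle\det(E^{1,0})\rangle\bigr)=\ell\mu(E^{1,0})-\frac{n_\iota r}{n_\iota+1}\,\mu(\Omega_\iota).
$$
Applying Lemma~\ref{spl.2},~i) (Simpson: $\mu\leq 0$ for Higgs subsheaves) and using the Arakelov equality in the form $\mu(\Omega_\iota)=\frac{\ell+\ell'}{\ell'}\mu(E^{1,0})$ with $\mu(E^{1,0})>0$ (since $\V$ is non-unitary, via Property~\ref{spl.1b}), one obtains $r\geq \frac{\ell\ell'(n_\iota+1)}{(\ell+\ell')n_\iota}$, which is the desired inequality.

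\smallskip

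For the equality statement, equality in the computed lower bound is equivalent to $\mu\bigl(\langle\det(E^{1,0})\rangle\bigr)=0$. Since $\omega_Y(S)$ is ample with respect to $U$, Lemma~\ref{spl.2},~iii) applies and gives that this vanishing is equivalent to $\langle\det(E^{1,0})\rangle$ being a direct factor of the Higgs bundle $\bigwedge^\ell(E,\theta)$. The main technical point to watch is the slope-matching calculation that makes the saturated image of $\det(E^{1,0})\otimes S^m(T_\iota)$ have exactly the predicted rank and slope: this is where the type~A/B assumption (stability of all symmetric powers of $\Omega_\iota$) and the Arakelov equality combine, and without either the two middle slopes would no longer agree.
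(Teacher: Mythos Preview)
Your proof is correct and follows essentially the same route as the paper: compute the rank and slope of each graded piece $\langle\det(E^{1,0})\rangle^{\ell-m,m}$ using the $\mu$-stability of $S^m(T_\iota)$ and the Arakelov equality, sum up, apply Simpson's inequality $\mu\leq 0$ for Higgs subsheaves, and invoke the equality case of Lemma~\ref{spl.2},~iii). Your presentation is in fact a bit more careful than the paper's---you spell out the injectivity argument for $\det(E^{1,0})\otimes S^m(T_\iota)\to E^{\ell-m,m}$ and the binomial identities explicitly, whereas the paper simply asserts the rank and slope and writes the final sum in closed form.
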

\begin{proof}
For $0\leq m\leq \varsigma=\varsigma(E)$ the sheaf $\langle\det(E^{1,0})\rangle^{\ell-m,m}$ is a $\mu$-stable sheaf of slope 
\begin{multline*}
(\ell-m)\cdot \mu(E^{1,0}) + m\cdot \mu(E^{0,1})=
\ell \cdot \mu(E^{1,0}) - m\cdot\mu(\Omega_Y^1(\log S))=\\
\big(\frac{\ell \cdot\ell'}{\ell+\ell'}-m\big)\cdot \mu(\Omega_Y^1(\log S)),
\end{multline*}
and of rank $\binom{n_\iota+m-1}{m}$. 
The degree of this sheaf with respect to the polarization $\omega_Y(S)$ is non-positive, hence
\begin{multline}\label{equpper2}
0\geq \frac{\mu(\langle\det(E^{1,0})\rangle)}{\mu(\Omega^1_Y(\log S))}=\sum_{m=0}^\varsigma 
\binom{n_\iota+m-1}{m}\cdot \big(\frac{\ell \cdot\ell'}{\ell+\ell'}-m\big)= \\
\left(\frac{\ell \cdot\ell'}{n_\iota\cdot(\ell+\ell')} 
- \frac{\varsigma}{n_\iota+1}\right)\cdot (\varsigma+1)\cdot\binom{\varsigma+n_\iota}{\varsigma+1},
\end{multline}
and one obtains the second inequality stated in~\ref{equpper}. This is an equality if and only if
(\ref{equpper2}) is an equality. By Simpson's correspondence for polystable Higgs bundles the 
latter holds if and only if $\langle\det(E^{1,0})\rangle$ is a Higgs direct factor of $\bigwedge^\ell (E,\theta)$. The first inequality in~\ref{equpper} is obvious, since
$E^{\ell-m,m}$ is zero for $m \geq {\rm Min}\{\ell,\ell'\}$.
\end{proof}
We now distinguish three cases, according to the type of the bounded symmetric domain
attached to $\Omega_\iota$.

\subsection{Type A: $\Omega_\iota$ is invertible}\label{typeA}

This case is easy to understand. Let us recall the arguments used already in  \cite{VZ07}.
The Arakelov equality and Lemma~\ref{spl.2} imply that
\begin{equation}\label{eqtypeA}
E^{1,0}\>>> E^{0,1}\otimes \Omega_\iota,
\end{equation}
is injective and surjective on some open dense subscheme. So $\ell=\ell'$ and the inequality 
(\ref{equpper}) implies that $\varsigma((E,\theta)) = \ell$.

Both sides in (\ref{eqtypeA}) are $\mu$-semistable of the same slope, and they are $\mu$-equivalent.
A $\mu$-stable subsheaf $\sF$ of $E^{1,0}$ of slope $\mu(E^{1,0})$
generates a Higgs subbundle $\sF\oplus \sF\otimes T_\iota$, whose first Chern class is zero.
So the irreducibility implies that $\sF=E^{1,0}$ and we can state:
\begin{proposition}\label{numcondA}
If $\Omega_\iota$ is invertible, then the Arakelov equality (\ref{eqco.1}) implies that
$E^{1,0}$ and $E^{0,1}$ are both $\mu$-stable of the same rank, that 
$\varsigma((E,\theta))=\ell$ and that $\langle\det(E^{1,0})\rangle$ is a direct factor of
$\bigwedge^\ell(E,\theta)$.
\end{proposition}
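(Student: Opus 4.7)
The plan is to combine the purity from Theorem~\ref{purity_Thm} with the rank-one structure of $\Omega_\iota$ to first force $\ell=\ell'$, then deduce $\mu$-stability of both Hodge bundles by a socle argument, and finally read off the length and direct-factor statements from Lemma~\ref{upper}. I would begin by observing that since $\Omega_\iota$ is invertible, the Higgs field $\theta: E^{1,0}\to E^{0,1}\otimes\Omega_\iota$ is a map between two $\mu$-semistable sheaves of equal slope (by the Arakelov equality~(\ref{eqco.1})). Its torsion-free cokernel is a Higgs quotient of $(E,\theta)$ whose slope is $\leq 0$ by Lemma~\ref{spl.2}~ii) and $\geq 0$ by additivity of slopes, hence $0$; Lemma~\ref{spl.2}~iii) then promotes it to a Higgs direct factor, and the irreducibility of $\V$ forces it to vanish. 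Combined with the injectivity of $\theta$ (automatic for an irreducible non-unitary $\V$), this yields $\ell=\ell'$ and generic bijectivity of $\theta$.

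For the $\mu$-stability of $E^{1,0}$, I would pick a $\mu$-stable direct summand $\sF$ of the socle of $E^{1,0}$, so that $\mu(\sF)=\mu(E^{1,0})$. The saturated image $\sG\subset E^{0,1}$ of the contracted map $\sF\otimes T_\iota\to E^{0,1}$ then satisfies $\rk(\sG)=\rk(\sF)$ (by injectivity of $\theta$) and $\mu(\sG)=\mu(E^{0,1})$ (by $\mu$-semistability of $E^{0,1}$ together with the lower bound $\mu(\sG)\geq\mu(\sF)-\mu(\Omega_\iota)=\mu(E^{0,1})$). By construction $\theta(\sF)\subset\sG\otimes\Omega_\iota$, so $\sF\oplus\sG$ is a Higgs subbundle. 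Using $\ell=\ell'$ and $\ch_1(E^{1,0})+\ch_1(E^{0,1})=0$ together with the Arakelov equality, one verifies that this subbundle has slope zero; Lemma~\ref{spl.2}~iii) then makes it a Higgs direct factor of $(E,\theta)$, and the irreducibility of $\V$ forces $\sF=E^{1,0}$. Applying the same argument to the dual Higgs bundle $E^\vee$ (as in the proof of Theorem~\ref{purity_Thm}) gives the $\mu$-stability of $E^{0,1}$.

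With $n_\iota=1$ and $\ell=\ell'$, the right-hand side of the inequality in Lemma~\ref{upper} evaluates to $\ell$, while the upper bound is also ${\rm Min}\{\ell,\ell'\}=\ell$, so $\varsigma(E)=\ell$; the equality case of Lemma~\ref{upper} yields that $\langle\det(E^{1,0})\rangle$ is a Higgs direct factor of $\bigwedge^\ell(E,\theta)$. The main obstacle I anticipate is in the middle step: one must carefully distinguish literal slope equalities from the $\mu$-equivalence of Addendum~\ref{spl.1a}, since the saturation used to define $\sG$ can introduce correction divisors supported in ${\rm NS}_0$; however, because the resulting subbundle is only used to apply Lemma~\ref{spl.2}~iii) (where the $U$-ampleness of $\omega_Y(S)$ is crucial), this ambiguity disappears at the end.
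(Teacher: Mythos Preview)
Your approach is essentially the paper's: show that $\theta:E^{1,0}\to E^{0,1}\otimes\Omega_\iota$ is injective and generically surjective (so $\ell=\ell'$), then take a $\mu$-stable $\sF\subset E^{1,0}$ of slope $\mu(E^{1,0})$, observe that $\sF\oplus\sF\otimes T_\iota$ is a Higgs subbundle of degree zero, and conclude $\sF=E^{1,0}$ by irreducibility, after which $\varsigma=\ell$ and the direct-factor statement drop out of Lemma~\ref{upper} with $n_\iota=1$. One small bookkeeping slip in your first paragraph: Lemma~\ref{spl.2}~ii) gives $\mu\geq 0$ for Higgs quotients (not $\leq 0$), while the cokernel of a map between $\mu$-semistable sheaves of equal slope has slope exactly $\mu(E^{0,1})<0$, so the cokernel vanishes by direct contradiction rather than via part~iii).
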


\subsection{Type B: $S^m(\Omega_\iota)$ stable for all $m$ and not invertible}
In this case we do not know whether the factor $M_\iota$ of the universal covering $\tilde{U}$ corresponding to $\Omega_\iota$ is a bounded domain, and 
the Arakelov equality just implies that certain numerical and stability conditions are equivalent.
\begin{proposition}\label{numcondB}
Let $\V$ be an irreducible non-unitary complex polarized variation of Hodge structures of weight $1$, pure of type A or B, with unipotent local monodromy at infinity, and with Higgs bundle $(E,\theta)$. 
Assume that $\V$ satisfies the Arakelov equality. 
Consider the following conditions:
\begin{enumerate}
\item[a.] $E^{1,0}$ and $E^{0,1}$ are $\mu$-stable.
\item[b.] ${E^{1,0}}^\vee \otimes E^{0,1}$ is $\mu$-polystable.
\item[c.] The saturated image of $T_\iota\to \sH om(E^{1,0},E^{0,1})$ is a direct factor of the sheaf
$\sH om(E^{1,0},E^{0,1})$.
\item[d.] The Higgs bundle $\langle\det(E^{1,0})\rangle$ is a direct factor of the Higgs bundle
$\bigwedge^\ell(E,\theta)$.
\item[e.] $\mu(\langle\det(E^{1,0})\rangle)=0$.
\item[f.] $\varsigma((E,\theta))=\frac{\ell\cdot\ell'\cdot(n_\iota+1)}{(\ell + \ell')\cdot n_\iota}.$  
\end{enumerate}
Then:
\begin{enumerate}
\item[i.] The conditions c), d), e), and f) are equivalent and they imply that 
$M_\iota$ is a complex ball of dimension $n_\iota$. 
\item[ii.] The condition b) implies c).
\item[iii.] Whenever condition ($\star$) in Lemma~\ref{poly} is satisfied, for example if $U$ is projective or of dimension one, a) implies b).
\end{enumerate}
\end{proposition}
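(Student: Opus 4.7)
The plan is to establish the equivalences f) $\Leftrightarrow$ e) $\Leftrightarrow$ d) $\Leftrightarrow$ c), then derive the ball conclusion from them, and finally dispatch parts ii) and iii). The equivalence f) $\Leftrightarrow$ e) is read off directly from the proof of Lemma~\ref{upper}: the right-hand side of the displayed equation (\ref{equpper2}) expresses $\mu(\langle\det(E^{1,0})\rangle)/\mu(\Omega^1_Y(\log S))$ as a product whose vanishing is equivalent to $\varsigma$ attaining the critical value in f). For d) $\Leftrightarrow$ e), the polarization of $\V$ gives $\ch_1(E^{1,0})+\ch_1(E^{0,1})=0$, so $\bigwedge^\ell(E,\theta)$ has total Higgs slope zero; applying Simpson's correspondence to the tame polarized VHS $\bigwedge^\ell\V$ makes this wedge Higgs bundle $\mu$-polystable of slope zero. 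Since $\langle\det(E^{1,0})\rangle$ is a Higgs subsheaf of slope $\leq 0$ by Lemma~\ref{spl.2}, it is a Higgs direct factor precisely when its slope vanishes.

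The heart of the proof is c) $\Leftrightarrow$ d). The Arakelov equality forces
$$\mu(T_\iota)=\mu(E^{0,1})-\mu(E^{1,0})=\mu(\sH om(E^{1,0},E^{0,1})),$$
so the dual Higgs field $\theta^\vee\colon T_\iota\hookrightarrow \sH om(E^{1,0},E^{0,1})$ is a morphism between $\mu$-semistable sheaves of equal slope. For d) $\Rightarrow$ c) I will look at the weight piece $m=1$: there $\langle\det(E^{1,0})\rangle^{\ell-1,1}$ is the saturated image of the injection $\det(E^{1,0})\otimes T_\iota\hookrightarrow \det(E^{1,0})\otimes \sH om(E^{1,0},E^{0,1})=\bigwedge^{\ell-1}(E^{1,0})\otimes E^{0,1}$, and the Higgs direct factor property of $\langle\det(E^{1,0})\rangle$ in $\bigwedge^\ell(E,\theta)$ restricted to this piece, after twisting by $\det(E^{1,0})^{-1}$, yields c). For c) $\Rightarrow$ d), given a splitting $\sH om(E^{1,0},E^{0,1})=T_\iota\oplus \sQ$, the Higgs section $\theta\in \sH om(E^{1,0},E^{0,1})\otimes \Omega_\iota$ lies by construction in $T_\iota\otimes \Omega_\iota$, so the iterated Higgs fields factor through the natural Plücker-type map
$$\det(E^{1,0})\otimes S^m(\sH om(E^{1,0},E^{0,1}))\longrightarrow \bigwedge^{\ell-m}(E^{1,0})\otimes \bigwedge^m(E^{0,1})$$
(which is the symmetrization of $m$ copies of $\theta^\vee$) restricted to the summand $\det(E^{1,0})\otimes S^m(T_\iota)$. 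A slope computation at each graded level, combined with the polystability of $\bigwedge^\ell(E,\theta)$ already established, then gives $\mu(\langle\det(E^{1,0})\rangle)=0$, hence e) and d). I expect this direction to be the principal technical obstacle, since it requires controlling the interaction of the Plücker map with the direct sum decomposition.

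The ball implication, given one of the equivalent conditions, is obtained by translating the splitting at level $m=1$ into a numerical identity between the first and second Chern classes of $\Omega_\iota$ on $Y$, and matching it against Yau's characterisation (\ref{eqyau}) of complex balls among type-B factors. Finally, ii) b) $\Rightarrow$ c) is immediate: $T_\iota$ is $\mu$-stable of the same slope as the $\mu$-polystable $\sH om(E^{1,0},E^{0,1})=(E^{1,0})^\vee\otimes E^{0,1}$, so the injection $\theta^\vee$ realises $T_\iota$ as one of the stable summands, hence as a direct factor. Part iii), a) $\Rightarrow$ b) under condition ($\star$), is a direct application of Lemma~\ref{poly} to the pair of $\mu$-stable sheaves $\sF=(E^{1,0})^\vee$ and $\sG=E^{0,1}$.
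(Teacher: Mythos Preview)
Your treatment of the easy parts is correct and matches the paper: f) $\Leftrightarrow$ e) via equation~(\ref{equpper2}), d) $\Leftrightarrow$ e) via Simpson's correspondence applied to the polarized VHS $\bigwedge^\ell\V$, d) $\Rightarrow$ c) by looking at the $(\ell-1,1)$-piece, and parts ii) and iii) are dispatched exactly as you say.

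The gap is in c) $\Rightarrow$ d). Your observation that $\theta\in T_\iota\otimes\Omega_\iota$ and hence that the iterated Higgs field factors through $\det(E^{1,0})\otimes S^m(T_\iota)$ is correct, but this holds for \emph{any} pure $\V$ satisfying the Arakelov equality, without using c) at all; it only identifies $\langle\det(E^{1,0})\rangle^{\ell-m,m}$ as the saturated image of $S^m(T_\iota)\otimes\det(E^{1,0})$ in $E^{\ell-m,m}$, which was already known. Knowing what this sub\-sheaf is does not give its slope sum---that depends on $\varsigma$, which is what you are trying to determine. The appeal to ``polystability of $\bigwedge^\ell(E,\theta)$'' is circular: Higgs polystability of slope zero tells you that a Higgs subsheaf of slope zero is a direct factor, but you have not shown the slope is zero. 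What c) actually buys you is the ability to construct a Higgs \emph{quotient} of $\bigwedge^\ell(E,\theta)$ which is $\mu$-equivalent to $\langle\det(E^{1,0})\rangle$; then Lemma~\ref{spl.2} gives $\mu\leq 0$ from the subobject and $\mu\geq 0$ from the quotient. The construction of this quotient is the substance of the paper's Claims~\ref{co.3}--\ref{co.5} and~\ref{cl4}: one takes the $S^m(T_\iota)$-isotypic part $\sS^{\ell-m,m}$ of the cosocle of $E^{\ell-m,m}$, checks that the Higgs field descends (this is where c) enters, via Claim~\ref{co.3}), and---the delicate point---verifies that the composite $\langle\det(E^{1,0})\rangle^{\ell-m,m}\hookrightarrow E^{\ell-m,m}\to\sS^{\ell-m,m}$ is injective. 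This last step is not formal: one must show that the image of $S^m(T_\iota)$ under the Cauchy projection $S^m(\sH om)\to \bigwedge^m({E^{1,0}}^\vee)\otimes\bigwedge^m(E^{0,1})$ is still a direct factor, which the paper does in Claim~\ref{co.5} using both the inclusion and the projection furnished by c) together with the $\mu$-stability of $S^m(T_\iota)$.

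Your description of the ball implication is also off. ``The splitting at level $m=1$'' is condition c), and that alone gives only a first Chern class relation. The paper instead uses the full strength of d): since $\langle\det(E^{1,0})\rangle$ is a Higgs direct factor it underlies a local subsystem, so $\ch_1$ \emph{and} $\ch_2$ vanish; passing to an auxiliary rank-$(n_\iota+1)$ Higgs bundle $F$ with $S^\ell(F)\cong\langle\det(E^{1,0})\rangle$ and computing discriminants then yields $2(n_\iota+1)\ch_2(T_\iota)-n_\iota\ch_1(T_\iota)^2=0$, which is Yau's equality~(\ref{eqyau}).
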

If $\V$ is pure of type A, we know that the conditions a), d), and f) automatically hold true.
Nevertheless we included this case in the statement , since we will later refer 
to the equivalence between c) and f).
\par
\begin{proof}[Proof of Proposition~\ref{numcondB}]
The stability of $E^{1,0}$ implies the one for ${E^{1,0}}^\vee$, and hence b) follows from a) and 
from ($\star$). 
\par
For part ii) remark that the Arakelov equality says that 
$$
\mu(T_\iota) = \mu(\sH om(E^{1,0},E^{0,1})).
$$
So c) is a consequence of b).

By Simpson's correspondence d) and e) are equivalent, and by Lemma~\ref{upper} the numerical 
condition in f) is equivalent to d). So for i) it remains to verify the equivalence of c) and d).
\begin{claim}\label{cl2}
The condition d) implies c).
\end{claim}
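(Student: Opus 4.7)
The plan is to show that d) implies c) by a direct translation between the bidegree $(\ell-1,1)$ piece of $\bigwedge^\ell(E,\theta)$ and the sheaf $\sH om(E^{1,0},E^{0,1})$, via tensoring with the invertible sheaf $\det(E^{1,0})^\vee$. There is almost nothing to prove beyond unwinding the definitions, so the plan below is essentially bookkeeping.

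First I would identify the two objects to be compared. By purity, the Higgs field $\theta_{\ell,0}:\det(E^{1,0})\to E^{\ell-1,1}\otimes \Omega^1_Y(\log S)$ factors through $E^{\ell-1,1}\otimes \Omega_\iota$, giving by adjunction a morphism
$$
\det(E^{1,0})\otimes T_\iota \>>> E^{\ell-1,1}=\bigwedge^{\ell-1}(E^{1,0})\otimes E^{0,1},
$$
whose saturated image is, by definition, $\langle\det(E^{1,0})\rangle^{\ell-1,1}$. Tensoring with $\det(E^{1,0})^\vee$ and using the canonical isomorphism $\bigwedge^{\ell-1}(E^{1,0})\otimes\det(E^{1,0})^\vee \cong {E^{1,0}}^\vee$, this becomes the map
$$
T_\iota \>>> \sH om(E^{1,0},E^{0,1})
$$
of c). Since tensoring by an invertible sheaf preserves both saturation and direct sum decompositions, the saturated image of this map is
$$
\langle\det(E^{1,0})\rangle^{\ell-1,1}\otimes \det(E^{1,0})^\vee\; \subset\; \sH om(E^{1,0},E^{0,1}).
$$

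Next I would invoke d). A Higgs direct factor splitting $\bigwedge^\ell(E,\theta)=\langle\det(E^{1,0})\rangle\oplus (C,\theta_C)$ is in particular a splitting of the underlying graded $\sO_Y$-modules, hence at bidegree $(\ell-1,1)$ one obtains
$$
E^{\ell-1,1}=\langle\det(E^{1,0})\rangle^{\ell-1,1}\oplus C^{\ell-1,1}.
$$
Tensoring this sheaf decomposition with the invertible sheaf $\det(E^{1,0})^\vee$ yields
$$
\sH om(E^{1,0},E^{0,1}) = \bigl(\langle\det(E^{1,0})\rangle^{\ell-1,1}\otimes \det(E^{1,0})^\vee\bigr)\oplus \bigl(C^{\ell-1,1}\otimes \det(E^{1,0})^\vee\bigr).
$$
Combining with the identification of the first summand as the saturated image of $T_\iota\to \sH om(E^{1,0},E^{0,1})$, this is exactly condition c).

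The only step requiring a moment's thought is the compatibility between the Higgs-theoretic operation of generating $\langle\det(E^{1,0})\rangle^{\ell-1,1}$ from the top piece $E^{\ell,0}$ and the sheaf-theoretic operation of taking the saturated image of $T_\iota\to \sH om(E^{1,0},E^{0,1})$; but since $\det(E^{1,0})^\vee$ is invertible, this amounts to checking that saturation commutes with twisting by a line bundle, which is immediate. No delicate stability or positivity arguments enter at this step—those enter only elsewhere in the proof of Proposition~\ref{numcondB}, for instance to derive d) from e) via Simpson's correspondence, or to establish the converse implication c)$\Rightarrow$d).
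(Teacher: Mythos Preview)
Your argument is correct and follows exactly the same approach as the paper: identify $E^{\ell-1,1}\cong \sH om(E^{1,0},E^{0,1})\otimes\det(E^{1,0})$, recognize that the inclusion $\langle\det(E^{1,0})\rangle^{\ell-1,1}\hookrightarrow E^{\ell-1,1}$ is just $T_\iota\to\sH om(E^{1,0},E^{0,1})$ tensored with $\det(E^{1,0})$, and then untwist the direct sum splitting provided by d). Your write-up is simply a more detailed version of the paper's two-sentence proof.
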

\begin{proof}
The inclusion $\langle \det(E^{1,0})\rangle^{\ell-1,1} \to E^{\ell-1,1}$ is given by
$T_\iota\to \sH om(E^{1,0},E^{0,1})$, tensorized with $\det(E^{1,0})$.
So Condition d) implies that the saturated image of $\langle \det(E^{1,0})\rangle^{\ell-1,1}$ is a direct factor of $E^{\ell-1,1}= {E^{1,0}}^\vee  \otimes E^{0,1} \otimes \det(E^{1,0})$, hence that c) holds.
\end{proof}
\begin{remark}\label{corr}
The implication `c) implies d)' has been claimed in \cite[page 327]{VZ07} in a more special
situation. There however, as pointed out by the referee of the present article, the argument is not complete. We did not verify that the image of $\Phi_{m+1}\circ\theta^{\ell-m,m}$ really lies in $\langle\det(E^{1,0})\rangle^{\ell-m,m}$. This can easily be done, using Claim~\ref{co.3} below and the property ($**$) on page 294 of \cite{VZ07}. Here, without using the last condition, we will work out
the argument in details without further reference to \cite{VZ07}. 
\end{remark}
Let us write 
$$
E^{\ell-m,m}=\bigwedge^{\ell-m}({E^{1,0}})\otimes \bigwedge^m(E^{0,1})\cong
\bigwedge^{m}({E^{1,0}}^\vee)\otimes \bigwedge^m(E^{0,1})\otimes  \det(E^{1,0}).
$$
Using the right hand isomorphism we will regard $E^{\ell-m,m}$ as a subsheaf of 
$$S^m({E^{1,0}}^\vee\otimes E^{0,1})\otimes  \det(E^{1,0}).$$ 
Then the dual Higgs field $\theta_{\ell-m,m}^\vee:E^{\ell-m,m}\otimes T_\iota \to  E^{\ell-m-1,m+1}$
is given by a quotient of the multiplication map
$$
S^m({E^{1,0}}^\vee\otimes E^{0,1})\otimes ({E^{1,0}}^\vee\otimes E^{0,1}) \otimes  \det(E^{1,0}) \>>>
S^{m+1}({E^{1,0}}^\vee\otimes E^{0,1})\otimes  \det(E^{1,0}).
$$
restricted to $E^{\ell-m,m}\otimes T_\iota$. Since the slope is additive for tensor products
$\mu(E^{\ell-m,m})$ is equal to $(\ell-m) \cdot \mu(E^{1,0}) + m \cdot \mu(E^{0,1})$.
The Arakelov equality implies that
\begin{equation}\label{araeq}
\mu(E^{\ell-m,m}) =
m \cdot \mu(T_\iota) + \ell \cdot \mu(E^{1,0})= m \cdot \mu(T_\iota) + \mu(\det(E^{1,0})).
\end{equation}
\begin{claim}\label{co.3}
Let $V$ be a $\mu$-semistable subsheaf of $E^{\ell-m,m}$ of slope of $\mu(E^{\ell-m,m})$.
Assume that for some $b>0$ there exists a morphism 
$$
E^{\ell-m-1,m+1} \>>>  S^{m+1}(T_\iota)^{\oplus b}\otimes  \det(E^{1,0})
$$ 
such that the composite
$$
\gamma'_m:V\otimes T_\iota \> \subset >> E^{\ell-m,m}\otimes T_\iota \>\theta_{\ell-m,m}^\vee>>
E^{\ell-m-1,m+1} \>>>  S^{m+1}(T_\iota)^{\oplus b}\otimes  \det(E^{1,0})
$$ 
is surjective up to $\mu$-equivalence, as defined in Definition~\ref{fil.2}. Then there exists a morphism 
$$
E^{\ell-m,m} \to S^{m}(T_\iota)^{\oplus b}\otimes  \det(E^{1,0}),
$$ 
whose restriction $\gamma_{m}:V \to S^{m}(T_\iota)^{\oplus b}\otimes  \det(E^{1,0})$ induces $\gamma'_m$,
in the sense that $\gamma'_m$ is the composite of $\gamma_{m}\otimes {\rm id}_{T_\iota}$ with the multiplication map 
$$
S^{m}(T_\iota)^{\oplus b}\otimes T_\iota\otimes  \det(E^{1,0}) \>>> S^{m+1}(T_\iota)^{\oplus b}\otimes  \det(E^{1,0}).
$$ 
In particular $\gamma_{m}$ is again surjective up to $\mu$-equivalence.
\end{claim}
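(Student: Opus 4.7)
First I would reduce the claim to a lifting problem by adjunction. The tensor-hom identification $\Hom(V\otimes T_\iota,Q)\cong\Hom(V,Q\otimes\Omega_\iota)$ turns $\gamma'_m$ into a morphism $\bar\gamma:V\to Q\otimes\Omega_\iota$, and the multiplication $P\otimes T_\iota\to Q$ into a canonical morphism $\kappa:P\to Q\otimes\Omega_\iota$ obtained by pairing with $\mathrm{id}_{T_\iota}\in\Omega_\iota\otimes T_\iota$. In a local frame $\partial_1,\dots,\partial_{n_\iota}$ of $T_\iota$ with dual basis $dz_1,\dots,dz_{n_\iota}$, the map $\kappa$ is given by $s\mapsto\sum_i(s\cdot\partial_i)\otimes dz_i$, which is straightforwardly injective. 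The claim thus reduces to exhibiting a factorization $\bar\gamma=\kappa\circ\gamma_m$ for some $\gamma_m:V\to P$, and then extending $\gamma_m$ canonically from $V$ to all of $E^{\ell-m,m}$.

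Next, I would rewrite $\theta^\vee_{\ell-m,m}$ in symmetric-algebra terms. Setting $H:={E^{1,0}}^\vee\otimes E^{0,1}$, the Cauchy-type inclusions $\bigwedge^k({E^{1,0}}^\vee)\otimes\bigwedge^k(E^{0,1})\hookrightarrow S^k(H)$ identify $E^{\ell-k,k}$ with a subsheaf of $S^k(H)\otimes\det(E^{1,0})$ for $k=m,m+1$. Purity of $\V$ yields the inclusion $T_\iota\hookrightarrow H$, and under these identifications $\theta^\vee_{\ell-m,m}(v\otimes t)$ coincides with the product $v\cdot t\in S^{m+1}(H)\otimes\det(E^{1,0})$. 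Consequently $\bar\gamma(v)=\sum_i\phi(v\cdot\partial_i)\otimes dz_i$, and the lifting problem becomes: produce a section $s=\gamma_m(v)\in P$ such that $s\cdot\partial_i=\phi(v\cdot\partial_i)$ in $Q$ for every $i$.

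The main obstacle is verifying that the $n_\iota$-tuple $(\phi(v\cdot\partial_i))_i$ does arise from such a common $s$; equivalently, that $\bar\gamma(v)\in Q\otimes\Omega_\iota$ lies in the image of $\kappa$. This rests on the commutativity of $S^\bullet(H)$: the identity $v\cdot\partial_i\cdot\partial_j=v\cdot\partial_j\cdot\partial_i$ in $S^{m+2}(H)\otimes\det(E^{1,0})$, together with the compatibility of $\phi$ with the symmetric algebra structure of its target $S^{m+1}(T_\iota)^{\oplus b}$, forces the required relations on the collection $\{\phi(v\cdot\partial_i)\}$. Once $\gamma_m$ is defined on $V$ by this formula, the same formula extends to a morphism $\psi:E^{\ell-m,m}\to P$, and surjectivity of $\gamma_m$ up to $\mu$-equivalence follows at once from that of $\gamma'_m=\mathrm{mult}\circ(\gamma_m\otimes\mathrm{id}_{T_\iota})$, since the multiplication map $P\otimes T_\iota\to Q$ is surjective.
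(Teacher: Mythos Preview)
Your adjunction reformulation is correct, and you have isolated the crux precisely: the factorization $\gamma'_m=\mathrm{mult}\circ(\gamma_m\otimes{\rm id}_{T_\iota})$ is equivalent to $\bar\gamma(V)\subset{\rm Im}(\kappa)$, which in coordinates reads $\phi(v\cdot\partial_i)\cdot\partial_j=\phi(v\cdot\partial_j)\cdot\partial_i$ in $S^{m+2}(T_\iota)^{\oplus b}$ for all $i,j$.

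The gap is in your justification of this relation. The morphism $\phi:E^{\ell-m-1,m+1}\to S^{m+1}(T_\iota)^{\oplus b}\otimes\det(E^{1,0})$ is a plain $\sO_Y$-linear map between $\mu$-semistable sheaves of the same slope; it is defined on a single graded piece and there is no ``compatibility of $\phi$ with the symmetric algebra structure'' available to invoke. The commutativity $v\cdot\partial_i\cdot\partial_j=v\cdot\partial_j\cdot\partial_i$ lives in degree $m+2$ of $S^\bullet(H)$, one step beyond the domain of $\phi$, and $\phi$ has no reason to intertwine multiplication by $\partial_j$ on source and target. Concretely, at a point where $v\cdot\partial_1$ and $v\cdot\partial_2$ are linearly independent in the fibre of $E^{\ell-m-1,m+1}$, nothing in your hypotheses rules out a linear $\phi$ with $\phi(v\cdot\partial_1)=\partial_2^{\,m+1}$ and $\phi(v\cdot\partial_2)=\partial_1^{\,m+1}$, and then $\bar\gamma(v)\notin{\rm Im}(\kappa)$. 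So this step, as written, does not go through.

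The paper does not attempt the lifting problem at all. It \emph{defines} $\gamma_m$ directly as the composite
\[
V\;\hookrightarrow\; V\otimes T_\iota\otimes\Omega_\iota \;\xrightarrow{\ \gamma'_m\otimes{\rm id}_{\Omega_\iota}\ }\; S^{m+1}(T_\iota)^{\oplus b}\otimes\Omega_\iota\otimes\det(E^{1,0}) \;\xrightarrow{\ \alpha_m\ }\; S^m(T_\iota)^{\oplus b}\otimes\det(E^{1,0}),
\]
where the first arrow inserts the identity section of $T_\iota\otimes\Omega_\iota$ and $\alpha_m$ is the natural contraction $S^{m+1}(T_\iota)\otimes\Omega_\iota\to S^m(T_\iota)$; in your notation this is simply $\gamma_m=\alpha_m\circ\bar\gamma$. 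The extension of $\gamma_m$ from $V$ to all of $E^{\ell-m,m}$ is then automatic, since every arrow in the composite already lives over $E^{\ell-m,m}$. For the compatibility with $\gamma'_m$ the paper rewrites $\gamma'_m$ itself as a longer composite through $V\otimes T_\iota\otimes\Omega_\iota\otimes T_\iota$ (inserting ${\rm id}_{T_\iota}\in\Omega_\iota\otimes T_\iota$) and rearranges the order of the contractions, rather than checking an obstruction class. Finally, surjectivity up to $\mu$-equivalence is argued componentwise: each projection of $\gamma_m$ to a single copy of $S^m(T_\iota)\otimes\det(E^{1,0})$ is nonzero (else the corresponding projection of $\gamma'_m$ would vanish), and one concludes by $\mu$-stability of $S^m(T_\iota)$.
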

\begin{proof}
The morphism $\gamma'_m$ is generically surjective, hence one has a generically surjective morphism
$$
\gamma'_m\otimes {\rm id}_{\Omega_\iota} :V\otimes T_\iota \otimes \Omega_\iota \>>> S^{m+1}(T_\iota)^{\oplus b}
\otimes \Omega_\iota\otimes  \det(E^{1,0}),
$$
factoring through $E^{\ell-m,m}\otimes T_\iota \otimes \Omega_\iota$.
Restricting to $V\subset V\otimes T_\iota \otimes \Omega_\iota$ and composing with the natural contraction map 
$$
\alpha_m:S^{m+1}(T_\iota)^{\oplus b}\otimes \Omega_\iota \otimes  \det(E^{1,0})\>>> S^{m}(T_\iota)^{\oplus b}\otimes  \det(E^{1,0})
$$
one gets 
$\gamma_{m} : V  \>\subset >> E^{\ell-m,m} \>>>  S^{m}(T_\iota)^{\oplus b}\otimes  \det(E^{1,0}).$
By construction $\gamma'_m$ is the restriction of the composite
\begin{multline*}
V\otimes T_\iota \> \subset >> 
V\otimes T_\iota \otimes \Omega_\iota\otimes T_\iota
\> \gamma'_m \otimes {\rm id}_{\Omega_\iota\otimes T_\iota}>>\\
 S^{m+1}(T_\iota)^{\oplus b} \otimes \Omega_\iota\otimes T_\iota\otimes  \det(E^{1,0})
\> {\rm id}_{S^{m+1}(T_\iota)^{\oplus b}}\otimes \alpha\otimes {\rm id}_{ \det(E^{1,0})} >> S^{m+1}(T_\iota)^{\oplus b} \otimes  \det(E^{1,0}),
\end{multline*}
where $\alpha: T_\iota \otimes \Omega_\iota \to \sO_Y$ denotes again the contraction map. 
The last of the morphisms is up to the tensor product with the identity on $\det(E^{1,0})$ a direct sum of
morphisms factoring like
$$
S^{m+1}(T_\iota) \otimes \Omega_\iota\otimes T_\iota \> \alpha_m\otimes {\rm id}_{T_\iota}>>
S^{m}(T_\iota)\otimes T_\iota \> {\rm mult} >> S^{m+1}(T_\iota).
$$
So one obtains $\gamma'_m$ as the composite of $\gamma_{m}$ with the multiplication map.
In particular $\gamma_{m}$ is the direct sum of non-zero morphisms and the stability of $S^{m}(T_\iota)$ implies that the image of $\gamma_{m}$ is $\mu$-equivalent to $S^{m}(T_\iota)^{\oplus b}\otimes  \det(E^{1,0})$.
\end{proof}
Let us return to the notations introduced in the first part of Section
~\ref{spl}. In particular ${\rm NS}_0$ denotes the
subgroup of the Neron-Severi group $NS(Y)_\Q$ generated by prime-divisors $D$ with
$\mu(\sO_Y(D))=0$, and $U'$ is the complement of those prime-divisors.  

Let us write $\sS'^{\ell-m,m}$ for the cosocle of $E^{\ell-m,m}$. As remarked in
the Example and Definition~\ref{spl.1c} it is a $\mu$-polystable 
sheaf of slope $\mu(E^{\ell-m,m})$ of maximal rank, for which there exists a morphism
$\theta:E^{\ell-m,m} \to\sS'^{\ell-m,m}$, which is surjective over some open set.
Using parts vii) and ix) of the Addendum~\ref{spl.1a} one finds that 
$\theta$ is surjective over $U'$. 

Let $\sS^{\ell-m,m}$ be the direct sum of all direct factors of $\sS'^{\ell-m,m}$, which are $\mu$-equivalent to the $\mu$-stable sheaf
$S^m(T_\iota)\otimes \det(E^{1,0})$. Remark that $\sS^{\ell-m,m}$ is not unique.
By Addendum~\ref{spl.1a}~vii) we may choose an effective divisor
$B_m\in {\rm NS}_0$ such that for some $b_m$ 
$$
S^m(T_\iota)^{\oplus b_m}\otimes \det(E^{1,0})\hookrightarrow \sS^{\ell-m,m}=
S^m(T_\iota)^{\oplus b_m}\otimes \det(E^{1,0})\otimes \sO_Y(B_m).
$$ 
In particular both sheaves are $\mu$-equivalent. Let us denote the induced morphism by  
$\beta_{m} : E^{\ell-m,m} \to \sS^{\ell-m,m}$.

As a next step, we will show by induction on $m$, that for a suitable choice of the divisors $B_m$ the dual Higgs field $\theta^\vee_{\ell-m,m}$ defines a morphism
$\sS^{\ell-m,m}\to {\sS}^{\ell-m-1,m+1}$. The induction step is given by: 

\begin{claim}\label{co.4} We assume that c) holds (and of course the Arakelov equality). 
Then choosing the effective divisor $B_{m+1}\in {\rm NS}_0$ and hence ${\sS}^{\ell-m-1,m+1}$ large enough, there exists a commutative diagram 
\begin{equation}\label{split3}
\begin{CD}
E^{\ell-m,m}\otimes T_\iota \> \beta_m\otimes {\rm id}_{T_\iota} >>
\sS^{\ell-m,m}\otimes T_\iota \\
\V \theta^\vee_{\ell-m,m} V V  \V V {\tau}^\vee V \\
E^{\ell-m-1,m+1} \> \beta_{m+1} >>
{\sS}^{\ell-m-1,m+1}.
\end{CD}
\end{equation}
\end{claim}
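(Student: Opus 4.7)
The strategy is to apply Claim~\ref{co.3} to $V = E^{\ell-m,m}$ with the morphism $\gamma'_m := \beta_{m+1}\circ\theta^\vee_{\ell-m,m}$ for an appropriately constructed $\beta_{m+1}$, and then to identify the factorization $\gamma_m$ produced by that claim with $\tau\circ\beta_m$ via the universal property of $\beta_m$ as the projection onto the maximal $S^m(T_\iota)\otimes\det(E^{1,0})$-isotypic quotient of ${\rm Cosoc}(E^{\ell-m,m})$. The map $\tau^\vee$ will then be $\tau\otimes{\rm id}_{T_\iota}$ followed by the multiplication into $\sS^{\ell-m-1,m+1}$.

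To construct $\beta_{m+1}$, let $V_{m+1}\subset E^{\ell-m-1,m+1}$ be the saturated image of $\theta^\vee_{\ell-m,m}$; by the Arakelov equality~\eqref{araeq} and the $\mu$-semistability of $E^{1,0}$ and $E^{0,1}$ (coming from \cite[Theorem~1]{VZ07}), $V_{m+1}$ is $\mu$-semistable of slope $\mu(E^{\ell-m-1,m+1})$. I extract from ${\rm Cosoc}(V_{m+1})$ the direct summands $\mu$-equivalent to $S^{m+1}(T_\iota)\otimes\det(E^{1,0})$ and, by Addendum~\ref{spl.1a}, absorb them via a sufficiently large twist $\sO_Y(B_{m+1})$, $B_{m+1}\in{\rm NS}_0$, into $\sS^{\ell-m-1,m+1} := S^{m+1}(T_\iota)^{\oplus b'_{m+1}}\otimes\det(E^{1,0})\otimes\sO_Y(B_{m+1})$. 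Extending this projection by zero on the remaining cosocle components defines $\beta_{m+1}: E^{\ell-m-1,m+1}\to\sS^{\ell-m-1,m+1}$, and by construction $\gamma'_m$ is then surjective up to $\mu$-equivalence.

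Now $E^{\ell-m,m}$, being a wedge of $\mu$-semistable sheaves, is itself $\mu$-semistable of the slope required by Claim~\ref{co.3}. That claim yields $\gamma_m: E^{\ell-m,m}\to S^m(T_\iota)^{\oplus b'_{m+1}}\otimes\det(E^{1,0})$ whose composite with the multiplication
$$ S^m(T_\iota)^{\oplus b'_{m+1}}\otimes T_\iota\otimes\det(E^{1,0}) \longrightarrow S^{m+1}(T_\iota)^{\oplus b'_{m+1}}\otimes\det(E^{1,0}) \hookrightarrow \sS^{\ell-m-1,m+1} $$
recovers $\gamma'_m$. Since $\sS^{\ell-m,m}$ is (up to $\mu$-equivalence) the maximal direct summand of ${\rm Cosoc}(E^{\ell-m,m})$ of the requisite isotypic type, $\gamma_m$ factors as $\gamma_m = \tau\circ\beta_m$ for some $\tau:\sS^{\ell-m,m}\to S^m(T_\iota)^{\oplus b'_{m+1}}\otimes\det(E^{1,0})\otimes\sO_Y(B_{m+1})$, possibly after enlarging $B_{m+1}$ to absorb the equivalence divisor. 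Setting $\tau^\vee := {\rm mult}\circ(\tau\otimes{\rm id}_{T_\iota})$ one obtains
$$ \tau^\vee\circ(\beta_m\otimes{\rm id}_{T_\iota}) = {\rm mult}\circ(\gamma_m\otimes{\rm id}_{T_\iota}) = \gamma'_m = \beta_{m+1}\circ\theta^\vee_{\ell-m,m}, $$
which is the desired commutativity.

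The principal technical obstacle is the coordinated bookkeeping of the divisor $B_{m+1}$: each of the three constructions above (the definition of $\sS^{\ell-m-1,m+1}$, the contraction inside Claim~\ref{co.3}, and the factorization of $\gamma_m$ through $\beta_m$) introduces an exceptional divisor in ${\rm NS}_0$, and one must verify that a single $B_{m+1}$ can be chosen to dominate all three, which is precisely the meaning of `choosing $B_{m+1}$ large enough'. Hypothesis c) does not enter the inductive step directly; it rather anchors the base case $m=0$, in which $\sS^{\ell,0}=\det(E^{1,0})$ and $\beta_0={\rm id}$, by forcing the saturated image of $\theta^\vee_{\ell,0}$ to sit as a direct factor of $E^{\ell-1,1}$, so that the $\beta_m$ referenced in the diagram at each stage of the induction is a genuine surjection up to $\mu$-equivalence.
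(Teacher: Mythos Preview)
Your core idea---apply Claim~\ref{co.3} with $V=E^{\ell-m,m}$ to produce $\gamma_m$, then factor $\gamma_m$ through $\beta_m$ using that $\sS^{\ell-m,m}$ is the maximal $S^m(T_\iota)\otimes\det(E^{1,0})$-isotypic quotient of the cosocle---is exactly the argument the paper uses to prove the companion statement Claim~\ref{co.4a}, and it does suffice for Claim~\ref{co.4} as well. The paper additionally carries out a separate kernel-to-kernel step (showing $\theta^\vee_{\ell-m,m}(\ker\beta_m\otimes T_\iota)\subset\ker\beta_{m+1}$ by contradiction, again via Claim~\ref{co.3}); your direct construction makes that step redundant, which is a genuine simplification.

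There is, however, an internal inconsistency you should repair. You \emph{redefine} $\beta_{m+1}$ using the cosocle of the saturated image of $\theta^\vee_{\ell-m,m}$, so your $\sS^{\ell-m-1,m+1}$ has only $b'_{m+1}$ factors---those visible in the Higgs image---rather than the full $b_{m+1}$ factors of the isotypic cosocle of $E^{\ell-m-1,m+1}$ as fixed before the Claim. Yet when you factor $\gamma_m$ through $\beta_m$ you invoke that $\sS^{\ell-m,m}$ is the \emph{maximal} isotypic summand, i.e.\ the paper's definition. If the construction is inductive these two conventions collide at the next step; if it is not, you have proven commutativity with the wrong $\beta_{m+1}$. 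The fix is painless: keep the paper's $\beta_{m+1}$, observe (as the paper does) that the image of $\gamma'_m=\beta_{m+1}\circ\theta^\vee_{\ell-m,m}$ is $\mu$-equivalent to a direct factor $S^{m+1}(T_\iota)^{\oplus b}\otimes\det(E^{1,0})$ of $\sS^{\ell-m-1,m+1}$ with $b\le b_{m+1}$, apply Claim~\ref{co.3} with that $b$, and then include back into $\sS^{\ell-m-1,m+1}$.

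One minor correction: condition~c) is not used in Claim~\ref{co.4} at all, not even to anchor the base case $m=0$ (where $\sS^{\ell,0}=\det(E^{1,0})$ and $\beta_0={\rm id}$ automatically). It enters later, in Claim~\ref{co.5}, to guarantee that $\langle\det(E^{1,0})\rangle^{\ell-m,m}$ injects into $\sS^{\ell-m,m}$.
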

The morphism ${\tau}^\vee$ has an explicit description. For simplicity
we just formulate this on the open subscheme $U'$. Part vii) of Addendum~\ref{spl.1a}
allows to extend this description to the boundary, perhaps after replacing
$B_{m+1}$ by a larger divisor in ${\rm NS}_0$.
\begin{claim}\label{co.4a} 
For some morphism $\tau'_m: S^{m}(T_\iota)^{\oplus b_m} \to S^{m}(T_\iota)^{\oplus b_{m+1}}$ the morphism $\tau^\vee|_{U'}$ is the composite of $\tau'_m\otimes {\rm id}_{T_\iota}|_{U'}$ and the direct product of $b_{m+1}$ copies of the multiplication map $S^{m}(T_\iota)\otimes T_\iota|_{U'} \to S^{m+1}(T_\iota)|_{U'}$.
\end{claim}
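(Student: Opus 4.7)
The plan is to apply Claim~\ref{co.3} with $V=E^{\ell-m,m}$ and $b=b_{m+1}$, and then to extract $\tau'_m$ from the resulting lift $\gamma_m$ by descending through $\beta_m$ over $U'$. To verify the hypotheses of Claim~\ref{co.3}, observe that the Arakelov equality forces $E^{1,0}$ and $E^{0,1}$ to be $\mu$-semistable, and in characteristic zero this transmits to all exterior and tensor products; thus $E^{\ell-m,m}$ is $\mu$-semistable of the slope recorded in~\eqref{araeq}. The target morphism $E^{\ell-m-1,m+1}\to S^{m+1}(T_\iota)^{\oplus b_{m+1}}\otimes\det(E^{1,0})$ required by Claim~\ref{co.3} is obtained by composing $\beta_{m+1}$ with the canonical identification $\sS^{\ell-m-1,m+1}=S^{m+1}(T_\iota)^{\oplus b_{m+1}}\otimes\det(E^{1,0})\otimes\sO_Y(B_{m+1})$: it is well-defined over $U'$ and, after absorbing the divisor $B_{m+1}\in{\rm NS}_0$ via Addendum~\ref{spl.1a}~viii), over the whole of $Y$.

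By the commutativity of diagram~\eqref{split3}, the composite $\gamma'_m$ appearing in Claim~\ref{co.3} equals
$$E^{\ell-m,m}\otimes T_\iota \xrightarrow{\beta_m\otimes{\rm id}} \sS^{\ell-m,m}\otimes T_\iota \xrightarrow{\tau^\vee} \sS^{\ell-m-1,m+1} \longrightarrow S^{m+1}(T_\iota)^{\oplus b_{m+1}}\otimes\det(E^{1,0}),$$
and its surjectivity up to $\mu$-equivalence follows from the surjectivity of $\beta_{m+1}$ over $U'$ (Addendum~\ref{spl.1a}~ix)). Claim~\ref{co.3} consequently produces a morphism $\gamma_m:E^{\ell-m,m}\to S^m(T_\iota)^{\oplus b_{m+1}}\otimes\det(E^{1,0})$ such that $\gamma_m\otimes{\rm id}_{T_\iota}$ followed by componentwise multiplication recovers $\gamma'_m$.

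It remains to descend $\gamma_m$ through $\beta_m$ over $U'$. Since the target of $\gamma_m$ is $\mu$-polystable of slope $\mu(E^{\ell-m,m})$, the morphism factors through the cosocle $\sS'^{\ell-m,m}$; of its $\mu$-stable summands, only those $\mu$-equivalent to $S^m(T_\iota)\otimes\det(E^{1,0})$, that is, exactly those gathered by $\sS^{\ell-m,m}$, can contribute. Hence $\gamma_m|_{U'}=\tau'_m\circ\beta_m|_{U'}$ for some morphism $\tau'_m:\sS^{\ell-m,m}|_{U'}\to S^m(T_\iota)^{\oplus b_{m+1}}\otimes\det(E^{1,0})|_{U'}$. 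Comparing the two expressions for $\gamma'_m|_{U'}$ given by Claim~\ref{co.3} and diagram~\eqref{split3}, and using the surjectivity of $\beta_m|_{U'}\otimes{\rm id}_{T_\iota}$, yields $\tau^\vee|_{U'}$ as the direct sum of multiplication maps precomposed with $\tau'_m\otimes{\rm id}_{T_\iota}|_{U'}$, which is exactly the claimed factorization. The main obstacle is this last descent step: one must be sure that $\gamma_m$ really factors through $\beta_m$ and not merely through the ambient cosocle $\sS'^{\ell-m,m}$, which depends critically on the $\mu$-stability of $S^m(T_\iota)$ (the type B hypothesis) so that direct summands outside its $\mu$-equivalence class admit no nonzero morphism into the target.
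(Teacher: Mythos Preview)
Your approach is essentially the paper's: apply Claim~\ref{co.3} with $V=E^{\ell-m,m}$ to obtain $\gamma_m$, then factor $\gamma_m$ through $\beta_m$ using that any map from $E^{\ell-m,m}$ to a sum of copies of $S^m(T_\iota)\otimes\det(E^{1,0})$ must factor through the cosocle and hence through $\sS^{\ell-m,m}$, and finally cancel the surjective $\beta_m|_{U'}\otimes{\rm id}$ to identify $\tau^\vee|_{U'}$.

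One small imprecision: you assert that $\gamma'_m=\beta_{m+1}\circ\theta^\vee_{\ell-m,m}$ is surjective up to $\mu$-equivalence onto all $b_{m+1}$ copies, citing the surjectivity of $\beta_{m+1}$. But surjectivity of $\beta_{m+1}$ alone does not give this, since $\theta^\vee_{\ell-m,m}$ need not be surjective. The paper's fix is to take $b\le b_{m+1}$ so that the image of $\gamma'_m$ is $\mu$-equivalent to a direct factor $S^{m+1}(T_\iota)^{\oplus b}\otimes\det(E^{1,0})$ of $\sS^{\ell-m-1,m+1}$, apply Claim~\ref{co.3} with this $b$, and then regard the resulting $\tau'_m$ as landing in $S^m(T_\iota)^{\oplus b_{m+1}}$ via the obvious inclusion of $b$ copies into $b_{m+1}$. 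With this adjustment your argument goes through unchanged.
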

\begin{proof}[Proof of the Claims~\ref{co.4} and~\ref{co.4a}] By the Arakelov equality, as restated in~\ref{araeq} and by the choice of the sheaves $\sS^{\ell-\bullet,\bullet}$
 the four sheaves in~\ref{split3} all have the same slope and they are all $\mu$-semistable. By Addendum~\ref{spl.1a} for each of the morphisms
the image coincides with the saturated image over the open set $U'$. In particular
the restriction of $\beta_m$ and $\beta_{m+1}$ to $U'$ is surjective.

Writing $V_m$ for the kernel of $\beta_m$, hence $V_m\otimes T_\iota$ for the one of
$\beta_m\otimes {\rm id}_{T_\iota}$, consider the image $\sI$ of $V_m\otimes T_\iota$ 
under $\theta^\vee_{\ell-m,m}$. We claim that $\sI$ is contained in $V_{m+1}$.

If not $\beta_{m+1}\circ \theta^\vee_{\ell-m,m}({V_m\otimes T_\iota})$ is a non-zero subsheaf of ${\sS}^{\ell-m-1,m+1}$. By Addendum~\ref{spl.1a},~ix) 
its saturated hull is a $\mu$-semistable subsheaf of ${\sS}^{\ell-m-1,m+1}$. Since both are of the same slope, and since the second one is $\mu$-polystable, the saturated image has to be a direct factor, hence isomorphic to $S^{m+1}(T_\iota)^b\otimes \det(E^{1,0})\otimes \sO_Y(B_{m+1})$ for some $b>0$.

By Claim~\ref{co.3} one obtains a morphism $E^{\ell-m,m}\to S^{m}(T_\iota)^b\otimes \det(E^{1,0})\otimes \sO_Y(B_{m+1})$ whose restriction to $V_m$ is non-zero. Obviously this contradicts the definition of ${\sS}^{\ell-m-1,m+1}$ as a maximal $\mu$-polystable quotient and of $V_m$ as the kernel of $\beta_m$.

The restriction of $\beta_m\otimes {\rm id}_{T_\iota}$ to $U'$ is surjective.
Since $\theta^\vee_{\ell-m,m}(\sI)\subset V_{m+1}$, the morphism 
$\tau^\vee$ exists on $U'$, and enlarging $B_{m+1}$ it extends to $Y$.

In order to get the explicit description stated in Claim~\ref{co.4a}, we apply Claim~\ref{co.3} to $V=E^{\ell-m,m}$. The image of $\gamma'_m=\beta_{m+1}\circ \theta^\vee_{\ell-m,m}$ is a $\mu$-semistable subsheaf of the $\mu$-polystable sheaf ${\sS}^{\ell -m-1,m+1}$, hence $\mu$-equivalent to a direct factor of the form 
$S^{m+1}(T_\iota)^{\oplus b}\otimes \det(E^{1,0})$. Claim~\ref{co.3} implies that for some
$$
\gamma_{m}: E^{\ell-m,m}  \to S^{m}(T_\iota)^{\oplus b}\otimes  \det(E^{1,0})
$$ 
the morphism $\beta_{m+1}\circ \theta^\vee_{\ell-m,m}$ is the composite of
$\gamma_m\otimes {\rm id}_{T_\iota}$ with the multiplication map. Since $\gamma_m$ 
factors through the cosocle $\sS'^{\ell-m,m}$ and hence through $\sS^{\ell-m,m}$, one finds the morphism $\tau'_m$. 
\end{proof}
Recall that $\langle \det(E^{1,0})\rangle$ is the saturated subsheaf
of $E^{\ell-m,m}$ which is generated by $\det(E^{1,0})$. If non-zero $\langle\det(E^{1,0})\rangle^{\ell-m,m}$
contains $S^m(T_\iota)\otimes \det(E^{1,0})$ and both are $\mu$-equivalent.
As a next step we will show that $\langle\det(E^{1,0})\rangle^{\ell-m,m}|_{U'}$ is a direct factor of $E^{\ell-m,m}|_{U'}$.
\begin{claim}\label{co.5} Assume c). Then the composite
$$
\langle\det(E^{1,0})\rangle^{\ell-m,m}\> \subset >> E^{\ell-m,m} \> \beta_m >> \sS^{\ell-m,m}
$$
is injective and defines a splitting of the inclusion $\langle\det(E^{1,0})\rangle^{\ell-m,m}|_{U'}\subset E^{\ell-m,m}|_{U'}$. 
\end{claim}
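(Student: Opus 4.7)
The plan is to prove Claim~\ref{co.5} by induction on $m\geq 0$. For $m=0$ the sheaf $\langle\det(E^{1,0})\rangle^{\ell,0}$ equals $E^{\ell,0}=\det(E^{1,0})$, a rank-one $\mu$-stable sheaf whose cosocle coincides with itself, so $\beta_0$ restricts to an isomorphism on $U'$. For $m=1$ the statement is a reformulation of hypothesis~c): tensoring the splitting $T_\iota\hookrightarrow \sH om(E^{1,0},E^{0,1})$ by $\det(E^{1,0})$ exhibits $\langle\det(E^{1,0})\rangle^{\ell-1,1}$ as a direct factor of $E^{\ell-1,1}$, and projecting this splitting onto the cosocle yields both the injectivity of the composite to $\sS^{\ell-1,1}$ and the splitting over $U'$.

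For the inductive step from $m$ to $m+1$ I would restrict the commutative diagram~(\ref{split3}) from Claim~\ref{co.4} to the subsheaf $\langle\det(E^{1,0})\rangle^{\ell-m,m}\otimes T_\iota$ of $E^{\ell-m,m}\otimes T_\iota$. By the inductive hypothesis, on $U'$ the image under $\beta_m\otimes{\rm id}_{T_\iota}$ sits as one distinguished copy of $S^m(T_\iota)\otimes\det(E^{1,0})\otimes T_\iota|_{U'}$ inside $\sS^{\ell-m,m}\otimes T_\iota|_{U'}\cong (S^m(T_\iota)\otimes\det(E^{1,0})\otimes T_\iota)^{\oplus b_m}|_{U'}$. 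Applying $\tau^\vee$ with the explicit description from Claim~\ref{co.4a} — namely $\tau'_m\otimes{\rm id}_{T_\iota}$ followed by the componentwise multiplication $S^m(T_\iota)\otimes T_\iota\to S^{m+1}(T_\iota)$, which is surjective in characteristic zero — I would show that the resulting composite is non-zero on this direct factor; were it zero then by commutativity of~(\ref{split3}) $\beta_{m+1}$ would annihilate $\langle\det(E^{1,0})\rangle^{\ell-m-1,m+1}$ on $U'$, and iterating Claim~\ref{co.4a} backwards down to $m=1$ this collapse would contradict the direct-factor splitting guaranteed by~c).

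Once non-vanishing is secured, the composite $\langle\det(E^{1,0})\rangle^{\ell-m-1,m+1}\to\sS^{\ell-m-1,m+1}$ is non-zero from a sheaf $\mu$-equivalent to the $\mu$-stable $\det(E^{1,0})\otimes S^{m+1}(T_\iota)$; the standard argument that any non-trivial kernel of smaller rank would produce a quotient of strictly larger slope than the $\mu$-semistable target forces injectivity. On $U'$ the target is a direct sum of $b_{m+1}$ isomorphic $\mu$-stable summands and the image is one copy embedded via a non-zero vector in the multiplicity space $\C^{b_{m+1}}$; by simplicity of a $\mu$-stable sheaf any complementary linear functional yields a retraction in $\sS^{\ell-m-1,m+1}|_{U'}$, which composed with $\beta_{m+1}|_{U'}$ gives the desired retraction of $\langle\det(E^{1,0})\rangle^{\ell-m-1,m+1}|_{U'}\subset E^{\ell-m-1,m+1}|_{U'}$. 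The main obstacle is making the inductive non-vanishing step rigorous — that the relevant component of $\tau'_m$ does not vanish on the distinguished direct summand produced by the inductive hypothesis — which is the essential use of~c) and precisely addresses the subtlety noted in Remark~\ref{corr} concerning the earlier argument in \cite{VZ07}.
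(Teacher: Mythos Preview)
Your inductive strategy is genuinely different from the paper's argument, and the gap you yourself flag in the last paragraph is real and not repaired by the sketch you offer.

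The problem is the non-vanishing step. After the inductive hypothesis you know that $\beta_m$ embeds $\langle\det(E^{1,0})\rangle^{\ell-m,m}|_{U'}$ as one line in the multiplicity space $\C^{b_m}$ of $\sS^{\ell-m,m}|_{U'}$. What you must show is that the linear map $\tau'_m:\C^{b_m}\to\C^{b_{m+1}}$ coming from Claim~\ref{co.4a} does not kill that particular line. Your proposed contradiction ``iterate Claim~\ref{co.4a} backwards to $m=1$'' does not work: the diagram~(\ref{split3}) and Claim~\ref{co.4a} only transport information forward, from level $m$ to level $m+1$, and the vanishing of $\beta_{m+1}$ on $\langle\det(E^{1,0})\rangle^{\ell-m-1,m+1}$ in no way forces anything to vanish at level $m$ or below. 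Condition~c) is a statement at level $m=1$ only; nothing in your scheme propagates it upward except through $\tau'_m$, whose behaviour on the distinguished line is exactly what is in question.

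The paper bypasses this difficulty by arguing directly for each $m$, with no induction and without using Claims~\ref{co.4} or~\ref{co.4a}. From c) one has a splitting $T_\iota\hookrightarrow {E^{1,0}}^\vee\otimes E^{0,1}$; applying $S^m$ gives $S^m(T_\iota)$ as a direct factor of $S^m({E^{1,0}}^\vee\otimes E^{0,1})$. The latter also contains $\bigwedge^m({E^{1,0}}^\vee)\otimes\bigwedge^m(E^{0,1})$ as a (Schur--Weyl) direct factor, and a short argument with the two projections shows that $S^m(T_\iota)$ is in fact a direct factor of $\bigwedge^m({E^{1,0}}^\vee)\otimes\bigwedge^m(E^{0,1})$ itself. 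Tensoring with $\det(E^{1,0})$ exhibits $\langle\det(E^{1,0})\rangle^{\ell-m,m}|_{U'}$ as a direct factor of $E^{\ell-m,m}|_{U'}$; any direct factor maps non-trivially to the cosocle, and because it is $\mu$-equivalent to the $\mu$-stable sheaf $S^m(T_\iota)\otimes\det(E^{1,0})$ its image lands in $\sS^{\ell-m,m}$, giving both injectivity and the splitting. The moral is that condition~c) should be exploited at level $m$ by taking $S^m$ of the given splitting, not by trying to carry the level-$1$ splitting up through the Higgs field.
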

\begin{proof} 
If $\langle\det(E^{1,0})\rangle^{\ell-m,m}=0$ there is nothing to show. Otherwise
by the equality~\ref{araeq} $\mu(\langle\det(E^{1,0})\rangle^{\ell-m,m})=\mu(E^{\ell-m,m})$ and
by part ix) of the Addendum~\ref{spl.1a} $\langle\det(E^{1,0})\rangle^{\ell-m,m}$ is a $\mu$-semistable subsheaf of $E^{\ell-m,m}$, containing 
$S^m(T_\iota)\otimes \det(E^{1,0})$ as a $\mu$-equivalent subsheaf.

Recall that $T_\iota$ is a direct factor of ${E^{1,0}}^\vee\otimes E^{0,1}$, and hence
$S^m(T_\iota)$ a direct factor of $S^m({E^{1,0}}^\vee\otimes E^{0,1})$. This sheaf also contains
$\bigwedge^{m}({E^{1,0}}^\vee)\otimes \bigwedge^m(E^{0,1})$ as a direct factor.
Writing 
$$
S^m({E^{1,0}}^\vee\otimes E^{0,1})=\bigwedge^{m}({E^{1,0}}^\vee)\otimes \bigwedge^m(E^{0,1})\oplus R_m,
$$ 
consider the image of $S^m(T_\iota)$ under the projection $S^m({E^{1,0}}^\vee\otimes E^{0,1})\to R_m$.
If this is zero we are done. If not one has an injection
$$
\alpha':S^m(T_\iota) \oplus S^m(T_\iota) \>>> \bigwedge^{m}({E^{1,0}}^\vee)\otimes \bigwedge^m(E^{0,1})\oplus R_m,
$$
where the first factor maps to 
$\bigwedge^{m}({E^{1,0}}^\vee)\otimes \bigwedge^m(E^{0,1})$ and the second one to $R_m$.
For both factors the composite with the projection 
 $$
\bigwedge^{m}({E^{1,0}}^\vee)\otimes \bigwedge^m(E^{0,1})\oplus R_m
\longrightarrow S^m(T_\iota)
$$  
is non-zero, hence by $\mu$-semistability it is surjective up to $\mu$-equivalence. So $\alpha'$ splits, and $S^m(T_\iota)$ as the image of the composite of $\alpha'$ with the projection to $\bigwedge^{m}({E^{1,0}}^\vee)\otimes \bigwedge^m(E^{0,1})$, splits as well. 

Since $\langle\det(E^{1,0})\rangle^{\ell-m,m}|_{U'}$ is defined as the image
of $S^m(T_\iota)\otimes \det(E^{1,0})|_{U'}$ in $E^{\ell-m,m}|_{U'}=\bigwedge^{m}({E^{1,0}}^\vee)\otimes \bigwedge^m(E^{0,1})\otimes \det(E^{1,0})|_{U'}$, it is a direct factor, hence its image in the cosockle is 
non-zero. By the choice of $\sS^{\ell-m,m}$ we are done.
\end{proof}
\begin{claim}\label{cl4}
The condition c) implies d).
\end{claim}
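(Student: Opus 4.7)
The plan is to assemble the maps $\beta_m$ from Claim~\ref{co.4} into a morphism $\beta\colon \bigwedge^\ell(E,\theta) \to (\sS,\tau)$ of Higgs bundles, exhibit its kernel as a Higgs complement to $\langle\det(E^{1,0})\rangle$ on $U'$, and conclude via the equivalence of d) and e) established in Lemma~\ref{upper}.

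First I would verify that $(\sS,\tau)$ is genuinely a Higgs bundle: by Claim~\ref{co.4a}, $\tau^\vee$ factors through multiplication in the commutative algebra $S^\bullet(T_\iota)$, so $\tau \wedge \tau = 0$. The diagram of Claim~\ref{co.4} then makes $\beta = \bigoplus_m \beta_m$ a morphism of Higgs bundles. Its kernel $V = \bigoplus_m V_m$ with $V_m := \ker(\beta_m)$ is a Higgs sub-sheaf of $\bigwedge^\ell(E,\theta)$: for $v \in V_m \otimes T_\iota$ the commutative diagram yields $\beta_{m+1}(\theta^\vee_{\ell-m,m}(v)) = \tau^\vee((\beta_m \otimes {\rm id}_{T_\iota})(v)) = 0$, so $\theta^\vee_{\ell-m,m}(v) \in V_{m+1}$.

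By Claim~\ref{co.5}, the composite of the inclusion $\langle\det(E^{1,0})\rangle \hookrightarrow \bigwedge^\ell(E,\theta)$ with $\beta$ is injective, forcing $\langle\det(E^{1,0})\rangle \cap V = 0$. Hence one has an injective morphism of Higgs sheaves $\langle\det(E^{1,0})\rangle \oplus V \hookrightarrow \bigwedge^\ell(E,\theta)$. The key technical point is to show this injection is $\mu$-equivalent, i.e., an isomorphism on $U'$ up to a boundary divisor in ${\rm NS}_0$. Equivalently, $\beta_m$ restricted to $\langle\det(E^{1,0})\rangle^{\ell-m,m}$ should exhaust $\sS^{\ell-m,m}$ over $U'$. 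Using the splittings from Claim~\ref{co.5} at each level together with the multiplicative description of $\tau$ from Claim~\ref{co.4a}, any extra isotypic summand of $\sS^{\ell-m,m}$ outside the image of $\langle\det(E^{1,0})\rangle^{\ell-m,m}$ would, via Claim~\ref{co.3}, produce a morphism $E^{\ell-m,m} \to S^m(T_\iota) \otimes \det(E^{1,0})$ incompatible with the inductive construction and the maximality built into the choice of $\sS$ as a polystable isotypic quotient.

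Once $\mu$-equivalence is in place, Addendum~\ref{spl.1a},~vii) yields $\mu(\langle\det(E^{1,0})\rangle \oplus V) = \mu(\bigwedge^\ell(E,\theta)) = 0$. Since both $\mu(\langle\det(E^{1,0})\rangle) \leq 0$ and $\mu(V) \leq 0$ by Lemma~\ref{spl.2},~i), the rank-weighted identity $\rk(\langle\det(E^{1,0})\rangle)\mu(\langle\det(E^{1,0})\rangle) + \rk(V)\mu(V) = 0$ forces $\mu(\langle\det(E^{1,0})\rangle) = \mu(V) = 0$. This is condition e), equivalent to d) by the Simpson-type computation of Lemma~\ref{upper}. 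The main obstacle is the inductive verification that $\beta$ maps $\langle\det(E^{1,0})\rangle$ onto all of $\sS$ over $U'$; this is where the interplay of Claims~\ref{co.3} through~\ref{co.5} becomes decisive.
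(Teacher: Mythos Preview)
Your setup is the same as the paper's: you correctly assemble $\beta\colon \bigwedge^\ell(E,\theta)\to(\sS,\tau)$ as a morphism of Higgs bundles from Claim~\ref{co.4}, and you correctly observe that its kernel $V=\bigoplus V_m$ is a sub-Higgs sheaf with $\langle\det(E^{1,0})\rangle\cap V=0$ (by Claim~\ref{co.5}). The divergence comes in the next step, and there your argument has a genuine gap.

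You need $\langle\det(E^{1,0})\rangle\oplus V$ to be $\mu$-equivalent to all of $\bigwedge^\ell(E,\theta)$; equivalently, since $\beta_m$ is surjective over $U'$, you need $\rk(\langle\det(E^{1,0})\rangle^{\ell-m,m})=\rk(\sS^{\ell-m,m})$, i.e.\ $b_m=1$ whenever $m\le\varsigma$ and $b_m=0$ otherwise. But $\sS^{\ell-m,m}$ was defined as the entire $S^m(T_\iota)\otimes\det(E^{1,0})$-isotypic part of the cosocle, and nothing rules out $b_m>1$, nor $b_m\ge 1$ for $m>\varsigma$. Your sentence about ``an extra isotypic summand \dots\ incompatible with \dots\ maximality'' runs in the wrong direction: maximality of the cosocle makes $b_m$ as \emph{large} as possible, not small. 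Claim~\ref{co.3} does not help here either; it produces morphisms $E^{\ell-m,m}\to S^m(T_\iota)^{\oplus b}\otimes\det(E^{1,0})$, but such morphisms existing is exactly consistent with $b_m>1$. So the degree identity $\deg(\langle\det(E^{1,0})\rangle)+\deg(V)=0$ is not available, and the proof stalls.

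The paper avoids this entirely by working with the \emph{image} of $\langle\det(E^{1,0})\rangle$ in $\sS$ rather than the kernel of $\beta$. Since each $\langle\det(E^{1,0})\rangle^{\ell-m,m}$ is $\mu$-stable and $\sS^{\ell-m,m}$ is $\mu$-polystable of the same slope, the image is $\mu$-equivalent to a direct factor $\widehat{\langle\det(E^{1,0})\rangle}^{\ell-m,m}$ of $\sS^{\ell-m,m}$ (one copy out of the $b_m$, no need for $b_m=1$). The explicit description of $\tau$ in Claim~\ref{co.4a} shows that $\tau$ respects this direct-sum splitting, so $\widehat{\langle\det(E^{1,0})\rangle}$ is a \emph{quotient} Higgs bundle of $(\sS,\tau)$ and hence of $\bigwedge^\ell(E,\theta)$. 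Now Lemma~\ref{spl.2} gives $\mu(\widehat{\langle\det(E^{1,0})\rangle})\ge 0$ (quotient) and $\mu(\langle\det(E^{1,0})\rangle)\le 0$ (sub), while Claim~\ref{co.5} says the two are $\mu$-equivalent; hence both slopes vanish and Lemma~\ref{spl.2} forces the splitting. This is the argument you should use in place of the kernel approach.
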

\noindent
{\it Proof.}
Writing $\tau$ for the composite of $\tau^\vee\otimes \Omega_\iota$ with the contraction
to $\sS^{\ell-m-1,m+1}$ one obtains by Claim~\ref{co.4} a Higgs bundle 
$$
(\sS,\tau)=\Big(\bigoplus_{m=0}^\ell \sS^{\ell-m,m},\bigoplus_{m=0}^{\ell-1} 
\big(\sS^{\ell-m,m} \> \tau >> \sS^{\ell-m,m}\otimes \Omega_\iota\ \big) \Big)
$$ 
together with a map of Higgs bundles 
\begin{equation}\label{hgbdl}
\bigwedge^\ell(E,\theta)=\big(\bigoplus_{m=0}^\ell E^{\ell-m,m},\theta \big) \> \beta >> (\sS,\tau).
\end{equation}
For  $\varsigma= \varsigma((E,\theta))$ the sheaf 
$$
\langle\det(E^{1,0})\rangle=\bigoplus_{m=0}^\varsigma \langle\det(E^{1,0})\rangle^{\ell-m,m}= \bigoplus_{m=0}^\ell \langle\det(E^{1,0})\rangle^{\ell-m,m}
$$ 
is a Higgs subbundle of the left hand side of~\ref{hgbdl}, hence its saturated image
$$
(\widetilde{\langle\det(E^{1,0})\rangle},\tau|_{\widetilde{\langle\det(E^{1,0})\rangle}})
$$
in the right hand side is a Higgs subbundle of $(\sS,\tau)$. By Claim~\ref{co.5}
the induced map 
$$
\langle\det(E^{1,0})\rangle^{\ell-m,m}\>>> \widetilde{\langle\det(E^{1,0})\rangle}^{\ell-m,m}
$$
is injective and both sheaves are $\mu$-equivalent. Since $\langle\det(E^{1,0})\rangle^{\ell-m,m}$
is $\mu$-stable, $\widetilde{\langle\det(E^{1,0})\rangle}^{\ell-m,m}$ is $\mu$ equivalent
to a direct factor $\widehat{\langle\det(E^{1,0})\rangle}^{\ell-m,m}$ of $\sS^{\ell-m,m}$. By the explicit description of $\tau^\vee$ in Claim~\ref{co.4a}
the Higgs field $\tau$ respects the splitting, and one obtains a quotient Higgs bundle 
$\widehat{\langle\det(E^{1,0})\rangle}$ of $(\sS,\tau)$, hence of $\bigwedge^\ell  (E,\theta)$.
Since $\langle\det(E^{1,0})\rangle$ is a sub-Higgs bundle Lemma~\ref{spl.2} implies that $\mu(\langle\det(E^{1,0})\rangle) \leq 0$, and since $\widehat{\langle\det(E^{1,0})\rangle}$ is a quotient-Higgs bundle, $\mu(\widehat{\langle\det(E^{1,0})\rangle}) \geq 0$. So the $\mu$-equivalence of all direct factors implies that 
$$
\mu(\langle\det(E^{1,0})\rangle)=\mu(\widehat{\langle\det(E^{1,0})\rangle})=0.
$$
Using Lemma~\ref{spl.2} again one finds that $\langle\det(E^{1,0})\rangle$ splits as a Higgs subbundle of $\bigwedge^\ell(E,\theta)$.
\end{proof}
To finish the proof of Proposition~\ref{numcondB} it remains to verify:
\begin{claim}\label{ball}
The splitting in d) implies that $M_\iota$ is an $n_\iota$-dimensional complex ball.
\end{claim}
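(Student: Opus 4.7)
The plan is to verify the numerical criterion of Yau's Uniformization Theorem (\cite[Theorem 1.4]{VZ07}) for $M_\iota$ to be a complex ball of dimension $n_\iota$, namely
$$
\bigl[2(n_\iota+1)\,\ch_2(\Omega_\iota) - n_\iota\, c_1(\Omega_\iota)^2\bigr] \cdot c_1(\omega_Y(S))^{n-2} = 0.
$$
Once this identity is established, Yau's theorem will give the conclusion.

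The first step will be to invoke Simpson's correspondence. The Higgs bundle $\bigwedge^\ell(E,\theta)$ is the graded Higgs bundle of the polarized CVHS $\bigwedge^\ell\V$, which inherits unipotent local monodromy from $\V$. By assumption d), $\langle\det(E^{1,0})\rangle$ is a Higgs direct summand, so by Simpson's correspondence it is itself the graded Higgs bundle of a sub-CVHS with unipotent monodromy at infinity. For such a CVHS the canonical Deligne extension has vanishing Chern classes in $H^{2i}(Y,\Q)$ for every $i\ge 1$: the residues of the logarithmic connection are nilpotent, so every invariant polynomial in them is zero, and the Chern--Weil representatives represent the zero class. Taking associated gradeds does not change Chern classes, so in particular
$$
c_1(\langle\det(E^{1,0})\rangle)=0 \quad\text{and}\quad c_2(\langle\det(E^{1,0})\rangle)=0 \quad \text{in } H^\bullet(Y,\Q).
$$

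The second step will be to expand this vanishing in terms of the Hodge pieces. Writing $\langle\det(E^{1,0})\rangle=\bigoplus_{m=0}^\varsigma W^m$ with $W^m$ of rank $r_m=\binom{n_\iota+m-1}{m}$, Claim~\ref{co.5} together with Addendum~\ref{spl.1a} identifies each $W^m$ with $S^m(T_\iota)\otimes\det(E^{1,0})\otimes\sO_Y(B_m)$ for some $B_m\in\mathrm{NS}_0$. By Lemma~\ref{spl.1} the classes $B_m$ are annihilated by every $(n-1)$-fold product of the $c_1(\Omega_j)$'s, and one checks directly that their contribution to the $c_1(\omega_Y(S))^{n-2}$-pairing with $c_2(\bigoplus W^m)$ cancels. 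Expanding $c_2(\bigoplus W^m)=\sum_m c_2(W^m)+\sum_{m_1<m_2}c_1(W^{m_1})c_1(W^{m_2})$ via the splitting principle applied to $T_\iota$ and using the Arakelov relation $(\ell+\ell')c_1(E^{1,0})\equiv \ell'\,c_1(\Omega_\iota) \pmod{\mathrm{NS}_0}$, the vanishing reduces, modulo $c_1(\omega_Y(S))^{n-2}$-annihilators, to a linear relation $A\cdot c_1(\Omega_\iota)^2+B\cdot c_2(\Omega_\iota)=0$.

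The third step will be a combinatorial verification of the coefficients $(A,B)$. With $\varsigma=\ell\ell'(n_\iota+1)/((\ell+\ell')n_\iota)$ and the hockey-stick identities $\sum_m r_m=\binom{n_\iota+\varsigma}{n_\iota}$ and $\sum_m m\,r_m=n_\iota\binom{n_\iota+\varsigma}{n_\iota+1}$ (which already encode the equality in Lemma~\ref{upper}), together with the symmetric-power formulas for $\ch(S^m T_\iota)$, the ratio $A:B$ should come out to $1:-2(n_\iota+1)$, up to a non-zero common factor. That is exactly Yau's combination, and Yau's theorem then yields that $M_\iota$ is a complex ball of dimension $n_\iota$. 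The main obstacle will be precisely this Chern class bookkeeping: explicitly computing $c_2(S^m T_\iota)$ via the splitting principle, combining it with the cross-term contributions $c_1(W^{m_1})c_1(W^{m_2})$, and checking that the boundary corrections from $\mathrm{NS}_0$ really drop out of the final $c_1(\omega_Y(S))^{n-2}$-intersection.
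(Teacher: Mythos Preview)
Your overall strategy matches the paper's: deduce $c_1(\langle\det(E^{1,0})\rangle)=c_2(\langle\det(E^{1,0})\rangle)=0$ from Simpson's correspondence, and then extract Yau's numerical criterion from this vanishing. The difference is in how the second step is executed, and your execution has real gaps.

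The paper avoids your combinatorial expansion entirely. After passing to a finite cover on which $\det(E^{1,0})=\sL^\ell$ for some invertible $\sL$, it introduces the auxiliary rank-$(n_\iota+1)$ Higgs bundle $F=\sL\oplus\sL\otimes T_\iota$ and observes that $S^\ell(F)$ has $(\ell-m,m)$-piece $\sL^\ell\otimes S^m(T_\iota)$, so that $S^\ell(F)$ plays the role of $\langle\det(E^{1,0})\rangle$. From $c_1=0$ one reads off $c_1(F)=0$, i.e.\ $c_1(\sL)=\tfrac{1}{n_\iota+1}c_1(\Omega_\iota)$. Since the discriminant $\Delta$ is invariant under invertible twists, $\Delta(F)=\Delta(\sO_Y\oplus T_\iota)$; and \cite[Lemma~3.3]{VZ07} converts $\Delta(S^\ell F)=0$ into $\Delta(F)=0$. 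This is exactly $2(n_\iota+1)c_2(T_\iota)-n_\iota c_1(T_\iota)^2=0$, and Yau's theorem finishes. No symmetric-power Chern class formulas, no hockey-stick sums, no boundary bookkeeping.

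Your direct route has three problems. First, what you call the ``Arakelov relation'' $(\ell+\ell')c_1(E^{1,0})\equiv \ell' c_1(\Omega_\iota)\pmod{\mathrm{NS}_0}$ is not supplied by the Arakelov equality: that is an equality of \emph{slopes}, i.e.\ of intersection numbers with $c_1(\omega_Y(S))^{n-1}$, and says nothing about the class $c_1(E^{1,0})$ in $\mathrm{NS}(Y)_\Q$ (your constants are also off; numerically one gets $n_\iota(\ell+\ell')\mu(E^{1,0})=\ell\ell'\mu(\Omega_\iota)/\ell$). A Chern-class relation of the kind you need does follow, but from the vanishing $c_1(\langle\det(E^{1,0})\rangle)=0$ you already have, not from Arakelov. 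Second, the identification $W^m\cong S^m(T_\iota)\otimes\det(E^{1,0})\otimes\sO_Y(B_m)$ is stronger than what $\mu$-equivalence provides: an inclusion of locally free sheaves of the same rank with torsion cokernel is not in general a global line-bundle twist, and the discrepancy affects $c_2$. Third, even granting that identification, your claim that the $B_m$-contributions drop out against $c_1(\omega_Y(S))^{n-2}$ is unjustified: membership in $\mathrm{NS}_0$ only kills intersections with $(n-1)$-fold products of the $c_1(\Omega_j)$, so terms like $B_m\cdot B_{m'}\cdot c_1(\omega_Y(S))^{n-2}$ do not obviously vanish. The paper's auxiliary-bundle trick sidesteps all three issues at once.
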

\begin{proof} (See also \cite[Section 5]{VZ07})
The Higgs bundle $\langle\det(E^{1,0})\rangle$ splits as a sub-Higgs bundle of $\bigwedge^\ell E$, hence it is itself a Higgs bundle arising from a local system. In particular the Chern classes $\ch_1(\langle\det(E^{1,0})\rangle)$ and $\ch_2(\langle\det(E^{1,0})\rangle)$ are zero. 

Assume for a moment that there exists an invertible sheaf $\sL$ with $\det(E^{1,0})=\sL^\ell$, and 
consider the Higgs bundle
$$
\big( F=F^{1,0}\oplus F^{0,1}= \sL \oplus \sL \otimes T_\iota,
\sL \to \sL \otimes T_\iota\otimes \Omega_\iota\big).
$$
Then $S^\ell(F)$ is a Higgs bundle with $\sL^\ell\otimes S^m(T_\iota)$ in bidegree $(\ell-m,m)$, hence
isomorphic to $\langle\det(E^{1,0})\rangle$. The first Chern class of $\langle\det(E^{1,0})\rangle$ is zero, hence $\ch_1(F)$ as well. On the other hand,
$$
\ch_1(F)= \ch_1(\sL) + n_\iota\cdot \ch_1(\sL) - \ch_1(\Omega_\iota)=
\frac{n_\iota+1}{\ell}\ch_1(E^{1,0}) - \ch_1(\Omega_\iota),
$$
and $\displaystyle\ch_1(\sL)=\frac{1}{n_\iota+1}\ch_1(\Omega_\iota)$. For the second Chern class
it is easier to calculate the discriminant
$$
\Delta(\sF)=2 \cdot \rk(\sF) \cdot \ch_2(\sF) - (\rk(\sF)-1)\cdot \ch_1(\sF)^2.
$$
By \cite[Lemma 3.3]{VZ07}, a), the discriminant is invariant under tensor products with invertible sheaves, 
hence $\Delta(\sL\oplus \sL\otimes T_\iota) = \Delta(\sO_Y \oplus T_\iota)$.

Since $\ch_1(\langle\det(E^{1,0})\rangle)^2=\ch_2(\langle\det(E^{1,0})\rangle)=0$ one finds $\Delta(\langle\det(E^{1,0})\rangle)=0$, and \cite[Lemma 3.3]{VZ07}
implies that $\Delta(F)=0$, hence
\begin{equation}\label{chern}
0=\Delta(\sO_Y \oplus T_\iota)=2 \cdot (n_\iota +1) \cdot \ch_2(T_\iota)
- n_\iota \cdot \ch_1(T_\iota)^2.
\end{equation}
In general on may choose a finite covering $\sigma:Y'\to Y$ such that $\sigma^*(\det(E^{1,0})=\sL'^\ell$
for some invertible sheaf $\sL'$. Repeating the calculations of Chern classes with
$T_\iota$ replaced by $T'_\iota=\sigma^*(T_\iota)$ one obtains that  
$2 \cdot (n_\iota +1) \cdot \ch_2(T'_\iota) - n_\iota \cdot \ch_1(T'_\iota)^2=0$ and again
\eqref{chern} holds true. 

By Yau's Uniformization Theorem, recalled in \cite[Theorem 1.4]{VZ07},
\eqref{chern} implies that $M_\iota$ is a complex ball. 
\end{proof}
The Proposition~\ref{numcondB} gives a numerical condition on the length of the wedge product of the Higgs field which, together with the Arakelov equality, implies that $M_\iota$ is a complex ball. A similar condition holds automatically for local systems which are pure of type A. This is not surprising, since in this case the corresponding factor $M_\iota$ automatically is a $1$-dimensional ball. 
\par
In slight abuse of notation we say that a local system
$\V$ is given by a wedge product of the standard representation of  $\SU(1,n)$, 
if the representation defining $\V$ factors through
one of the standard wedge product representations (e.g.\ \cite{Sa80}, p.~461)
$$\bigwedge^k: \ \SU(1,n) \to SU\left(\left(\begin{matrix} n \\ k-1 \end{matrix}\right),
\left(\begin{matrix} n \\ k \end{matrix}\right)\right)$$
and if, moreover, the period map for $\V$ factors through the (totally geodesic)
embedding of symmetric spaces attached to $\bigwedge^k$. In different terms, 
for $k=1$ the corresponding Higgs field is given by
$$
E^{1,0}=\omega_i^{-\frac{1}{n_i+1}}\otimes \Omega_i, \ \ \ 
E^{0,1}=\omega_i^{-\frac{1}{n_i+1}} \mbox{ \ \ and \ \ }
\theta={\rm id}: \omega_i^{-\frac{1}{n_i+1}}\otimes \Omega_i \>>> 
\omega_i^{-\frac{1}{n_i+1}}\otimes \Omega_i,
$$ 
where $\omega_i^{-\frac{1}{n_i+1}}$ stands for an invertible sheaf, whose 
$(n_i+1)$-st power is $\det(\Omega_i)$.
\par
\begin{proposition}\label{kugaAB}
Let $\V$ be an irreducible complex polarized variation of Hodge structures of weight $1$, pure of type $\iota$, with unipotent local monodromy at infinity, and with Higgs bundle $(E,\theta)$. Assume that $\Omega_\iota$ is of type A or B, and that the saturated image of $T_\iota\to \sH om(E^{1,0},E^{0,1})$ splits.
\par
Then $\V$ is the tensor product of a unitary representation with a wedge product of the standard representation of $\SU(1,n)$. In particular the period map $\tau:\tilde{U} \to M'$ factors as the projection $\tilde{U} \to M_\iota$
and a totally geodesic embedding $M_\iota \to M'$. 
\end{proposition}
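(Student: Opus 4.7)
The plan is to leverage Proposition~\ref{numcondB} and the preceding analysis to reduce matters to a rank-one symmetric space, then use the factorization of the period map to explicitly identify the variation.

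First, the hypothesis that the saturated image of $T_\iota \to \sH om(E^{1,0},E^{0,1})$ splits as a direct summand is precisely condition (c) of Proposition~\ref{numcondB}. Since $\Omega_\iota$ is of type A or B, that Proposition gives the equivalent condition (d): the Higgs subbundle $\langle\det(E^{1,0})\rangle$ splits as a direct Higgs summand of $\bigwedge^\ell(E,\theta)$. By Claim~\ref{ball}, this forces the factor $M_\iota$ of the universal covering to be an $n_\iota$-dimensional complex ball, so we may write $M_\iota=\SU(1,n_\iota)/K_\iota$. Moreover, after passing to a finite cover to guarantee that the invertible sheaf $\sL=\omega_\iota^{-1/(n_\iota+1)}$ exists, the proof of Claim~\ref{ball} identifies $\langle\det(E^{1,0})\rangle$ with $S^\ell(F)$, where $F=\sL\oplus\sL\otimes T_\iota$ is the tautological rank-$(n_\iota+1)$ Higgs bundle attached to the standard representation of $\SU(1,n_\iota)$ twisted by a unitary line bundle.

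Next, I would argue that the period map factors. Purity of type $\iota$ means $\theta$ vanishes in all directions tangent to $M_j$ for $j\neq \iota$. By Griffiths transversality, the differential of the period map $\tilde\tau\colon\tilde U\to M'$ is computed from $\theta$, and therefore vanishes in these directions as well. Since $\tilde U=M_1\times\cdots\times M_s$ is simply connected, $\tilde\tau$ is constant on the fibres of the projection $\pi_\iota\colon\tilde U\to M_\iota$ and factors through an equivariant holomorphic map $\overline\tau\colon M_\iota\to M'$.

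To identify $\V$, I would now work directly on $M_\iota$. The splitting in condition (c), together with the identification $\langle\det(E^{1,0})\rangle\cong S^\ell F$, determines the isotypic structure of $(E,\theta)$: on pulling back to $M_\iota$, each of $E^{1,0}$ and $E^{0,1}$ is a homogeneous $K_\iota$-module, by the same curvature/Hermitian-Yang-Mills arguments as in Lemma~\ref{hom_is_stable} and Lemma~\ref{tensorofhomog} (one checks that the Higgs bundle of $\V$ is itself homogeneous because it arises as a direct summand of a tensor expression involving the homogeneous bundle $F$). Combining this with the classification of irreducible representations of $\SU(1,n_\iota)$ whose attached Hodge bundles are concentrated in bidegrees $(1,0)$ and $(0,1)$ — exactly the case $q=1$ of type $a_n$ in Lemma~\ref{DeligneSatake}, where the admissible representations are precisely the wedge powers $\bigwedge^k$ of the standard representation — one obtains that, after factoring out a unitary line bundle, $(E,\theta)\cong\bigwedge^k F\otimes\U_0$ for some $k$ and some unitary $\U_0$. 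The resulting embedding $M_\iota\hookrightarrow M'$ is induced by the group homomorphism $\bigwedge^k\colon\SU(1,n_\iota)\to\Sp$, hence is strongly equivariant and consequently totally geodesic by Theorem~II.2.4 of \cite{Sa80}.

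The principal obstacle is the third step: going from the structural information on the Higgs bundle (it is homogeneous, and its wedge product contains the explicit summand $S^\ell F$) to the precise identification with a wedge-power representation tensored with a unitary bundle. The point where genuine work is needed is in pinning down that only wedge powers, and no other irreducible representations of $\SU(1,n_\iota)$, can carry a weight-$1$ Hodge structure compatible with the ambient Hodge bundle of $\V$; this is exactly the content of the $a_n$ case with $q=1$ in Lemma~\ref{DeligneSatake}, and this is where the splitting hypothesis is used decisively to exclude tensor representations with non-trivial unitary pieces mixed into the non-unitary factor.
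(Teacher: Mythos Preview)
Your first two steps (invoking Proposition~\ref{numcondB} to get that $M_\iota$ is a complex ball, and factoring the period map through the projection $\tilde U\to M_\iota$ by purity) match the paper. The third step, however, contains a genuine gap.

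You assert that ``the Higgs bundle of $\V$ is itself homogeneous because it arises as a direct summand of a tensor expression involving the homogeneous bundle $F$.'' This is not what you have. What Claim~\ref{ball} gives is that $S^\ell(F)$ is a direct summand of $\bigwedge^\ell(E,\theta)$, i.e.\ a homogeneous bundle sits inside a wedge power of $E$, not the other way around. You cannot deduce homogeneity of $E$ from this, and Lemmata~\ref{hom_is_stable} and~\ref{tensorofhomog} go in the wrong direction for your purposes (they deduce stability \emph{from} homogeneity). Likewise, Lemma~\ref{DeligneSatake} presupposes that the representation already comes from a group homomorphism into $\Sp$; you cannot invoke it until you know the period map is induced by such a homomorphism.

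The paper's route is essentially different at this point and supplies exactly the missing input. The splitting hypothesis is used \emph{geometrically}: since the inclusion $T_\iota\hookrightarrow\sH om(E^{1,0},E^{0,1})$ is the derivative of $\tau_1:M_\iota\to M'$, the splitting says that $\tau_1^*T_{M'}=T_{M_\iota}\oplus R$ holomorphically and orthogonally for the Hodge metric. An argument due to Mok (Claim~\ref{Mokarg}) then shows that this orthogonal holomorphic splitting is preserved under parallel transport, hence $\tau_1(M_\iota)\subset M'$ is totally geodesic. Only \emph{after} this does the paper obtain a group homomorphism $G_\iota\to G'$ (Claim~\ref{nouniversalcov}), split off the unitary factor via Schur's lemma, and then appeal to Satake's classification of holomorphic totally geodesic embeddings $\SU(1,n_\iota)\to\SU(\ell,\ell')$ (Claim~\ref{co.9}) to conclude that the non-unitary part is a wedge power of the standard representation. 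In your outline the logical order is inverted: you try to identify the representation first and deduce ``totally geodesic'' as a consequence, but without Mok's argument there is no mechanism to get the group homomorphism that the classification requires.
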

\begin{proof}
Proposition~\ref{numcondB},~i) implies that $M_\iota$ is a complex ball.
\par  
Before we proceed, we fix some notation. For a simply connected
complex space we denote by $\Aut(M)$ the group of biholomorphic
self-mappings of $M$. This coincides with the definition 
in Section~\ref{verify_arakelov} if $M$ is a hermitian 
symmetric domain. We write,
as usual, $\tilde{U} = \prod_k M_k$ and fix origins $o_k$ in all $M_k$. Let
$G_k:= \Aut(M_k)$, $K_k := \Stab(o_k)$. Thus for hermitian symmetric
domains $M_k$ we have hence $M_k = G_k/K_k$.
\par
Let $\tau: \tilde{U} \to M'$ be the period map for the bundle $\V$. In $M'$
fix an origin $o'$, let $G':=\Aut(M') \cong \SU(\ell,\ell')$, 
and let $K':=\Stab(o')$.
By the purity of the Higgs bundle, $\tau$ factors
as the projection $ \tilde{U} \to M_\iota$ composed with a map
$\tau_1: M_\iota \to M'$. 
\par
The next claim derives the second statement from the main hypothesis.
Remember that,
since the splitting $T_\iota \to \sH om(E^{1,0},E^{0,1})$ comes 
from a splitting of Higgs bundles, 
it is orthogonal for the Hodge metric, hence for the K\"ahler metric.
\par
\begin{claim}\label{Mokarg}
Let $\tau_1:M_\iota \to M'=G'/K'$ be a holomorphic
map to a hermitian symmetric domains. 
Assume that $\tau_1^* T_{M'}= T_{M_\iota}\oplus R$ is a holomorphic splitting, 
orthogonal with respect to the K\"ahler metric on $M'$. 
Then $M_\iota \to M'$ is a totally geodesic embedding
and $M_\iota$ a symmetric domain.
\end{claim}
\par
\begin{proof} (From a letter by N. Mok.)
First, the splitting condition on $\tau_1^* T_{M'}$ implies that
$\tau_1$ is locally an embedding. Second, we check that the image
$\tau_1(M_\iota)$ is totally geodesic in $M'$. This is again a local
condition. By \cite[Theorem~I.14.5]{He62} it suffices to check that 
the splitting 
$T_{M'}|_{\tau_1(M_\iota)} = T_M\oplus R$ is preserved under parallel transport.
\par
Take any local holomorphic sections $s$ of $T_{M_\iota}$ and $t$ of $R$.  
Then $\langle s,t \rangle = 0$ with respect to 
the Hermitian inner product. The derivative of $t$ with respect to a 
$(1,0)$ vector is orthogonal to $s$ because $s$ is holomorphic 
and because $\langle \cdot,\cdot \rangle$ is Hermitian bilinear. 
Since $s$ and $t$ are arbitrary, it follows that $R$ is invariant 
under differentiation in the $(1,0)$ direction.  But since $R$ is a 
holomorphic subbundle, it is invariant under differentiation in the 
$(0,1)$-direction.  As a consequence $R$ is parallel, and hence
its orthogonal complement $T_{M_\iota}$ is parallel, too.
\par
Finally, since $M'$ is a global symmetric domain, it has geodesic
symmetries at each point of $\tau_1(M_\iota)$. Since $\tau_1(M_\iota)$ is
totally geodesic in $M'$, these are geodesic symmetries of $\tau_1(M_\iota)$.
Consequently, $\tau_1(M_\iota)$ is a global symmetric domain and
$\tau_1$ is (globally) an embedding.
\end{proof}
\par
We continue with the proof of Proposition~\ref{kugaAB} and let 
$$B:= \{\varphi \in \Aut(M'): \varphi(\tau_1(M_\iota)) = 
\tau_1(M_\iota)\} \subset G'.$$
In the next step we deduce from Claim~\ref{Mokarg} that  
$\tau_1(M_\iota) = B/K_B$,
where $K_B$ is a maximal compact subgroup.
The first observation is:
\par
\begin{claim}\label{nouniversalcov}
The embedding $\tau_1: M_\iota\to M'$ is induced by a homomorphism from 
$\tilde{\tau_1}: G_\iota \to G'$ that factors through $B$. 
\end{claim}
\begin{proof}
As explained in \cite[\S 1.1]{Sa65} or \cite[II \S 2]{Sa80}, 
the geodesic holomorphic embedding $M_\iota \to M'$ is induced by a local 
isomorphism $G_\iota \to G'$ and hence a homomorphism of Lie algebras 
$\Lie(G_\iota)\to \Lie(G')$. This induces a 
homomorphism $\hat\tau_1: \tilde{G_\iota}\to G'$ from the universal covering 
$\tilde{G_\iota}$ of $G_\iota$. It remains to show that $\hat\tau_1$
factors through $G_\iota$, then the factorization through $B$ 
is obvious from the definition.
\par
It suffices to exhibit a factorization of $\hat\tau_1$ on
the $\R$-valued points. Since 
$$G_\iota(\R) \subset G_\iota(\C) \cong {\rm Sl}(1+n)(\C),$$
and since  ${\rm Sl}(1+n)(\C)$ is simply connected, this
factorization is obvious.
\end{proof} 
\par
By this claim, the natural map ${\rm res}: B \to \Aut(M_\iota) 
\cong G$ induces
a surjection $B/K_B \to M_\iota$. This map is also injective, 
since elements in $B$
preserve $M_\iota$. Consequently, the kernel $\Upsilon$ of 
${\rm res}$ is a compact
subgroup. By Claim~\ref{nouniversalcov} again, this kernel is a direct factor.
In fact, the kernel is a maximal direct factor, since $G_\iota = 
\Aut(M_\iota)$ does not contain direct
compact factors. We deduce that given the choice of origins, 
the product decomposition $B \cong G_\iota \times  \Upsilon$ is canonical.
\par
By definition of a period map, $\tau$ 
is equivariant with respect to the action of $\pi_1(U)$ via
$$\rho_1: \pi_1(U) \>>> \Aut(\tilde{U}) \quad
\text{and} \quad \rho_2: \pi_1(U) \>>> \Aut(M') \cong G' $$
on domain and range. 
\par
The image of $\rho_2$ lies in $B$ by definition. The usual
argument with Schur's Lemma (e.g.\ Proposition~5.9 or 
\cite{VZ05} Proposition~3.3) implies that $\rho_2$ is a
tensor product of a unitary representation and of a representation
that factors through $\tilde{\tau_\iota}: G_\iota \to B \to G'$. 
\par
The last step in the proof of Proposition~\ref{kugaAB} is to
exploit that there are not many possibilities for 
$\tilde{\tau_1}$ that give rise to a holomorphic totally geodesic
embedding of hermitian symmetric domains.
\par
\begin{claim}\label{co.9}
The representation $G_\iota \to B \to G'$ is a wedge product of the 
standard representation.
\end{claim}
\noindent
{\it Proof.} In order to match the hypothesis of \cite{Sa80} precisely, we
should postcompose the map $\tilde{\tau_1}: G_\iota \to G'$ 
by a natural inclusion of $G'$ into the symplectic group. By the table 
p.~461 and Proposition~1 in \cite{Sa80}, ${\rm incl} \circ \tilde{\tau_1}$
is a direct sum of wedge products of the standard representations. This 
direct sum has only one summand, since $\V$ was reducible otherwise.
\end{proof}
In order to prove the missing part v) of Proposition~\ref{shimuraprop} we will use:
\begin{proposition}\label{stabAB}
Assume that $U$ is the quotient of a bounded symmetric domain by an arithmetic group.
Assume that $\Omega_\iota$ is of type A or B, that $M_\iota$ is the complex ball $\SU(1,n)/K$ and that $\V$ is the tensor product of a unitary representation with a wedge product of the standard representation of $\SU(1,n)$. 
\begin{enumerate}
\item[1.] Then $\V$ satisfies the Arakelov equality.
\item[2.] Let $Y'$ be a Mumford compactification. Writing $(E',\theta')$ for the
Higgs bundle of $\V$ on $Y'$, the sheaves $E'^{1,0}$ and $E'^{0,1}$ are $\mu$-stable
and ${E'^{1,0}}^\vee \otimes E'^{0,1}$ is $\mu$-polystable.
\end{enumerate}
\end{proposition}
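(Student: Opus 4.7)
The plan is to split Proposition~\ref{stabAB} into its two assertions and handle each by reducing to the framework developed in Section~\ref{verify_arakelov}.

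For part 1, I would first observe that if $\V = \U\otimes\W$ with $\U$ unitary and $\W$ the wedge product of the standard representation of $\SU(1,n_\iota)$, then the Higgs bundle decomposes as $E^{p,q} = U\otimes F^{p,q}$, where $U$ is the flat (weight zero) bundle attached to $\U$ and $(F^{1,0}\oplus F^{0,1},\theta_\W)$ is the Higgs bundle of $\W$. Because $\mu(A\otimes B)=\mu(A)+\mu(B)$ and $U$ has trivial Chern class, the Arakelov defect $\mu(E^{1,0})-\mu(E^{0,1})$ equals $\mu(F^{1,0})-\mu(F^{0,1})$. So it suffices to treat the wedge of the standard representation. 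Using the explicit formula recalled just before the statement of Proposition~\ref{kugaAB}, for the standard representation one has $F^{1,0}_{\rm std}=\omega_\iota^{-1/(n_\iota+1)}\otimes\Omega_\iota$ and $F^{0,1}_{\rm std}=\omega_\iota^{-1/(n_\iota+1)}$. For the $k$-th wedge the Hodge decomposition yields
$$F^{1,0}=\bigwedge^{k} F^{1,0}_{\rm std},\qquad F^{0,1}=\bigwedge^{k-1}F^{1,0}_{\rm std}\otimes F^{0,1}_{\rm std}.$$
Using $\mu(\bigwedge^{j}F^{1,0}_{\rm std})=j\cdot\mu(F^{1,0}_{\rm std})$ and $\mu(\det\Omega_\iota)=n_\iota\mu(\Omega_\iota)$, a direct computation gives $\mu(F^{1,0})-\mu(F^{0,1})=\mu(\Omega_\iota)$, which equals $\mu(\Omega^1_Y(\log S))$ by Lemma~\ref{spl.1} (all $\mu$-stable direct factors of $\Omega^1_Y(\log S)$ have the same slope). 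This is the Arakelov equality.

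For part 2 I would use that on a Mumford compactification $Y'$ the bundles $E'^{1,0}$ and $E'^{0,1}$ are Mumford extensions of homogeneous bundles. Indeed, the tensor decomposition $\V=\U\otimes \W$ corresponds to a tensor decomposition under the maximal compact $K=\prod K_j$: the Hodge pieces $V^{1,0}$ and $V^{0,1}$ attached to $\V$ are $V(\U)\otimes V^{1,0}(\W)$ and $V(\U)\otimes V^{0,1}(\W)$. The key point is that each factor is irreducible as a representation of the corresponding compact group. For the wedge of standard, $V^{1,0}(\W)=\bigwedge^{k}V^{1,0}_{\rm std}$ is the $k$-th exterior power of the standard representation of the $U(n_\iota)$-factor of $K_\iota$, which is well known to be irreducible; similarly $V^{0,1}(\W)$ carries the $(k-1)$-st exterior power. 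The unitary factor $V(\U)$ is irreducible under the remaining $K_j$ because $\V$ is irreducible. Since a tensor product of irreducible representations of a product of compact groups is again irreducible, both $V^{1,0}$ and $V^{0,1}$ are irreducible $K$-modules, so Lemma~\ref{hom_is_stable} (together with Lemma~\ref{Deligne_Mumford2} identifying the Deligne and Mumford extensions) yields the $\mu$-stability of $E'^{1,0}$ and $E'^{0,1}$. The $\mu$-polystability of ${E'^{1,0}}^\vee\otimes E'^{0,1}$ then follows immediately from Lemma~\ref{tensorofhomog}.

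I do not expect a serious obstacle: the ampleness of $\omega_{Y'}(S')$ on the Mumford compactification (Theorem~\ref{mumford1}~c) provides the polarization needed for the stability lemmas, and the arithmeticity assumption guarantees that such a compactification exists. The only care required is to check the irreducibility of the Hodge pieces under the compact group $K$, which is a representation-theoretic verification that reduces to the classical fact that exterior powers of the standard representation of $U(n)$ are irreducible, and is compatible with the identification between the Mumford extension and the canonical (Deligne) extension used to define the slopes.
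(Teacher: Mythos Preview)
Your approach is essentially correct and close to the paper's, but the two parts are handled somewhat differently.

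For Part~2 your argument is the paper's argument: the paper says the bundles $E'^{1,0}$ and $E'^{0,1}$ are irreducible homogeneous bundles ``as in Lemma~\ref{DeligneSatake}, case $a_n$ and $q=1$'' and then invokes the machinery of Section~\ref{verify_arakelov} (Lemmata~\ref{hom_is_stable} and~\ref{tensorofhomog}). You spell out explicitly that the Hodge pieces of $\bigwedge^k$ of the standard representation are the exterior powers of the standard representation of $U(n_\iota)$, which is exactly the content of that case of Lemma~\ref{DeligneSatake}.

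For Part~1 you take a more elementary route than the paper. The paper simply says ``the same arguments as in the proof of the first parts of Proposition~\ref{shimuraprop}'' apply; there the Arakelov equality is obtained by observing that the tangent map $T_Y(-\log S)\to \sH om(E^{1,0},E^{0,1})$ lands in an irreducible homogeneous (hence $\mu$-stable) summand, forcing equality of slopes. Your direct slope computation for the wedge powers is cleaner and avoids that detour; it works because the Deligne extension on a Mumford compactification agrees with Mumford's extension (Lemma~\ref{Deligne_Mumford2}), so the explicit formulas for $F^{1,0}_{\rm std}$, $F^{0,1}_{\rm std}$ are valid on $Y'$. You should say this, and then cite Lemma~\ref{AEindep} (as the paper does) to transfer the equality to an arbitrary compactification satisfying Assumption~\ref{compass-pos}.

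One point to watch: your sentence ``the unitary factor $V(\U)$ is irreducible under the remaining $K_j$'' tacitly assumes that the unitary local system is itself a homogeneous bundle, i.e.\ that the unitary representation of $\Gamma$ factors through the compact part of $G$. This is not automatic for an arbitrary unitary representation of $\pi_1(U)$. The paper's proof makes the same tacit assumption (it simply asserts homogeneity), and in all applications of Proposition~\ref{stabAB} in the paper the unitary factor does arise from the compact factors of $G$ via the analysis in the proof of Proposition~\ref{shimuraprop}, so the argument goes through. But if you want the statement at the level of generality in which it is phrased, you should either add this hypothesis or argue separately that tensoring an irreducible homogeneous bundle with an irreducible flat unitary bundle preserves $\mu$-stability on a Mumford compactification.
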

\begin{proof}
Let $Y',S'$ be a Mumford compactification (see Section~\ref{verify_arakelov}). The bundles $E'^{1,0}$ and $E'^{0,1}$ are irreducible homogeneous bundles as in Lemma~\ref{DeligneSatake}, case $a_n$ and $q=1$, given by
the wedge products of the standard representation of $U(n)$. The same
arguments as in the proof of the first parts of Proposition~\ref{shimuraprop} now imply 1) and 2).
\par
By Lemma~\ref{AEindep} the Arakelov equality on a Mumford compactification implies
the one on any compactification, satisfying the positivity statement
in Assumption~\ref{compass-pos}.
\end{proof}
%
\subsection{Type C: $S^m(\Omega_\iota)$ is $\mu$-unstable for some $m >1$} 

Yau's Uniformization Theorem, recalled in \cite[Theorem 1.4]{VZ07}, implies that $M_\iota$ 
is a bounded symmetric domain of rank greater than one. Using the characteristic subvarieties, introduced by Mok
presumably one can write down an explicit formula for $\varsigma(\V)$. However we do not need this, since 
in this case the superrigidity theorems apply.
Recall the notations introduced at the beginning of the proof of Proposition~\ref{kugaAB}.
\par
\begin{proposition} \label{factor_type_C}
If $\V$ is pure of type $\iota$, 
The period map factors as the projection $\tilde{U} \to M_\iota$
and a totally geodesic embedding $M_\iota \to M'$.
\end{proposition}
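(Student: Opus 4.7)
The plan is to first use purity of $\V$ to extract the factorization through $M_\iota$, and then to invoke Margulis superrigidity (enabled by the rank $\geq 2$ of $M_\iota$ coming from the assumption that $\Omega_\iota$ is of type C) to show that $\tau_1$ is totally geodesic.

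First I would note that purity of $\V$ of type $\iota$ says exactly that the Higgs field $\theta:E^{1,0}\to E^{0,1}\otimes\Omega^1_Y(\log S)$ factors through $E^{0,1}\otimes\Omega_\iota$. Since the Higgs field is the $\sO_Y$-linear shadow of the $(1,0)$-part of the differential of the holomorphic period map $\tau:\tilde{U}\to M'$, this factorization translates into the vanishing of $d\tau$ on the holomorphic tangent directions $T_{M_j}$ for $j\neq\iota$. By holomorphicity, $\tau$ is then constant on each (connected) fiber of the projection $\pi_\iota:\tilde{U}=M_1\times\cdots\times M_s\to M_\iota$, yielding the desired factorization $\tau=\tau_1\circ\pi_\iota$ for some holomorphic $\tau_1:M_\iota\to M'$.

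Next I would reduce to an irreducible-lattice situation exactly as in the proof of Proposition~\ref{erg.1}. After a finite \'etale covering, the lattice $\Gamma$ decomposes as a product $\prod_k\Gamma_{I_k}$ of irreducible lattices indexed by a partition $\{I_k\}$ of $\{1,\ldots,s\}$. Let $k_0$ be the index with $\iota\in I_{k_0}$. Applying the Schur-type decomposition for variations of Hodge structures on products (\cite[Proposition 3.3]{VZ05}) together with the weight constraint, the irreducible $\V$ is an exterior tensor product in which the factors over the $I_k$ with $k\neq k_0$ are unitary of weight $0$. Hence $\tau$ already factors through the projection to $\prod_{j\in I_{k_0}}M_j$, and we are reduced to the situation where $\Gamma$ is an irreducible lattice in the real semisimple group $G':=\prod_{j\in I_{k_0}}G_j$. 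Since $\Omega_\iota$ is of type C, Yau's Uniformization Theorem (\cite[Theorem~1.4]{VZ07}) forces $M_\iota$ to be of rank $\geq 2$, so $G'$ has real rank $\geq 2$ and Margulis' superrigidity theorem applies.

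Invoking superrigidity, the monodromy representation extends, up to finite index, to a homomorphism of algebraic groups $\hat\rho:G'\to\Aut(M')$. The irreducibility of $\V$ combined with Schur's Lemma yields a decomposition of $\hat\rho$ into a tensor product of representations of the simple factors of $G'$, and the purity of type $\iota$ together with weight $1$ forces every factor distinct from the $G_\iota$-factor to be unitary of weight $0$. Up to this unitary twist, $\hat\rho$ therefore factors through a homomorphism $G_\iota\to\Aut(M')$, and $\tau_1$ is the induced map of symmetric spaces. Being the map of hermitian symmetric domains induced by a Lie group homomorphism, $\tau_1$ is \emph{strongly equivariant} in the sense of \cite{Sa80}, hence by \cite[Theorem~II.2.4]{Sa80} a totally geodesic embedding. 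The main obstacle in executing the plan is the careful reduction to an irreducible lattice so that Margulis applies cleanly; once that step is set up as in Proposition~\ref{erg.1}, the superrigidity plus Satake's theorem give the totally geodesic conclusion essentially for free.
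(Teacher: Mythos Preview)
Your factorization $\tau=\tau_1\circ\pi_\iota$ from purity is fine and matches the paper. The gap is in the second step. You invoke the reduction ``exactly as in the proof of Proposition~\ref{erg.1}'' and then Margulis superrigidity, but Proposition~\ref{erg.1} carries the standing hypothesis that $\tilde{U}$ is a bounded symmetric domain, so that $\Gamma=\pi_1(U)$ is a lattice in the semisimple group $\prod_j G_j$ and Zimmer's decomposition into irreducible lattices applies. In the setting of Proposition~\ref{factor_type_C} this is \emph{not} available: the ambient assumptions of Section~\ref{stablengthsplit} are only~\ref{compass-pos} together with the Arakelov equality, and the type~C hypothesis tells you (via Yau's theorem) only that the single factor $M_\iota$ is an irreducible bounded symmetric domain of rank $>1$. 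For $j\neq\iota$ the factor $\Omega_j$ may well be of type~B without the Yau equality~\eqref{eqyau}, in which case nothing guarantees that $M_j$ is a symmetric domain or that $\Aut(M_j)$ is semisimple. Consequently $\Gamma$ need not be a lattice in a semisimple Lie group at all, the Zimmer partition $\{I_k\}$ is not defined, and the input to Margulis' theorem is missing. Even if you tried to work only with the projection of $\Gamma$ to $\Aut(M_\iota)$, there is no reason for that image to be a lattice.

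The paper avoids this obstacle by using a \emph{metric} rigidity argument instead of a \emph{representation-theoretic} one: the pullback via $\tau_1$ of the Bergman metric on $M'$ gives a Hermitian metric of seminegative curvature on the bundle $T_\iota=(\Omega_\iota)^\vee$ over $U$, and Mok's metric rigidity theorem~\cite[Theorem~4]{Mk87} (extended to the finite-volume case by To~\cite{To89}) forces this metric to be a constant multiple of the canonical one, hence $\tau_1$ is totally geodesic. This argument only needs $M_\iota$ itself to be irreducible of rank $\geq 2$ and makes no demand on the other factors $M_j$ or on any lattice structure for $\Gamma$.
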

\par
\begin{proof}
Purity of $\V$ implies that the period map factors through the
projection to $M_\iota$. In the case we treat, $M_\iota$
has rank greater than one, hence the metric rigidity theorems
of Mok and their generalizations due to To apply. More
precisely, let $h$ be the pullback the restriction of
the Bergman-metric on $M'$ to $M_\iota$. By purity
and since $M'$ is a bounded symmetric domain of non-compact
type, $h$ descends to a metric of semi-negative curvature
on the bundle $(\Omega_\iota)^\vee$ on $U$. Thus the
hypothesis of \cite[Theorem~4]{Mk87} are met, if one 
takes into account the arguments of To (\cite[Corollary~2]{To89}
and the subsequent remark) to extend from $U$ compact to
$U$ of finite volume. We conclude that up, to a constant
multiple, $h$ is the  Bergman-metric on $M_\iota$ and
$M_\iota \to M'$ a totally geodesic embedding. 
\end{proof}
\par
\begin{lemma} \label{lemma:geoprojections}
Let $U \to \sA_g$ be a generically finite map with $\tilde{U} = \prod M_i$.
Suppose that all for all irreducible summands $\V$ the period
map $\tau(\V)$ is either constant or the composition 
$\tau(\V) = \tau_{j} \circ p_{i(\V)}$ of a projection and
a totally geodesic embedding of $M_{i(\V)}$ to the period domain 
of $\V$. Then the universal covering
map $\tau: \tilde{U}\to \tilde{\sA}_g$ is a totally geodesic embedding.
\end{lemma}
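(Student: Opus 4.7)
The plan is to factor the universal covering period map $\tau$ through a product of the period maps of the individual irreducible summands. Decompose $R^1f_*\C_A = \bigoplus_\alpha \V_\alpha$ into irreducible $\C$-subvariations of Hodge structures; grouping non-self-conjugate $\V_\alpha$ with their complex conjugates gives a coarser decomposition $W_\R = \bigoplus_\beta W_\R^{(\beta)}$ defined over $\R$ and orthogonal with respect to the polarization $Q$. Let $M'_\alpha$ denote the period domain attached to $\V_\alpha$ (a point if $\V_\alpha$ is unitary). Since the Hodge filtration on $R^1f_*\C_A$ at each point of $\tilde{U}$ is the direct sum of the Hodge filtrations on the $\V_\alpha$, one has a factorization $\tau = \iota \circ \widetilde{\tau}$, where $\widetilde{\tau} = \prod_\alpha \tau(\V_\alpha) \colon \tilde{U} \to \prod_\alpha M'_\alpha$ and $\iota \colon \prod_\alpha M'_\alpha \to \tilde{\sA}_g$ is the inclusion induced by the decomposition.

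First I would check that each factor $\tau(\V_\alpha)$ is totally geodesic. By hypothesis it is either constant or equals $\tau_{j(\alpha)} \circ p_{i(\alpha)}$, where $p_{i(\alpha)}$ is the projection onto the Riemannian factor $M_{i(\alpha)}$ of $\tilde{U} = M_1 \times \cdots \times M_s$ and $\tau_{j(\alpha)}$ is a totally geodesic embedding. Because the Bergman metric on $\tilde{U}$ is a product metric, geodesics in $\tilde{U}$ are products of geodesics in the factors, so $p_{i(\alpha)}$ sends geodesics to geodesics; composing with the totally geodesic $\tau_{j(\alpha)}$ then yields a totally geodesic map. The product map $\widetilde{\tau}$ is consequently totally geodesic, since a curve in a Riemannian product is a geodesic if and only if each coordinate projection is.

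The main obstacle is the remaining step: the inclusion $\iota$ must itself be totally geodesic. This rests on the classical principle that the symmetric subdomain associated to a reductive $\R$-algebraic subgroup of a semisimple Lie group is totally geodesic (see \cite[II.2.4]{Sa80}, as used in Section~\ref{verify_arakelov}). Here the subgroup of $\Sp(W_\R,Q)$ preserving each summand $W_\R^{(\beta)}$ together with the restricted polarizations is a product of symplectic groups, hence reductive, and the associated symmetric subspace of $\tilde{\sA}_g$ is precisely the image of $\iota$; the finer splitting inside each $W_\R^{(\beta)}$ into a $\V_\alpha$ and its conjugate $\overline{\V_\alpha}$ sits totally geodesically by the same principle applied to the unitary subgroups stabilizing this complex refinement.

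Composition of totally geodesic maps is totally geodesic, so $\tau$ is totally geodesic. To promote it to an embedding, I would invoke the generic finiteness of $U \to \sA_g$, which forces $d\tau$ to be injective at some point; for a totally geodesic map between symmetric spaces the rank of the differential is constant along geodesics, being conjugated by parallel transport, so $\tau$ is everywhere an immersion, and generic injectivity together with the closedness of totally geodesic submanifolds then upgrades $\tau$ to a (totally geodesic) embedding.
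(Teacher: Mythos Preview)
Your proof is correct and follows essentially the same route as the paper: factor $\tau$ as the product of the $\tau(\V_\alpha)$ followed by the block-diagonal inclusion $\iota:\prod_\alpha M'_\alpha\hookrightarrow\tilde{\sA}_g$, observe that $\iota$ is totally geodesic (the paper simply asserts this for the Bergman metric; your reductive-subgroup justification via \cite[II.2.4]{Sa80} is the right way to unpack it), and use the hypothesis on each $\tau(\V_\alpha)$.

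The only real difference lies in the embedding step. The paper invokes Lemma~\ref{etale}, part~(1), to conclude that for every index $i$ there is at least one non-unitary $\V$ with $i(\V)=i$; since $\widetilde{\tau}$ restricted to the slice $M_i$ is then the product of the totally geodesic \emph{embeddings} $\tau_{j(\alpha)}$ over those $\alpha$, injectivity of $d\widetilde{\tau}$ on each $T M_i$ is immediate from the explicit product structure. Your alternative---generic finiteness gives $d\tau$ injective at one point, then rank constancy propagates this---arrives at the same conclusion. The phrase ``conjugated by parallel transport'' is a bit loose as a general justification (it presupposes the map is affine, which is what one is trying to establish); here it is safer either to note that the explicit product form of $\widetilde{\tau}$ makes the rank visibly constant, or to observe that the maps are equivariant for the relevant group actions, which forces constant rank on a homogeneous space. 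Either way the argument goes through, but citing Lemma~\ref{etale} is the cleanest route.
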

\par
\begin{proof}
By Lemma~\ref{etale} the hypothesis `generically finite' implies that for each $i$ there
is at least one non-unitary summand $\V$ with $i=i(\V)$. The universal covering map
is, by definition, the product of the $\tau(\V)$ composed
with a block diagonal embedding $\prod_j M_j' \to \tilde{\sA}_g$.
Since the latter is totally geodesic for the Bergman metric,
the claim follows from the hypothesis on the $\tau(\V)$.
\end{proof}
\begin{proof}[Proof of Proposition~\ref{shimuraprop}]
Parts i)--iv) have been verified at the end of Section~\ref{verify_arakelov}. By assumption $f:A\to U$ is a Kuga fibre space, $\V$ is pure of type $i=i(\V)$, and $\Omega_{i}$ is of type B. In particular the assumption made in Proposition~\ref{stabAB} hold, and
on a suitable compactification the sheaf ${E^{1,0}}^\vee \otimes E^{0,1}$ is $\mu$-polystable.
So Proposition~\ref{numcondB} implies that $$
\varsigma((E,\theta))=\frac{\ell\cdot\ell'\cdot(n_i+1)}{(\ell + \ell')\cdot n_i}.
$$ 
Of course this equality is independent of the compactification.  
\end{proof}
\par
\begin{proof}[Proof of Addendum~\ref{characterizationadd}]
Part I) is Proposition~\ref{numcondA}, if one uses in addition the equivalence 
between f) and c) in Proposition~\ref{numcondB} and the 
Proposition~\ref{kugaAB}. Part II) is just repeating the conclusion of 
Proposition~\ref{factor_type_C}.

For Part III) remark first that the equivalence of the conditions $\beta$) and 
$\gamma$) is part of
Proposition~\ref{numcondB}. By Proposition~\ref{kugaAB} $\beta$) implies 
$\delta$) and $\eta$). 
\end{proof}
\par
\begin{proof}[Proof of Theorem~\ref{characterization}]
If for some \'etale covering $\tau:U'\to U$ the pullback family $f':A' \to U'$ is a Kuga fibre space, then the two conditions 1) and 2) on $U$ are equivalent to the same conditions on $U'$. So we may as well assume that
$f:A\to U$ is itself a Kuga fibre space. If $\V$ is a non-unitary 
irreducible subvariation of Hodge structures in $R^1f_*\C_A$, then part ii) of Proposition~\ref{shimuraprop} gives the Arakelov equality, and part i) implies that $\V$ is pure of type $i=i(\V)$.

If $\Omega_i$ is of type A or C, there is nothing to verify in 2). If $\Omega_i$ is of type B, Part v) of Proposition~\ref{shimuraprop} shows that 
$$
\varsigma(\V)=\varsigma((E,\theta))=\varsigma((E,\theta_i)) = \frac{\rk(E^{1,0})\cdot\rk(E^{0,1})\cdot(n_i+1)}{\rk(E) \cdot n_i}.
$$
\par
Assume now that the conditions 1) and 2) in Theorem~\ref{characterization},~b)
hold. Since $\varphi$ is generically finite, by the first part of Lemma~\ref{etale} one finds for each direct factor $\Omega^1_\iota$ of $\Omega_Y(\log S)$ some non-unitary subvariation of Hodge structures $\V$, which
is pure of type $\iota$.

If $\Omega_\iota$ is of type C, we find that the map
$\tilde{U} \to M'$ to the period domain $M'$ of $\V$ factors as the projection $\tilde{U} \to M_\iota$
and a totally geodesic embedding $M_\iota \to M'$. 

By Proposition~\ref{kugaAB} the same holds if $\Omega_\iota$ is of type A, 
or if it is of type B and if the condition 2) Theorem~\ref{characterization} holds.
\par
So all the hypothesis of the Lemma~\ref{lemma:geoprojections} are
met and $\tilde{U}\to \tilde{\sA}_g$ is a totally geodesic embedding, 
hence by Theorem~\ref{Moonen_product} there exists a Kuga fibre space $f':A'\to U'$ such that the image of
$U'$ in $\sA_g$ coincides with the image $\varphi(U)$. In particular this image is non singular.
By the second part of Lemma~\ref{etale} $\varphi:U\to \varphi(U)$ is \'etale, and replacing $U'$ by an \'etale covering we may assume that $U'$ dominates $U$.
\par
Finally the last statement in Theorem~\ref{characterization} follows from Corollary~\ref{Moonen_rigid}.
\end{proof}
\begin{proof}[Proof of Corollary~\ref{characterizationadd3}]
Since we assumed that $U\to \sA_g$ is generically finite and that the condition $\eta$) in 
Addendum~\ref{characterizationadd} holds for all irreducible non-unitary subvariations of Hodge structures,
the argument used at the end of the proof of Theorem~\ref{characterization} shows that the pullback $f':A'\to U'$
of $f:A\to U$ to some \'etale covering $U'$ of $U$ is a Kuga fibre space. Hence there exists a Mumford compactification $Y'$.  The condition $\delta$) allows to apply  Proposition~\ref{stabAB},~2) to obtain the 
conditions $\alpha$) and $\beta$) 
\end{proof}
\section{The Arakelov equality and the Mumford-Tate group}\label{aemt}

We keep the assumption that $U$ is the complement of a normal crossing divisor $S$ in 
a non-singular projective variety $Y$, that $\Omega_Y^1(\log S)$ is nef, and that $\omega_Y(S)$
is ample with respect to $U$. Let $f:A\to U$ be a family of polarized abelian varieties such that $R^1f_*\C_A$ has unipotent local monodromies at infinity, and such that the induced 
morphism $U\to \sA_g$ is generically finite.

If each irreducible subvariation of Hodge structures of $R^1f_*\C_A$ is either unitary or it satisfies the Arakelov equality and if in addition the second condition in Theorem~\ref{characterization} holds, we have 
shown in the last section that the induced morphism $U\to \sA_g$ is totally geodesic.
By Moonen's Theorem~\ref{Moonen_product} we know that $U$ is the base of a Kuga fibre space, and that it is the translate of a Shimura variety of Hodge type.
In particular this implies that the monodromy group $\Mon^0$ of $R^1f_*\C_A$ is normalized by the
complex structure, hence by the derived Hodge group $\MT(R^1f_*\C_A)^\der$. In this section we will verify the last property as a direct consequence of the Arakelov equality, without using the second condition in Theorem~\ref{characterization}, and we will determine the
invariant cycles under $\Mon^0$ explicitly. The final statement is given in 
Corollary~\ref{mtcor}.

In the first part of this section we will consider arbitrary complex polarized variations of Hodge structures $\V$ of weight $k$ on $U$, with unipotent local monodromy around the components of $S$, and we will write  $(E=\bigoplus_{m=0}^k E^{k-m,m},\theta)$ for the Higgs bundle. For $k>1$ and $\dim(U)>1$ there is not yet any concept of Arakelov inequality where maximality has as nice consequences as in weight one. We thus start with an ad hoc definition 
of what should be the maximal case and show that this condition is satisfied for
some variations of Hodge structures derived from  variations of Hodge structures
of weight one with Arakelov equality.
\par
\begin{definition}\label{aemt.1}
The Higgs bundle $(E,\theta)$ (or the variation of Hodge structures $\V$) satisfies the {\em Arakelov condition} if there exist integers $m_{\rm min} \leq m_{\rm max}$ with
\begin{enumerate}
\item[i.] $E^{k-m,m}\neq 0$ if and only if $m_{\rm min} \leq m \leq m_{\rm max}$.
\item[ii.] For $m_{\rm min} \leq m < m_{\rm max}$ the morphism
$\theta_{k-m,m}=\theta|_{E^{k-m,m}}$ is non-zero.
\item[iii.] For $m_{\rm min} \leq m \leq m_{\rm max}$ the sheaves $E^{k-m,m}$ are $\mu$-semistable of slope
\begin{equation}\label{aracond2}
\mu(E^{k-m,m})=
\mu(E^{k-m_{\rm min},m_{\rm min}})-
(m-m_{\rm min})\cdot \mu(\Omega^1_Y(\log S)).
\end{equation}
\end{enumerate}
\end{definition}
\begin{lemma}\label{aracondex} \
\begin{enumerate} 
\item[1.] If $\V$ is unitary and irreducible, it satisfies the Arakelov condition.
\item[2.] If $k=1$, if $\V$ is irreducible and if both, $E^{1,0}$ and $E^{0,1}$ are non-zero, then $\V$ satisfies the Arakelov condition if and only if the Arakelov equality
holds.
\item[3.] If $\V$ satisfies the Arakelov condition, then the same holds true for
its complex conjugate $\V^\vee$.
\end{enumerate}
\end{lemma}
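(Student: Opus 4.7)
The plan is to verify the three parts separately, relying only on standard properties of Higgs bundles, the Simpson correspondence, and results already recalled in the introduction.

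For part~1, unitarity of $\V$ forces the Higgs field $\theta$ to vanish. Then $(E,\theta)=\bigoplus_{p+q=k}(E^{p,q},0)$ splits as a direct sum of Higgs subbundles, so via Simpson's correspondence irreducibility of $\V$ forces at most one bidegree $(k-m_0,m_0)$ to be nonzero. Setting $m_{\min}=m_{\max}=m_0$ makes conditions~(i) and~(ii) of Definition~\ref{aemt.1} tautological. The single piece $E^{k-m_0,m_0}$ is $\mu$-stable as a sheaf (Simpson applied to the zero Higgs field), and unipotency combined with unitarity forces trivial local monodromies around $S$, so the Deligne extension is flat and $\ch_1(E^{k-m_0,m_0})=0$; hence condition~(iii) reduces to the tautology $\mu(E^{k-m_0,m_0})=0$.

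For part~2 with $k=1$ and both $E^{1,0}$ and $E^{0,1}$ nonzero, one necessarily has $m_{\min}=0$ and $m_{\max}=1$, and~(\ref{aracond2}) reads
\begin{equation*}
\mu(E^{0,1})=\mu(E^{1,0})-\mu(\Omega^1_Y(\log S)),
\end{equation*}
which is exactly the Arakelov equality~(\ref{Arakelovequ}). The non-vanishing of $\theta$ in~(ii) is automatic: if $\theta$ were zero, $(E,\theta)$ would split as $(E^{1,0},0)\oplus(E^{0,1},0)$, contradicting the irreducibility of $\V$. The $\mu$-semistability of $E^{1,0}$ and $E^{0,1}$ is precisely the consequence of the Arakelov equality recorded after~(\ref{Arakelovequ}), i.e.\ \cite[Theorem~1]{VZ07}. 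Conversely, the Arakelov condition directly contains the Arakelov equality as its slope formula.

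For part~3 we use the description of the dual Higgs bundle recalled in the proof of Claim~\ref{sta.6}: $(E^\vee)^{p,q}=(E^{q,p})^\vee$ with transposed Higgs field. The nonzero bidegrees of $\V^\vee$ are therefore $(k-m',m')$ for $k-m_{\max}\le m'\le k-m_{\min}$, giving $m_{\min}^\vee=k-m_{\max}$ and $m_{\max}^\vee=k-m_{\min}$. Duality preserves $\mu$-semistability of locally free sheaves and negates slopes; and the dual of each nonzero $\theta_{k-m,m}$ yields a nonzero $\theta^\vee$ between the corresponding consecutive bidegrees of $\V^\vee$. The slope formula~(\ref{aracond2}) for $\V^\vee$ then follows by substituting
\begin{equation*}
\mu\bigl((E^\vee)^{k-m',m'}\bigr)=-\mu\bigl(E^{m',k-m'}\bigr)=-\mu\bigl(E^{k-m_{\min},m_{\min}}\bigr)+(k-m'-m_{\min})\,\mu(\Omega^1_Y(\log S))
\end{equation*}
into the same identity evaluated at $m'=m_{\min}^\vee$ and taking the difference. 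The only real obstacle is the indexing bookkeeping here: one must carefully track the reversal $m\mapsto k-m$ and the sign change in slopes under dualization, but no new geometric input is required beyond the classical duality of Higgs bundles.
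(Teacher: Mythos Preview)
Your proof is correct and follows the same line as the paper's, which is extremely terse: Simpson's correspondence forces the concentration in one bidegree (part~1) and the non-vanishing of $\theta$ (part~2), after which both parts are reformulations of Definition~\ref{aemt.1}; part~3 follows from the identification of the complex conjugate with the dual via the polarization. One small imprecision: in part~1, condition~(iii)'s slope formula is already the trivial identity $\mu(E^{k-m_0,m_0})=\mu(E^{k-m_0,m_0})$ once $m_{\min}=m_{\max}$, so your detour through $\ch_1=0$ is unnecessary (though correct), and calling ``$\mu=0$'' the tautology is a slight mislabeling---the only nontrivial content of~(iii) here is $\mu$-semistability, which you correctly deduce from Simpson.
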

\begin{proof}
Simpson correspondence implies in 1) that $E$ is concentrated in one bidegree,
whereas in 2) it implies that the Higgs field is non-zero.
Then 1) and 2) are just reformulations of the definition. 3) is obvious, since the polarization
(as indicated by the notation) allows to identify $\V^\vee$ with the dual local system. 
\end{proof}
\begin{lemma}\label{aemt.2} Consider for $i=1,\ldots,s$ polarized $\C$-variations of Hodge structures $\V_i$ with unipotent local monodromy at infinity and with Higgs bundles 
$$
\Big(E_i=\bigoplus_{m=0}^{k_i} E_i^{k_i-m,m},\theta_i\Big).
$$ 
If the Arakelov condition holds for all the $\V_i$, it holds for 
$\V=\V_1\otimes \cdots \otimes \V_s$ and for each irreducible direct factor
$\V'$ of $\V_1\otimes \cdots \otimes \V_s$.   
\end{lemma}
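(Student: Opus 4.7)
The plan is to read off the Arakelov condition for the tensor product directly from its bigrading and the additivity of slope, and then to restrict to an irreducible summand using the standard fact that a direct summand of a $\mu$-semistable sheaf of the same slope is again $\mu$-semistable.

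First I would write the Higgs bundle of $\V = \V_1\otimes\cdots\otimes\V_s$ of weight $k=k_1+\cdots+k_s$ as
\[
E^{k-m,m} = \bigoplus_{m_1+\cdots+m_s=m} E_1^{k_1-m_1,m_1}\otimes\cdots\otimes E_s^{k_s-m_s,m_s},
\]
with Higgs field induced by the Leibniz rule, and set $m_{\rm min} := \sum_i m_{i,{\rm min}}$ and $m_{\rm max} := \sum_i m_{i,{\rm max}}$. Condition i) then reduces to the combinatorial observation that an admissible tuple $(m_1,\ldots,m_s)$ with $\sum m_i = m$ and $m_{i,{\rm min}}\leq m_i\leq m_{i,{\rm max}}$ exists exactly when $m\in[m_{\rm min},m_{\rm max}]$. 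For condition iii), the slope additivity $\mu(\sF\otimes\sG)=\mu(\sF)+\mu(\sG)$ applied to each non-zero summand, combined with the Arakelov condition for the $\V_i$, telescopes to the common slope $\mu(E^{k-m_{\rm min},m_{\rm min}})-(m-m_{\rm min})\mu(\Omega^1_Y(\log S))$; invoking the (characteristic zero) result that tensor products of $\mu$-semistable torsion-free sheaves are $\mu$-semistable, each summand is $\mu$-semistable, hence so is the direct sum. Condition ii) is then immediate: for $m_{\rm min}\leq m<m_{\rm max}$ I would pick an admissible tuple in which some $m_j<m_{j,{\rm max}}$ and observe that the Leibniz contribution ${\rm id}\otimes\cdots\otimes\theta_j\otimes\cdots\otimes{\rm id}$ is non-zero by assumption on $\V_j$.

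For an irreducible direct factor $\V'\subset\V$ with Higgs bundle $(E',\theta')$, which occurs as a direct summand of $(E,\theta)$ respecting the bigrading by semisimplicity of polarized variations of Hodge structures, let $m'_{\rm min}$ and $m'_{\rm max}$ denote the extremal indices with $E'^{k-m,m}\neq 0$. A gap $E'^{k-m_0,m_0}=0$ with $m'_{\rm min}<m_0<m'_{\rm max}$ would make $\bigoplus_{m\leq m_0-1}E'^{k-m,m}$ a proper non-zero sub-Higgs bundle respecting the bigrading (the Higgs field cannot cross the gap), contradicting the irreducibility of $\V'$ via Simpson correspondence; analogously $\theta'_{k-m,m}=0$ for some $m'_{\rm min}\leq m<m'_{\rm max}$ would exhibit $\bigoplus_{m'\leq m}E'^{k-m',m'}$ as such a subbundle. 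This yields i) and ii) for $\V'$. Since each $E'^{k-m,m}$ is a direct summand of the $\mu$-semistable sheaf $E^{k-m,m}$, it inherits both the slope and $\mu$-semistability; evaluating the slope equality at $m=m'_{\rm min}$ fixes the constant and gives (\ref{aracond2}) for $\V'$.

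The only genuinely non-trivial ingredient is the $\mu$-semistability of tensor products of $\mu$-semistable torsion-free sheaves with respect to the nef and big polarization $\omega_Y(S)$; in characteristic zero this is classical, and I would simply cite it. Everything else reduces to bookkeeping with bigradings, slopes, and the Leibniz rule, so I do not expect substantive obstacles beyond this.
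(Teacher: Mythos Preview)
Your proposal is correct and follows essentially the same approach as the paper's proof: both read off conditions i)--iii) for the tensor product from the bigrading, slope additivity, and the Leibniz rule (the paper reduces to $s=2$ by induction, you handle general $s$ directly, which is equivalent), and both deduce the conditions for an irreducible summand $\V'$ from the fact that a vanishing $\theta'_{k-m,m}$ (or a gap) would split $(E',\theta')$ as a direct sum of two Higgs bundles, contradicting irreducibility via Simpson's correspondence. The only mild imprecision is that a proper sub-Higgs bundle alone does not contradict irreducibility; what matters is that the complementary piece $\bigoplus_{m'>m}E'^{k-m',m'}$ is also a sub-Higgs bundle (since $\theta'$ only shifts bidegree in one direction), so you obtain a genuine direct sum decomposition---but this is implicit in your parenthetical remark and is exactly how the paper phrases it.
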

\begin{proof}
Let $(E,\theta)$ denote again the Higgs bundle of $\V$. In order to show that $\V$ satisfies
the Arakelov condition we may assume by induction that $s=2$.
Write $m^{(i)}_{\rm min}$ and $m^{(i)}_{\rm max}$ for the integers with
$E_i^{k_i-\ell_i,\ell_i}\neq 0$ for $m^{(i)}_{\rm min}\leq \ell_i \leq m^{(i)}_{\rm max}$.
Then for $k=k_1+k_2$
$$
E^{k-r,r} = \bigoplus_{\ell_1+\ell_2=r} E_1^{k_1-\ell_1,\ell_1}\otimes E_2^{k_2-\ell_2,\ell_2}\neq 0,
$$ 
if and only if $m_{\rm min}=m^{(1)}_{\rm min} +m^{(2)}_{\rm min} \leq r \leq m_{\rm max}=m^{(1)}_{\rm max} +m^{(2)}_{\rm max}$. In addition, if $m_{\rm min} \leq m_{\rm max}-1$ then
$r=\ell_1 + \ell_2$ for some $\ell_i$ with either $\ell_1 < m^{(1)}_{\rm max}$ or
$\ell_2 < m^{(2)}_{\rm max}$. In the first case, for example, part of the Higgs field is given by
the tensor product of the Higgs field $\theta_i|_{E_1^{k_1-\ell_1,\ell_1}}$ with the identity
on $E_2^{k_2-\ell_2,\ell_2}$, hence non-zero.

The equation~\ref{aracond2} tells us that as the tensor product of $\mu$-semistable sheaves $E_1^{k_1-\ell_1,\ell_1}\otimes E_2^{k_2-\ell_2,\ell_2}$ is $\mu$-semistable of slope
\begin{equation}\label{aracond3}
\mu(E_1^{k_1-m^{(1)}_{\rm min},m^{(1)}_{\rm min}}) + \mu(E_2^{k_2-m^{(2)}_{\rm min},m^{(2)}_{\rm min}}) - (\ell_1+ \ell_2-m^{(1)}_{\rm min}-m^{(2)}_{\rm min}) \cdot \mu(\Omega^1_Y(\log S)).
\end{equation}
So $E^{k-m,m}$ is $\mu$-semistable of slope $\mu(E^{k-m_{\rm min},m_{\rm min}}) - (m-m_{\rm min}) \cdot \mu(\Omega^1_Y(\log S))$, if non-zero. 

For the last part, let $(E',\theta')$ denote the Higgs bundle of the irreducible
subvariation of Hodge structures $\V'$. We choose
$m'_{\rm min}$ and $m'_{\rm max}$ to be the smallest and largest integer with
$E'^{k-m'_{\rm min},m'_{\rm min}}$ and $E'^{k-m'_{\rm max},m'_{\rm max}}$ non-zero.
By Simpson's correspondence $(E',\theta')$ can not be a direct sum of two Higgs bundles, hence
$\theta'_{E'^{k-m,m}}\neq 0$ for $m'_{\rm min}\leq m \leq m'_{\rm max}-1$. 
Finally, the $\mu$-semistability as well as the equation~\ref{aracond3} carry over 
to direct factors of $(E,\theta)$.
\end{proof}
\par
\begin{lemma}\label{aemt.4}
Let $\V$ be a complex polarized variation of Hodge structures $\V$ of weight $k$, with unipotent local monodromy around the components of $S$, and satisfying the Arakelov condition. 
\begin{enumerate}
\item[a.] There is a unique $m$ such that each unitary local subsystem $\U$ of $\V$
is concentrated in bidegree $(k-m,m)$. In particular
all global sections $s\in H^0(U, \V)$ are of bidegree $(k-m,m)$.
\item[b.] If $\V$ is defined over $\R$, hence of the form $\V_\R\otimes_\R\C$, then $k$ is even and $m=\frac{k}{2}$.
\end{enumerate}
\end{lemma}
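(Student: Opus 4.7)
The plan is to exploit the Arakelov condition via the slope machinery of Sections~\ref{spl}--\ref{sta}, in particular Lemma~\ref{spl.2}. Let $\U \subset \V$ be a unitary sub-$\C$-variation of Hodge structures with Higgs bundle $(F,\theta_\U=0)$; since $\U$ is a sub-VHS one has $F^{k-m,m}\subset E^{k-m,m}$ for each $m$, and the vanishing of $\theta_\U$ means $\theta|_F=0$, so $F=\bigoplus_m F^{k-m,m}$ is a Higgs subsheaf of $(E,\theta)$ with zero Higgs field. The unitarity of $\U$ together with the unipotent monodromy at infinity makes the Deligne extension compatible with Simpson's correspondence, so $\deg F=0$ and $\mu(F)=0$.

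The first key step is to exhibit $F$, or rather its saturation $\tilde F$ inside $E$, as a Higgs direct factor of $(E,\theta)$. The saturation $\tilde F$ is still a Higgs subsheaf on which $\theta$ vanishes, because $\tilde F/F$ is torsion while $\tilde F\otimes \Omega^1_Y(\log S)$ is torsion-free; combining the inequality $\mu(\tilde F)\le 0$ from Lemma~\ref{spl.2}~i) with $\mu(\tilde F)\ge\mu(F)=0$ (saturation cannot decrease the slope) gives $\mu(\tilde F)=0$. Since $\omega_Y(S)$ is ample with respect to $U$, the equivalence in Lemma~\ref{spl.2}~iii) then promotes $\tilde F$ to a Higgs direct summand of $(E,\theta)$; compatibility with the Hodge grading implies that each $\tilde F^{k-m,m}$ is a holomorphic direct summand of $E^{k-m,m}$.

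Next I would pinch the slopes from both sides. Since $E^{k-m,m}$ is $\mu$-semistable by Definition~\ref{aemt.1}~iii), any direct summand has the same slope, so $\mu(\tilde F^{k-m,m})=\mu(E^{k-m,m})$ whenever $\tilde F^{k-m,m}\neq 0$; on the other hand each $\tilde F^{k-m,m}$ is itself a Higgs subsheaf with zero Higgs field, so $\mu(\tilde F^{k-m,m})\le 0$ by Lemma~\ref{spl.2}~i), and the identity $\sum_m \rk(\tilde F^{k-m,m})\mu(\tilde F^{k-m,m})=\deg\tilde F=0$ forces each non-zero piece to have slope zero. Thus $\mu(E^{k-m,m})=0$ for every $m$ with $F^{k-m,m}\neq 0$. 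By the Arakelov condition the slopes $\mu(E^{k-m,m})$ form a strictly decreasing arithmetic progression (with step $-\mu(\Omega^1_Y(\log S))<0$; the positivity comes from $\Omega^1_Y(\log S)$ being nef and $\omega_Y(S)$ being nef and big), so at most one value of $m$ admits slope zero. Hence $\U$ is concentrated in a single bidegree, and this bidegree depends only on $\V$, which proves part~(a). The statement about global sections then follows, since any non-zero $s\in H^0(U,\V)$ generates a trivial, hence unitary, rank-one sub-local-system to which (a) applies.

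For part~(b), if $\V=\V_\R\otimes_\R\C$ then the real structure acts on $\V$ swapping bidegrees $(p,q)\leftrightarrow(q,p)$; applied to a unitary sub $\U$ concentrated at $(k-m,m)$ it produces another unitary sub $\bar\U$ (the complex conjugate of a unitary representation remains unitary) concentrated at the swapped bidegree $(m,k-m)$. The uniqueness of $m$ in part~(a) then forces $k-m=m$, so $k$ is even and $m=k/2$. The main obstacle in the plan is the promotion step via Lemma~\ref{spl.2}~iii), which hinges on $\omega_Y(S)$ being ample with respect to $U$: without that input one would only obtain $\mu(E^{k-m,m})\ge 0$ from semistability of $E^{k-m,m}$ and sub-sheafness of $F^{k-m,m}$, which allows multiple bidegrees and destroys the uniqueness of~$m$.
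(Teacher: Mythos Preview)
Your proof is correct and follows essentially the same approach as the paper's: a unitary sub-local-system splits off as a Higgs direct factor, each nonzero Hodge piece $F^{k-m,m}$ then inherits the slope $\mu(E^{k-m,m})$ and simultaneously has slope zero, and the strict monotonicity of the slopes built into the Arakelov condition forces uniqueness of $m$. The only cosmetic difference is that the paper obtains the splitting by invoking Deligne's semisimplicity \cite{De87} directly, whereas you recover it through the slope criterion of Lemma~\ref{spl.2}~iii); your argument for part~(b) via complex conjugation is likewise equivalent to the paper's appeal to Lemma~\ref{aracondex}~3).
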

\begin{proof}
Obviously b) follows from a) using Lemma~\ref{aracondex},~3).

By \cite{De87} in a) the local system $\U$ is a subvariation of Hodge structures, in particular the corresponding Higgs bundle $(F,0)$ is a direct factor of $(E,\theta)$. So $F^{p,q}$ has to be a direct factor of $E^{p,q}$, in particular it is again $\mu$-semistable of slope $\mu(E^{p,q})$. Since $\U$ is unitary $\mu(F^{p,q})=\mu(E^{p,q})=0$ and since $\mu(\Omega^1_Y(\log S))>0$ the equation~\ref{aracond2} implies that this is only possible for one tuple $(p,q)$.

For the last part of a) one takes for $\U$ the trivial sub-local system generated by $H^0(U, \V)$. 
\end{proof}
\par
In the sequel we consider a $\Q$-variation of Hodge structures $\W_\Q=R^1f_*\Q_A$
with unipotent monodromy at infinity, induced by a family of polarized abelian varieties $f:A\to U$. So $\W_\Q$ is polarized of weight $1$ and concentrated in bidegrees $(1,0)$ and $(0,1)$. For $\Q \subset K$ we will write $\W_K=\W_\Q\otimes K$ and $\W=\W_\C$.
\par
\begin{lemma}\label{decompass}
There exists a totally real number field $K$ such that:
\begin{enumerate}
\item[1.] One has a decomposition of variations of Hodge structures
$$
\W_K=\W_{1\,K}\oplus \cdots \oplus \W_{\ell\,K} 
\mbox{ \ \ with \ \ }
\W_{i\,K}=\V'_{i\,K}\otimes_K H_{i\,K},
$$ 
orthogonal with respect to the polarization.
\item[2.] $\V'_{i\,\R}=\V'_{i\,K}\otimes_K \R$ is irreducible for $i=1,\ldots,\ell$.
\item[3.] ${\rm Hom}(\V'_{i\,\R},\V'_{j\,\R})$ is a skew field for $i=j$ and is zero otherwise.
\item[4.]
For each $i$ the decomposition in 1) satisfies one of the following conditions:
\begin{enumerate}
\item[a.] $\V'_{i\,K}$ is a polarized $K$-variation of Hodge structures of weight $1$ and $H_{i\,K}$ a trivial $K$-Hodge structure, i.e.\ a $K$-vectorspace regarded as a Hodge structure concentrated in bidegree $(0,0)$. 
\item[b.] $H_{i\,K}$ a $K$-Hodge structure of weight $1$ and
$\V'_{i\,K}$ is a polarizable variation of Hodge structures concentrated in bidegree $(0,0)$
and unitary.
\end{enumerate}
\end{enumerate}
\end{lemma}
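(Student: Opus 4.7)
The plan is to extract the decomposition from the structure theory of endomorphism algebras of polarizable $\Q$-Hodge structures. First I would apply Deligne's semisimplicity theorem for polarizable variations of Hodge structures to write $\W_\Q$ as an orthogonal direct sum of $\Q$-irreducible polarizable subvariations $\W_{i,\Q}$. For each $i$, set $D_i = \End_{\Q\text{-VHS}}(\W_{i,\Q})$; by Schur's lemma, $D_i$ is a finite-dimensional $\Q$-division algebra, equipped with the positive involution induced by the polarization. Its center $Z(D_i)$ carries the induced involution, and I let $K_i \subset Z(D_i)$ denote the fixed subfield, which is totally real (it equals $Z(D_i)$ in Albert types I--III and is a proper subfield in type IV).

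Next, I would choose $K$ to be a totally real number field containing each $K_i$ and splitting the algebras $D_i \otimes_{K_i} K$ as far as the involution allows. After base change, each $\W_{i,K}$ decomposes as a sum of isotypic components indexed by the simple factors of $D_i \otimes_{\Q} K$, and each isotypic component factors as $\V'_{j,K} \otimes_K H_{j,K}$, where $\V'_{j,K}$ is a $K$-simple subvariation of Hodge structures whose endomorphism ring is a skew field (trivial if $K$ splits $D_i$ completely) and $H_{j,K}$ is the multiplicity space carrying the residual $K$-Hodge structure. The orthogonality in 1), the Hom-vanishing/skew field property in 3), and the $\R$-irreducibility in 2) follow from Schur's lemma applied to the $K$-simple factors together with the observation that, since $K$ is totally real, every real embedding sends $\V'_{j,K}$ to an irreducible real local system.

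For part 4) I would use that $\W$ has weight $1$. The factorization $\V'_{j,K} \otimes_K H_{j,K}$ must then have total weight $1$, so weights of the two tensor factors sum to $1$. Hence either $\V'_{j,K}$ has weight $1$ and $H_{j,K}$ is a trivial $K$-Hodge structure concentrated in bidegree $(0,0)$ (case (a)), or $\V'_{j,K}$ has weight $0$, is concentrated in $(0,0)$, has vanishing Higgs field, and is therefore unitary by Simpson's correspondence (case (b)).

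The main obstacle is to carry out the isotypic tensor factorization compatibly with the Hodge filtration and the polarization. The essential input is the positivity of the Rosati involution on each $D_i$: once $K$ is chosen so that this involution extends nicely, one obtains an orthogonal isotypic decomposition in which both tensor factors inherit polarized Hodge structures, and the bookkeeping of weights then yields the dichotomy (a)/(b).
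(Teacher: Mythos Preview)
Your route is genuinely different from the paper's. The paper works \emph{top--down}: it first decomposes $\W_\C$ into $\C$-irreducibles (Deligne \cite{De87}, Prop.~1.12), pairs each non-selfconjugate factor $\V$ with $\V^\vee$ to obtain a decomposition of $\W_\R$ into \emph{$\R$-irreducible} polarized subvariations, then invokes \cite[Lemma~9.4]{VZ07} to descend this $\R$-decomposition to a totally real number field $K$. The isotypic grouping $\V'_{i\,K}\otimes_K H_{i\,K}$ and the Hodge structure on $H_{i\,K}$ come afterwards via \cite[Prop.~1.13]{De87} or \cite[Thm.~4.4.8]{De71}, and the weight-one dichotomy in part 4) is argued exactly as you do. The advantage of this order is that part 2) --- the $\R$-irreducibility of $\V'_{i\,\R}$ --- holds \emph{by construction}.

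Your bottom--up approach via the Albert classification of $D_i=\End_{\Q\text{-VHS}}(\W_{i,\Q})$ can be made to work, but the step you flag least is the one with the real gap. You assert that ``since $K$ is totally real, every real embedding sends $\V'_{j,K}$ to an irreducible real local system.'' This is not automatic: $\V'_{j,K}\otimes_K\R$ is $\R$-simple iff $\End_K(\V'_{j,K})\otimes_K\R$ is a division algebra, i.e.\ $\R$, $\C$ or $\BH$. For Albert type~II (totally indefinite quaternion $D_i$ over $F$) one has $D_i\otimes_{F,\sigma}\R\cong M_2(\R)$ at every real place, so the $\Q$-simple factor is \emph{not} $\R$-simple; you must first enlarge $K$ to a totally real field that splits $D_i$ (possible, but it needs a Grunwald--Wang type argument, not just ``$K$ totally real''). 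Similarly for type~IV with $\deg_L D_i>1$ you need the CM extension $L\otimes_F K$ to split $D_i$. Your phrase ``splitting the algebras as far as the involution allows'' gestures at this, but the subsequent sentence misattributes the conclusion. Either carry out the Albert case analysis explicitly to pin down $K$, or --- much shorter --- adopt the paper's order: produce the $\R$-irreducibles first and then descend.
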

For subsequent use we label the direct factors in Lemma~\ref{decompass} such
that for some $\ell_2$ and for $1\leq i \leq \ell_2$ the condition a) holds true, whereas for $\ell_2 < i \leq \ell$ one has the condition b).  
\begin{proof}[Proof of Lemma~\ref{decompass}]
By \cite[Proposition~1.12]{De87} $\W$ decomposes as a direct sum of irreducible $\C$-subvariations of Hodge structures. Replacing the direct factors $\V$ which are not invariant under complex conjugation by $\V\oplus \V^\vee$, one obtains a decomposition 
of $\V_\R$ as a direct sum of irreducible polarized $\R$-subvariations of Hodge structures. 
As shown in \cite[Lemma~9.4]{VZ07}, for example, such a decomposition is induced by one which is defined over some totally real number field $K$, and it can be chosen to be orthogonal with respect to the polarization. The irreducibility implies that
${\rm Hom}(\V'_{i\,\R},\V'_{j\,\R})$ is a skew field if and only if  
$\V'_{i\,\R}\cong \V'_{j\,\R}$.

Of course we may write the direct sum of all direct factors, isomorphic to some $\V'_{i\,K}$
in the form $\V'_{i\,K}\otimes_K  H_{i\,K}$, for some $K$ vector space $H_{i\,K}$. 
As in \cite[Proposition~1.13]{De87} or in \cite[Theorem~4.4.8]{De71} one defines a Hodge structure on $H_{i\,K}$.

In 4) the bidegrees of $\V'_{i\,K}$ and $H_{i\,K}$ have to add up to
$(1,0)$ and $(0,1)$. If $H_{i\,K}$ is concentrated in bidegree $(0,0)$ the variation
of Hodge structures $\W_{i\,K}$ is just a direct sum of the $\V'_{i\,K}$, again
orthogonal with respect to the polarization, and one obtains case a).
Otherwise $\V'_{i\,K}$ has to be concentrated in bidegree $(0,0)$. Since it is polarizable, it has to be unitary.
\end{proof}
Beside of the totally real number field $K$ in Lemma~\ref{decompass}  
we fix as in Subsection~\ref{mt} a very general point $y\in U$. 
If a variation of Hodge structures is denoted by a boldface letter, the restriction to the base point $y\in U$ will be denoted by the same letter, not in boldface, so $W_{i\,K}$ and $V'_{i\,K}$ will denote the fibres at $y$ of $\W_{i\,K}$ and
$\V'_{i\,K}$, respectively.

As in \cite{An92}, one can extend the definition of the Hodge and Mumford-Tate group to  an arbitrary polarized $K$-Hodge structure $W_{K}$. Since the decomposition in Lemma~\ref{decompass} is defined over a real number field and orthogonal, the complex structure factors through
$$
\varphi_0: S^1 \>>> 
\bigtimes_{i=1}^\ell \Sp(W_{i\,K} \otimes_K \R,Q|_{W_{i\,K}}) \subset
\Sp(W_K \otimes_K \R,Q).
$$ 
In a similar way the morphism 
$h:{\rm Res}_{\C/\R}\G_m \to \Gl(W_K\otimes_K \R)$ factors through
$$
h:{\rm Res}_{\C/\R}\G_m \>>> \bigtimes_{i=1}^\ell \Gl(W_{i\,K} \otimes_K \R)
\subset \Gl(W_K\otimes_K \R).
$$
Hence for the {\em Mumford-Tate group $\MT(W_K)$}, defined as the smallest $K$-algebraic subgroup of $\Gl(W_K)$ whose extension to $\R$ contains the image of $h$, one has an inclusion
\begin{equation}\label{eqmt}
\MT(W_K)\subset \bigtimes_{i=1}^\ell \MT(W_{i\,K}). 
\end{equation}
By \cite{An92} and \cite{De82} the group $\MT(W_K)$ is reductive, and by \cite[Lemma 2, a)]{An92} it can again be defined as the largest $K$-algebraic subgroup of the linear group $\Gl(W_K)$, which leaves all $K$-Hodge cycles invariant, hence all elements
$$
\eta \in \big[W_K^{\otimes m} \otimes W_K^{\vee \otimes m'}\big]^{0,0}.
$$
The decomposition $\Gl(V'_{i\,K})\times \Gl(H_{i\,K}) \subset\Gl(W_{i\,K}) $ allows to define
$$
G^\mov_K=\bigtimes_{i=1}^\ell\Gl(V'_{i\,K})\times \{{\rm id}_{H_{i\,K}}\}\subset \bigtimes_{i=1}^\ell \Gl(W_{i\,K})
\subset \Gl(W_K). 
$$
\begin{addendum}\label{decompass5} Keeping the notations introduced in
Lemma~\ref{decompass} one has
\begin{enumerate}
\item[5.] There exists a $\Q$-algebraic subgroup $G^\mov_\Q$ of $\Gl(W_\Q)$ with
$G^\mov_\Q \otimes K=G^\mov_K$. Moreover $G^\mov_\Q$ is independent of $K$.
\end{enumerate}
\end{addendum}
\begin{proof} On may assume that $\W_\Q$ is irreducible over $\Q$.
Obviously, if $K'$ is a totally real extension of $K$, then
$G^\mov_{K'}=G^\mov_K\otimes K'$. So one may also assume that $K$ is a Galois extension of $\Q$ with Galois group $\Gamma$. 

Let $\Gamma'\subset \Gamma$ be the subgroup consisting of all $\gamma$ for which $\W_{i\,K}$ is isomorphic to the conjugate $\W_{i\,K}^\gamma$ under $\gamma$. In particular, $\V'_{i\,K}\cong \V'^\gamma_{i\,K}$. 
So the action of $\gamma$ on $\W_{i\,K}$ is trivial on the first factor $\V'_{i\,K}$, and it leaves $\Gl(V'_{i\,K})\times \{{\rm id}_{H_{i\,K}}\}$ invariant. Since
$\V'_{j\,K}=\V'^\delta_{i\,K}$ for some $\delta\in \Gamma$, unique up to
multiplication with $\Gamma'$, the group $G^\mov_K$ is invariant under conjugation by
$\Gamma$, hence it is defined over $\Q$ and, as said already, it is independent of $K$.
\end{proof}
As in \cite[Lemma~4.4.9]{De71} the polarization $Q|_{W_{i\,K}}$ is the tensor product of two forms $Q'_i$ and $Q_i$ on $\V'_{i\,K}$ and $H_{i\,K}$, respectively, one being antisymmetric, the other symmetric. This allows to distinguish in Lemma~\ref{decompass},~4,b)
two subcases:\\[.1cm] 
We say that $\W_{i\,K}$ is {\em of type b1} if $Q'_i$ is antisymmetric and {\em of type b2}
if $Q_i$ is antisymmetric. In the second case $H_{i\,K}$ is a polarized Hodge structure, and we can talk about its Mumford-Tate group.
\par
\begin{lemma}\label{mtdecomp} \
\begin{enumerate}
\item[a.] In case a) of Lemma~\ref{decompass},~4), i.e.\ for $i=1,\ldots,\ell_2$, 
one has
$$
\MT(W_{i\,K})=\MT(V_{i\,K}) \times \{{\rm id}_{H_{i\,K}}\}.
$$
\item[b.] In Lemma~\ref{decompass},~4.b) one finds: \\[.1cm]
1. In case b1, i.e.\ for symmetric $Q_i$, one has an inclusion
$$
\MT(W_{i\,K}) \subset \{{\rm id}_{V_{i\,K}}\} \times \SO(H_{i\,K}).$$
In particular, for $\dim(H_{i\, K})=2$, the group
$\MT(W_{i\,K})$ is commutative.\\[.1cm]
2. In case b2, i.e.\ if $Q_i$ is antisymmetric, one has
$$
\MT(W_{i\,K}) = \{{\rm id}_{V_{i\,K}}\} \times \MT(H_{i\,K}).$$
3. If $Q_i$ is antisymmetric or if $Q_i$ is symmetric and $\dim(H_{i\, K})>2$, there exists a non-zero antisymmetric endomorphism of $\W_{i\, K}$ of bidegree $(-1,1)$.
\end{enumerate}
\end{lemma}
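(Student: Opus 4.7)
The proof rests on a uniform observation: in each case of Lemma~\ref{decompass},~4 the Hodge structures on the two tensor factors of $W_{i\,K}=V'_{i\,K}\otimes_K H_{i\,K}$ sit in complementary bidegrees, so that the complex structure $h:\mathrm{Res}_{\C/\R}\G_m \to \Gl(W_{i\,K}\otimes_K \R)$ factors as a tensor product with an identity factor: $h|_{V'_i}\otimes \mathrm{id}_{H_i}$ in case a), and $\mathrm{id}_{V'_i}\otimes h|_{H_i}$ in case b). The Mumford--Tate statements in part a) and in b), items 1 and 2, then follow by taking the smallest $K$-algebraic subgroup of $\Gl(W_{i\,K})$ through which this factored morphism runs. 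For part a) the image of $h$ lies in $\Gl(V'_{i\,K})\times\{\mathrm{id}_{H_i}\}$, so $\MT(W_{i\,K})\subseteq \MT(V'_{i\,K})\times\{\mathrm{id}\}$; the reverse inclusion holds because the projection of $\MT(W_{i\,K})$ onto the first factor must surject onto $\MT(V'_{i\,K})$ by minimality. In part b1 the image of $h$ preserves the symmetric form $Q_i$ and, being connected, lands in $\{\mathrm{id}\}\times \SO(H_{i\,K},Q_i)$; for $\dim H_{i\,K}=2$ this $\SO$ is a one-dimensional torus, so $\MT(W_{i\,K})$ is commutative. Case b2 is the exact analogue of a) with the roles of $V'$ and $H$ exchanged, giving the asserted equality $\MT(W_{i\,K})=\{\mathrm{id}\}\times \MT(H_{i\,K})$.

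For part b3 I construct explicitly a non-zero antisymmetric $(-1,1)$-endo\-mor\-phism as a tensor $\alpha\otimes \beta$ with $\alpha\in \mathrm{End}(V'_{i\,K})^{0,0}$ and $\beta\in \mathrm{End}(H_{i\,K})^{-1,1}$. A direct expansion shows that $\alpha\otimes\beta$ is $(Q'_i\otimes Q_i)$-antisymmetric whenever $\alpha$ is $Q'_i$-symmetric and $\beta$ is $Q_i$-antisymmetric; taking $\alpha=\mathrm{id}_{V'_{i\,K}}$ (which is symmetric with respect to any bilinear form) reduces the task to exhibiting a non-zero $Q_i$-antisymmetric $\beta\in \Hom(H^{1,0}_{i\,K},H^{0,1}_{i\,K})$. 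Using the polarization $Q_i$ to identify $H^{0,1}_{i\,K}\cong \Hom_K(H^{1,0}_{i\,K},K)$, such $\beta$ correspond bijectively to bilinear forms on $H^{1,0}_{i\,K}$ that are antisymmetric when $Q_i$ is symmetric (case b1) and symmetric when $Q_i$ is antisymmetric (case b2). Since $\dim_K H^{1,0}_{i\,K}=\tfrac{1}{2}\dim_K H_{i\,K}$ by complex conjugation symmetry at any real place of $K$, non-zero examples exist precisely when $\dim H_{i\,K}>2$ in case b1 and automatically in case b2 (where $H_{i\,K}\neq 0$ forces $\dim H^{1,0}_{i\,K}\geq 1$).

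The principal technical care is the bookkeeping of signs and symmetry types in b3: the product polarization $Q'_i\otimes Q_i$ must come out antisymmetric overall, while one of the two factors is symmetric and the other antisymmetric, and one must verify that the parity of $\beta$ needed for antisymmetry of $\alpha\otimes\beta$ matches the parity actually available on $\Hom(H^{1,0}_{i\,K},H^{0,1}_{i\,K})$. Beyond this, the remaining verifications --- the factorization of $h$ through the tensor structure, preservation of the various forms, and connectedness of the image of $h$ --- are routine once the decomposition into tensor factors is in place.
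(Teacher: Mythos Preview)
Your proof is correct and takes a genuinely different route from the paper in parts a), b1), and b2). Where the paper argues via the Tannakian characterization of $\MT$ --- decomposing Hodge tensors $\eta=\sum_\iota \gamma_\iota\otimes h_\iota$ in $[W_{i\,K}^{\otimes m}\otimes W_{i\,K}^{\vee\otimes m}]^{0,0}$, tracking which factor carries the nontrivial Hodge type, and invoking \cite[Proposition~3.1~(c)]{De82} to identify reductive groups with the same invariants --- you instead use the defining property of $\MT$ as the smallest $K$-group through which $h$ factors. Your approach is more direct and avoids the appeal to \cite{De82}; the paper's approach has the advantage of simultaneously exhibiting the invariant tensors, which feeds into the later comparison with the monodromy group in Proposition~\ref{mtfinal}.

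One small imprecision in your b1): the image of $h$ does not preserve $Q_i$ exactly but only up to the similitude factor $|z|^2$, so strictly your argument gives $\MT(W_{i\,K})\subset \{\mathrm{id}\}\times \mathrm{GSO}(H_{i\,K})$ rather than $\SO$. This does not affect the application, since $\mathrm{GSO}(2)$ is still a (two-dimensional) torus and hence commutative. The paper's own statement has the same looseness.

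For b3) your explicit construction via bilinear forms on $H^{1,0}$ is cleaner and more transparent than the paper's argument, which in case b1 only says that for $\mu\ge 4$ the elements of $\SO(\mu,K)$ generate the full matrix algebra and leaves the passage to an antisymmetric $(-1,1)$-endomorphism implicit. Your parity bookkeeping (antisymmetric $\beta$ corresponds to an antisymmetric form on $H^{1,0}$ when $Q_i$ is symmetric, and to a symmetric form when $Q_i$ is antisymmetric) is correct. One notational point: the Hodge decomposition $H^{1,0}\oplus H^{0,1}$ lives over $\C$, not over $K$, so ``$\dim_K H^{1,0}_{i\,K}$'' should read $\dim_\C H^{1,0}_{i\,\C}$; since $K$ is totally real, the existence of the desired $\C$-linear $\beta$ is what is needed for the rigidity application via \cite{Fa83}.
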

\begin{proof}
Consider a non-trivial element
$$
\eta \in \big[W_{i\,K}^{\otimes m} \otimes_K  W_{i\,K}^{\vee \otimes m'}\big]^{0,0}=
\big[V'^{\otimes m}_{i\,K} \otimes_K  V'^{\vee \otimes m'}_{i\,K} \otimes_K 
H_{i\,K}^{\otimes m} \otimes_K  H_{i\,K}^{\vee \otimes m'}\big]^{0,0}.
$$   
So $m=m'$ and $\eta$ can be written as
$$
\eta=\sum_\iota \gamma_\iota \otimes h_\iota, \mbox{ \ \ with \ \ }
\gamma_\iota \in V'^{\otimes m}_{i\,K} \otimes \V'^{\vee \otimes m}_{i\,K}
\mbox{ \ \ and \ \ } h_\iota \in H_{i\,K}^{\otimes m} \otimes H_{i\,K}^{\vee \otimes m}.
$$
For $i \leq \ell_2$ all the $h_\iota$ are of bidegree $(0,0)$. Then $\eta$ is pure of bidegree $(0,0)$ if and only if this holds for $\gamma_\iota \otimes h_\iota$ for each $\iota$, or equivalently, if $\gamma_\iota$ is a Hodge cycle. Altogether $\MT(W_{i\,K})$ and $\MT(V'_{i\,K}) \times \{{\rm id}_{H_{i\,K}}\}$ are two reductive groups leaving the same cycles invariant. By \cite[Proposition~3.1~(c)]{De82} they coincide.

If $i > \ell_2$, the sections $\gamma_\iota$ are all of bidegree $(0,0)$. So again $\eta$ is of bidegree $(0,0)$ if and only if the same holds for the elements $h_\iota$. 
Let $\Gamma$ be the largest subgroup of $\Sp(H_{i\,K},Q_i)$ which leaves all tensors
$h$ of bidegree $(0,0)$ invariant. Then $\gamma_\iota\otimes h_\iota$ is invariant under $\{{\rm id}_{V_{i\,K}}\}\times \Gamma$ if and only if it is invariant under  $ \MT(W_{i\,K})$. Again by \cite[Proposition~3.1~(c)]{De82} both groups coincide.

In case b2 the vector space $H_{i\,K}$, together with $Q_i$, is a polarized variation of Hodge structures, and $\Gamma$ is the Mumford-Tate group of $H_{i\,K}$.

In case b1 one has $\Gamma \subset \SO(H_{i\,K})$. Since $\OO(2,K)$ is commutative
one obtains the second part of b.1).
\par
For the third part of b) assume first that $Q_i$ is symmetric. Then $\dim(H_{i\, K})=\mu$ is even and for $\mu\geq 4$ the elements of $\SO(\mu,K)$ generate the matrix algebra ${\rm M}(\mu,K)$.
So one finds an antisymmetric endomorphism of $V'_{i\, K}\otimes H_{i\,K}$ of bidegree
$(-1,1)$. 

For $Q_i$ antisymmetric there are obviously antisymmetric endomorphisms of $H_{i\,K}$ of bidegree $(-1,1)$. The product with the identity of $\V'_{i\,K}$ gives the endomorphism we are looking for. 
\end{proof}
To compare the Mumford-Tate group with the monodromy group in case a) of Lemma~\ref{decompass} one needs some additional hypothesis on the variation of Hodge structures, in our case the Arakelov equality. By \cite[Proposition~1.12]{De87} the variations of Hodge structures  $\V'_i=\V'_{i\,K}\otimes_K\C$ can be written as a direct sum of irreducible polarized $\C$-variations of Hodge structures. We distinguish two subcases.\\[.2cm]
\noindent
{\bf Type a1.} $\V'_i$ is an irreducible $\C$-variation of Hodge structures. This 
implies in particular that $\V'_i$ is isomorphic to its complex conjugate 
$\V'^\vee_i$, and that $\V'_i$ is not unitary.
In fact, if $\V'_i$ were unitary, it would decompose in two non-trivial subsystems, one of bidegree $(1,0)$ and the other of bidegree $(0,1)$, contradicting the irreducibility.

\begin{claim}\label{cyclesI}
Assume that $\W_{i\,K}$ is of type a1, and that it satisfies the Arakelov 
equality. Then all global sections
$$
\eta \in H^0\big(Y,\W_{i\,K}^{\otimes m} \otimes_K  \W_{i\,K}^{\vee \otimes m'}\big)
$$
are of bidegree $(m-m',m-m')$.
\end{claim}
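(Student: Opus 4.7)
The plan is to combine the structural decomposition of Lemma~\ref{decompass} with the propagation of the Arakelov condition under tensor products, then invoke Lemma~\ref{aemt.4}.

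First, I would use Lemma~\ref{decompass},~4.a) to write $\W_{i\,K} \cong \V'_{i\,K} \otimes_K H_{i\,K}$, with $H_{i\,K}$ a trivial Hodge structure in bidegree $(0,0)$. This yields an isomorphism of $K$-variations of Hodge structures
$$
\W_{i\,K}^{\otimes m} \otimes_K \W_{i\,K}^{\vee \otimes m'} \;\cong\; \bigl(\V'^{\otimes m}_{i\,K} \otimes_K \V'^{\vee \otimes m'}_{i\,K}\bigr) \otimes_K \bigl(H_{i\,K}^{\otimes m} \otimes_K H_{i\,K}^{\vee \otimes m'}\bigr),
$$
in which the Hodge bigrading is inherited from the first tensor factor. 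Hence it suffices to prove that every global section of $\V'^{\otimes m}_{i\,K} \otimes_K \V'^{\vee \otimes m'}_{i\,K}$ is concentrated in a single Hodge bidegree; after passing to $\C$-coefficients, this becomes the same statement for $\V'^{\otimes m}_i \otimes_\C \V'^{\vee \otimes m'}_i$.

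Second, I would verify that this $\C$-tensor product satisfies the Arakelov condition. Since $\V'_i$ is irreducible (type a1) and non-unitary, the Arakelov equality together with Lemma~\ref{aracondex},~2) gives the Arakelov condition for $\V'_i$; by Lemma~\ref{aracondex},~3) the same holds for $\V'^\vee_i$. Iterated application of Lemma~\ref{aemt.2} then propagates the Arakelov condition to the full tensor product $\V'^{\otimes m}_i \otimes_\C \V'^{\vee \otimes m'}_i$.

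Third, a global section generates a trivial and in particular unitary sub-local system, so Lemma~\ref{aemt.4},~a) forces each global section to lie in a single Hodge bidegree. To identify that bidegree with the one recorded in the claim, I would use that $K$ is totally real---so that $\V'^{\otimes m}_{i\,K} \otimes_K \V'^{\vee \otimes m'}_{i\,K}$ carries a canonical real structure---and then invoke Lemma~\ref{aemt.4},~b) to force the distinguished bidegree to be the diagonal one of weight $m-m'$.

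The principal obstacle I expect is the exact matching of the distinguished bidegree with the expression $(m-m',m-m')$ stated in the claim. This will require unwinding how the polarization-induced self-duality of $\V'_i$ (available under type a1) converts the mixed tensor product into a Tate twist of a pure power of $\V'_i$, so that the ``middle'' bidegree produced by Lemma~\ref{aemt.4},~b) agrees with the normalization used in the claim.
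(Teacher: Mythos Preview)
Your approach is correct and coincides with the paper's own argument, just carried out in more detail: the paper simply observes that the Arakelov equality gives the Arakelov condition for $\V'_i$, hence for $\W_i=\V'_i\otimes H_i$, and then appeals to Lemma~\ref{aemt.4}; you spell out the intermediate invocations of Lemma~\ref{aracondex} and Lemma~\ref{aemt.2} and the passage through the trivial factor $H_{i\,K}$.

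Your concern in the last paragraph is well-founded, but not for the reason you suggest. There is no need to convert the mixed tensor product into a Tate twist of a pure power: Lemma~\ref{aemt.4},~b) already applies directly, since $K$ is totally real and hence $\W_{i\,K}^{\otimes m}\otimes_K\W_{i\,K}^{\vee\otimes m'}$ (equivalently $\V'^{\otimes m}_{i\,K}\otimes_K\V'^{\vee\otimes m'}_{i\,K}$) is defined over $\R$. The weight of this variation is $m-m'$, so Lemma~\ref{aemt.4},~b) forces the unique bidegree of global sections to be $\bigl(\tfrac{m-m'}{2},\tfrac{m-m'}{2}\bigr)$ and in particular $m-m'$ must be even whenever nonzero sections exist. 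The expression ``$(m-m',m-m')$'' in the stated claim is thus a slip of normalization in the paper; the argument you give proves the correct bidegree, and nothing in the subsequent use of the claim depends on the exact numerical form beyond its being the diagonal one.
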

\begin{proof}
The Arakelov equality implies that $\V'_i$ satisfies the Arakelov condition.
Since $H_i$ is a $K$-vector space concentrated in bidegree $(0,0)$, the same holds true for $\W_i=\V'_i\otimes H_i$. So the Claim follows from Lemma~\ref{aemt.4}.
\end{proof}
\par
\noindent
{\bf Type a2.} $\V'_i$ is the direct sum of two irreducible factors $\V_i$ and $\V^\vee_i$, dual to each other and interchanged by complex conjugation. 
Remark that we allow $\V_i$ and $\V^\vee_i$ to be unitary. If not, they satisfy the Arakelov equality. Hence by Lemma~\ref{aemt.2} the two variations of Hodge structures $\V_i$, $\V^\vee_i$ as well as their tensor product with $H_i$ will satisfy the Arakelov condition and Lemma~\ref{aemt.4} implies:
\begin{claim}\label{cyclesII} Assume that $\W_{i\,K}$ is of type a2, and either 
unitary or with Arakelov equality. Then there exist $p$ and $q$ such that 
all global sections
$$
\eta \in H^0\big(Y,(\V_{i\,K}\otimes_K  H_{i\,K})^{\otimes m} \otimes_K  (\V_{i\,K}\otimes_K H_{i\,K})^{\vee \otimes m'}\big)
$$
are of bidegree $(p,q)$, and all global sections
$$
\eta \in H^0\big(Y,(\V^\vee_{i\,K}\otimes_K  H_{i\,K})^{\otimes m} \otimes_K  (\V^\vee_{i\,K}\otimes_K  H_{i\,K})^{\vee \otimes m'}\big)
$$
are of bidegree $(q,p)$. Moreover one has $p+q=m-m'$. 
\end{claim}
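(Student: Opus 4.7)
The plan is to reduce the claim to Lemma~\ref{aemt.4}~a) applied to the full tensor product variation, using Lemma~\ref{aemt.2} to propagate the Arakelov condition through tensor powers, and then to exploit complex conjugation to pass from the first statement to the second.

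First I would verify that all four building blocks — $\V_i$, $\V_i^\vee$, $H_{i\,K}$ and $H_{i\,K}^\vee$ — satisfy the Arakelov condition individually. By hypothesis each of $\V_i$ and $\V_i^\vee$ is either unitary (so the Arakelov condition holds by Lemma~\ref{aracondex},~1) or it satisfies the Arakelov equality (so Lemma~\ref{aracondex},~2 applies, since in type a2 both $E^{1,0}$ and $E^{0,1}$ of $\V_i$ are non-zero). The factor $H_{i\,K}$ is a constant local system concentrated in bidegree $(0,0)$, hence a direct sum of trivial rank one systems each of which trivially satisfies the Arakelov condition, and similarly for its dual.

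Applying Lemma~\ref{aemt.2} repeatedly, the tensor product
\[
\T := (\V_{i\,K}\otimes_K H_{i\,K})^{\otimes m}\otimes_K (\V_{i\,K}\otimes_K H_{i\,K})^{\vee\otimes m'}
\]
satisfies the Arakelov condition. The weight of $\T$ is $m-m'$, since $\V_i$ has weight $1$, $\V_i^\vee$ has weight $-1$, and $H_i,H_i^\vee$ have weight $0$. Global sections of $\T$ over $Y$ form a trivial, hence unitary, sub-local system. By Lemma~\ref{aemt.4},~a) there is a unique bidegree $(p,q)$ with $p+q=m-m'$ containing all such sections, which is the first assertion of the claim.

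For the second assertion I would apply complex conjugation to the decomposition of $\W_{i\,K}\otimes_K \C$. Since in type a2 conjugation interchanges $\V_i$ with $\V_i^\vee$ while fixing $H_i$ (regarded as a trivial Hodge structure), one obtains a conjugate-linear isomorphism from $\T$ onto
\[
\T' := (\V^\vee_{i\,K}\otimes_K H_{i\,K})^{\otimes m}\otimes_K (\V^\vee_{i\,K}\otimes_K H_{i\,K})^{\vee\otimes m'}
\]
which on each fibre sends a class of bidegree $(p,q)$ to one of bidegree $(q,p)$. Since it carries global sections of $\T$ bijectively onto global sections of $\T'$, the bidegree statement for $\T'$ follows at once. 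The only subtle bookkeeping is the correct identification of conjugation with the passage $\V_i\leftrightarrow \V_i^\vee$, which is precisely the content of the type a2 definition, and the fact that the polarization-induced antilinear isomorphism preserves the $K$-structure after tensoring with $\C$; both are routine once the Hodge-theoretic framework is in place.
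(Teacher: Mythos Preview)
Your argument is correct and follows the same route as the paper: verify the Arakelov condition for the building blocks via Lemma~\ref{aracondex}, propagate it to the full tensor product by Lemma~\ref{aemt.2}, and then invoke Lemma~\ref{aemt.4},~a). Your explicit use of complex conjugation to deduce the $(q,p)$ statement for $\T'$ from the $(p,q)$ statement for $\T$ is exactly the point the paper leaves implicit when it says ``Lemma~\ref{aemt.4} implies'' the claim.
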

\begin{claim}\label{mtII}
For $\W_{i\,K}$ of type a2 the Mumford-Tate group  
respects the decomposition of $\V'_i$, i.e.\ up to conjugation
$$\MT(W_{i\,K})\otimes_K \C \subset 
\Gl(V_i \otimes H_i)\times \Gl(V^\vee_i\otimes H_i).$$
\end{claim}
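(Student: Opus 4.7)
The plan is to exhibit a $K$-rational Hodge cycle that distinguishes the two $\C$-summands $V_i$ and $V^\vee_i$, and then to apply the characterization of the Mumford-Tate group as the stabilizer of all $K$-Hodge cycles. By Lemma~\ref{mtdecomp},~a) one has $\MT(W_{i\,K}) = \MT(V'_{i\,K})\times\{{\rm id}_{H_{i\,K}}\}$ in case~a) of Lemma~\ref{decompass}, so after extension of scalars to $\C$ it is enough to show that $\MT(V'_{i\,K})\otimes_K\C$ preserves the splitting $V_i\oplus V^\vee_i$ of $V'_{i\,K}\otimes_K\C$; tensoring with $H_{i\,K}\otimes_K\C$ then yields the claim.

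First I would identify the Hodge endomorphism algebra $D_i:={\rm End}_{{\rm HS}}(\V'_{i\,K})$. In type~a2 the complex VHS $\V'_i\otimes\C$ decomposes as a direct sum of the non-isomorphic irreducible summands $\V_i$ and $\V^\vee_i$, so Schur's lemma gives ${\rm End}_{{\rm HS},\,\C}(\V'_i\otimes\C)\cong\C\oplus\C$, which forces $D_i\otimes_K\C\cong\C\oplus\C$. Since $\V'_{i\,\R}$ is irreducible by Lemma~\ref{decompass},~2), the $K$-form $\V'_{i\,K}$ is irreducible as well; hence $D_i$ admits no non-trivial idempotents and must therefore be a field---a quadratic extension $K'$ of $K$, with generator $\alpha$ satisfying $\alpha^2\in K$.

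Next, the element $\alpha\in D_i\subset{\rm End}_K(\V'_{i\,K})$ is by definition a morphism of Hodge structures, hence of bidegree $(0,0)$, and thus defines a $K$-Hodge cycle in $\V'_{i\,K}\otimes_K(\V'_{i\,K})^\vee$. By the tannakian description of $\MT$ recalled in Section~\ref{mt}, the adjoint action of $\MT(V'_{i\,K})$ on ${\rm End}_K(\V'_{i\,K})$ fixes $\alpha$; equivalently, $\MT(V'_{i\,K})$ lies in the centralizer of $\alpha$. This commutation relation is preserved under extension of scalars, and so $\MT(V'_{i\,K})\otimes_K\C$ commutes with $\alpha\otimes 1 \in {\rm End}_\C(V'_{i\,K}\otimes_K\C)$.

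It remains to identify the eigenspaces of $\alpha\otimes 1$ with $V_i$ and $V^\vee_i$. Under the splitting ${\rm End}_\C(V'_{i\,K}\otimes\C)={\rm End}(V_i)\oplus{\rm End}(V^\vee_i)$ coming from the type~a2 decomposition, $\alpha\otimes 1$ acts as a scalar $\lambda$ on $V_i$ and a scalar $\mu$ on $V^\vee_i$, with $\lambda^2=\mu^2=\alpha^2\in K$. If one had $\lambda=\mu$, then $\alpha$ would act as a scalar on all of $V'_{i\,K}\otimes\C$, forcing $\alpha\in K\cdot{\rm id}$ and contradicting $\alpha\notin K$. Hence $\lambda\neq\mu$, and the eigenspace decomposition of $\alpha\otimes 1$ is precisely $V_i\oplus V^\vee_i$, which is therefore preserved by $\MT(V'_{i\,K})\otimes_K\C$. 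The main subtle point I expect to need care over is the passage from a $K$-rational Hodge endomorphism to the $\C$-linear decomposition it induces: although the summands $V_i$ and $V^\vee_i$ themselves are only defined over $\C$ and are even interchanged by complex conjugation, the element $\alpha$ that distinguishes them is genuinely $K$-rational, which is exactly what allows the Mumford-Tate group over $K$ to ``see'' the complex decomposition after extension of scalars.
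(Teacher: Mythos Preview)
Your proof is correct and follows essentially the same idea as the paper's: both exhibit a $K$-rational Hodge endomorphism of $\V'_{i\,K}$ (or of $\W_{i\,K}$) whose $\C$-eigenspaces are exactly the two summands $V_i$ and $V^\vee_i$, and then use that $\MT$ fixes all $K$-Hodge cycles to conclude it preserves these eigenspaces. The paper simply writes the element down explicitly as $\sqrt{b}\cdot({\rm id}_{V_i\otimes H_i}-{\rm id}_{V^\vee_i\otimes H_i})$, noting that the decomposition is defined over $K(\sqrt{b})$ and that this element is Galois-invariant, while you recover the same element abstractly as a generator $\alpha$ of the quadratic field $D_i={\rm End}_{{\rm HS}}(\V'_{i\,K})$.
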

\begin{proof}
The decomposition in a direct sum can be defined over an imaginary quadratic extension
$K(\sqrt{b})$ of $K$, say with $\iota$ as a generator of the Galois group.
So the Mumford-Tate group acts trivially on $\iota$-invariant global sections of
${\rm End}(\W_i)$. Applying this to ${\rm id}_{V_i\otimes H_i}+{\rm id}_{V^\vee_i\otimes H_i}$
and to $\sqrt{b}\cdot({\rm id}_{V_i\otimes H_i}-{\rm id}_{V^\vee_i\otimes H_i})$
one obtains the claim.
\end{proof}
\begin{definition}\label{hgrestr}
Let $G^\mov_\Q$ be the group defined in Addendum~\ref{decompass5}. Then we define
the {\em moving part of the Mumford-Tate group} as 
$$
\MT^\mov(W_\Q)= \MT(W_\Q)\cap G^\mov_\Q \mbox{ \ \ and \ \ }
\MT^\mov(W_K)= \MT(W_K)\cap G^\mov_K.
$$
Correspondingly we write for any of the components $\W_{i\,K}$ in Lemma~\ref{decompass}
$$
\MT^\mov(W_{i\,K})=\MT(W_{i\,K})\cap \big(\Gl(V_{i\,K}) 
\times \{{\rm id}_{H_{i\,K}}\}\big).
$$
\end{definition}
Lemma~\ref{mtdecomp} allows to evaluate the moving part of the Mumford-Tate group.
In case a), i.e. for $i=1,\ldots,\ell_2$ one finds 
$$
\MT^\mov(W_{i\,K})= \MT(W_{i\,K})= \MT(V'_{i\,K})\times \{{\rm id}_{H_{i\,K}}\},
$$
whereas in case b) $\MT^\mov(W_{i\,K})$ is trivial. By~\ref{eqmt} 
\begin{equation}\label{mtprod}
\MT^\mov(W_K)=\MT(W_K)\cap \Big(\bigtimes_{i=1}^{\ell_2} \Gl(V'_{i\,K}) \times \{{\rm id}_{H_{i\,K}}\}\Big)\subset\bigtimes_{i=1}^{\ell_2}\MT^\mov(W_{i\,K}).
\end{equation}
To give a definition of $\MT^\mov(W_\Q)$ in terms of complex structures
we define 
\begin{equation}\label{restcs}
h^{\mov}:{\rm Res}_{\C/\R}\G_m \>>> \bigtimes_{i=1}^\ell \Gl(W_{i\,K} \otimes_K \R)
\>{\rm proj} >> \bigtimes_{i=1}^{\ell_2} \Gl(W_{i\,K} \otimes_K \R).
\end{equation}
\begin{lemma}\label{hgrest2} \ 
\begin{enumerate}
\item[i.] $\MT^\mov(W_\Q)$ is a normal subgroup of $\MT(W_\Q)$.
\item[ii.] $\MT^\mov(W_K)$ is the smallest $K$-algebraic subgroup $H_K$ of
$\Gl(W_K)$, for which $H_K\otimes_K \R$ contains the image of $h^{\mov}$.
\item[iii.] $\MT^\mov(W_\Q)$ is the smallest  $\Q$-algebraic subgroup $H_\Q$ of
$\Gl(W_\Q)$ with $$\MT^\mov(W_K) \subset H_\Q\otimes K.$$
\item[iv.] $\MT^\mov(W_\Q)$ is the smallest  $\Q$-algebraic subgroup $H_\Q$ of
$\Gl(W_\Q)$, for which $H_\Q\otimes \R$ contains the image of $h^{\mov}$. 
\end{enumerate}
\end{lemma}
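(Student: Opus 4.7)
The plan is to establish the four statements in the order (ii), (iii), (iv), (i), with (ii) carrying the main technical content. For (ii) I would first verify that $\mathrm{im}(h^{\mov}) \subseteq G^\mov_K(\R)$: by Lemma~\ref{decompass},~4.a, for $i \leq \ell_2$ the factor $H_{i\,K}$ is concentrated in bidegree $(0,0)$, so $h_i(z)$ acts as the identity on $H_{i\,K}$ and therefore $h^{\mov}(z)$ lies in $\prod_{i \leq \ell_2}(\Gl(V'_{i\,K}) \times \{\mathrm{id}_{H_{i\,K}}\}) \subseteq G^\mov_K(\R)$.

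Next I would show $\mathrm{im}(h^{\mov}) \subseteq \MT(W_K)(\R)$. Write $h(z) = h^{\mov}(z) \cdot h^{\mathrm{fix}}(z)$, where $h^{\mathrm{fix}}$ is defined analogously by projecting onto the indices $i > \ell_2$; the two factors commute as endomorphisms of $W_K$, since they act on the complementary summands $W'_K = \bigoplus_{i \leq \ell_2} W_{i\,K}$ and $W''_K = \bigoplus_{i > \ell_2} W_{i\,K}$. The key point is that by Lemma~\ref{mtdecomp} together with Claim~\ref{mtII}, the Mumford-Tate group $\MT(W_K)$ respects the product decomposition $W_K = W'_K \oplus W''_K$ and each tensor decomposition $W_{i\,K} = V'_{i\,K} \otimes_K H_{i\,K}$; hence $\MT(W_K) \subseteq \prod_i (\Gl(V'_{i\,K}) \cdot \Gl(H_{i\,K}))$, and this product structure allows one to isolate $h^{\mov}(z)$ as a genuine element of $\MT(W_K)(\R)$. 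Minimality is then formal: any $K$-subgroup $H_K \subseteq \Gl(W_K)$ with $H_K(\R) \supseteq \mathrm{im}(h^{\mov})$ must contain the Mumford-Tate group of the Hodge substructure $W'_K$ (embedded in $\Gl(W_K)$ via trivial action on $W''_K$), which by the previous step coincides with $\MT^\mov(W_K)$.

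Part (iii) follows by Galois descent. From (ii) one has $\MT^\mov(W_K) \subseteq \MT^\mov(W_\Q) \otimes_\Q K$, since $\MT(W_K) \subseteq \MT(W_\Q) \otimes K$ (both contain $\mathrm{im}(h)$, and the right side is a $K$-group) and since $G^\mov_K = G^\mov_\Q \otimes K$ by Addendum~\ref{decompass5}. Conversely, any $\Q$-subgroup $H_\Q \subseteq \Gl(W_\Q)$ with $\MT^\mov(W_K) \subseteq H_\Q \otimes K$ yields a Galois-stable $K$-subgroup $H_\Q \otimes K$ containing the Galois orbit of $\MT^\mov(W_K)$; the minimal such subgroup defined over $\Q$ is exactly $\MT^\mov(W_\Q)$, which is already $\Q$-rational and Galois-invariant. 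Part (iv) is then a direct combination of (ii) and (iii): a $\Q$-group $H_\Q$ has $H_\Q(\R) \supseteq \mathrm{im}(h^{\mov})$ iff $H_\Q \otimes K$ contains $\MT^\mov(W_K)$, iff $H_\Q \supseteq \MT^\mov(W_\Q)$.

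For (i), normality of $\MT^\mov(W_\Q)$ in $\MT(W_\Q)$ reduces to showing that $\MT(W_\Q)$ normalizes $G^\mov_\Q$. Again by Lemma~\ref{mtdecomp} and Claim~\ref{mtII}, one has $\MT(W_K) \subseteq \prod_i (\Gl(V'_{i\,K}) \cdot \Gl(H_{i\,K}))$, so any element $m$ factors (modulo the diagonal scalars) as $m = \prod_i m^V_i \cdot m^H_i$ with $m^V_i$ acting only on the $V'_i$-tensor factor and $m^H_i$ only on $H_i$. The $m^H_i$-components commute pointwise with $G^\mov_K$, and the $m^V_i$-components normalize $G^\mov_K$ (they act on it by inner automorphisms); hence $m$ normalizes $G^\mov_K$. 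Descending to $\Q$ gives the normality of $G^\mov_\Q$ in $\MT(W_\Q)$, and therefore of $\MT^\mov(W_\Q) = \MT(W_\Q) \cap G^\mov_\Q$. The main obstacle in this plan is the isolation step in (ii): proving that $h^{\mov}(z)$ lies in $\MT(W_K)(\R)$ and not merely in $(\MT(W_K) \cdot \MT(W'_K))(\R)$ is not formal and relies crucially on the fine product structure of the Mumford-Tate group provided by Lemma~\ref{mtdecomp}.
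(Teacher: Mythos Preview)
Your proposal follows essentially the same route as the paper: both derive (iv) from (ii) and (iii), handle (iii) by Galois descent, and prove (i) by showing that $G^\mov_K$ is normalized by $\MT(W_K)$ via the containment $\MT(W_K)\subset\prod_i\bigl(\Gl(V'_{i\,K})\cdot\Gl(H_{i\,K})\bigr)$ from Lemma~\ref{mtdecomp}. For (ii) the paper simply invokes \eqref{mtprod} and the definition of $\MT(W_{i\,K})$ in one line, whereas you unpack the argument and correctly single out the ``isolation step'' $h^{\mov}(z)\in\MT(W_K)(\R)$ as the point requiring the fine product structure of Lemma~\ref{mtdecomp}; your formulation of (i) is also slightly cleaner than the paper's phrasing ``$G^\mov_K$ is normal in $\prod_i\Gl(W_{i\,K})$'', which as stated is too strong and only holds in the smaller ambient group $\prod_i\Gl(V'_{i\,K})\times\Gl(H_{i\,K})$.
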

\begin{proof}
We may assume again that $\W_\Q$ is irreducible an that $K$ is Galois over
$\Q$ with Galois group $\Gamma$.

Part ii) follows from~\ref{mtprod} and from the definition of $\MT(W_{i\,K})$, and
part iv) follows from ii) and iii).

To verify part iii) remark that $\MT(W_\Q)$ is the smallest $\Q$-algebraic
subgroup of $\Gl(W_\Q)$ whose extension to $K$ contains $\MT(W_K)$.
By~\ref{mtprod} 
$$
\MT^\mov(W_K)=\MT(W_K)\cap G_K = \MT(W_K)\cap ( G_\Q \otimes K).
$$
Taking conjugates with $\sigma\in \Gamma$ one finds that
$$
\MT^\mov(W_K)^\sigma =\MT(W_K)^\sigma \cap ( G_\Q \otimes K).
$$
For the smallest $\Q$ algebraic subgroup $H_\Q$ of $\Gl(W_\Q)$ with $\MT^\mov(W_K) \subset H_\Q\otimes K$ the extension $H_\Q\otimes K$ of scalars is the product over all conjugates of $\MT^\mov(W_K)$, hence it is equal to $(\MT(W_\Q)\otimes K) \cap ( G_\Q \otimes K)$ and one obtains iii).

Obviously $G^\mov_K$ is normal in $\displaystyle\bigtimes_{i=1}^{\ell} \Gl(W_{i\,K})$.
The latter contains $\MT(W_K)$ and all its conjugates under $\Gamma$. So
$\MT^\mov(W_\Q)\otimes K$ is a normal subgroup of $\MT(W_\Q)\otimes K$ and i) holds true.
\end{proof}
Lemma~\ref{mtdecomp} implies that $\MT^\mov(W_{i\,K})^\der=\MT(W_{i\,K})^\der$
in case a) and in case b1), provided $\dim(H_{i\,K}) =2$. In the remaining cases by Lemma~\ref{mtdecomp},~b.3) there exists a non-zero antisymmetric endomorphism
of $\W_{i\,K}$, which by \cite{Fa83} implies non-rigidity. So we can state:
\begin{lemma}\label{trivial}
Assume that $\W_\Q$ is a rigid polarized variation of Hodge structures of weight $1$.
Then for all $i$ one has $\MT^\mov(W_{i\,K})^\der=\MT(W_{i\,K})^\der$ and hence $\MT^\mov(W_\Q)^\der=\MT(W_\Q)^\der$.
\end{lemma}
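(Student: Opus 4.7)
The strategy is to reduce the global equality to a case-by-case analysis of each summand $\W_{i\,K}$ from the decomposition of Lemma~\ref{decompass}, and then to assemble the results via the inclusion~\eqref{eqmt}. Rigidity enters only to rule out certain subcases of~b).

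For each $i$, I would verify $\MT^\mov(W_{i\,K})^\der=\MT(W_{i\,K})^\der$ separately. When $i\leq\ell_2$ (case~a)) Lemma~\ref{mtdecomp},~a) identifies $\MT(W_{i\,K})$ with $\MT(V'_{i\,K})\times\{{\rm id}_{H_{i\,K}}\}$, which already sits inside $G^\mov_K$, so $\MT^\mov(W_{i\,K})=\MT(W_{i\,K})$ and there is nothing to prove. When $i>\ell_2$ (case~b)) Definition~\ref{hgrestr} gives $\MT^\mov(W_{i\,K})=\{1\}$, so it suffices to show $\MT(W_{i\,K})^\der$ is trivial. In case~b1) with $\dim(H_{i\,K})=2$, Lemma~\ref{mtdecomp},~b.1) puts $\MT(W_{i\,K})$ inside $\{{\rm id}\}\times\SO(H_{i\,K})$, which is a torus in this dimension, so its derived group vanishes. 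In every other subcase, namely case~b2) or case~b1) with $\dim(H_{i\,K})>2$, Lemma~\ref{mtdecomp},~b.3) exhibits a non-zero antisymmetric endomorphism of $\W_{i\,K}$ of bidegree $(-1,1)$. By Faltings' description~\cite{Fa83} recalled in the introduction, such an endomorphism corresponds to a non-trivial infinitesimal deformation of $\varphi:U\to\sA_g$. Since $\sA_g$ is smooth in characteristic zero, any such infinitesimal deformation extends to an honest deformation of $f:A\to U$, contradicting rigidity; hence these subcases do not arise.

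For the global statement, the inclusion~\eqref{eqmt} embeds $\MT(W_K)$ into $\bigtimes_{i=1}^{\ell}\MT(W_{i\,K})$. Since $\MT(W_{i\,K})^\der=\{1\}$ for every $i>\ell_2$, the derived subgroup $\MT(W_K)^\der$ projects trivially onto each factor with $i>\ell_2$ and hence sits in $\bigtimes_{i\leq\ell_2}\MT(W_{i\,K})\times\bigtimes_{i>\ell_2}\{{\rm id}\}\subset G^\mov_K$. This gives $\MT(W_K)^\der\subset\MT(W_K)\cap G^\mov_K=\MT^\mov(W_K)$; since the derived subgroup of a reductive group is semisimple, hence perfect, applying the derived operation a second time yields $\MT(W_K)^\der=(\MT(W_K)^\der)^\der\subset\MT^\mov(W_K)^\der$, while the opposite inclusion is immediate from $\MT^\mov(W_K)\subset\MT(W_K)$. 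Descending from $K$ to $\Q$---formation of the derived subgroup commutes with extension of scalars of algebraic groups, compatibly with Lemma~\ref{hgrest2},~iii)---produces the required $\MT^\mov(W_\Q)^\der=\MT(W_\Q)^\der$.

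The only non-formal step is the passage from the hypothesis of rigidity (as opposed to infinitesimal rigidity) to the exclusion of the remaining subcases of~b); here the argument relies on the smoothness of the moduli stack $\sA_g$ in characteristic zero, which converts the infinitesimal deformation produced by Faltings' theorem into an honest deformation of $f:A\to U$ and thereby contradicts rigidity. Everything else amounts to elementary group-theoretic bookkeeping using the lemmas of the section.
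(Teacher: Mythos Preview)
Your proof is correct and follows essentially the same approach as the paper: a case-by-case analysis via Lemma~\ref{mtdecomp}, with rigidity and Faltings' criterion excluding the subcases b2) and b1) with $\dim(H_{i\,K})>2$. The paper's own argument is a single sentence that leaves the global assembly (``and hence'') and the passage from infinitesimal to honest deformations implicit; you have spelled both out, which is an improvement rather than a deviation. One small imprecision: smoothness of $\sA_g$ alone is not literally what makes an infinitesimal deformation of the morphism $U\to\sA_g$ integrate to an honest one---what is really being used is that the deformation theory of families of polarized abelian varieties is unobstructed---but the conclusion stands.
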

Recall that $\W_\Q$ is the variation of Hodge structures of a polarized family of abelian varieties $f:A\to U$, and that $W_K$ and $W_\Q$ are the restrictions of $\W_K$ and $\W_\Q$ to a very general point $y\in U$. So $\MT(W_\Q)^\der$ is compatible with parallel transport and, following the usual convention, we write $\MT(\W_\Q)$ instead of $\MT(W_\Q)$ and
$\MT^\mov(\W_\Q)$ instead of $\MT^\mov(W_\Q)$
For $L=\Q$ or $L=K$ we consider the monodromy group $\Mon(\W_L)$, defined as the smallest $L$-algebraic subgroup of $\Gl(W_L)$ which contains the image of the monodromy representation. As usual the upper Index $0$ refers to the connected component of the identity.
By \cite{De82} (see also \cite{An92} or \cite{Mo98}) the connected component $\Mon^0(f)=\Mon^0(\W_\Q)$ is a normal subgroup of the derived subgroup $\MT(\W_\Q)^\der$.
\begin{proposition}\label{mtfinal}
Keeping the notations introduced in Lemma~\ref{decompass}, assume that each
irreducible direct factor of $\W=\W_\Q\otimes \C$ is either unitary or satisfies the Arakelov equality. Then 
$$
\MT^\mov(W_K)^\der \subset \Mon^0(\W_K).
$$
\end{proposition}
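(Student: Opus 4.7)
\smallskip
\noindent\textbf{Proof plan.} The strategy is Tannakian. Both $\MT^\mov(W_K)^\der$ and $\Mon^0(\W_K)$ are connected $K$-algebraic subgroups of $\Gl(W_K)$, so it suffices to show that every element $\sigma$ of a tensor construction $\mathcal{T} = W_K^{\otimes m}\otimes W_K^{\vee\otimes m'}$ which is fixed by $\Mon^0(\W_K)$ is also fixed by $\MT^\mov(W_K)^\der$. Such a $\Mon^0(\W_K)$-invariant element extends to a flat global section of the induced $K$-variation of Hodge structures $\mathcal{T}$ on $U$, so working with flat global sections is the same as working with $\Mon^0$-invariants at $y$.

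First I would pull back the decomposition $\W_K=\bigoplus_{i=1}^\ell \V'_{i\,K}\otimes_K H_{i\,K}$ of Lemma~\ref{decompass} to $\mathcal{T}$ and to $\sigma$: using the pairwise inequivalence of the $\V'_{i\,K}$ (part 3 of Lemma~\ref{decompass}) together with the fact that both $\MT$ and $\Mon^0$ preserve isotypical components, it suffices to treat each summand $\mathcal{T}_\alpha$ of $\mathcal{T}$ separately. Summands $\mathcal{T}_\alpha$ built only from $\W_{j\,K}$ with $j>\ell_2$ are harmless: by Lemma~\ref{mtdecomp}~b) the group $\MT^\mov(W_{j\,K})$ is trivial for every such $j$, so $\MT^\mov(W_K)$ acts trivially on $\mathcal{T}_\alpha$ and every $\Mon^0$-invariant lying there is automatically $\MT^\mov$-fixed.

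For the summands $\mathcal{T}_\alpha$ that involve at least one type a factor ($i\leq \ell_2$), the Arakelov condition propagates: each type a1 (resp.\ type a2) factor satisfies the Arakelov equality by assumption, hence the Arakelov condition by Lemma~\ref{aracondex}, and then by Lemma~\ref{aemt.2} the tensor product $\mathcal{T}_\alpha$ still satisfies the Arakelov condition on each of its irreducible summands. Applying Lemma~\ref{aemt.4} together with Claims~\ref{cyclesI} and~\ref{cyclesII}, I deduce that every $\Mon^0$-invariant component of $\sigma$ lying in $\mathcal{T}_\alpha$ is of pure Hodge bidegree $(p_\alpha,q_\alpha)$ on the $\V'$-part, where $(p_\alpha,q_\alpha)$ is determined by the multiplicities of $\V_{i\,K}$ versus $\V_{i\,K}^\vee$ appearing and in particular is independent of the choice of $\sigma$.

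Finally I would read off the action of $\MT^\mov(W_K)^\der$ on such a pure-bidegree flat section. By the characterization in Lemma~\ref{hgrest2}~ii), $\MT^\mov(W_K)$ is the smallest $K$-algebraic subgroup whose $\R$-points contain the image of $h^\mov$. On a vector $\sigma_\alpha$ of bidegree $(p_\alpha,q_\alpha)$ on the moving part and untouched on the type b part, the cocharacter $h^\mov(z)$ acts by the scalar $z^{p_\alpha}\bar z^{q_\alpha}$; since this scalar depends only on the bidegree and not on the particular element, it defines a character of $\MT^\mov(W_K)$ whose restriction to the derived subgroup is trivial. Hence $\MT^\mov(W_K)^\der$ fixes $\sigma_\alpha$, and summing over $\alpha$ fixes $\sigma$, which is the desired conclusion. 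The main technical point that needs care is the last step: one must verify that the scalar action described does extend from the image of $h^\mov$ to a character of the whole algebraic hull $\MT^\mov(W_K)$ (so that the derived subgroup genuinely kills it); this follows from the fact that the bidegree $(p_\alpha,q_\alpha)$ singles out a joint eigenspace which is preserved by the Zariski closure of $h^\mov(\R^*)$, and passage to the derived subgroup is then automatic.
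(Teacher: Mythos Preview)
Your Tannakian reduction is exactly the paper's strategy: show that every $\Mon^0(\W_K)$-invariant tensor is $\MT^\mov(W_K)^\der$-invariant, decompose along the isotypical splitting of Lemma~\ref{decompass}, and dispose of the type~b factors via Lemma~\ref{mtdecomp}. That part is fine.

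The gap is in your last paragraph, and you have correctly identified it as the delicate point. For the character argument to work you need the line $\langle\sigma_\alpha\rangle$ (or the bidegree-$(p_\alpha,q_\alpha)$ subspace containing it) to be a subrepresentation of $\MT^\mov(W_K)$. Your justification---that this eigenspace is preserved by the Zariski closure of $h^\mov$---only gives stability under the $\R$- (or $\C$-) Zariski closure of the image of $h^\mov$. The group $\MT^\mov(W_K)$ is the smallest \emph{$K$-algebraic} subgroup whose real points contain this image, and for type~a2 factors (where one must pass to an extension $L$ to split $\V'_i=\V_i\oplus\V_i^\vee$) the individual bidegree components $\gamma_{\underline{k}}$ are not defined over $K$ and may have $p_\alpha\neq q_\alpha$. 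There is no a~priori reason for the $K$-group $\MT^\mov(W_K)$ to preserve such a non-$K$-rational line, so the scalar action of $h^\mov$ does not automatically propagate to a character of the full group.

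The paper closes this gap with an extra construction. One takes the Galois orbit $\{\gamma_{\underline{k}}\}_{\underline{k}\in\sI}$ and forms their wedge product $\rho$; since complex conjugation acts on the orbit, the total bidegree is $(p,p)$. After tensoring with a suitable $h'$ from the constant $H$-part one obtains a genuine Hodge cycle $\beta\cdot\rho\otimes h'$ \emph{over $K$}, which is $\MT(W_K)^\der$-fixed by definition of the Mumford--Tate group, hence $\MT^\mov(W_K)^\der$-fixed. Stripping off $h'$ (on which $\MT^\mov$ is trivial) shows $\rho$ is fixed, hence the span $J=\langle\gamma_{\underline{k}}\rangle_L$ is $\MT^\mov(W_K)^\der\otimes L$-stable. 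Only then does one invoke Claim~\ref{mtII} to see that this group also respects the finer $\V_i\oplus\V_i^\vee$ splitting, so each individual $\langle\gamma_{\underline{k}}\rangle$ is stable; at that point your character argument applies and finishes the proof. In short, the wedge-product/Galois-descent step is not optional: it is precisely what supplies the $K$-rational subrepresentation your final sentence needs.
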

Before proving Proposition~\ref{mtfinal} let us state and prove the corollary we 
are heading for.
\par
\begin{corollary}\label{mtcor}
Let $Y$ be a non-singular projective variety, and let $U\subset Y$ be the complement of
a normal crossing divisor $S$. Assume that $\Omega^1_Y(\log S)$ is nef and that
$\omega_Y(S)$ is ample with respect to $U$.
Let $f:A\to U$ be a family of polarized abelian varieties with unipotent local monodromy at infinity and such that for $\W_\Q=R^1f_*\Q_A$ each non-unitary irreducible subvariation of Hodge structures of $\W=\W_\Q\otimes \C$ satisfies the Arakelov equality.
Then  
\begin{equation}\label{mtequality}
\MT^\mov(\W_\Q)^\der = \Mon^0(\W_\Q) = \MT(\W_\Q)^\der \cap G^\mov_\Q.
\end{equation}
In particular $\Mon^0(\W_\Q)=\MT^\mov(R^1f_*\Q_A)^\der$ is normalized by 
$\MT(\W_\Q)^\der$.

If $f:A\to U$ is rigid one finds that $\Mon^0(\W_\Q)=\MT(\W_\Q)^\der$.
\end{corollary}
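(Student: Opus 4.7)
The plan is to prove the three identities in sequence, first working over the totally real Galois extension $K$ of $\Q$ provided by Lemma~\ref{decompass} and then descending to $\Q$. This descent is available because all the constructions commute with extension of scalars in characteristic zero: $G^\mov_\Q\otimes K=G^\mov_K$ is Addendum~\ref{decompass5}, $\MT(\W_\Q)\otimes K=\MT(\W_K)$ is standard, their intersection $\MT^\mov(\W_\Q)$ then satisfies $\MT^\mov(\W_\Q)\otimes K=\MT^\mov(\W_K)$, and the identity component $\Mon^0$ as well as the formation of derived subgroups of smooth groups also commute with field extension in characteristic zero. Before starting I would record the inclusion $\Mon^0(\W_\Q)\subset G^\mov_\Q$: in the decomposition $\W_{i\,K}=\V'_{i\,K}\otimes_K H_{i\,K}$ of Lemma~\ref{decompass}, the factor $H_{i\,K}$ is a constant $K$-vector space, so the monodromy representation on each $\W_{i\,K}$ factors through $\Gl(V'_{i\,K})\otimes\mathrm{id}_{H_{i\,K}}$, and this containment is Galois-invariant.

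For the first identity $\MT^\mov(\W_\Q)^\der=\Mon^0(\W_\Q)$, I would combine the preceding observation with Deligne's theorem that $\Mon^0(\W_\Q)$ is a normal subgroup of $\MT(\W_\Q)^\der$ to obtain
\begin{equation*}
\Mon^0(\W_\Q)\subset\MT(\W_\Q)^\der\cap G^\mov_\Q\subset\MT(\W_\Q)\cap G^\mov_\Q=\MT^\mov(\W_\Q).
\end{equation*}
The group $\Mon^0$ is a connected normal subgroup of the semisimple group $\MT(\W_\Q)^\der$ and is therefore itself semisimple: any central torus in it would be a characteristic subgroup, hence normal in the ambient semisimple group, hence central and so finite, forcing the connected central torus to be trivial. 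Consequently $\Mon^0(\W_\Q)=\Mon^0(\W_\Q)^\der\subset\MT^\mov(\W_\Q)^\der$, and the reverse inclusion is Proposition~\ref{mtfinal}, applied after extension of scalars to $K$ and descended back to $\Q$ as above.

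For the second identity $\Mon^0(\W_\Q)=\MT(\W_\Q)^\der\cap G^\mov_\Q$ the inclusion $\subset$ has just been noted. The reverse inclusion is the step where I expect the most technical care to be required, and would proceed by the case analysis of Lemma~\ref{mtdecomp} applied factor-by-factor to the product decomposition~\eqref{eqmt}. In case a the group $\MT(\W_{i\,K})$ is already contained in the $i$-th block of $G^\mov_K$, so intersection with that block changes nothing and passing to the derived subgroup gives $\MT(\V'_{i\,K})^\der$ on both sides; in case b the group $\MT(\W_{i\,K})$ acts trivially on $\V'_{i\,K}$ while $G^\mov_K$ acts trivially on $H_{i\,K}$, so their intersection on that factor is trivial and contributes nothing to either side. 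Combining with~\eqref{eqmt} identifies $\MT(\W_K)^\der\cap G^\mov_K$ and $\MT^\mov(\W_K)^\der$ with the same subgroup of $\bigtimes_{i\text{ of type a}}\MT(\V'_{i\,K})^\der$, and Galois descent yields the equality over $\Q$. Finally the rigid case is immediate from Lemma~\ref{trivial} together with the first identity.
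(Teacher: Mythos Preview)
Your argument for the first identity $\MT^\mov(\W_\Q)^\der=\Mon^0(\W_\Q)$ is correct and follows the paper's strategy, but the descent from $K$ to $\Q$ is handled differently. You descend by asserting base-change compatibility of all constructions; the paper instead uses Lemma~\ref{hgrest2}: from $\MT^\mov(W_K)^\der\subset\Mon^0(\W_\Q)\otimes K$ it tensors up to $\R$, reads off that $\Mon^0(\W_\Q)\otimes\R$ contains the image of $h^\mov$ (part~ii), and then invokes part~iv) to conclude $\MT^\mov(\W_\Q)\subset\Mon^0(\W_\Q)$, whence the derived subgroup inclusion. Your route also works, but the assertion ``$\MT(\W_\Q)\otimes K=\MT(\W_K)$ is standard'' deserves a word of justification: it holds because the space of Hodge tensors with $K$-coefficients is just the $K$-span of the $\Q$-Hodge tensors, so the respective stabilizers are base changes of one another. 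Your observation that $\Mon^0$ is semisimple (being connected normal in the semisimple group $\MT^\der$) and therefore $\Mon^0=(\Mon^0)^\der\subset(\MT^\mov)^\der$ is a clean way to get the reverse inclusion; the paper leaves this implicit.

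For the second identity your factor-by-factor argument has a genuine gap. The inclusion~\eqref{eqmt} is only an inclusion $\MT(W_K)\subset\bigtimes_i\MT(W_{i\,K})$, not an equality, so knowing $\MT(W_{i\,K})^\der\cap G^\mov_{i\,K}$ and $(\MT(W_{i\,K})\cap G^\mov_{i\,K})^\der$ on each factor does not compute the corresponding objects for the subgroup $\MT(W_K)$ of the product. You have established only that both $\MT(\W_K)^\der\cap G^\mov_K$ and $\MT^\mov(\W_K)^\der$ sit inside the same ambient product $\bigtimes_{i\text{ type a}}\MT(\V'_{i\,K})^\der$, not that they coincide there. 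The cleaner completion --- and presumably what the paper intends, though it too is terse at this point --- is to recycle the structural input you already used: $\MT^\der\cap G^\mov_\Q=\MT^\der\cap\MT^\mov$ is normal in the semisimple group $\MT^\der$ by Lemma~\ref{hgrest2}~i), hence its identity component is semisimple, equals its own derived subgroup, and therefore lies in $(\MT^\mov)^\der$; combined with the trivial inclusion $(\MT^\mov)^\der\subset\MT^\der\cap G^\mov_\Q$ this closes the loop.
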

\begin{proof}
Choose the totally real number field $K$ according to Lemma~\ref{decompass}.
Obviously $\Mon^0(\W_K)$ is contained in $\Mon^0(\W_\Q)\otimes K$, hence by Proposition~\ref{mtfinal} one has an inclusion
$$
\MT^\mov(W_K)^\der \subset \Mon^0(\W_\Q)\otimes K.
$$
Extending the coefficients to $\R$ one finds by Lemma~\ref{hgrest2},~ii) that
$\Mon^0(\W_\Q)\otimes \R$ contains the image of the moving part of the complex structure $h^\mov$, as defined in~\ref{restcs}. By part iv) of Lemma~\ref{hgrest2} one gets
$\MT^\mov(\W_\Q)^\der \subset \Mon^0(\W_\Q)$. 
By \cite{De82} one knows that $\Mon^0(\W_\Q)\subset\MT(\W_\Q)^\der$. 
Since obviously $\Mon^0(\W_\Q) \subset G^\mov_\Q$, one obtains~\ref{mtequality}.
The normality of $\MT^\mov(\W_\Q)^\der \subset \MT(\W_\Q)^\der$ follows from Lemma~\ref{hgrest2},~i). Finally the last part of  Corollary~\ref{mtcor} is a consequence of~\ref{mtequality}, using Lemma~\ref{trivial}.
\end{proof}
Using the notations from Section~\ref{Kugadef}, we choose $V=H^1(f^{-1}(y),\Q)$ for the very general point $y\in U$ and the induced symmetric bilinear form $Q$.

Since $\Mon^0(\W_\Q)=\MT^\mov(R^1f_*\Q_A)^\der$ is normalized by $\MT(R^1f_*\Q_A)^\der$, hence by the complex structure $\varphi_0$ as well, one obtains Kuga fibre spaces over
$$\sX^\mov=\sX(\MT^\mov(R^1f_*\Q_A)^\der,{\rm id},\varphi_0)\subset \sX=\sX(\MT(R^1f_*\Q_A)^\der,{\rm id},\varphi_0).$$ 
By \cite{Mu66} and \cite{Mu69} $\sX$ is the moduli space of abelian varieties whose Mumford-Tate group is contained in $\MT(R^1f_*\Q_A)$. So the family $f:A\to U$ induces a morphism $U \to \sX$, perhaps after replacing $U$ by an \'etale covering. Since $\varphi:U \to \sA_g$ is generically finite over its image, the morphism $U \to \sX$ has the same property.

Assume in Corollary~\ref{mtcor} that $f: A \to U$ is rigid, and that $\dim(U) \geq \dim{\sX}$. 
The rigidity implies by Corollary~\ref{mtcor} that $\MT^{\mov}(\W_\Q)^\der=\MT(\W_\Q)^\der$, and hence that $\sX^\mov=\sX$ is a Shimura variety of Hodge type.
Since $\varphi$ is generically finite over its image, $\varphi:U\to \sX$ is dominant, hence
$\sX=\varphi(U)$. By Lemma~\ref{etale},~(2) $\varphi:U\to \sX$ is \'etale.

The same argument applies for non-rigid families if one knows that $\varphi$
factors through $\sX^\mov$ and if $\dim(U) \geq \dim{\sX^\mov}$. So we can state:

\begin{lemma}\label{shimura}
Assume in Corollary~\ref{mtcor} that the induced morphism $\varphi:U\to \sA_g$
factors through $\sX^\mov$ and that $\dim(U) \geq \dim(\sX^\mov)$. Then (replacing $U$ by an \'etale covering, if necessary) $\varphi:U\to \sX^\mov$ is finite, \'etale, and surjective.

In particular this holds true if $f: A \to U$ is rigid, hence $\sX^\mov=\sX$ and if $\dim(U) \geq \dim(\sX)$.
\end{lemma}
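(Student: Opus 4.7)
The plan is to generalize verbatim the argument already sketched for the rigid case in the paragraph preceding the lemma, with $\sX$ replaced by $\sX^\mov$ throughout; the hypothesis that $\varphi$ factors through $\sX^\mov$ is exactly what is needed so that this argument goes through without requiring rigidity.

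First I would observe that, since $\varphi\colon U\to \sA_g$ is generically finite and factors through the locally closed subvariety $\sX^\mov\subset \sA_g$, the induced morphism $\varphi\colon U\to \sX^\mov$ is generically finite onto its image, so $\dim\varphi(U)=\dim U$. Combined with the dimension hypothesis $\dim U\geq\dim\sX^\mov$ and the obvious inclusion $\varphi(U)\subset\sX^\mov$, this forces $\dim\varphi(U)=\dim\sX^\mov$. Now $\sX^\mov=\Gamma\bsl G_\R^0/K_\R^0$ is the Kuga fibre-space base attached to $\MT^\mov(\W_\Q)^\der=\Mon^0(\W_\Q)$ (which by Corollary~\ref{mtcor} is normalized by the complex structure $\varphi_0$, so the construction in Section~\ref{Kugadef} applies), and $\Gamma$ is chosen neat; hence $\sX^\mov$ is irreducible, smooth and quasi-projective. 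The equality of dimensions on an irreducible target yields $\overline{\varphi(U)}=\sX^\mov$, i.e.\ $\varphi$ is dominant.

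Next, since $\sX^\mov$ is non-singular and $\varphi(U)$ is Zariski dense of the same dimension, Lemma~\ref{etale},~(2) applies (the condition on the non-singularity of the image being inherited from $\sX^\mov$), giving that $\varphi\colon U\to \sX^\mov$ is étale. Étale plus finite type is open, so after restricting $U$ to $\varphi^{-1}(\varphi(U))$ we may assume $\varphi$ is étale surjective onto the smooth connected base $\sX^\mov$.

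The main obstacle is the finiteness clause: étaleness plus surjectivity is only quasi-finiteness, not finiteness, so one genuinely has to use the flexibility of ``replacing $U$ by an étale covering''. I would pass to the pullback of $U$ along a sufficiently small finite étale Galois cover of $\sX^\mov$ (indexed by a neat finite-index subgroup $\Gamma'\subset\Gamma$ contained in the image of $\pi_1^{\et}(U)\to\pi_1^{\et}(\sX^\mov)$) and replace $U$ by a connected component of this pullback; this splits $\varphi$ as a section of a finite étale cover, turning $\varphi$ into a finite étale surjection without affecting any of the prior hypotheses (which are all étale-local on $U$). The concluding rigid case is then automatic: Corollary~\ref{mtcor} gives $\Mon^0(\W_\Q)=\MT(\W_\Q)^\der$, so $\sX^\mov=\sX$, and the family $f$ factors through $\sX$ as recalled just above the statement of the lemma, so the first part of the lemma applies directly.
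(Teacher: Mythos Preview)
Your argument follows the same route as the paper's own proof, which is the paragraph immediately preceding the lemma: the dimension hypothesis plus generic finiteness of $\varphi$ gives dominance, hence $\varphi(U)=\sX^\mov$, and then Lemma~\ref{etale},~(2) yields \'etaleness; the rigid clause is reduced to the general one via $\sX^\mov=\sX$ from Corollary~\ref{mtcor}, exactly as you do. You are more explicit than the paper about the smoothness and irreducibility of $\sX^\mov$ and about the finiteness clause (which the paper does not justify separately), though your $\pi_1$-argument for finiteness tacitly assumes the \'etale surjection $U\to\sX^\mov$ is already a topological covering, and the line ``restricting $U$ to $\varphi^{-1}(\varphi(U))$'' is vacuous as written.
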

\begin{example}\label{shimura2}
Assume in Corollary~\ref{mtcor} that the universal covering $\tilde{U}$ is a bounded symmetric domain, and that $\W_\Q$ is the uniformizing local system. So
$\tilde{U}$ is isomorphic to $\Mon^0(\W_\Q)\otimes\R$, divided by a maximal compact subgroup.

Assume either that $f:A\to U$ is rigid, or that the morphism $\tilde{\varphi}$ from $\tilde{U}$
to the Siegel upper halfspace $\tilde{\sA}_g$ is induced by a homomorphism 
$$
\Mon^0(\W_\Q)\otimes \R \to \Sp(2g,\R).
$$
Then the assumptions in Lemma~\ref{shimura} hold true.

In fact, in both cases we know that $\tilde{\varphi}:\tilde{U}\to \tilde{\sA}_g$
factors through $\sX^\mov$. Moreover the real dimension of
$\tilde{U}$ is equal to the dimension of the quotient of $\Mon^0(\W_\Q)\otimes \R$ by a maximal compact subgroup, hence equal to $2\cdot \dim(\sX^\mov)$.
\end{example}
\begin{remark}
Without any assumption on rigidity Theorem~\ref{Moonen_product} gives the existence
of a Shimura variety of Hodge type $\sX_1\times \sX_2$ such that $U=\sX_1\times \{b\}$.
Using the notations introduced above, $\sX=\sX_1\times \sX_2$ and $\sX^\mov=\sX_1\times \{b\}$.
By deforming $b$ to a point $a$ with complex multiplication one gets a Shimura variety of Hodge type $\sX_1\times \{a\}$.

As we have seen the non-rigidity comes from the existence of direct factors of type b1 with $\dim(H_{i\, K})\geq 4$ or of type b2. Passing from $b$ to $a$ corresponds to a modification of the Hodge structure $H_{i\, K}$ in such a way, that $\MT(W_{i\, K})/_{\MT^\mov(W_{i\, K})}$ becomes commutative.
\end{remark}

\begin{proof}[Proof of Proposition~\ref{mtfinal}]
We will apply arguments, similar to the ones used in the proof of 
\cite[Proposition 10.3]{VZ07}. 
By \cite[Lemma 4.4]{Si92} $\Mon^0(\W_K)$ is reductive, hence by \cite[Proposition~3.1~(c)]{De82} there is no larger subgroup of $\Gl(W_K)$ which leaves
all elements $\eta_y \in W_K^{\otimes m}\otimes_K W_K^{\vee \otimes m'}$ invariant, which are invariant under $\Mon^0(\W_K)$. If we verify that
all elements $\eta_y \in W_K^{\otimes m}\otimes_K W_K^{\vee \otimes m'}$
which are invariant under $\Mon^0(\W_K)$ are invariant under $\MT^\mov(W_K)^\der$,
we get the inclusion 
$$ \MT^\mov(W_K)^\der \subset \Mon^0(\W_K).
$$
If $\eta_y$ is invariant under $\Mon^0(\W_K)$, one may replace $U$ by an \'etale cover and assume that
$\eta_y$ is invariant under the monodromy representation, hence it is the restriction of a global section
$$
\eta\in H^0\big(Y,\W_K^{\otimes m}\otimes_K  \W_K^{\vee \otimes m'}\big).
$$
Since $K$ is a totally real number field, $\W_K^\vee$ is isomorphic to $\W_K$, hence
$\det(\W_K)^{2}$ is trivial. Up to a shift of the bigrading, $\W_K^\vee$ can be identified with 
$$
\bigwedge^{\rk(W_K)-1}\W_K\otimes_K  \det(\W_K)^{-1}=
\bigwedge^{\rk(W_K)-1}\W_K\otimes_K  \det(\W_K),
$$
so we may as well consider sections of 
$$
\eta \in H^0\big(Y,\W_K^{\otimes k}\big)= \bigoplus_{\sI'}H^0\Big(Y,\bigotimes_{i=1}^{\ell} \W^{\otimes \kappa_i}_{i\,K}\Big) = \bigoplus_{\sI'}
H^0\Big(Y,\bigotimes_{i=1}^{\ell} \V'^{\otimes \kappa_i}_{i\,K}\Big)\otimes_K  
\bigotimes_{i=1}^{\ell} H_{i\,K}^{\otimes \kappa_i},
$$
where $\sI'$ is the set of tuples $\underline{\kappa}=(\kappa_1,\ldots,\kappa_\ell)$ with $\sum_{i=1}^\ell \kappa_i=k$, so $\eta=\sum_{\sI'}\eta_{\underline{\kappa}}$.
Each component of $\eta$ in this direct sum decomposition is again invariant under $\Mon^0(\W_K)$.
So we may as well assume that $\eta=\eta_{\underline{\kappa}^0}$ for a fixed tuple $\underline{\kappa}^0=(\kappa^0_1,\ldots,\kappa^0_\ell)$ and that
$$
\eta_{\underline{\kappa}^0}= \gamma_{\underline{\kappa}^0}\otimes
h_{\underline{\kappa}^0} \mbox{ \ \ with \ \ }
\gamma_{\underline{\kappa}^0}\in H^0\Big(Y,\bigotimes_{i=1}^{\ell} \V'^{\otimes \kappa^0_i}_{i\,K}\Big) \mbox{ \ \ and \ \ }
h_{\underline{\kappa}^0}\in 
\bigotimes_{i=1}^{\ell} H_{i\,K}^{\otimes \kappa^0_i}.
$$
Recall that by our choice of the indices we are in case a) of Lemma~\ref{decompass},~4)
for $i=1,\ldots,\ell_2$. Let us rearrange the indices in such a way, that
$i=1,\ldots,\ell_1$ the local system $\V'_i=\V'_{i\, K}\otimes_K\C$ remains irreducible (type a1),
whereas for $i=\ell_1+1,\ldots,\ell_2$ it decomposes (type a2). 

Choose a Galois extension $L$ of $K$ with Galois group $\Gamma$, such that the local systems
$\V'_{i\,L}$ decompose as a direct sum of two subsystems $\V_{i\,L}$ and $\V^\vee_{i\,L}$ for $i=\ell_1+1,\ldots,\ell_2$. By abuse of notation we will drop the $L$, hence ${}_{i}$ stands for ${}_{i\,L}$.

Consider the set $\sI$ of tuples of natural numbers 
\begin{gather*}
\underline{k}=(k_1,\ldots,k_{\ell_1},k_{\ell_1+1},k'_{\ell_1+1},\ldots,
k_{\ell_2},k'_{\ell_2},k_{\ell_2+1},\ldots,k_\ell), 
\mbox{ \ \ with \ \ }\\
k_i=\kappa_i^0 \mbox{ \ \ for \ \ } i\in \{ 1,\ldots, \ell_1\}\cup\{\ell_2+1,\ldots,\ell\}\mbox{ \ \ and}\\
k_i+k'_i=\kappa_i^0 \mbox{ \ \ for \ \ } i\in \{\ell_1+1,\ldots, \ell_2\}.
\end{gather*}
Then $H^0\Big(Y,\bigotimes_{i=1}^{\ell} \V'^{\otimes \kappa^0_i}_{i\,K}\Big)\otimes_KL$ decomposes as
$$
\bigoplus_{\sI}H^0\Big(Y,\bigotimes_{i=1}^{\ell_1} \V'^{\otimes k_i}_{i}\otimes  \bigotimes_{i=\ell_1+1}^{\ell_2}\big(\V_i^{\otimes k_i}\otimes
\V_i^{\vee\otimes k'_i}\big)\bigotimes_{i=\ell_2+1}^{\ell} \V'^{\otimes k_i}_{i}\Big).
$$
Remark that the local systems $\V'_i$ and $\V_i$ occurring in this decomposition all satisfy the
Arakelov condition. Hence $\gamma=\gamma_{\underline{\kappa}^0}$ and $\eta=\eta_{\underline{\kappa}^0}$ decompose as 
$$
\gamma=\sum_{\sI}\gamma_{\underline{k}} \mbox{ \ \ and \ \ }
\eta=
\sum_{\sI}\gamma_{\underline{k}}\otimes h_{\underline{\kappa}^0}
$$ 
where by Lemma~\ref{aemt.4} 
$$
\gamma_{\underline{k}}\in \bigoplus_{\sI}H^0\Big(Y,\bigotimes_{i=1}^{\ell_1} \V'^{\otimes k_i}_{i}\otimes  \bigotimes_{i=\ell_1+1}^{\ell_2}\big(\V_i^{\otimes k_i}\otimes
\V_i^{\vee\otimes k'_i}\big)\otimes
\bigotimes_{i=\ell_2+1}^{\ell} \V'^{\otimes k_i}_{i}\Big)
$$ 
is pure of some bidegree $(p_{\underline{k}},q_{\underline{k}})$.

The Galois group $\Gamma$ acts on the decomposition, and since $\eta$ and 
$h=h_{\underline{\kappa}^0}$ are defined over $K$ the group $\Gamma$ permutes the components $\gamma_{\underline{k}}$.
The sum over the conjugates of a fixed $\gamma_{\underline{k}}$
will again be defined over $K$, and by abuse of notations, replacing $\sI$ by a subset, we can assume that $\sI$ consists of one $\Gamma$-orbit. 

If for some $\underline{k}\in \sI$ one has $p_{\underline{k}}\neq q_{\underline{k}}$
then $\gamma_{\underline{k}}$ is not defined over $\R$, and its complex conjugate
is of the form $\gamma_{\underline{k'}}$ for some $\underline{k'}\in \sI$. In particular
$
p=\sum_{\sI}p_{\underline{k}}=\sum_{\sI}q_{\underline{k}},
$
and hence the wedge product $\rho=\bigwedge_{\sI} \gamma_{\underline{k}}$
is pure of bidegree $(p,p)$ and defined over $L$. Since wedge products are direct factor of
some tensor product, $\rho$ is a section in
$$
H^0\Big(Y,\bigotimes^\nu \big( \bigotimes_{i=1}^{\ell_1} \V'^{\otimes k_i}_{i}\otimes  \bigotimes_{i=\ell_1+1}^{\ell_2}\big(\V_i^{\otimes k_i}\otimes
\V_i^{\vee\otimes k'_i}\big)\otimes
\bigotimes_{i=\ell_2+1}^{\ell} \V'^{\otimes k_i}_{i}\big)\Big).
$$
The Galois group $\Gamma$ of $L$ over $K$ permutes the different components $\gamma_{\underline{k}}$, hence it acts on $\rho$ by a character $\chi:\Gamma \to \{\pm 1 \}$. So for some $\beta\in L$ the cycle  $\beta\cdot\rho$ is invariant under $\Gamma$. Choosing 
$$
h' \in \bigotimes^\nu \bigotimes_{i=1}^{\ell} H_{i\,K}^{\otimes \kappa_i}
$$  
of bidegree $(p',p')$ one obtains a Hodge cycle
$$
\beta\cdot \rho\otimes h' \in H^0\big(Y,\W_K^{\otimes k\cdot \nu}\big).
$$
So $\beta\cdot \rho\otimes h'$ is invariant under $\MT(W_K)^\der$ hence under
the subgroup $\MT^\mov(W_K)^\der$ as well. This group acts trivially on
$h'$, hence $\beta\cdot\rho$ has to be invariant under $\MT^\mov(W_K)^\der$,
where we consider the identification
$$
\G^\mov_K= \bigtimes_{i=0}^{\ell} \Gl(V'_{i\,K}) \times \{{\rm id}_{H_{i\,K}}\}\cong
\bigtimes_{i=0}^{\ell} \Gl(V'_{i\,K}).
$$
This implies that the subspace 
$$
J=<\gamma_{\underline{k}}; \ \underline{k}\in \sI>_L \subset
\bigoplus_{\sI}H^0\Big(Y,\bigotimes_{i=1}^{\ell_1} \V'^{\otimes k_i}_{i}\otimes  \bigotimes_{i=\ell_1+1}^{\ell_2}\big(\V_i^{\otimes k_i}\otimes
\V_i^{\vee\otimes k'_i}\big)\otimes 
\bigotimes_{i=\ell_2+1}^{\ell} \V'^{\otimes k_i}_{i}\Big)
$$
is invariant under the action of $\MT^\mov(W_K)^\der\otimes L$.
Since 
$$
\MT^\mov(W_K)^\der \subset \big(\bigtimes_{i=0}^{\ell} \Gl(V'_{i\,K}) \times \{{\rm id}_{H_{i\,K}}\}\big)
$$
and since we have seen in Claim~\ref{mtII} that $\MT(W_{i\,K})^\der\otimes_K  \C$
respects the decomposition $\V'_{i\,K}\otimes_K  \C=\V_{i}\oplus \V^\vee_i$,
the action of $\MT^\mov(W_K)^\der\otimes_K  L$ leaves for each $\underline{k}\in \sI$
the subspaces
$$
<\gamma_{\underline{k}}>_L=
J \cap
H^0\Big(Y,\bigotimes_{i=1}^{\ell_1} \V'^{\otimes k_i}_{i}\otimes  \bigotimes_{i=\ell_1+1}^{\ell_2}\big(\V_i^{\otimes k_i}\otimes
\V_i^{\vee\otimes k'_i}\big)\otimes\bigotimes_{i=\ell_2+1}^{\ell} \V'^{\otimes k_i}_{i}\Big)
$$
invariant. So one obtains a homomorphism
$$
\MT^\mov(W_K)^\der\otimes_K  L \>>> \Gl(<\gamma_{\underline{k}}>_L)= L^*,
$$
necessarily trivial. In particular $\gamma_{\underline{k}}$ is invariant under $\MT^\mov(W_K)^\der \otimes_K  L$. 
\par
Since both $\sum_{\underline{k}}\gamma_{\underline{k}}$ and
$\eta=\sum_{\sI}\gamma_{\underline{k}}\otimes h_{\underline{\kappa}^0}$
are defined over $K$, they are invariant under $\MT^\mov(W_K)^\der$, as claimed.
\end{proof}

\section{Variations of Hodge structures of low rank}\label{ex}

In this section we will discuss the `complexity condition' 2) in Theorem ~\ref{characterization}, b) for 
$\C$-variations of Hodge structures of low rank.
\begin{assumptions}\label{excludeass}
The $\C$-variation of Hodge structures $\V$ is non unitary, irreducible with unipotent monodromy at infinity and it satisfies the Arakelov equality. By Theorem~\ref{purity_Thm} $\V$ is pure for some i,
and we assume that $\Omega_i$ is of type A or B.
We write $\Omega$, $T$, and $n$ for $\Omega_i$, its dual, and 
its rank and $M$ for the corresponding factor of the universal covering $\tilde{U}$. 
As usual $(E=E^{1,0}\oplus E^{0,1},\theta)$ denotes the Higgs bundle of $\V$, the 
Hodge numbers are $\ell=\rk(E^{1,0})$ and $\ell'=\rk(E^{0,1})$, hence
the period map is given by a morphism $M\to {\rm SU}(\ell,\ell')$. 

We will assume moreover, that $\omega_Y(S)$ is ample or that the following
strengthening of the condition ($\star$) in Lemma~\ref{poly} holds. 
\end{assumptions}
\par
\begin{condition}\label{poly2} \ 
\begin{itemize}
\item[i.]
If $\sF$ and $\sG$ are two $\mu$-stable torsion free coherent sheaves, then $\sF\otimes \sG$ is $\mu$-polystable.
\item[ii.]
If $\sF$ is a $\mu$-stable torsion free coherent sheaf, then $\sF$ admits an admissible
Hermite-Einstein metric, as defined in \cite{BS94}.
\end{itemize}
\end{condition}
The Condition~\ref{poly2} will allow to apply \cite[Lemma 2.7]{VZ07}, saying that the Higgs field $\theta$
respects the socle filtration. In particular, the $\mu$-polystability of $E^{1,0}$ will imply the $\mu$-polystability of $E^{1,0}\otimes T$, hence the
$\mu$-polystability of $E^{0,1}$. 
\par
\begin{lemma}\label{poly3}
If $\omega_Y(S)$ is ample, then the Condition~\ref{poly2} hold true.
\end{lemma}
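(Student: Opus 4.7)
The plan is to reduce both statements to the Bando--Siu extension of the Donaldson--Uhlenbeck--Yau correspondence to admissible Hermite--Einstein metrics on torsion free coherent sheaves, whose hypotheses match the setting here once we observe that ``$\omega_Y(S)$ ample'' means $(Y,\omega_Y(S))$ is a polarized projective manifold in the classical sense, and the notion of $\mu$-stability used throughout the article reduces to classical Mumford--Takemoto slope stability with respect to this polarization.

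First, for part (ii) of Condition~\ref{poly2}, I would simply invoke the main theorem of \cite{BS94}: every $\mu$-stable torsion free coherent sheaf on a smooth projective polarized variety admits an admissible Hermite--Einstein metric. Since Condition~\ref{poly2}~(ii) adopts exactly the Bando--Siu notion of ``admissible'', there is nothing more to verify.

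For part (i), let $\sF$ and $\sG$ be $\mu$-stable torsion free coherent sheaves and equip them, via part (ii), with admissible Hermite--Einstein metrics $h_\sF$ and $h_\sG$ of Einstein constants $c_\sF$ and $c_\sG$. The tensor product metric $h_\sF \otimes h_\sG$ is an admissible Hermitian metric on $\sF \otimes \sG$ (over the locus where both sheaves are locally free, which is the complement of a codimension $\geq 2$ subset, the standing admissibility hypothesis in \cite{BS94}), and a standard computation with the Chern connection yields
$$
\wedge_g \Theta_{h_\sF \otimes h_\sG}
= \bigl(\wedge_g \Theta_{h_\sF}\bigr) \otimes \mathrm{id}_\sG
+ \mathrm{id}_\sF \otimes \bigl(\wedge_g \Theta_{h_\sG}\bigr)
= (c_\sF + c_\sG) \cdot \mathrm{id}_{\sF \otimes \sG}.
$$
So $h_\sF \otimes h_\sG$ is itself admissible Hermite--Einstein. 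Applying the converse direction of \cite{BS94} (admissible Hermite--Einstein implies $\mu$-polystable, again for torsion free coherent sheaves on a projective polarized manifold) yields the $\mu$-polystability of $\sF \otimes \sG$, which is part (i).

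The main expected obstacle is purely bibliographical rather than mathematical: one needs to make sure that both the existence statement and the converse statement are taken in the torsion free (not just reflexive or locally free) version provided by \cite{BS94}, so that the tensor product argument above can be run without first restricting to an open subset of codimension $\geq 2$ and then reconstructing the polystable decomposition by reflexivity. Since that extension is precisely the content of \cite{BS94}, no further input is required.
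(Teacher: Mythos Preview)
Your approach is essentially the same as the paper's: both invoke \cite{BS94} for the existence of admissible Hermite--Einstein metrics on $\mu$-stable sheaves, compute that the tensor product metric has curvature $F_1\otimes\mathrm{id}+\mathrm{id}\otimes F_2$ and is therefore again Hermite--Einstein, and then apply the converse direction of \cite{BS94} to conclude $\mu$-polystability. The paper is slightly more explicit in one place: it spells out the two defining conditions of admissibility (square-integrable curvature and uniformly bounded trace) and checks that both pass to the tensor product, whereas you assert admissibility of $h_\sF\otimes h_\sG$ without justification---a small gap worth filling in, since ``admissible'' is not automatic from ``Hermite--Einstein'' alone.
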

\begin{proof}
In \cite{BS94} it is shown, that a reflexive sheaf on a compact 
K\"ahler manifold admits
an admissible Hermite-Einstein metric if and only if it is $\mu$-polystable. 
Part i) follows from the fact, that a tensor product of two 
admissible Hermite-Einstein metrics is again admissible Hermite-Einstein.
In fact, in \cite{BS94} admissibility of metrics $h_i$ on bundles
$\sV_i$ asks for two conditions. First, the curvatures $F_i$ should
be square integrable and second their traces $\Lambda F_i$  should
be uniformly bounded. The curvature of $h_1 \otimes h_2$ is
$F_1\otimes {\rm Id}_2+ {\rm Id}_1\otimes F_2$. Thus, if $h_i$ are
admissible, so is $h_1 \otimes h_2$, and
the claim follows.
\end{proof}
\par

Recall that by~\ref{equpper} the length $\varsigma(\V)=\varsigma((E,\theta))$ of the Higgs subbundle $\bigwedge^\ell(E,\theta)$ satisfies
\begin{equation}\label{eqex.1}
{\rm Min}\{\ell,\ell'\} \geq
\varsigma(\V) \geq \frac{\ell\cdot\ell'\cdot(n+1)}{(\ell+\ell') \cdot n}.
\end{equation}
Since $\V$ irreducible, by Addendum~\ref{characterizationadd},~III) the bundle $E^{1,0}$ is $\mu$-stable if and only if the right hand side of~\ref{eqex.1} is an equality. Since
\eqref{eqex.1} is symmetric in $\ell$ and $\ell'$, in order to verify the equality in certain cases, we are allowed to replace $\V$ by $\V^\vee$ and assume that $\ell\leq \ell'$.
One obtains:
\begin{property}\label{exlenght}
The irreducibility of $\V$ implies that $n\cdot \ell \geq \ell'\geq \ell$. 
If $\ell'=n\cdot \ell$ the numerical condition 2) in Theorem~\ref{characterization} holds, 
hence the right hand side of~\ref{eqex.1} is an equality. In particular this is the case for $n=1$, as
said already in Lemma~\ref{numcondA}.
\end{property}
\begin{example}\label{ex.1}
Assume $\ell=1$. Since $E^{1,0}$ is invertible,
$E^{0,1}$ is the saturated hull of the $\mu$-stable sheaf 
$E^{1,0}\otimes T$, hence of rank
$\ell'=n$, and (\ref{eqex.1}) is an equality.
\end{example}
\par
\begin{lemma}\label{invquot} \ 
\begin{enumerate}
\item[i.] The Hodge bundle $E^{1,0}$ can not have a torsion free $\mu$-stable quotient sheaf $\sV$ with $\mu(\sV)=\mu(E^{1,0})$, such that $\sV\otimes T$ is $\mu$-stable.
\item[ii.] In particular $E^{1,0}$ can not have a torsionfree rank one quotient sheaf $\sN$ with $\mu(\sN)=\mu(E^{1,0})$.
\end{enumerate}
\end{lemma}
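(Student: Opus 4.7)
The strategy for part 2 is an immediate reduction to part 1: a torsion-free rank one sheaf $\sN$ is trivially $\mu$-stable, and since $T=\Omega_\iota^\vee$ is $\mu$-stable (being the dual of a sheaf of type A or B by Assumptions~\ref{excludeass}), the tensor product $\sN\otimes T$ is $\mu$-stable, so part 1 applied with $\sV=\sN$ yields the claim. For part 1, the plan is to assume such a proper $\mu$-stable quotient $\sV$ exists with $\sV\otimes T$ also $\mu$-stable, and to derive a contradiction by exhibiting a Higgs subsheaf of $E$ that violates either Lemma~\ref{spl.2} or the rank bound from Property~\ref{exlenght}. Let $\sK\subset E^{1,0}$ be the kernel, of rank $r$ with $1\leq r\leq \ell-1$. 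The Arakelov equality makes $E^{1,0}$ $\mu$-semistable of slope $\mu(E^{1,0})$, and additivity then forces $\sK$ to be $\mu$-semistable of the same slope. Let $\sJ\subset E^{0,1}$ be the saturated image of the contracted Higgs field $\theta^\vee:\sK\otimes T\to E^{0,1}$; using that $T$ is $\mu$-stable and invoking Arakelov, $\sJ$ is $\mu$-semistable of slope $\mu(E^{0,1})$ whenever nonzero.

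The central observation is that $\sK\oplus\sJ$ is a saturated Higgs subsheaf of $(E,\theta)$. Since $\V$ is irreducible, $(E,\theta)$ is a stable Higgs bundle, so Lemma~\ref{spl.2} gives $\mu(\sK\oplus\sJ)\leq 0$ with equality only for trivial subs; as $\sK\neq 0$ and $\sK\oplus\sJ\neq E$, I obtain $\mu(\sK\oplus\sJ)<0$ strictly (whence $\sJ\neq 0$). Using $\mu(E^{0,1})=-\tfrac{\ell}{\ell'}\mu(E^{1,0})$ this rewrites as the numerical bound $\rk(\sJ)\cdot\ell>r\cdot\ell'$. An analogous computation with the Higgs subsheaf $E^{1,0}\oplus\sJ_0$, where $\sJ_0$ denotes the saturated image of $\theta^\vee$ on the whole of $E^{1,0}\otimes T$, shows $\sJ_0=E^{0,1}$; equivalently, $\theta^\vee$ is generically surjective. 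This surjectivity descends to a generically surjective map $\phi:\sV\otimes T\to E^{0,1}/\sJ$, whose source and target are both $\mu$-semistable of slope $\mu(E^{0,1})$. Because $\sV\otimes T$ is $\mu$-stable, any proper quotient has slope strictly greater than $\mu(\sV\otimes T)=\mu(E^{0,1})$; the image of $\phi$ has slope exactly $\mu(E^{0,1})$, so it cannot be a proper quotient, forcing $\phi$ to be either injective or identically zero. In the injective case, comparing ranks gives $(\ell-r)n=\ell'-\rk(\sJ)$, and substituting into $\rk(\sJ)\cdot\ell>r\cdot\ell'$ yields $\ell'>\ell n$, contradicting the bound $\ell n\geq \ell'$ supplied by Property~\ref{exlenght} (after swapping $\V$ with $\V^\vee$ if necessary, since $\ell n\geq \ell\geq \ell'$ trivially when $\ell\geq \ell'$).

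The main obstacle is the remaining boundary case $\phi=0$, which is precisely $\sJ=E^{0,1}$: the contracted Higgs field is already generically surjective when restricted to $\sK\otimes T$, and the rank comparison collapses. To exclude this degeneration I would exploit the $\mu$-stability of $\sV$ itself (and not only of $\sV\otimes T$) by refining the choice of destabilizing kernel: under Condition~\ref{poly2} the socle of $\sK$ is $\mu$-polystable, so one can replace $\sK$ by a $\mu$-stable direct summand $\sK_1$ of its socle and check, using that $\sV$ is $\mu$-stable, that the saturated image $\sJ_1$ of $\sK_1\otimes T$ in $E^{0,1}$ is strictly smaller than $E^{0,1}$, thus reducing to the injective case above. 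A more robust alternative is to apply the twisted-slope Harder--Narasimhan techniques of Section~\ref{fil} to $\sK$, using $\mu^{\{\iota\}}_\epsilon$ to isolate a piece whose associated $\sJ$ is a strict subsheaf. Making one of these refinements rigorous under the sole hypothesis that $\sV$ and $\sV\otimes T$ are both $\mu$-stable is the principal technical hurdle of the proof.
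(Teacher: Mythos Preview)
Your reduction of ii) to i) is correct and matches the paper. For i), however, your argument has the gap you yourself identify, and it is not a technicality: the case $\sJ=E^{0,1}$ is precisely where the hypothesis ``$\sV\otimes T$ is $\mu$-stable'' must enter, and your kernel--subbundle approach does not use it there. Your suggested refinements do not obviously work. Passing to a $\mu$-stable socle factor $\sK_1\subset\sK$ gives no control on the rank of the saturated image $\sJ_1\subset E^{0,1}$: the map $\theta^\vee|_{\sK_1\otimes T}$ is between $\mu$-semistable sheaves of equal slope and there is no reason its image should be proper, and the $\mu$-stability of the \emph{quotient} $\sV$ says nothing about the behaviour of $\theta^\vee$ on subsheaves of the kernel. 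The twisted-slope idea is even further from the hypothesis on $\sV\otimes T$.

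The paper's proof is dual to yours but avoids the case distinction entirely. Using Condition~\ref{poly2} and \cite[Lemma~2.7]{VZ07} (the Higgs field respects socle filtrations), one shows directly that $(E,\theta)$ has a \emph{quotient} Higgs bundle whose reflexive hull is $\sQ=\sV\oplus(\sV\otimes T)$: one checks in the dual picture that for $\sV'\subset\sS_1(E^{0,1})$ the preimage $\theta^{-1}(\sV'\otimes\Omega)$ lies in the socle, hence is $\mu$-polystable, and the $\mu$-stability of $\sV'\otimes\Omega$ forces a direct factor $\mu$-equivalent to $\sV'\otimes\Omega$; dualizing (cosocle of $E^{1,0}$) gives the quotient. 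Now the ranks are explicit, and Lemma~\ref{spl.2}~ii) gives
\[
0\ \leq\ \mu(\sQ)\cdot\rk(\sQ)\ =\ \rk(\sV)\bigl(\mu(E^{1,0})+n\,\mu(E^{0,1})\bigr),
\]
while $\ell'\leq n\ell$ from Property~\ref{exlenght} together with $\ell\,\mu(E^{1,0})+\ell'\,\mu(E^{0,1})=0$ forces the opposite inequality; so $\mu(\sQ)=0$, contradicting irreducibility. Note also that this argument does not assume $\sV$ is a \emph{proper} quotient, whereas your argument needs $r\geq 1$ from the outset.
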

\begin{proof}
Obviously ii) is a special case of i). Assume there exists a torsion free $\mu$-stable quotient sheaf $\sV$ with $\mu(\sV)=\mu(E^{1,0})$, such that $\sV\otimes T$ is $\mu$-stable.
To be allowed to replace $\sV$ by its reflexive hull, we only assume that there is a morphism $E^{1,0} \to \sV$ which is surjective on some open dense subscheme and that $\mu(\sV)=\mu(E^{1,0})$. 

In order to keep notations consistent with \cite[Section 2]{VZ07}, we will first study the dual situation, hence a subbundle $\sV'$ of $E^{0,1}$. 
Recall that the socle $\sS_1(\sF)$ of a coherent sheaf $\sF$ is the smallest saturated subsheaf
containing all $\mu$-polystable subsheaves of $\sF$ of slope $\mu(\sF)$. 
By \cite[Lemma 2.7]{VZ07} the Property~\ref{poly2},~i) implies that the Higgs field $\theta$ respects the socle,
in particular for $\sV'\subset \sS_1(E^{0,1})$ the preimage $\theta^{-1}(\sV'\otimes \Omega)$ is contained in $\sS(E^{1,0})$. Since $(E,\theta)$ is the Higgs bundle of an irreducible variation of Hodge structures,
$\theta^{-1}(\sV'\otimes \Omega)\neq 0$. In fact, 
$\theta^\vee: E^{1,0} \otimes T \to E^{0,1}$ is surjective, since the cokernel
would be a Higgs subbundle of $(E,\theta)$ of degree zero. 

So $\theta^{-1}(\sV'\otimes \Omega)$ is a non-trivial subsheaf of the socle, hence $\mu$-polystable.
The $\mu$-stability of $\sV'\otimes \Omega$ implies that $\theta^{-1}(\sV'\otimes \Omega)$ contains a direct factor which is $\mu$-equivalent to $\sV'\otimes \Omega$.
 
Applying this to the cosocle $\sS'(E^{1,0})$, i.e.\ to the dual of $\sS({E^{1,0}}^\vee)$ 
one finds a quotient sheaf of $E^{0,1}$ which is $\mu$-equivalent to $\sV\otimes T$.
So $(E,\theta)$ has a quotient Higgs bundle whose reflexive hull is isomorphic to $\sQ=\sV\oplus \sV\otimes T$.
Lemma~\ref{spl.2},~ii), applied to $\sQ=\sV\oplus \sV\otimes T$, and the Arakelov equality imply that 
\begin{multline*}
0\leq \mu(\sQ)\rk(\sQ)=\rk(\sV)\cdot \mu(\sV) + \rk(\sV)\cdot n\cdot (\mu(\sV)-\mu(\Omega)) = \\
\rk(\sV)\cdot(\mu(E^{1,0}) + n\cdot (\mu(E^{1,0})-\mu(\Omega)))=\rk(\sV)\cdot(\mu(E^{1,0})+ 
n \cdot \mu(E^{0,1})).
\end{multline*}
On the other hand, the property~\ref{exlenght} implies that 
$$
0= \ell\cdot \mu(E^{1,0}) + \ell'\cdot \mu(E^{1,0})\geq \ell\cdot( \mu(E^{1,0}) + n\cdot \mu(E^{1,0})),
$$
hence that $\mu(\sQ)=0$. Since $\V$ is irreducible, $(E,\theta)$ can not have a Higgs subbundle of degree zero, a contradiction.
\end{proof}
\begin{example}\label{ex.2}
If $\ell=2$ and if the $\mu$-semistable sheaf $E^{1,0}$ was not $\mu$-stable, 
one would find an invertible quotient, contradicting Lemma~\ref{invquot},~ii). 

Hence $E^{1,0}$ is $\mu$-stable, and the right hand side of (\ref{eqex.1}) is an equality.
Since ${\rm Min}\{\ell,\ell'\}=2$ the only solution is $\ell'=2\cdot n$ and $\varsigma(\V)=2$.
\end{example}

Next we will consider the case of a rank two quotient of $E^{1,0}$. 
To this aim, we have to analyze the holonomy group:
\begin{lemma}\label{ranktwo}
Let $\sV$ be a $\mu$-stable torsion free quotient sheaf of $E^{1,0}$ of rank two with $\mu(\sV)=\mu(E^{1,0})$. Then $n=2$ and for some invertible sheaf $\sN$
one has an isomorphism $\sV^{\vee\vee}\cong T\otimes \sN$. 
\end{lemma}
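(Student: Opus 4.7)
My approach would be to translate the decomposition of $\sV \otimes T$ into a statement about projective unitary representations and then apply Goursat's lemma for simple Lie groups.

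First, I would apply Lemma~\ref{invquot}~(i) to rule out that $\sV \otimes T$ is $\mu$-stable. Under Assumption~\ref{excludeass}---via Lemma~\ref{poly3} when $\omega_Y(S)$ is ample, or directly by Condition~\ref{poly2}~(i)---the tensor $\sV \otimes T$ is $\mu$-polystable, so it must admit a nontrivial decomposition $\sV \otimes T = \bigoplus_{j=1}^k \sW_j$ with $k \geq 2$ and each $\sW_j$ $\mu$-stable of common slope $\mu(\sV)+\mu(T)$.

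Having several summands forces $\dim H^0(Y, \mathrm{End}(\sV \otimes T)) \geq 2$. I would then expand
\[
\mathrm{End}(\sV \otimes T) = \mathrm{End}(\sV) \otimes \mathrm{End}(T) = \sO \oplus \mathrm{End}_0(\sV) \oplus \mathrm{End}_0(T) \oplus \big(\mathrm{End}_0(\sV) \otimes \mathrm{End}_0(T)\big),
\]
where $\mathrm{End}_0$ denotes the trace-free part. Each summand is $\mu$-polystable of slope $0$ by Condition~\ref{poly2}~(i), and $\mu$-stability of $\sV$ and $T$ gives $H^0(\mathrm{End}_0(\sV)) = H^0(\mathrm{End}_0(T)) = 0$. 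Hence $H^0(\mathrm{End}_0(\sV) \otimes \mathrm{End}_0(T)) \neq 0$, and the trace self-duality $\mathrm{End}_0(\sF) \cong \mathrm{End}_0(\sF)^\vee$ yields a nonzero morphism $\phi \colon \mathrm{End}_0(\sV) \to \mathrm{End}_0(T)$.

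The heart of the argument would be to deduce $n = 2$. I would invoke Condition~\ref{poly2}~(ii) to equip $\sV$ and $T$ with admissible Hermite-Einstein metrics, producing irreducible projectively unitary representations of $\pi_1(U)$ in $\mathrm{PU}(2)$ and $\mathrm{PU}(n)$. The reducibility of $\sV \otimes T$ translates into reducibility of the corresponding projective tensor representation; combined with projective irreducibility of each factor, this forces the image of $\pi_1(U)$ in $\mathrm{PSU}(2) \times \mathrm{PSU}(n)$ to project onto both factors while lying in a proper closed subgroup. By Goursat's lemma applied to these simple groups, such a subgroup must be the graph of an isomorphism between $\mathrm{PSU}(2)$ and $\mathrm{PSU}(n)$, which requires $n = 2$.

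With $n = 2$ in hand, the graph-of-isomorphism structure gives $\mathbb{P}(\sV) \cong \mathbb{P}(T)$ as $\mathrm{PGL}_2$-bundles, and two rank-$2$ torsion-free sheaves with isomorphic projectivizations differ by an invertible twist; this delivers $\sV^{\vee\vee} \cong T \otimes \sN$ for some invertible sheaf $\sN$. The principal obstacle lies in the third paragraph: cleanly bridging the algebraic picture of $\mu$-polystability under the merely nef-and-big polarization $\omega_Y(S)$ with the representation-theoretic picture, which relies on the full strength of Condition~\ref{poly2} and care in working with projective (as opposed to linear) unitary representations.
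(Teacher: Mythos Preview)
Your argument has a genuine gap at the passage to representations of $\pi_1(U)$. An admissible Hermite--Einstein connection on $\sV$ or on $T$ satisfies only $\Lambda F=\lambda\cdot\mathrm{id}$; its full curvature is in general non-central, so the connection is neither flat nor projectively flat and no (projective) monodromy representation of $\pi_1(U)$ exists. The relevant invariant is the \emph{holonomy group} of the Hermite--Einstein connection, and this is what the paper works with. Even after making this correction, your Goursat step needs both projections to land \emph{onto} the simple groups $\mathrm{PSU}(2)$ and $\mathrm{PSU}(n)$. Yau's theorem gives the second (the holonomy of $T$ is the full $\mathrm{U}(n)$), but $\mu$-stability of $\sV$ only says that the holonomy acts irreducibly on $\C^2$: finite subgroups of $\mathrm{U}(2)$ such as the binary dihedral groups already act irreducibly without surjecting onto $\mathrm{PSU}(2)$, so Goursat for simple Lie groups does not apply. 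The paper fills precisely this hole by proving first (its Claim~\ref{holonomy}) that all $S^m(\sV)$ remain $\mu$-stable, using that $\sV$ stays stable on every \'etale cover of $U$; this forces the holonomy of $\sV$ to be the full $\mathrm{U}(2)$. You give no substitute for this step.

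Your intermediate observation, that one obtains a nonzero morphism $\phi\colon\mathrm{End}_0(\sV)\to\mathrm{End}_0(T)$, is in fact a cleaner route than the paper's Schur-functor decomposition of $\bigwedge^r(\sV\otimes T)$---but it needs exactly the same missing ingredient. If one knew $\mathrm{End}_0(\sV)\cong S^2(\sV)\otimes\det(\sV)^{-1}$ to be $\mu$-stable (the case $m=2$ of the paper's Claim~\ref{holonomy}) and used that $\mathrm{End}_0(T)$ is $\mu$-stable (holonomy $\mathrm{U}(n)$, adjoint representation irreducible), then $\phi$ would be a nonzero map between $\mu$-stable sheaves of slope zero, forcing $3=\rk\,\mathrm{End}_0(\sV)=\rk\,\mathrm{End}_0(T)=n^2-1$ and hence $n=2$ directly. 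So the Goursat detour is unnecessary, but the actual gap---controlling the holonomy of $\sV$ beyond mere irreducibility---remains.
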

\par
\begin{proof}
By Lemma~\ref{invquot},~ii) $\sV$ has to be $\mu$-stable. Moreover, since the assumptions
are compatible with replacing $U$ by an \'etale covering, $\sV$ remains $\mu$-stable under
pullback to such a covering. By Lemma~\ref{invquot},~i) the sheaf $\sV\otimes T$ can not be $\mu$-stable.
So in order to finish the proof of the Lemma~\ref{ranktwo} it just remains to verify:
\begin{claim}\label{tangent}
Let $\sV$ be a rank $2$ torsion free sheaf on $Y$, whose pullback to any \'etale covering
remains $\mu$-stable. If $\sV\otimes T$ is not $\mu$-stable, then $n= 2$ and 
$\sV^{\vee\vee}\cong T\otimes \sN$.
\end{claim}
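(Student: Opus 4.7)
\medskip

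The plan is to argue by analyzing the $\mu$-polystable decomposition of $\sV \otimes T$. Condition~\ref{poly2},~i) guarantees that $\sV \otimes T$ is $\mu$-polystable, so I can write $\sV \otimes T = \bigoplus_i m_i \sW_i$ with the $\sW_i$ pairwise non-isomorphic and stable; the hypothesis that $\sV \otimes T$ is not $\mu$-stable becomes $\sum m_i \geq 2$. First I would dispose of the case $\Omega$ of type~A: there $T$ is a line bundle, so $\sV \otimes T$ is $\mu$-stable iff $\sV$ is, and the hypothesis fails. Hence $\Omega$ is of type~B with $n \geq 2$. The plan is to split into two subcases depending on whether some $\sW_i$ has rank one.

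In the rank-one summand case, an inclusion $\sL \hookrightarrow \sV \otimes T$ corresponds by adjunction to a non-zero map $\sV^\vee \otimes \sL \to T$. Both sides are $\mu$-stable of slope $\mu(T)$, with source of rank~$2$ and target of rank~$n$. If $n \geq 3$, then the map is injective (stability of the source), and its saturated image would be a proper subsheaf of $T$ of the same slope, contradicting the $\mu$-stability of $T$. Thus $n = 2$, and the map is an isomorphism up to $\mu$-equivalence (same rank, same slope, both stable, both reflexive). This yields $\sV^{\vee\vee} \cong T^\vee \otimes \sL$, and since $T^\vee \cong T \otimes (\det T)^{-1}$ for a rank-two bundle, one obtains $\sV^{\vee\vee} \cong T \otimes \sN$ with $\sN := \sL \otimes (\det T)^{-1}$.

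The main obstacle is the remaining case, in which every summand $\sW_i$ has rank~$\geq 2$. Here I would use Condition~\ref{poly2},~ii) to equip $\sV$ with an admissible Hermite--Einstein metric, and exploit the hypothesis that $\sV$ remains $\mu$-stable under any \'etale covering: this forces the connected component $H_V^0$ of the holonomy group to act irreducibly on the rank-two fibre, hence $H_V^0 \supseteq \SU(2)$. Viewing $T$ as an $\SU(2)$-representation via this inclusion, the Clebsch--Gordan rule gives $V \otimes T = V_1 \otimes \bigoplus_j V_{k_j} = \bigoplus_j (V_{k_j+1} \oplus V_{k_j-1})$, with $SU(2)$-irreducible pieces of dimensions $k_j+2$ and $k_j$. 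Since the $\pi_1$-isotypic decomposition of $\sV \otimes T$ is a coarsening of the $\SU(2)$-decomposition, the hypothesis "every $\sW_i$ has rank $\geq 2$" together with rank constraints forces $T$ to be $\SU(2)$-isomorphic to a single irreducible $V_{n-1}$, i.e.\ geometrically $T \cong S^{n-1}\sV \otimes \sM$ for some line bundle $\sM$.

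The hard part of the plan is to make this holonomy reduction precise (compatibility between the global $\pi_1$-decomposition of $\sV \otimes T$ and the local $\SU(2)$-decomposition, and excluding the possibility that the $\SU(2)$-irreducible pieces of $T$ have several constituents) and then to derive a contradiction with $\Omega$ being of type~B: once $T \cong S^{n-1}\sV \otimes \sM$ with $\sV$ of rank~$2$, a plethysm computation gives $S^2 T \cong \bigl(\bigoplus_{p \geq 0} S^{2n-2-4p}\sV\bigr)\otimes \sM^{2}$, which for $n \geq 2$ is a non-trivial direct sum of $\mu$-stable pieces, contradicting the $\mu$-stability of $S^2 \Omega = (S^2 T)^\vee$. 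Combined with the impossibility of no rank-one summand for $n \geq 3$ shown above, this forces $n = 2$, for which the decomposition $V \otimes V_1 = V_2 \oplus V_0$ automatically contains a one-dimensional piece, reducing to the first case and completing the proof.
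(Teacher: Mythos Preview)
Your rank-one summand case is correct and clean. The gap is in the other case. The phrase ``viewing $T$ as an $\SU(2)$-representation via this inclusion'' has no content: the holonomy $H_V^0 \supseteq \SU(2)$ acts on the fibre of $\sV$, not on that of $T$. The holonomy of the tensor product connection on $\sV \otimes T$ is generated by elements $P_V(\gamma) \otimes P_T(\gamma)$, and the $\SU(2)$ coming from $H_V$ acts only on the first tensor slot; under that action $V \otimes T$ is simply $n$ copies of the standard two-dimensional representation, so no Clebsch--Gordan decomposition of $T$ arises and there is no route to a conclusion $T \cong S^{n-1}\sV \otimes \sM$. What is missing is any information about the holonomy of $T$ itself, and a mechanism to compare it with that of $\sV$. (Aside: even granting your formula, $S^2(S^{n-1}V)$ is a single $\SU(2)$-irreducible when $n=2$, so your plethysm step would only exclude $n \geq 3$.)

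The paper supplies the missing link in two parts. First, it proves (Claim~\ref{holonomy}) that the holonomy of $T$ with respect to the K\"ahler--Einstein metric is the full $U(n)$; this is where Yau's uniformization theorem enters and is the step with real content. Second, to force an actual comparison of the two holonomies, it takes a direct summand $\sN \subset \sV \otimes T$ of rank $r \leq n$, forms the invertible sheaf $\sL = \bigwedge^r \sN$, and maps it into $\bigwedge^r(\sV \otimes T) = \bigoplus_\lambda {\mathbb S}_\lambda(\sV) \otimes {\mathbb S}_{\lambda'}(T)$. Projecting to a nonzero Schur component and adjointing yields a nonzero map ${\mathbb S}_\lambda(\sV)^\vee \otimes \sL \to {\mathbb S}_{\lambda'}(T)$ between $\mu$-stable sheaves of equal slope (stability on both sides comes from the holonomy claim), hence an isomorphism. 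Now the holonomy of the right side contains an almost faithful image of $U(n)$, while that of the left is governed by $U(2)$; for $n \geq 3$ this is impossible. This determinant-plus-Schur-functor trick is the device that manufactures a genuine bundle isomorphism linking $\sV$ and $T$, and it is what replaces your unjustified $\SU(2)$-action on $T$.
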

\begin{proof} For a sheaf $\sV$ of rank two, the only irreducible Schur functors are
of the form $\{k-a,a\}$, for $a\leq\frac{k}{2}$. By \cite{FH91}, 6.9 on p. 79, one has
$$
{\mathbb S}_{\{k-a,a\}}(\sV)=\left\{ \begin{array}{lll} {\mathbb
S}_{\{k-2a\}}(\sV)=S^{k-2a}(\sV)\otimes\det(\sV)^a & \mbox{ if } & 2a < k\\
{\mathbb S}_{\{a,a\}}(\sV)=\det(\sV)^a & \mbox{ if } & 2a=k
\end{array} \right. .
$$
\begin{claim}\label{holonomy}
The sheaves $S^{m}(\sV)$ (and $S^{m}(T)$) are $\mu$-stable, for all $m$.
Moreover, the holonomy group of $S^{m}(T)$ with respect to the 
Hermite-Einstein metric is the full group ${\rm U}(n)$.
\end{claim}
\noindent
{\it Proof.}\,\,Otherwise, the holonomy group with respect to the Hermitian-Einstein metric
on $S^m(\sV)$ (or on $S^m(T)$) is not irreducible. Note that the holonomy group of the tensor product of Hermitian vector bundles is just the tensor product of the holonomy groups of the different factors. 

Consequently, a non-trivial splitting of $S^m(\sV)$ (resp.\ of $S^m(T)$) 
forces the holonomy group of $\sV$ (resp.\ of $T$)
with respect to the Hermite-Einstein metric to be strictly smaller than
${\rm U}(2)$ (resp.\ smaller than ${\rm U}(n)$).

It is known that a proper subgroup of ${\rm U}(2)$ is a semi-product of the torus with $\mathbb Z_2$. So one obtains a splitting of $\sV$ on some \'etale double cover.
\par
For $T$ we use instead \cite{Ya93} (see also \cite[Section 1]{VZ07}), saying that 
the holonomy group of $T$ is ${\rm U}(n)$. 
\end{proof}
Let us continue the proof of Claim~\ref{tangent}. Assume that $\sV\otimes T$  contains a subsheaf $\sN$ of the same slope and of rank $r< 2\cdot \rk(T)=2\cdot n$. Since $\sV\otimes T$ is $\mu$-polystable, $\sN$ is a direct factor. Replacing $\sN$ by its complement in
$\sV\otimes T$, if necessary, we may assume that $r \leq n$.

By taking the $r$-th wedge product one obtains an inclusion of $\sL=\bigwedge^r\sN$
into $\bigwedge^r(\sV\otimes T)$, and both sheaves have the same slope. Here and later on, the wedge products of 
a torsion free sheaf is the reflexive hull of the corresponding wedge product on the open set, where the sheaf is locally free.

By \cite[p.~80]{FH91}, for example, one has a decomposition
$$
\bigwedge^r(\sV\otimes T)=\bigoplus {\mathbb S}_{\lambda}(\sV)\otimes {\mathbb S}_{\lambda'}(T)
$$
where the sum is taken over all partitions $\lambda$ of $r$ with at most $2$ rows and $n$ columns
and where $\lambda'$ is the partition complementary to $\lambda$. The rank one subsheaf $\sL$ of $\bigwedge^r(\sV\otimes T)$ must inject to 
${\mathbb S}_{\lambda}(\sV)\otimes {\mathbb S}_{\lambda'}(T)$
for some $\lambda$. Again both sheaves are $\mu$-semistable of slope $\mu(\sL)$. Moreover,
for $\lambda=\{a,a\}$ the rank of ${\mathbb S}_{\lambda'}(T)$ is strictly larger than one, and the Claim~\ref{holonomy} implies that neither ${\mathbb S}_{\lambda}(\sV)$ nor
${\mathbb S}_{\lambda'}(T)$ can be invertible.

Let us assume that $n=2$. If $r=2$, the only possibilities for $\lambda$ are $\{2,0\}$ or $\{1,1\}$. In the first case ${\mathbb S}_{\lambda}(\sV)=\det(\sV)$, and in the second case
${\mathbb S}_{\lambda'}(T)=\det(T)$. So both are excluded.

If $\sN$ is a subbundle of rank one, we obtain a non-trivial map $\sN\otimes \Omega \to \sV$.
Since both sheaves are $\mu$-stable of the same slope this must be an isomorphism on some dense open subset, and since $\Omega=T\otimes \det(\Omega)$ we are done.

So assume from now on that $n\geq 3$. A non-zero projection of $\sL$ to some Schur functor  
$\sL\to {\mathbb S}_{\lambda}(\sV)\otimes {\mathbb S}_{\lambda'}(T)$
gives again rise to a non-zero map
$$
{\mathbb S}_{\lambda}(\sV)^\vee\otimes \sL\>>> {\mathbb S}_{\lambda'}(T)
$$
between $\mu$-polystable bundles of rank strictly larger than $1$ and of the same slope. Claim~\ref{holonomy} implies that this is an isomorphism.

Hence the holonomy group of ${\mathbb S}_{\lambda'}(T)$ with respect to the Hermitian-Yang-Mills
connection is isomorphic to the holonomy group of ${\mathbb S}_{\lambda}(\sV)^\vee$,
up to twisting by scalars. Holonomy groups are compatible with Schur functors, so the ${\mathbb S}_\lambda$-representation of the holonomy group of $\sV$ is isomorphic to ${\mathbb S}_{\lambda'}$ applied to the holonomy group of $T_Y$, which by Claim~\ref{holonomy} is ${\rm U}(n)$.

Since ${\mathbb S}_\lambda'$ is not the determinant representation,
this representation is almost faithful (with the kernel contained in the subgroup of scalar matrices). Since the holonomy group of $\sV$ is  ${\rm U}(2),$ it is too small to contain an almost faithful representation of ${\rm U}(n)$  for $n\geq 3$ one obtains 
a contradiction. So $n$ must be two, and we handled this case already.
\end{proof}
\par
\begin{example}\label{ex.3}
If $\ell=3$ and if $n\geq 3$, then the right hand side of (\ref{eqex.1}) is an equality, hence
$$
3\geq \varsigma(\V) = \frac{3\cdot \ell' \cdot(n+1)}{(3+\ell')\cdot n}> 1.
$$
For $\varsigma(\V)=3$ one finds $\ell'=n\cdot \ell$. For $\varsigma(\V)=2$
the only possibility is $n=\ell'=3$.
\end{example}
\begin{proof}
If $E^{1,0}$ is not $\mu$-stable, it has a torsion free quotient sheaf $\sV$ of slope $\mu(E^{1,0})$, either of rank one or of rank two. Both cases have been excluded, by the
Lemmata~\ref{invquot} and~\ref{ranktwo}.

For $\varsigma=\varsigma(\V)$ the equality implies that $\ell' = \frac{\varsigma\cdot 3 \cdot n}{(3-\varsigma)\cdot n + 3}$. For $\varsigma=1$ there is no solution in $\Z_{\geq 3}$, and for 
$\varsigma=2$ the only solutions are $(\ell',n)=(3,3), \ (4,6)$ or $(5,15)$.
To exclude the last two cases, consider the non-trivial map
$$
S^2(T)\otimes \det(E^{1,0}) \>\tau^{(2)}>> E^{1,0}\otimes \bigwedge^2(E^{0,1}). 
$$
Since both sides have the same slope, $\tau^{(2)}$ must be injective. However the inequality
$$
\frac{(n+1)\cdot n}{2} \leq \ell \cdot \frac{\ell'\cdot(\ell'-1)}{2}. 
$$
is violated for $(\ell',n)=(4,6)$ or $(5,15)$.
\end{proof}
\par
\begin{example}\label{lowcase}
For $n=2$ the right hand side of (\ref{eqex.1}) is an equality, except possibly for $\ell'=5$.
\end{example} 
\begin{proof} The inequality (\ref{eqex.1}) says that
$$
3\geq \varsigma(\V) \geq \frac{3\cdot \ell' \cdot 3}{(3+\ell')\cdot 2}.
$$
Since $\ell' \geq 3$ the right hand side is strictly larger than $2$, hence 
$\varsigma(\V)=3$, and the morphism
$$
\det(E^{1,0})\otimes S^3(T) \>\tau^{(3)}>> \bigwedge^{3}(E^{0,1})
$$
is non-zero. Since both sides have the same slope, for $\ell'=3$ this
contradicts the stability of $S^3(T)$. For $\ell'=4$ the saturated image of
$\tau^{(3)}$ is $\bigwedge^{3}(E^{0,1})$. Hence the latter and $E^{0,1}$
are both $\mu$-stable.
 The compatibility of the Higgs field with the socle filtration
implies that $E^{1,0}$ is $\mu$-stable, and hence the right hand side of (\ref{eqex.1}) must be an equality. Obviously this is a contradiction. 
\end{proof}
Altogether we verified:
\begin{proposition}\label{exclude}
Under the Assumptions~\ref{excludeass} the numerical condition 2) in Theorem~\ref{characterization}, b) holds in the following cases:
\begin{enumerate}
\item[1.] $n=1$.
\item[2.] $n=2$, $\ell \leq 3$, $\ell \leq \ell'$ and $\ell'\neq 5$.
\item[3.] $n\geq 3$, $\ell \leq 3$, and $\ell \leq \ell'$.
\end{enumerate}
\end{proposition}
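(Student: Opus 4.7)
The plan is to assemble the proof by case analysis, drawing on the results established in Examples~\ref{ex.1}--\ref{lowcase}. The starting observation is that both the inequality~\eqref{eqex.1} and the numerical condition 2) of Theorem~\ref{characterization}, b) are symmetric in the Hodge numbers $\ell=\rk(E^{1,0})$ and $\ell'=\rk(E^{0,1})$, while the dualization $\V\mapsto \V^\vee$ exchanges these. Consequently, after possibly replacing $\V$ by $\V^\vee$, one may assume $\ell\leq \ell'$, so that Cases 2 and 3 fall into the setting of the individual examples.

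First I would dispose of Case 1, where $n=1$ and $\Omega$ is of type A: the numerical condition is part of Proposition~\ref{numcondA}, and no further restriction on $\ell,\ell'$ is needed. For Cases 2 and 3 one has $n\geq 2$ and $\Omega$ is of type B by Assumption~\ref{excludeass}, so it remains to handle $\ell\in\{1,2,3\}$ separately. For $\ell=1$, Example~\ref{ex.1} gives the equality directly. For $\ell=2$, Example~\ref{ex.2} shows that $E^{1,0}$ is $\mu$-stable, using Lemma~\ref{invquot}, ii) to exclude a rank one quotient of slope $\mu(E^{1,0})$; the equivalence $\beta\Leftrightarrow \gamma$ of Addendum~\ref{characterizationadd}, III) then yields the numerical condition. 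For $\ell=3$ and $n\geq 3$, Example~\ref{ex.3} combines Lemma~\ref{invquot}, ii) and Lemma~\ref{ranktwo} to exclude rank one and rank two quotients of $E^{1,0}$ of maximal slope, again forcing $\mu$-stability.

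The delicate remaining case is $\ell=3$, $n=2$, treated in Example~\ref{lowcase}. Here~\eqref{eqex.1} already forces $\varsigma(\V)=3$, and one must show that $\varsigma(\V)=\tfrac{9\ell'}{2(3+\ell')}$, which by direct computation happens only for $\ell'=6$. The values $\ell'=3$ and $\ell'=4$ are ruled out by examining the map
$$\tau^{(3)}:\det(E^{1,0})\otimes S^3(T)\longrightarrow \bigwedge^3 E^{0,1}:$$
for $\ell'=3$ it contradicts the $\mu$-stability of $S^3(T)$, while for $\ell'=4$ the isomorphism $\bigwedge^3 E^{0,1}\cong {E^{0,1}}^\vee\otimes\det(E^{0,1})$ forces $E^{0,1}$ and hence $E^{1,0}$ (via compatibility of the Higgs field with the socle filtration, which uses Condition~\ref{poly2}) to be $\mu$-stable, giving the equality $\varsigma=\tfrac{36}{14}\neq 3$, a contradiction.

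The main obstacle, and the reason the statement of the proposition excludes it, is the value $\ell'=5$: the same rank count on $\tau^{(3)}$ produces neither a dimension clash nor a self-duality of the target $\bigwedge^3 E^{0,1}$, so these elementary arguments break down. A sharper treatment would presumably require a finer analysis of the rank $10$ Higgs map $\tau^{(3)}$ and of the Hermite--Einstein holonomy on $E^{0,1}$ in the spirit of Claim~\ref{holonomy}, but we do not attempt this here. Throughout the argument the hypothesis that either $\omega_Y(S)$ is ample or that Condition~\ref{poly2} holds is essential, since it underlies the application of \cite[Lemma 2.7]{VZ07} in the stability arguments, cf.\ Lemma~\ref{poly3}.
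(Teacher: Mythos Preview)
Your proposal is correct and follows exactly the paper's approach: the paper's proof of Proposition~\ref{exclude} is literally the sentence ``Altogether we verified:'', meaning that the examples and lemmas you cite (Property~\ref{exlenght}, Examples~\ref{ex.1}--\ref{lowcase}, together with Lemmata~\ref{invquot} and~\ref{ranktwo}) constitute the entire argument. Your write-up is in fact more explicit than the paper's in assembling the case analysis, and your remarks on why $\ell'=5$ resists the method are accurate commentary but not part of the proof proper.
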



\end{document}